\declaretheoremstyle[headfont=\normalsize\normalfont\bfseries,notefont=\mdseries, notebraces={(}{)},bodyfont=\normalfont,postheadspace=0.5em]{basicstyle}
\declaretheorem[style=basicstyle,name=Theorem]{theorem}
\declaretheorem[name=Remark,style=basicstyle,sibling=theorem]{remark}
\declaretheorem[style=basicstyle,name=Corollary,sibling=theorem]{cor}
\declaretheorem[style=basicstyle,name=Proposition,sibling=theorem]{prop}
\declaretheorem[style=basicstyle,name=Lemma,sibling=theorem]{lemma}
\renewenvironment{proof}{\preproof}{\endpreproof}
\newenvironment{dmatrix}{\left[\,\begin{matrix}}{\end{matrix}\,\right]}
\newcommand{\abs}[1]{\left|#1\right|}
\newcommand{\bd}{\partial}
\newcommand{\cl}[1]{\overline{#1}}
\newcommand{\C}{\mathbb{C}}
\newcommand{\db}{\overline{\partial}}
\renewcommand{\d}{\mathrm{d}}
\newcommand{\ip}[1]{\left\langle#1\right\rangle}
\newcommand{\norm}[1]{\left\lVert#1\right\rVert}
\newcommand{\pd}[2]{\frac{\partial #1}{\partial #2}}
\newcommand{\pr}{\mathrm{pr}}
\newcommand{\R}{\mathbb{R}}
\renewcommand\section{\@startsection{section}{1}{0pt}{-3.5ex \@plus -1ex \@minus -.2ex}{2.3ex \@plus.2ex}{\centering\bfseries}}
\renewcommand{\setminus}{\mathbin{\tikz[baseline={(0,0.033)}]{\draw[line width=0.4, rounded corners=0.5,line cap=round,scale=0.7] (0,0.3)--(0.25,0.1);}}}
\newcommand{\set}[1]{\left\{#1\right\}}
\newcommand{\Z}{\mathbb{Z}}
\title[An Index Formula For Cauchy-Riemann Operators]{An Index Formula For Cauchy-Riemann Operators on Surfaces with boundary punctures}
\author{Dylan Cant}
\date{Monday November 1st 2021}
\subjclass[2020]{Primary 53D42, Secondary 58J20}
\begin{document}
\maketitle
\begin{abstract}
  We give a self-contained proof of a formula computing the Fredholm index for asymptotically non-degenerate Cauchy-Riemann operators on surfaces with boundary punctures using the method of large anti-linear deformations. This method for computing the index was introduced in the case of closed surfaces by \cite{taubes} and generalized to the case with interior punctures by \cite{gerig}, as explained in \cite{wendl-sft}. One novel feature arising from our proof is that the Euler characteristic term in the index formula involves a non-standard weighted count of boundary zeros. We hope that this formulation of the index formula will be useful to other researchers.
\end{abstract}
\tableofcontents
\section{Introduction}
The main goal of this paper is to prove an index formula for \emph{asymptotically non-degenerate Cauchy-Riemann operators} on surfaces with boundary punctures. We generalize the result stated in \cite[Theorem 3.3.11]{schwarz-diss} (see also \cite[Theorem 3.1.2]{gerig} and \cite[Theorem 5.4]{wendl-sft}).

To actually prove the index formula we adopt the technique introduced in \cite[Section 7]{taubes} (subsequently generalized by \cite[Chapter 3]{gerig}), and deform our Cauchy-Riemann operator $D$ by an anti-linear lower order term $\sigma B$. As explained in \cite{taubes}, \cite{gerig}, and \cite{wendl-sft}, as $\sigma\to\infty$ the kernel of $D+\sigma B$ concentrates near the positive zeros of $B$ and the cokernel of $D+\sigma B$ is represented by sections supported near the negative zeros of $B$. With some further analysis, one concludes that the signed count of zeros of $B$ equals the index of $D+\sigma B$.

Our argument is complicated by the boundary $\bd\Sigma$. The most apparent difference is that the anti-linear perturbation $B$ can have zeros on the boundary, and, as we will show, the boundary zeros split into \emph{four} cases, two of which contribute $0$ to the index, and the other two contribute $+1$ and $-1$. See Figure \ref{fig:boundary-zeros} and \ref{fig:dual-yt}. This phenomenon leads to the ``Euler characteristic'' term in the index formula depending on the signs of punctures -- this is a novel phenomenon when compared with the $\bd\Sigma=\emptyset$ case. See Section \ref{sec:D1-kernel-class} for the computation which leads to some of the boundary zeros contributing $0$.

\begin{remark}  
  If the asymptotics of $D$ do not match the asymptotics of $D+\sigma B$ for $\sigma$ large, then the Fredholm index will likely change during the deformation $\sigma\to\infty$. There are two approaches to deal with this: one way is to try to find $B$ so that the asymptotics of $D$ match the asymptotics $D+\sigma B$ for all $\sigma\in [0,\infty)$ -- this is the approach taken in \cite[Chapter 3]{gerig} and \cite[Section 5.8]{wendl-sft}. The other approach is to pick $B$ without regard to $D$, and then analyze the change in index as an ``index gluing'' problem. This is the approach considered in this paper, and it leads to a natural definition of the Conley-Zehnder indices as Fredholm indices of certain operators. The necessary analytic ingredient to make this work is the \emph{linear kernel gluing operation} (see Section \ref{sec:cz-index}). See \cite[Section 3.2]{schwarz-diss} and \cite{fh-coherent} for similar gluing problems.
\end{remark}
\begin{remark}
  There is other work which proves index formulas for Cauchy-Riemann operators on surfaces with boundary punctures. See, for instance, \cite[Appendix A]{cel2009}. Our work differs from theirs in how we present the index formula (e.g.\ they do not define Conley-Zehnder indices for Reeb chords), and how we prove the result. 
\end{remark}



\subsection{Statement of result}
\label{sec:statement}
Let $D$ be an asymptotically non-degenerate Cauchy-Riemann operator for the data $(\Sigma,\bd\Sigma,\Gamma_{\pm},E,F,C,[\tau])$. Briefly:
  \begin{enumerate}[label=(\arabic*)]
  \item $\Gamma=\Gamma_{+}\cup \Gamma_{-}$ is collection of punctures which may be on the boundary (we denote the punctured surface by $\dot\Sigma$),
  \item $(E,F)$ is a complex vector bundle with totally real sub-bundle $F\subset E|_{\bd\dot\Sigma}$
  \item for each $z\in \Gamma$, $C_{z}\subset \dot\Sigma$ is a chosen cylindrical/strip-like end with holomorphic coordinate $s+it$ (there are four possibilities for $C_{z}$, depending on whether $z\in \Gamma_{\pm}$ and $z\in \bd\Sigma$),
  \item $[\tau]$ is an equivalence class of trivializations $\tau_{z}:(E|_{C_{z}},F|_{\bd C_{z}})\to (\C^{n},\R^{n})$ called \emph{asymptotic trivializations}. See Section \ref{sec:ahs} for more details.
  \end{enumerate}
  We recall that Cauchy-Riemann operators are defined by their symbol. The \emph{asymptotically non-degenerate} condition means that for any $\tau\in [\tau]$, the coordinate representation $D_{\tau}$ in the end $C_{z}$ is asymptotic to $\bd_{s}-A_{z}^{\tau}$ as $s\to\pm \infty$ where $A_{z}^{\tau}=-i\bd_{t}-S(t)$ is a non-degenerate \emph{asymptotic operator}: $$C^{\infty}([0,1],\C^{n},\R^{n})\to C^{\infty}([0,1],\C^{n})\hspace{.2cm}\text{ or }\hspace{.2cm}C^{\infty}(\R/\Z,\C^{n},\R^{n})\to C^{\infty}(\R/\Z,\C^{n}).$$ See Section \ref{sec:asymptotics} for more details.

  \begin{theorem}\label{theorem:index-formula}
    For $p>1$, $D:W^{1,p}(E,F)\to L^{p}(\Lambda^{0,1}\otimes E)$ is Fredholm and its index is given by
    \begin{equation*}
      \mathrm{ind}(D)=n\mathrm{X}(\Sigma,\Gamma_{\pm})+\mu_{\mathrm{Mas}}^{\tau}(E,F)+\sum_{z\in \Gamma_{+}}\mu_{\mathrm{CZ}}(A^{\tau}_{z})-\sum_{z\in \Gamma_{-}}\mu_{\mathrm{CZ}}(A^{\tau}_{z}),
    \end{equation*}
    where $n$ is the complex rank of $E$, $\tau$ is an asymptotic trivialization of $(E,F)$, and:
    \begin{enumerate}
    \item The Euler characteristic $\mathrm{X}(\Sigma,\Gamma_{\pm})$ is the count of zeros of a generic vector field on $\dot\Sigma$ which is tangent to $\bd\dot\Sigma$ and points inwards along $\Gamma_{-}$ and outwards along $\Gamma_{+}$ (e.g.,\ equal to $\bd_{s}$ in the ends $C_{z}$). Boundary zeros are counted according to the rules in Figure \ref{fig:boundary-zeros}. Interior are zeros are counted as usual. See Section \ref{fig:example-surfaces} for examples.
     
  \item The Maslov index $\mu^{\tau}_{\mathrm{Mas}}(E,F)$ is the signed count of zeros of a generic section $\sigma$ of $(\det E)^{\otimes 2}$ which (a) restricts to the canonical positive generator of $(\det F)^{\otimes 2}$ along the boundary, and (b) is identically $1$ in the asymptotic trivializations induced by~$\tau$. The zeros are all interior.
    
  \item The Conley-Zehnder index is the Fredholm index of any Cauchy-Riemann operator on the trivial bundle $E=\C^{n}$, $F=\R^{n}$ over an infinite strip/cylinder which equals $$\bd_{s}u+J_{0}\bd_{t}u+\cl{u}=\bd_{s}u+J_{0}\bd_{t}u+Cu$$ at the negative end and $\bd_{s}-A_{z}^{\tau}$ at the positive end. See Figure \ref{fig:cz-index}.
  \end{enumerate}
\end{theorem}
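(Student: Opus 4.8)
The plan is to follow the large anti-linear deformation strategy of Taubes and Gerig, adapted to the punctured-boundary setting. First I would reduce to a single model situation. Since $D$ is Fredholm (by the standard elliptic theory for Cauchy–Riemann operators on $W^{1,p}$, using the non-degeneracy of the asymptotic operators $A^\tau_z$, as recalled in the sections on asymptotics), I may compute its index after any deformation through Fredholm operators with fixed asymptotics. I would choose a generic anti-linear zeroth-order term $B\in\Gamma(\overline{\mathrm{Hom}}_\C(E,\Lambda^{0,1}\otimes E))$ which vanishes in the ends $C_z$ (so that $D+\sigma B$ has the \emph{same} asymptotic operators as $D$ for every $\sigma\ge 0$), is transverse to zero as a section over $\Sigma$, and meets $\bd\Sigma$ transversally; the zero set is then a finite collection of interior points and boundary points, with the four boundary types described in the introduction. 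Because $B$ vanishes on the ends, the asymptotics never change and $\mathrm{ind}(D+\sigma B)=\mathrm{ind}(D)$ for all $\sigma$. This is exactly the ``pick $B$ without regard to $D$'' dichotomy mentioned in the first remark, \emph{except} that here I arrange $B$ to be admissible for all $\sigma$; the subtler gluing route is reserved for the $C$-dependent Conley–Zehnder piece, which is handled separately in the section on the CZ index.

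Second, I would carry out the concentration analysis for $\sigma\to\infty$. The key local model is $D_0=\bd_s+i\bd_t$ on a half-plane or full plane with the anti-linear perturbation $\sigma\,\overline{(\,\cdot\,)}$ times a local normal form of $B$ near a zero; rescaling by $\sqrt\sigma$ converts this to a fixed model operator whose kernel and cokernel I would compute explicitly. For an interior zero of $B$ this reproduces Taubes's computation: a positive (resp.\ negative) zero contributes a one-dimensional piece of kernel (resp.\ cokernel). For a boundary zero one must solve the model problem on the half-plane with the totally real boundary condition $\R^n$; this is where the four cases arise, and where I would invoke the computation of Section~\ref{sec:D1-kernel-class}: two of the boundary normal forms admit neither a decaying kernel element nor a decaying cokernel element (contributing $0$), while the remaining two contribute $+1$ and $-1$. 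Gluing these local pieces—i.e., showing that for $\sigma$ large the global kernel and cokernel of $D+\sigma B$ are, up to exponentially small error, the direct sum of the local model solutions—is the standard but technically heaviest part; I would phrase it via a uniform (in $\sigma$) a priori estimate away from $B^{-1}(0)$ together with the rescaled local surjectivity, exactly as in Wendl's exposition. The upshot is
\[
  \mathrm{ind}(D)=\mathrm{ind}(D+\sigma B)=\#\{\text{zeros of }B\},
\]
with zeros counted by the signed/weighted rule, i.e.\ interior zeros by their usual sign and boundary zeros by the Figure~\ref{fig:boundary-zeros} weights $\{0,0,+1,-1\}$.

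Third, I would identify this signed count with the three terms on the right-hand side. Choose $B=\beta\cdot\nabla f$ built from a generic vector field $V$ on $\dot\Sigma$ that is tangent to $\bd\dot\Sigma$ and equals $\bd_s$ in the ends (so inward along $\Gamma_-$, outward along $\Gamma_+$); more precisely, use the Hermitian structure to turn $V$ into an anti-linear bundle map on the canonical-type summand and, on the remaining $\mathfrak{sl}$-part, the section of $(\det E)^{\otimes 2}$ described in item~(ii). Then the zero set of $B$ splits: the zeros coming from $V$ on $\Sigma$ give, by the weighted Poincaré–Hopf count, precisely $n$ times $\mathrm X(\Sigma,\Gamma_\pm)$ (the weights at boundary zeros of $V$ matching the weights at boundary zeros of $B$ is the crux—this is why the Euler-characteristic term depends on the signs of the punctures); the zeros coming from the $(\det E)^{\otimes2}$ section give $\mu^\tau_{\mathrm{Mas}}(E,F)$ by definition; and the mismatch between the chosen trivialization $\tau$ in the ends and the ``canonical'' data near each puncture is absorbed, puncture by puncture, into $\pm\mu_{\mathrm{CZ}}(A^\tau_z)$ via the linear kernel gluing operation of Section~\ref{sec:cz-index}, with the sign determined by whether $z\in\Gamma_+$ or $z\in\Gamma_-$. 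Summing the contributions yields the stated formula.

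The main obstacle I anticipate is the boundary model analysis together with its gluing: correctly enumerating the four boundary normal forms of a transverse zero of an anti-linear map respecting a totally real boundary condition, solving the corresponding half-plane model problems to see which contribute $0,+1,-1$, and then proving that these local solutions glue to the actual kernel/cokernel with uniform estimates as $\sigma\to\infty$. A secondary difficulty is bookkeeping the trivialization-change at the punctures as an honest Fredholm-index gluing so that the $\mu_{\mathrm{CZ}}$ terms appear with the correct signs and the Euler-characteristic weights are the ones forced by the vector field $V$.
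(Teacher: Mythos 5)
Your outline of the local analysis is essentially right: the rescaling by $\sqrt{\sigma}$, the explicit solution of the half-plane model problems with totally real boundary condition, the four boundary normal forms contributing $\{+1,0,0,-1\}$, the identification of the vector-field zeros with $n\mathrm{X}$ and of the $(\det E)^{\otimes 2}$ zeros with $\mu^{\tau}_{\mathrm{Mas}}$, and the separate kernel-gluing treatment of the Conley--Zehnder terms all match what the paper does. But there is a genuine gap at the very first step: you choose the anti-linear perturbation $B$ to \emph{vanish in the ends} $C_{z}$, and this breaks the method. A section that vanishes identically on the (open, non-compact) ends cannot be transverse to zero there, so ``the zero set is a finite collection of points'' is already inconsistent with your hypotheses. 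More seriously, the Bochner--Weitzenb\"ock estimate forces kernel and cokernel elements to concentrate on the zero set of $B$; if $B\equiv 0$ on the ends, that zero set contains the entire non-compact ends and the concentration argument localizes nothing there. The conclusion you draw, $\mathrm{ind}(D)=\#\{\text{zeros of }B\}$, would make the index independent of the asymptotic operators $A^{\tau}_{z}$, which is false (the $\mu_{\mathrm{CZ}}$ terms are generally nonzero). Your third step then tries to reintroduce the CZ terms as a ``trivialization mismatch,'' but by your own setup the asymptotics of $D+\sigma B$ never changed, so there is no mismatch left to absorb; the two halves of your argument cannot both hold.

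The fix is one of two things, and the paper takes the first. Either (i) first deform $D$ to a reference operator $D^{\mathrm{al}}$ whose asymptotics are all $\bd_{s}+i\bd_{t}+C$, paying for the change of asymptotics with a linear kernel-gluing argument that \emph{produces} the terms $\pm\mu_{\mathrm{CZ}}(A^{\tau}_{z})$ (this is Proposition \ref{prop:main-gluing}), and only then run the large anti-linear deformation on $D^{\mathrm{al}}$ with a $B$ that is \emph{nonvanishing and translation-invariant} in the ends, equal to $\mu(-,\bd_{s})\otimes X\otimes X$ there, so that the asymptotic operator $-i\bd_{t}-\sigma C$ stays non-degenerate for every $\sigma>0$ (Lemma \ref{lemma:reference-non-degen}) and the index of $D^{\sigma}$ is genuinely constant in $\sigma$; or (ii) follow Gerig/Wendl and choose $B$ nonvanishing in the ends with asymptotics adapted to the given $A_{z}$ so that non-degeneracy persists for all $\sigma$ --- in which case the constraint on $B$ near the punctures is exactly what encodes the CZ contributions in its zero count. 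In neither variant is ``$B$ vanishes in the ends'' admissible. You should also note that the surjectivity/isomorphism statement for large $\sigma$ requires stabilizing $D^{\sigma}$ by the explicit Gaussian kernel and cokernel elements at the zeros (Proposition \ref{prop:gluing-iso}); a bare ``uniform a priori estimate away from $B^{-1}(0)$'' does not by itself identify the cokernel.
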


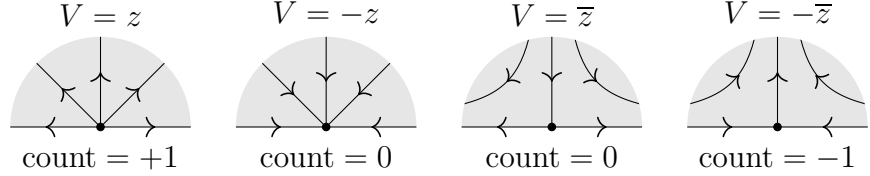
\begin{figure}[H]
  \centering
  \begin{tikzpicture}[scale=.6]
    \begin{scope}[shift={(0,0)}]
      \fill[black!10!white] (2,0) arc (0:180:2)--cycle;
      \node at (0,2) [above] {$V=z$};
      \draw[postaction={decorate,decoration={markings,mark=at position 0.5 with {\arrow[scale=1.5]{<};}}}] (-2,0)--(0,0);
      \draw[postaction={decorate,decoration={markings,mark=at position 0.5 with {\arrow[scale=1.5]{<};}}}] (2,0)--(0,0);
      \draw[postaction={decorate,decoration={markings,mark=at position 0.5 with {\arrow[scale=1.5]{<};}}}] (0,2)--(0,0);
      \draw[postaction={decorate,decoration={markings,mark=at position 0.5 with {\arrow[scale=1.5]{<};}}}] (45:2)--(0,0);
      \draw[postaction={decorate,decoration={markings,mark=at position 0.5 with {\arrow[scale=1.5]{<};}}}] (135:2)--(0,0);
      \node[draw,circle,inner sep=1pt,fill] at (0,0){};
      \node at (0,-0.2)[below]{$\mathrm{count}=+1$};
    \end{scope}
    \begin{scope}[shift={(5,0)}]
      \node at (0,2) [above] {$V=-z$};
      \fill[black!10!white] (2,0) arc (0:180:2)--cycle;
      \draw[postaction={decorate,decoration={markings,mark=at position 0.5 with {\arrow[scale=1.5]{>};}}}] (-2,0)--(0,0);
      \draw[postaction={decorate,decoration={markings,mark=at position 0.5 with {\arrow[scale=1.5]{>};}}}] (2,0)--(0,0);
      \draw[postaction={decorate,decoration={markings,mark=at position 0.5 with {\arrow[scale=1.5]{>};}}}] (0,2)--(0,0);
      \draw[postaction={decorate,decoration={markings,mark=at position 0.5 with {\arrow[scale=1.5]{>};}}}] (45:2)--(0,0);
      \draw[postaction={decorate,decoration={markings,mark=at position 0.5 with {\arrow[scale=1.5]{>};}}}] (135:2)--(0,0);
      \node[draw,circle,inner sep=1pt,fill] at (0,0){};
      \node at (0,-0.2)[below]{$\mathrm{count}=0$};
    \end{scope}
    \begin{scope}[shift={(10,0)}]
      \node at (0,2) [above] {$V=\cl{z}$};
      \fill[black!10!white] (2,0) arc (0:180:2)--cycle;
      \begin{scope}
        \clip (2,0) arc (0:180:2)--cycle;
        \draw[postaction={decorate,decoration={markings,mark=at position 0.5 with {\arrow[scale=1.5]{>};}}}] plot[domain=0.5:2] ({\x},{1/\x});
        \draw[postaction={decorate,decoration={markings,mark=at position 0.5 with {\arrow[scale=1.5]{>};}}}] plot[domain=0.5:2] ({-\x},{1/\x});
      \end{scope}
      \draw[postaction={decorate,decoration={markings,mark=at position 0.5 with {\arrow[scale=1.5]{<};}}}] (-2,0)--(0,0);
      \draw[postaction={decorate,decoration={markings,mark=at position 0.5 with {\arrow[scale=1.5]{<};}}}] (2,0)--(0,0);
      \draw[postaction={decorate,decoration={markings,mark=at position 0.5 with {\arrow[scale=1.5]{>};}}}] (0,2)--(0,0);
      \node[draw,circle,inner sep=1pt,fill] at (0,0){};
      \node at (0,-0.2)[below]{$\mathrm{count}=0$};
    \end{scope}
    \begin{scope}[shift={(15,0)}]
      \node at (0,2) [above] {$V=-\cl{z}$};
      \fill[black!10!white] (2,0) arc (0:180:2)--cycle;
      \begin{scope}
        \clip (2,0) arc (0:180:2)--cycle;
        \draw[postaction={decorate,decoration={markings,mark=at position 0.5 with {\arrow[scale=1.5]{<};}}}] plot[domain=0.5:2] ({\x},{1/\x});
        \draw[postaction={decorate,decoration={markings,mark=at position 0.5 with {\arrow[scale=1.5]{<};}}}] plot[domain=0.5:2] ({-\x},{1/\x});
      \end{scope}
      \draw[postaction={decorate,decoration={markings,mark=at position 0.5 with {\arrow[scale=1.5]{>};}}}] (-2,0)--(0,0);
      \draw[postaction={decorate,decoration={markings,mark=at position 0.5 with {\arrow[scale=1.5]{>};}}}] (2,0)--(0,0);
      \draw[postaction={decorate,decoration={markings,mark=at position 0.5 with {\arrow[scale=1.5]{<};}}}] (0,2)--(0,0);
      \node[draw,circle,inner sep=1pt,fill] at (0,0){};
      \node at (0,-0.2)[below]{$\mathrm{count}=-1$};
    \end{scope}
  \end{tikzpicture}
  \caption{Boundary zeros either contribute $\pm 1$ or $0$ to the index.}
  \label{fig:boundary-zeros}
\end{figure}

\begin{figure}[H]
  \centering
  \begin{tikzpicture}
    \begin{scope}[shift={(14,0)}]
      \fill[pattern=north west lines] (7.0,-0.5) rectangle +(1,1);
      \node at ({3+2},0.5)[above]{$\bd_{s}+i\bd_{t}+C$};
      \node at ({10},0.5)[above]{$\bd_{s}-A^{\tau}_{z}$};
      \path (10,-1)--(7.75,-0.5)node[outer sep=0pt,inner sep=1pt](A){};
      \draw (3,0.5)--(12,.5);
      \draw (3,-0.5)--(12,-.5);
      \foreach \x in {4.0,5.0} {
      \draw ({3+\x},-0.5)--+(0,1);
    }
    \end{scope}
  \end{tikzpicture}
  \caption{The Conley-Zehnder index is the Fredholm index of any Cauchy-Riemann operator on the infinite strip or cylinder which interpolates between the two asymptotic conditions. The matrix $C$ represents complex conjugation.}
  \label{fig:cz-index}
\end{figure}
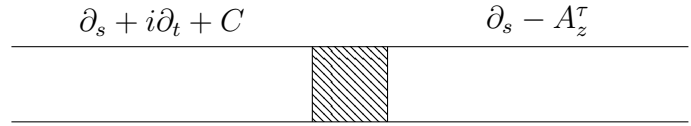

\begin{remark}
  If $\bd\Sigma=\emptyset$, then this agrees with \cite[Theorem 5.4]{wendl-sft}. If $\Gamma=\emptyset$ then this agrees with \cite[Theorem C.1.10]{mcduffsalamon}.
\end{remark}
\begin{remark}
  The definition of the Conley-Zehnder index as a Fredholm index suggests a way to define \emph{determinant lines} for asymptotic operators, namely as the Fredholm determinant of the operator in Figure \ref{fig:cz-index}. This is similar to \cite[Definition 1.4.3]{abouzaid14} or \cite[Definition 2.46]{pardon-cha}.

  A kernel gluing theorem (e.g.,\ see \cite{fh-coherent}) should establish a relationship between the Fredholm determinant of $D$, the determinant lines of the asymptotic operators, and the Fredholm determinant of a different Cauchy-Riemann operator $D^{1}$ where all the asymptotic operators are changed to $-i\bd_{t}-C$. 

  The method of large anti-linear deformations considers a family $D^{\sigma}=D_{0}+\sigma B$ (which agrees with $D^{1}$ when $\sigma=1$). Moreover, $B$ can be chosen so that $D^{\sigma}$ is Fredholm for all $\sigma\ge 1$. See Section \ref{sec:large-antilinear} for a precise definition of $D^{\sigma}$.

  For large $\sigma$, we can explicitly describe the kernel and cokernel of $D^{\sigma}$ as the $\R$-vector space generated by certain sections concentrated near certain zeros of $B$ (i.e.,\ each zero either contributes $\pm 1$, or $0$ to the index). In particular, the problem of orienting the Fredholm determinant of $D^{\sigma}$ reduces to the problem of ordering certain subsets of zeros of $B$. We do not pursue the question of ``coherently orienting'' Fredholm determinants any further in this paper.
\end{remark}

\section{Euler characteristics for Riemann surfaces with boundary punctures}
\label{sec:euler}

In this section we give a more precise definition of the Euler characteristic term appearing in the index formula. Suppose that $(\Sigma,\bd\Sigma,\Gamma_{+},\Gamma_{-},C)$ is a Riemann surface with punctures $\Gamma=\Gamma_{+}\cup \Gamma_{-}$, some of which may be on the boundary, and cylindrical/strip-like ends $C_{z}$ for each $z\in \Gamma_{\pm}$. Each puncture in $\Gamma_{+}$ has a cylindrical end biholomorphic to $[0,\infty)\times [0,1]$ or $[0,\infty)\times \R/\Z$, and similarly for $\Gamma_{-}$ with $[0,\infty)$ replaced by $(-\infty,0]$. Let $s+it$ denote the holomorphic coordinate in these cylindrical ends. 

Let $V$ be a vector field on $\dot\Sigma:=\Sigma\setminus \Gamma_{+}\setminus \Gamma_{-}$ which agrees with $\bd_{s}$ in the cylindrical ends, and which is everywhere tangent to $\bd\dot\Sigma$. See Figure \ref{fig:example-surfaces} for an illustration. By choosing $V$ generically, we can assume that the linearizations of $V$ at its zeros are non-degenerate. Let us agree to call such a vector field \emph{admissible} for $(\Sigma,\bd\Sigma,\Gamma_{\pm})$.

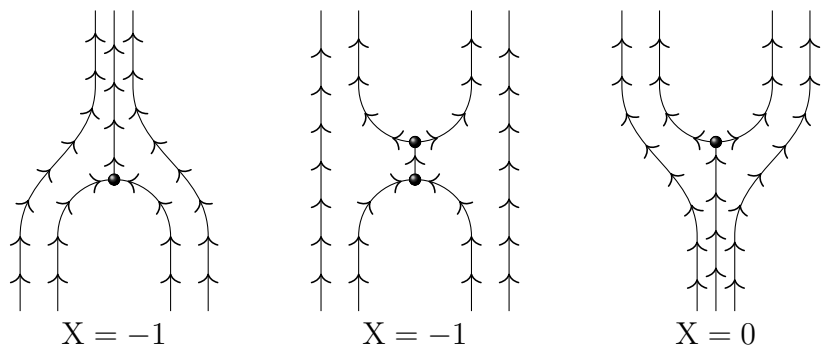
\begin{figure}[H]
  \centering
  \begin{tikzpicture}
    \path[decorate,decoration={markings,mark=between positions 5mm and 1 step 5mm with {\arrow[scale=1.5]{>};}}](0,-1)--(0,0)to[out=90,in=-90] (1,2)--(1,3);
    \path[decorate,decoration={markings,mark=between positions 5mm and 1 step 5mm with {\arrow[scale=1.5]{>};}}](2.5,-1)--(2.5,0)to[out=90,in=-90] (1.5,2)--(1.5,3);

    \path[decorate,decoration={markings,mark=between positions 5mm and 1 step 5mm with {\arrow[scale=1.5]{>};}}]
    (2,-1)--(2,0) arc (0:90:0.75);
    \draw (0,-1)--(0,0)to[out=90,in=-90] (1,2)--(1,3) (1.5,3)--(1.5,2)to[out=-90,in=90] (2.5,0)--(2.5,-1) (2,-1)--(2,0) arc (0:180:0.75)--+(0,-1);
    \draw[postaction={decorate,decoration={markings,mark=between positions 3mm and 1 step 5mm with {\arrow[scale=1.5]{>};}}}] (1.25,0.75)--+(0,{3-0.75});
    \path[decorate,decoration={markings,mark=between positions 5mm and 1 step 5mm with {\arrow[scale=1.5]{>};}}]
    (.5,-1)--(.5,0) arc (180:90:0.75) node[draw,circle,shading=ball,ball color=black,inner sep=1.5pt]{};

    \begin{scope}[shift={(4,0)}]
      \path[decorate,decoration={markings,mark=between positions 5mm and 0.95 step 5mm with {\arrow[scale=1.5]{>};}}](0,-1)--(0,3);
      \path[decorate,decoration={markings,mark=between positions 5mm and 0.95 step 5mm with {\arrow[scale=1.5]{>};}}](2.5,-1)--(2.5,3);

      \draw (0,-1)--(0,3) (2.5,3)--(2.5,-1) (2,-1)--(2,0) arc (0:180:0.75)--+(0,-1);

      \begin{scope}
        \draw[postaction={decorate,decoration={markings,mark=at position 0.65 with {\arrow[scale=1.5]{>};}}}] (1.25,0.75)--(1.25,{2-0.75});
        \draw[shift={(0,2)},yscale=-1](2,-1)--(2,0) arc (0:180:0.75)--+(0,-1);
        \path[decorate,decoration={markings,mark=between positions 5mm and 1 step 5mm with {\arrow[scale=1.5]{>};}}]
        (2,-1)--(2,0) arc (0:90:0.75);
        \path[decorate,decoration={markings,mark=between positions 5mm and 1 step 5mm with {\arrow[scale=1.5]{>};}}]
        (.5,-1)--(.5,0) arc (180:90:0.75) node[draw,circle,shading=ball,ball color=black,inner sep=1.5pt]{};
        \path[shift={(0,2)},yscale=-1,decorate,decoration={markings,mark=between positions 5mm and 1 step 5mm with {\arrow[scale=1.5]{<};}}]
        (2,-1)--(2,0) arc (0:90:0.75);
        \path[shift={(0,2)},yscale=-1,decorate,decoration={markings,mark=between positions 5mm and 1 step 5mm with {\arrow[scale=1.5]{<};}}]
        (.5,-1)--(.5,0) arc (180:90:0.75) node[draw,circle,shading=ball,ball color=black,inner sep=1.5pt]{};
      \end{scope}
    \end{scope}

    \begin{scope}[shift={(8,2)},yscale=-1]
          \path[decorate,decoration={markings,mark=between positions 5mm and 1 step 5mm with {\arrow[scale=1.5]{<};}}](0,-1)--(0,0)to[out=90,in=-90] (1,2)--(1,3);
    \path[decorate,decoration={markings,mark=between positions 5mm and 1 step 5mm with {\arrow[scale=1.5]{<};}}](2.5,-1)--(2.5,0)to[out=90,in=-90] (1.5,2)--(1.5,3);

    \path[decorate,decoration={markings,mark=between positions 5mm and 1 step 5mm with {\arrow[scale=1.5]{<};}}]
    (2,-1)--(2,0) arc (0:90:0.75);
    \draw (0,-1)--(0,0)to[out=90,in=-90] (1,2)--(1,3) (1.5,3)--(1.5,2)to[out=-90,in=90] (2.5,0)--(2.5,-1) (2,-1)--(2,0) arc (0:180:0.75)--+(0,-1);
    \draw[postaction={decorate,decoration={markings,mark=between positions 3mm and 1 step 5mm with {\arrow[scale=1.5]{<};}}}] (1.25,0.75)--+(0,{3-0.75});
    \path[decorate,decoration={markings,mark=between positions 5mm and 1 step 5mm with {\arrow[scale=1.5]{<};}}]
    (.5,-1)--(.5,0) arc (180:90:0.75) node[draw,circle,shading=ball,ball color=black,inner sep=1.5pt]{};

  \end{scope}
  \node at (1.25,-1)[below]{$\mathrm{X}=-1$};
  \node at (5.25,-1)[below]{$\mathrm{X}=-1$};
  \node at (9.25,-1)[below]{$\mathrm{X}=0$};  
  \end{tikzpicture}
  \caption{Vector fields on surfaces with boundary punctures. Positive punctures (i.e.,\ in $\Gamma_{+}$) are placed at the top of the figure while negative punctures are placed at the bottom. The Euler characteristic $\mathrm{X}$ is the count of zeros weighted as in Figure \ref{fig:boundary-zeros}.}
  \label{fig:example-surfaces}
\end{figure}

If $p\in \Sigma$ is a zero of $V$, and $z=s+it$ is a holomorphic coordinate with $z(p)=0$, we can write $V$ as
\begin{equation*}
  V=\begin{dmatrix}
    {\bd_{s}}&{\bd_{t}}
  \end{dmatrix}\begin{dmatrix}
    {a}&{b}\\
    {c}&{d}
  \end{dmatrix}\begin{dmatrix}
    {s}\\
    {t}
  \end{dmatrix}+\text{higher order terms},
\end{equation*}
where the $2\times 2$ matrix is invertible.

If $p$ is an interior zero, then we define the count of $p$ to be the sign of the determinant of the $2\times 2$ matrix. We can deform our vector field near $p$ so that $a=1$, $d=\pm 1$ and $b=c=0$ -- this uses the fact that $\mathrm{GL}_{2}(\mathbb{R})$ has two connected components. After this deformation, the local coordinate representation of $V$ is either $z$ or $\cl{z}$, depending on whether the count of the $p$ is $\pm 1$.

Suppose now that $p$ is a boundary zero. Then we can pick $z$ so that it takes values in $\cl{\mathbb{H}}$, in which case we must have $c=0$ and $a>0$. We define:
\begin{equation*}
  \text{count of $p$}=\left\{
    \begin{aligned}
      +1\text{ if $a>0$ and $c>0$},\\
      0\text{ if $a<0$ and $c<0$},\\
      0\text{ if $a>0$ and $c<0$},\\
      -1\text{ if $a<0$ and $c<0$}.
    \end{aligned}
  \right.
\end{equation*}
Unlike the case when $p$ was an interior zero, we cannot freely deform the linearization, since the linearization is required to map $T\bd \Sigma$ into $T\bd\Sigma$. The four cases above depend on whether the coordinate representation of $V$ can be deformed to $\pm z$ or $\pm \cl{z}$, as shown in Figure \ref{fig:boundary-zeros}. Note that $a\in \R^{1\times 1}$ can be thought of as the linearization of the \emph{restriction} of $V$ to $\bd\Sigma$, considered as a section of $T\bd\Sigma\to \bd\Sigma$.

\begin{prop}\label{prop:X-well-defined}
  The sum of the counts of the zeros of $V$ is independent of the choice of $V$ and the coordinate systems used. It does depend on the assignment of signs to the boundary punctures $\Gamma$. The resulting integer is denoted $\mathrm{X}(\Sigma,\bd\Sigma,\Gamma_{+},\Gamma_{-})$.
\end{prop}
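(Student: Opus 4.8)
The plan is a cobordism argument in the affine space of admissible vector fields, which reduces the whole statement to a local normal-form analysis at finitely many degeneration instants. I first note that the count of a single zero is coordinate-independent: a holomorphic change of coordinate acts on the linearization $L_p$ by conjugation, which preserves $\operatorname{sgn}\det L_p$ (the count at an interior zero) and, in the boundary case, preserves the signs of the two diagonal entries of $L_p$ (the entry below the diagonal is forced to vanish by tangency to $\bd\dot\Sigma$), and those two signs are all the data the count depends on. So it remains to prove independence of $V$.

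Fix the data, hence the value $\bd_s$ that $V$ must take on each end $C_z$, and let $\mathcal V$ be the set of vector fields on $\dot\Sigma$ tangent to $\bd\dot\Sigma$ and equal to $\bd_s$ on every $C_z$. This is an affine subspace of the space of all vector fields (the difference of two of its elements is tangent to $\bd\dot\Sigma$ and vanishes on the ends), hence path-connected. Given admissible $V_0,V_1$, take the segment $V_\lambda=(1-\lambda)V_0+\lambda V_1$ in $\mathcal V$ and perturb it rel endpoints, through $\mathcal V$, to be generic. The universal zero set $\mathcal Z=\{(\lambda,p)\in[0,1]\times\dot\Sigma:V_\lambda(p)=0\}$ then lies in $[0,1]$ times a fixed compact subset of $\dot\Sigma$ (every $V_\lambda$ equals $\bd_s$ on the ends) and, by a standard parametric transversality argument, is a $1$-manifold away from finitely many isolated points. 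It suffices to show $\Lambda(\lambda):=\sum_{p:V_\lambda(p)=0}(\text{count of }p)$ is constant in $\lambda$.

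$\Lambda$ is locally constant away from finitely many critical values $\lambda_*$, at each of which one of the codimension-one degenerations of the zero set of a boundary-tangent vector field occurs:
\begin{enumerate}[label=(\alph*)]
\item a pair of interior zeros is born or dies, away from $\bd\dot\Sigma$;
\item a pair of boundary zeros is born or dies;
\item an interior zero reaches $\bd\dot\Sigma$ and disappears.
\end{enumerate}
Case (a) is the classical cancellation: the two zeros have opposite $\operatorname{sgn}\det L$. For (b), work in a holomorphic boundary coordinate $z=s+it$ with $\bd\dot\Sigma=\{t=0\}$ and write $V_\lambda=P_\lambda\,\bd_s+tR_\lambda\,\bd_t$; boundary zeros solve $P_\lambda(s,0)=0$, at a birth the two emerging branches have $\bd_sP$ of opposite sign while $R$ keeps one sign there, so the two new boundary zeros have opposite $\operatorname{sgn}(a)$ and equal $\operatorname{sgn}(d)$ (with $a=\bd_sP$, $d=R$ at the zero), and by the rules of Figure \ref{fig:boundary-zeros} their counts are $\{+1,-1\}$ or $\{0,0\}$, summing to $0$ in either case.

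Case (c) is the crux and the main obstacle, having no analogue in the closed or interior-puncture setting; it is the mechanism that forces the weight $0$ on two of the four boundary-zero types. Keeping the notation $V_\lambda=P_\lambda\,\bd_s+tR_\lambda\,\bd_t$, suppose $P_{\lambda_*}(s_*,0)=R_{\lambda_*}(s_*,0)=0$. Genericity gives: $\bd_sP_{\lambda_*}(s_*,0)\neq0$, so nearby boundary zeros form a smooth arc $s=\beta(\lambda)$; the set $\{P_\lambda=R_\lambda=0\}$ is a smooth arc in $(\lambda,s,t)$-space through $(\lambda_*,s_*,0)$ transverse to $\{t=0\}$, so its $t>0$ part (a genuine interior zero) exists for $\lambda$ on exactly one side of $\lambda_*$; and $d(\lambda):=R_\lambda(\beta(\lambda),0)$ vanishes to first order at $\lambda_*$, so $\operatorname{sgn}(d)$ flips there while $\operatorname{sgn}(a)$ stays fixed and the boundary zero's count jumps by $\pm1$. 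The interior zero's count is $\operatorname{sgn}\det L=\operatorname{sgn}(m_{st})$, where $m_{st}=\det\!\bigl(\begin{smallmatrix}\bd_sP&\bd_tP\\ \bd_sR&\bd_tR\end{smallmatrix}\bigr)$ is the $(s,t)$-minor of the Jacobian $\bd(P,R)/\bd(\lambda,s,t)$ at $(\lambda_*,s_*,0)$, and a short computation (using the tangent direction of the arc $\{P_\lambda=R_\lambda=0\}$ and the implicit formula for $\beta'$) shows that the change in the boundary zero's contribution across $\lambda_*$ equals $-\operatorname{sgn}(m_{\lambda s})$ while the change in the interior zero's contribution equals $+\operatorname{sgn}(m_{\lambda s})$, where $m_{\lambda s}=\det\!\bigl(\begin{smallmatrix}\bd_\lambda P&\bd_sP\\ \bd_\lambda R&\bd_sR\end{smallmatrix}\bigr)$ is the $(\lambda,s)$-minor; the two cancel, so $\Lambda$ is unchanged. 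Summing over all branches of $\mathcal Z$ and all critical values gives $\Lambda(1)=\Lambda(0)$; with the coordinate-independence noted above, this proves $\mathrm X(\Sigma,\bd\Sigma,\Gamma_+,\Gamma_-)$ is well-defined. For a fixed partition $\Gamma=\Gamma_+\cup\Gamma_-$ all admissible $V$ lie in the one path-connected $\mathcal V$, so the integer depends only on the data; that it genuinely depends on which boundary punctures are positive is already visible in Figure \ref{fig:example-surfaces}, since the prescribed direction of $V=\bd_s$ on the end $C_z$ is dictated by the sign of $z$. (The identity $(\text{count of a boundary zero})=\tfrac12(\operatorname{sgn}(a)+\operatorname{sgn}\det L)$ gives a conceptual organization of the cancellation in Case (c) --- splitting the count into a ``$\bd\Sigma$-part'' insensitive to (c) and a ``$\Sigma$-part'' that assembles with the interior zeros into an oriented count --- but the minor computation above is what one actually checks.)
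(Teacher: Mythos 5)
Your argument is correct, and it is genuinely different from the proof the paper actually gives for Proposition \ref{prop:X-well-defined}: there the invariance is deduced from the analytic machinery, by observing that the weighted count of zeros of $V$ equals the Fredholm index of the operator $f\mapsto \d f+i\cdot\d f\cdot j+\mu(-,V)\cl{f}$ on the trivial line bundle with real boundary values, so deformation invariance of the Fredholm index does all the work. Your proof is instead a self-contained differential-topology argument (generic homotopy $V_\lambda$ inside the affine space of admissible fields, compactness of the zero locus because $V_\lambda=\bd_s$ on the ends, and a complete list of codimension-one wall crossings), and it is closest in spirit to the paper's second appendix, which runs a Milnor-style cobordism over $\dot\Sigma\times[0,1]$: your case (c) --- an interior zero absorbed into the boundary, flipping $\operatorname{sgn}(d)$ while $\operatorname{sgn}(a)$ is fixed --- is exactly what the appendix calls a ``sink,'' and your minor computation is the orientation bookkeeping done there. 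The trade-off is clear: the paper's official proof is essentially free given the rest of the paper but is useless to a reader who wants the invariance as a standalone topological fact, whereas your argument needs no analysis at all; it also makes transparent, via the identity $\mathrm{count}=\tfrac12(\operatorname{sgn}(a)+\operatorname{sgn}\det L)$, why precisely the $(+,-)$ and $(-,+)$ boundary zeros must be weighted $0$, which in the paper only emerges analytically from Lemma \ref{lemma:kernel-class-1} and Corollary \ref{cor:kernel-class-dual}. Two small points worth spelling out if you write this up: the genericity of $V_\lambda$ must be achieved by perturbations that stay in $\mathcal{V}$ (supported away from the ends and, near $\bd\dot\Sigma$, perturbing $P$ and $R$ in the decomposition $V=P\,\bd_s+tR\,\bd_t$ rather than the raw $\bd_t$-component), and the exhaustiveness of your list (a)--(c) should be justified by noting that the remaining coincidences ($a=d=0$ at a boundary zero, or a degenerate interior zero meeting the boundary) are codimension two.
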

\begin{proof}
  We do not actually use this invariance to prove the index formula, and hence the proposition follows from the index formula. Indeed, one can use the large anti-linear perturbation method of this paper to show that our count of zeros of $V$ equals the Fredholm index of the operator
  \begin{equation*}
    f\mapsto D(f):=\d f+i\cdot \d f\cdot j+\mu(-,V)\cl{f}
  \end{equation*}
  acting on sections of the trivial line bundle $\mathbb{C}$ which take real values on $\bd\dot\Sigma$. Here $\mu$ is a Hermitian metric on $T\dot\Sigma$ which is cylindrical in the ends. This completes the proof. 
\end{proof}

\section{Asymptotically non-degenerate Cauchy-Riemann operators}
\label{sec:cr-ops}

Fix a Riemann surface $\Sigma$ with boundary $\bd\Sigma$ and punctures $\Gamma=\Gamma_{+}\cup \Gamma_{-}$. Fix cylindrical ends around each of the punctures of $\Gamma$; this means that we pick holomorphic coordinate disks or half-disks around each $z\in \Gamma_{\pm}$, and identify the disks with $\R_{\pm}\times \R/\Z$ via the map $(s,t)\mapsto e^{\mp 2\pi(s+it)}$ and the half-disks with $\R_{\pm }\times [0,1]$ via the map $(s,t)\mapsto e^{\mp \pi(s+it)}$. Note that in order for this to make sense, we require picking lower half-disks around positive punctures and upper half-disks around negative punctures. 

For each $z\in \Gamma$, let $C_{z}$ denote the cylindrical end corresponding to $z$, and let $C_{z}(\rho)\subset C_{z}$ denote the closed which translated by $\rho$ deeper into the end, i.e.,\ if $z$ is a positive boundary puncture then $C_{z}(\rho)=[\rho,\infty)\times [0,1]$, and similarly for the other possibilities for $z$. See Figure \ref{fig:rho-deep}. Let $C(\rho)=\bigcup_{z\in \Gamma} C_{z}(\rho)$, with $C=C(0)$. We let $\Sigma(\rho)=\dot\Sigma\setminus C(\rho)$, so that $\Sigma(\rho)$ is a precompact sub-domain of $\dot\Sigma$. 

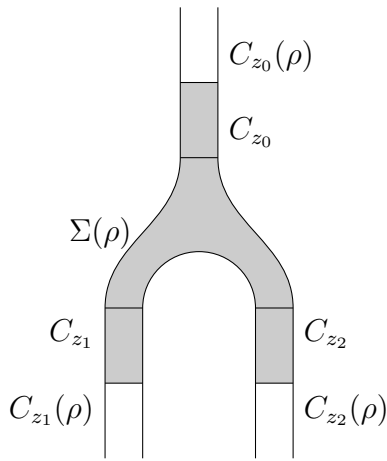
\begin{figure}[H]
  \centering
  \begin{tikzpicture}
    \fill[black!20!white] (0,-1)--(0,0) to [out=90,in=-90] (1,2)--+(0,1)--(1.5,3)--+(0,-1)to[out=-90,in=90] (2.5,0)--(2.5,-1)--(2,-1)--+(0,1) arc(0:180:0.75)--+(0,-1);
    \draw (0,-2)--(0,0) to [out=90,in=-90]node[pos=0.5,left]{$\Sigma(\rho)$} (1,2)--(1,4) (2.5,-2)--+(0,2)to[out=90,in=-90](1.5,2)--+(0,2);
    \draw (0.5,-2)--+(0,2) arc (180:0:0.75)--+(0,-2);
    \draw (2,0)--+(0.5,0)node[below right]{$C_{z_{2}}$};
    \draw (0,0)node[below left] {$C_{z_{1}}$}--+(0.5,0) (1,2)--+(0.5,0)node[above right]{$C_{z_{0}}$} (2,-1)--+(0.5,0)node[below right] {$C_{z_{2}}(\rho)$} (0,-1)node[below  left] {$C_{z_{1}}(\rho)$}--+(0.5,0) (1,3)--+(0.5,0)node[above right] {$C_{z_{0}}(\rho)$};
  \end{tikzpicture}
  \caption{A surface $\Sigma$ with $\Gamma_{+}=\set{z_{0}}$ and $\Gamma_{-}=\set{z_{1},z_{2}}$ and chosen cylindrical ends. The precompact sub-domain $\Sigma(\rho)$ is shown as the shaded region. The bundle $E$ has an equivalence class of unitary trivializations defined on the ends.}
  \label{fig:rho-deep}
\end{figure}

\subsection{Asymptotically Hermitian structures}\label{sec:ahs} Suppose that $(E,J)$ is a complex vector bundle of rank $n$ over $\dot\Sigma$ and $F\subset E|_{\bd\dot\Sigma}$ is a totally real sub-bundle. Similarly to \cite[Section 4.1]{wendl-sft}, we define an \emph{asymptotically Hermitian structure} on $(E,F,J)$ to be an equivalence class of trivializations $\tau$ of $E|_{C}\simeq \R^{2n}$ which identify $J$ with the standard complex structure $J_{0}$ and send $F$ to $\R^{n}$. Two trivializations are equivalent provided the transition map between them is an $s$-independent unitary transformation (i.e.,\ multiplication by a $t$-dependent family $\Omega(t)\in U(n)$). The inverse $X=\tau^{-1}$ of a trivialization will be called an \emph{asymptotic unitary frame}.

\begin{remark}
  There are variants of the notion of asymptotic Hermitian structure; for instance we could change the notion of equivalence to be that the transition between $\tau_{1}$ and $\tau_{2}$ is multiplication by $\Omega(s,t)$ and $\Omega\in W^{k,p}$ for all $k\ge 1$. This variant is weaker than the one defined above, but it is still strong enough to define $W^{k,p}$ spaces.
\end{remark}

\subsection{Sobolev spaces} We recall that the space of sections $W^{k,p}_{\mathrm{loc}}(E)$ is well-defined independently of any choice of auxiliary data on $\dot\Sigma$ for all $k\ge 0,p\ge 1$. These are sections which are of class $W^{k,p}_{\mathrm{loc}}$ in any coordinate chart equipped with a trivialization of $E$.

For $k-2/p>0$, the Sobolev embedding theorem (see \cite[Theorem B.1.11]{mcduffsalamon}) implies that $W^{k,p}_{\mathrm{loc}}(E)$ sections are continuous, and hence we can define $W^{k,p}_{\mathrm{loc}}(E,F)\subset W^{k,p}_{\mathrm{loc}}(E)$ as the sections taking boundary values in $F$.

For $k=1$ and $p\in [1,2]$, we say $\xi\in W^{1,p}_{\mathrm{loc}}(E,F)$ if, for any choice of $\cl{\mathbb{H}}$-valued coordinates equipped with trivializations identifying $E$ with $\R^{2n}$ and $F$ with $\R^{n}$, the doubling\footnote{To make this precise, we double $\xi$ in the sense of distributions.} of $\xi$ (i.e.,\ extension by $\cl{\xi}$ on $-\mathbb{H}$) is still of class $W^{1,p}$. It can be shown that this agrees with the other definition of $W^{1,p}_{\mathrm{loc}}(E,F)$ when $p>2$. See Remark \ref{remark:pg2} for more details.

Using the asymptotic Hermitian structure we can define Sobolev spaces which admit Banach space norms. We say $\xi\in W^{k,p}(E,F)$ if $\xi\in W^{k,p}_{\mathrm{loc}}(E,F)$ and $\tau\circ \xi\in W^{k,p}$ using the standard Euclidean structure on the cylindrical ends (for any asymptotic trivialization $\tau$). To define a Banach space topology on $W^{k,p}(E,F)$, we introduce the norm:
\begin{equation*}
  \norm{\xi}_{\tau,k,p,g,\mu,\nabla}:=\sum_{z\in \Gamma}\sum_{\ell=0}^{k}\sum_{a+b=\ell}\left[\int_{C_{z}}\abs{\bd_{s}^{a}\bd_{t}^{b}(\tau\circ \xi)}^{p}\d s\d t\right]^{1/p}+\norm{u}_{W^{k,p}_{g,\mu,\nabla}(\Sigma(1))}.
\end{equation*}
Here we make an arbitrary choice of metric $g$ on $\dot\Sigma$, and fiber-wise metric $\mu$ and connection $\nabla$ on $E\to\dot\Sigma$. It is straightforward to show that for any other choice of $g,\mu,\nabla$ we obtain an equivalent norm (since $\Sigma(1)$ is precompact). It is also not hard to show that two different choices of $\tau$ give equivalent norms.

The same process defines $W^{k,p}(E)$ for $k\ge 0$. We denote $W^{0,p}(E)=:L^{p}(E)$.

\subsection{Cauchy-Riemann operators with non-degenerate asymptotics}
\label{sec:asymptotics}

A first order partial differential operator $D:\Gamma(E)\to \Gamma(\Lambda^{0,1}\otimes E)$ is called a \emph{Cauchy-Riemann operator} if $$D(f\otimes \xi)=\d f\otimes \xi+(\d f\cdot j)\otimes J\xi+f\cdot D\xi$$ for all real-valued functions $f$ and smooth sections $\xi$. Here $j$ is the complex structure on $\Sigma$ and $J$ is the fiber-wise complex structure on $E$.

We begin with a discussion of the local coordinate representations of Cauchy-Riemann operators. If $z=s+it$ is a holomorphic coordinate, then $\d s-i\d t$ trivializes $\Lambda^{0,1}$. Suppose that $\tau:E\to \mathbb{C}^{n}$ is a complex linear trivialization over $E$. Then $\tau^{-1}(e_{k})=X_{k}$ and $\tau^{-1}(ie_{k})=JX_{k}$ define a local frame for $E$.

Write $\xi=\tau^{-1}(u)=\sum_{k} u_{k}X_{k}=\sum_{k}a_{k}X_{k}+b_{k}JX_{k}$, where $u=a+ib$ is a $\C^{n}$-valued function. We obtain:
\begin{equation*}
  \begin{aligned}
    D(\sum a_{k}X_{k})&=\sum\d a_{k}\otimes X_{k}+(\d a_{k}\cdot j)\otimes JX_{k}+a_{k}\cdot DX_{k}\\
    D(\sum b_{k}JX_{k})&=\sum\d b_{k}\otimes JX_{k}-(\d b_{k}\cdot j)\otimes X_{k}+b_{k}\cdot D(JX_{k}).
  \end{aligned}
\end{equation*}
Hence, using $c\otimes JX_{k}=ic\otimes X_{k}$ (for sections of $\Lambda^{0,1}\otimes E$) we obtain:
\begin{equation*}
  D(\tau^{-1}(u))=\sum (\d u_{k}+i\cdot \d u_{k}\cdot j)\otimes X_{k}+a_{k}\cdot DX_{k}+b_{k}\cdot D(JX_{k}).
\end{equation*}
It is straightforward to compute $\d u_{k}+i\cdot \d u_{k}\cdot j=(\bd_{s}u_{k}+i\bd_{t}u_{k})(\d s-i \d t)$. In particular, we have
\begin{equation*}
  D(\tau^{-1}(u))=\sum_{k=1}^{n} (\bd_{s}u_{k}+i\bd_{t}u_{k})\cdot (\d s-i\d t)\otimes X_{k}.
\end{equation*}
We note that $(\d s-i\d t)\otimes X_{k}$ is a local (complex) frame for $\Lambda^{0,1}\otimes E$. We denote the inverse trivialization by $\tau_{1}$. If conjugate $D$ by the complex trivializations $\tau^{-1}$ and $\tau_{1}^{-1}:w\mapsto w(\d s-i\d t)\otimes X$, we conclude that
\begin{equation}\label{eq:trivialization}
  \tau_{1}\circ D\circ \tau^{-1}=:D_{\tau}(u)=\bd_{s}u+i\bd_{t}u+S(s,t)u,
\end{equation}
where $S(s,t)$ is some smooth family of real linear matrices. Note that $D_{\tau}$ depends both on the holomorphic coordinates used on the base and on the trivialization $\tau$.

Now let $\tau$ be an asymptotic trivialization for $E$. Using the holomorphic coordinate in the ends $C$, we can compute the coordinate representation for $D$ using $\tau$. We say that $D$ has \emph{non-degenerate asymptotics} provided that \eqref{eq:trivialization} satisfies
\begin{equation}\label{eq:asymptotic}
  \sup_{t}\text{$\abs{\bd_{s}^{k}\bd_{t}^{\ell}(S(s,t)-S_{\infty}(t))}\to 0$ as $\abs{s}\to\infty$,}
\end{equation}
for all $k,\ell\in \mathbb{N}$, for some smooth family of \emph{symmetric} matrices $S_{\infty}$, and the corresponding \emph{asymptotic operator}
\begin{equation}\label{eq:asymptotic}
  A_{\tau}=-i\bd_{t}-S_{\infty}(t):\left\{
    \begin{aligned}
      C^{\infty}([0,1],\mathbb{C}^{n},\mathbb{R}^{n})\to C^{\infty}([0,1],\C^{n})\\
      C^{\infty}(\R/\Z,\mathbb{C}^{n})\to C^{\infty}(\R/\Z,\C^{n})
    \end{aligned}
  \right.
\end{equation}
is injective. In this case we say that $A_{\tau}$ is \emph{non-degenerate}. The two cases in \eqref{eq:asymptotic} are whether the cylindrical end corresponds to a boundary or interior puncture. The notation $C^{\infty}([0,1],\C^{n},\R^{n})$ means ``smooth $\C^{n}$-valued functions which are $\R^{n}$-valued at $t=0,1$.''

Since the transition function between two asymptotic trivializations is a smooth $s$-independent family of unitary matrices, the condition that $D$ has non-degenerate asymptotics is independent of the chosen $\tau$. Indeed, if $\tau_{1}\tau_{2}^{-1}=\Omega(t)$, then
\begin{equation*}
  A_{\tau_{2}}=\Omega(t)^{-1}A_{\tau_{1}}\Omega(t)=-i\bd_{t}-i\Omega(t)^{-1}\Omega'(t)-\Omega(t)^{-1}S_{\infty}(t)\Omega(t).
\end{equation*}
A straightforward computation shows that $i\Omega(t)^{-1}\Omega'(t)-\Omega(t)^{-1}S_{\infty}(t)\Omega(t)$ is still symmetric. 

\subsection{Some facts about non-degenerate asymptotic operators}
\label{sec:asymptotic-ops}

In this section we fix a non-degenerate asymptotic operator $A=-i\bd_{t}-S(t)$ on $[0,1]$. The analogous results with $[0,1]$ replaced by $\R/\Z$ are left to the reader.

\begin{prop}\label{lemma:nondegenerate-A}
  The map $A:C^{\infty}([0,1],\C^{n},\R^{n})\to C^{\infty}([0,1],\C^{n})$ extends to a self-adjoint isomorphism $$A:W^{1,2}([0,1],\C^{n},\R^{n})\to L^{2}([0,1],\C^{n}).$$
  By self-adjoint we mean that $\ip{Av,w}=\ip{v,Aw}$ for all $v,w\in W^{1,2}([0,1],\C^{n},\R^{n})$.
\end{prop}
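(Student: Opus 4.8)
The statement has two independent parts: the integration-by-parts identity (``self-adjointness''), and the claim that $A$ is a continuous bijection. For the first part, note that $A$ is a bounded operator $W^{1,2}([0,1],\C^{n},\R^{n})\to L^{2}([0,1],\C^{n})$ (the top-order piece $-i\bd_{t}$ is bounded $W^{1,2}\to L^{2}$, and multiplication by the smooth family $S(t)$ is bounded $L^{2}\to L^{2}$), and that $C^{\infty}([0,1],\C^{n},\R^{n})$ is dense in $W^{1,2}([0,1],\C^{n},\R^{n})$; hence it suffices to check $\ip{Av,w}=\ip{v,Aw}$ for smooth $v,w$. Here $\ip{\cdot,\cdot}$ must be the real $L^{2}$ pairing $\int_{0}^{1}\mathrm{Re}\ip{v(t),w(t)}_{\C^{n}}\,\d t$, since $S(t)$ is only assumed symmetric as an $\R$-linear map. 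Integrating by parts in the $-i\bd_{t}$ term produces the boundary contribution $\mathrm{Re}\ip{-iv(1),w(1)}_{\C^{n}}-\mathrm{Re}\ip{-iv(0),w(0)}_{\C^{n}}$, which vanishes because $\R^{n}\subset\C^{n}$ is Lagrangian: $\mathrm{Re}\ip{ix,y}_{\C^{n}}=0$ for $x,y\in\R^{n}$. The zeroth-order term $-S(t)$ moves across the pairing because each $S(t)$ is symmetric. This gives the identity on smooth sections, hence on all of $W^{1,2}$.

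For surjectivity and injectivity I would solve the two-point boundary value problem $Av=g$ by hand. Fix $g\in L^{2}([0,1],\C^{n})$; rewriting $-i\bd_{t}v-S(t)v=g$ as $\bd_{t}v=i\big(S(t)v+g\big)$, this is an inhomogeneous linear first-order ODE with the constraint $v(0),v(1)\in\R^{n}$. Let $\Phi:[0,1]\to\mathrm{GL}(2n,\R)$ be the fundamental solution of $\bd_{t}\Phi=iS(t)\Phi$, $\Phi(0)=I$ (a smooth path of real-linear automorphisms of $\C^{n}\cong\R^{2n}$). By variation of constants, the solutions of $\bd_{t}v=i(S(t)v+g)$ with $v(0)=x$ are exactly $v(t)=\Phi(t)\big(x+i\int_{0}^{t}\Phi(s)^{-1}g(s)\,\d s\big)$; since $g\in L^{2}([0,1])\subset L^{1}([0,1])$, any such $v$ automatically lies in $W^{1,2}$ (the integral term is absolutely continuous with derivative in $L^{2}$, and $\Phi$ is smooth). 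The condition $v(0)\in\R^{n}$ forces $x\in\R^{n}$, and then $v(1)\in\R^{n}$ becomes $\Phi(1)(x+w)\in\R^{n}$, where $w:=i\int_{0}^{1}\Phi(s)^{-1}g(s)\,\d s\in\C^{n}$ depends only on $g$. Letting $\pi:\C^{n}\to\C^{n}/\R^{n}$ be the quotient projection, this is the affine equation $L(x)=-\pi\Phi(1)w$ for the $\R$-linear map $L:\R^{n}\to\C^{n}/\R^{n}$, $L(x):=\pi\Phi(1)x$, between real vector spaces of equal dimension $n$.

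The crux is that $L$ is injective. If $x\in\R^{n}$ with $\Phi(1)x\in\R^{n}$, then $v(t):=\Phi(t)x$ is a $W^{1,2}$ solution of $Av=0$ whose boundary values lie in $\R^{n}$; the ODE $\bd_{t}v=iS(t)v$ with a standard bootstrap shows $v$ is smooth, so $v\in C^{\infty}([0,1],\C^{n},\R^{n})$ with $Av=0$, whence $v\equiv 0$ by the assumed non-degeneracy of $A$, and therefore $x=0$. Thus $L$ is an isomorphism, the equation for $x$ has a unique solution, and $Av=g$ has a unique $v\in W^{1,2}([0,1],\C^{n},\R^{n})$ for every $g$; taking $g=0$ gives $\ker A=\set{0}$. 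Hence $A$ is a continuous bijection of Banach spaces, and its inverse is continuous by the bounded inverse theorem. The only non-formal ingredients are this reduction of the two-point boundary value problem to the finite-dimensional injectivity of $x\mapsto\Phi(1)x$ modulo $\R^{n}$, and the regularity step needed to apply the hypothesis (which is phrased for smooth sections) to a $W^{1,2}$ solution of $Av=0$; in one base dimension the latter is simply that an $L^{2}$ function with $L^{2}$ weak derivative solving a smooth linear ODE is smooth, together with $W^{1,2}([0,1])\hookrightarrow C^{0}$ so that the boundary conditions make sense.
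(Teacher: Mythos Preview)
Your proof is correct and follows essentially the same route as the paper: integration by parts for self-adjointness, then explicit solution of the first-order ODE via the fundamental solution, reducing the two-point boundary value problem to the injectivity of $x\mapsto \Phi(1)x$ modulo $\R^{n}$, which is deduced from non-degeneracy. Your version is slightly tidier in two respects: you use the genuine fundamental solution $\Phi$ rather than the paper's informal $\exp(\Sigma(t))$ notation, and by noting $L^{2}\subset L^{1}$ on $[0,1]$ you solve directly for $L^{2}$ data, bypassing the paper's separate closed-range argument.
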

See \cite[Corollary 3.14]{wendl-sft} for an alternative approach, yielding a proof in the $\R/\Z$ case.
\begin{proof}  
  It is clear that $A$ extends to a bounded linear operator between the advertised Banach spaces. Since $C^{\infty}([0,1],\C^{n},\R^{n})$ is dense in $W^{1,2}([0,1],\C^{n},\R^{n})$ it suffices to prove the self-adjointness for smooth functions $u,v$. This follows from a straightforward integration-by-parts computation, using the fact that the matrix $S(t)$ is symmetric, and $i$ is anti-symmetric. We leave this computation to the reader. Note that it is crucial that both $u,v$ take boundary values in $\R^{n}$, otherwise the integration by parts will fail.

  It suffices to prove that $A$ is a bijection, since continuous bijections between Banach spaces are isomorphisms. Observe that any element in the kernel of $A$ must be smooth (by 1-dimensional elliptic regularity). Since we assume that $u\mapsto Au$ is injective for smooth $u$, we conclude that $A$ is injective on $W^{1,2}$.
  
  Now we will prove that $A$ is surjective. Fix a smooth $\eta$, and we attempt to solve $A(\xi)=\eta$ for a smooth $\xi$:
  \begin{equation}\label{eq:computation}
    \begin{aligned}
      &\hphantom{\iff}\ i\pd{\xi}{t}+S(t)\xi=-\eta(t)
      \iff \pd{\xi}{t}-iS(t)\xi=i\eta(t).\\
      &\iff \pd{}{t}(\exp(\Sigma(t))\xi(t))=\exp(\Sigma(t))i\eta(t),\\
      &\iff \xi(t)=\exp(-\Sigma(t))\xi(0)+\exp(-\Sigma(t))\int_{0}^{t}\exp(\Sigma(t'))i\eta(t')\d t'.
    \end{aligned}
  \end{equation}
  where $\Sigma'(t)=-iS(t)$ and $\Sigma(0)=0$. This shows that we can solve $A(\xi)=\eta$ for many different choices of $\xi$, namely there is an $\R^{2n}$ dimensional family of solutions corresponding to the choice of $\xi(0)$. We claim that (exactly) one of these solutions will satisfy $\xi(0),\xi(1)\in \R^{n}$. To see why, consider the affine map:
  \begin{equation*}
    F:\xi(0)\in \R^{n}\mapsto \exp(-\Sigma(1))\xi(0)+\exp(-\Sigma(1))\int_{0}^{1}\exp(\Sigma(t'))i\eta(t')\d t'\in \R^{2n}.
  \end{equation*}
  This map parameterizes an $n$-dimensional affine subspace of $\R^{2n}$. Note that the associated linear subspace $\exp(-\Sigma(1))\R^{n}$ is transverse to $\R^{n}$ (otherwise we could find a vector $v\in \R^{n-1}$ so $\exp(-\Sigma(1))v\in \R^{n}$, and the above computation with $\eta=0$ would imply $\xi(t)=\exp(-\Sigma(t))v$ lies in the kernel of $A$).

  Therefore $F(\R^{n})$ intersects $\R^{n}$ in a unique point $F(\xi(0))=\xi(1)$. Thus \eqref{eq:computation} with this special $\xi(0)$ shows that $A$ is surjective onto the smooth elements $\eta$. 

  To show that $A$ is surjective in general, it suffices to prove that the image of $A$ is closed. This follows from the estimate
  \begin{equation*}
    \norm{\xi}_{W^{1,2}}\le C(\norm{A(\xi)}_{L^{2}}+\norm{\xi}_{L^{2}})
  \end{equation*}
  and the fact that $W^{1,2}\to L^{2}$ is a compact inclusion. This completes the proof of the lemma.
\end{proof}

\begin{prop}\label{prop:spectral-prop}
  There exists an orthonormal basis of $L^{2}([0,1],\R^{n})$ consisting of (smooth) eigenvectors of $A$. The union of all the eigenvalues is a discrete set $\Lambda\subset \R$ disjoint from $0$.
\end{prop}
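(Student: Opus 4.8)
The plan is to realize $A$ as an unbounded self-adjoint operator on the Hilbert space $H=L^2([0,1],\C^n)$ with dense domain $\mathcal D=W^{1,2}([0,1],\C^n,\R^n)$, and then apply the spectral theorem for operators with compact resolvent. By Proposition \ref{lemma:nondegenerate-A} we already know $A:\mathcal D\to H$ is a self-adjoint isomorphism; in particular $0\notin\Lambda$, which gives the last assertion of the statement once we know the spectrum is a discrete set of eigenvalues. First I would observe that $A^{-1}:H\to\mathcal D\hookrightarrow H$ is a bounded operator, and since the inclusion $W^{1,2}([0,1],\C^n,\R^n)\hookrightarrow L^2([0,1],\C^n)$ is compact (Rellich, using that $[0,1]$ is a bounded interval), $A^{-1}$ is a compact operator on $H$. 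Moreover $A^{-1}$ is self-adjoint: for $\eta,\zeta\in H$ write $\eta=A\xi$, $\zeta=A\chi$ with $\xi,\chi\in\mathcal D$, and then $\ip{A^{-1}\eta,\zeta}=\ip{\xi,A\chi}=\ip{A\xi,\chi}=\ip{\eta,A^{-1}\zeta}$ using the self-adjointness of $A$ from Proposition \ref{lemma:nondegenerate-A}.

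Next I would invoke the spectral theorem for compact self-adjoint operators on a separable Hilbert space: $H$ admits an orthonormal basis $\{e_m\}$ of eigenvectors of $A^{-1}$, with real eigenvalues $\lambda_m\to 0$ and $\lambda_m\neq 0$ (the latter because $A^{-1}$ is injective, being the inverse of an isomorphism). Each such $e_m$ then lies in $\mathcal D$ and satisfies $A e_m=\lambda_m^{-1}e_m$, so $\{e_m\}$ is an orthonormal eigenbasis for $A$ with eigenvalues $\mu_m:=\lambda_m^{-1}\in\R\setminus\{0\}$, and $|\mu_m|\to\infty$. This shows the eigenvalue set $\Lambda=\{\mu_m\}$ is a subset of $\R\setminus\{0\}$ with no finite accumulation point, hence discrete, and that its complement in $\R$ — in fact the entire spectrum of $A$ is $\Lambda$, since a self-adjoint operator with compact resolvent has pure point spectrum. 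Finally, one-dimensional elliptic regularity (as already used in the proof of Proposition \ref{lemma:nondegenerate-A}) shows any solution of $A e=\mu e$ is smooth, since $e=-i\partial_t$-equation rearranges to $\partial_t e = i(S(t)+\mu)e$, a linear ODE with smooth coefficients; hence the eigenvectors are smooth.

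The one subtlety worth spelling out — and the step I expect to be the only place requiring care — is the claim that the orthonormal basis can be taken in $L^2([0,1],\R^n)$ rather than merely $L^2([0,1],\C^n)$. Here the point is that $A$ is a \emph{real}-linear operator (the statement of the proposition and the displayed formula $A=-i\partial_t-S(t)$ should be read with $\C^n\cong\R^{2n}$): the symmetric matrix $S(t)$ is real and $-i\partial_t$ is real-linear, and the boundary condition $\R^n$ at $t=0,1$ is a real subspace. Thus it is cleaner to work from the start on the real Hilbert space $L^2([0,1],\R^{2n})$ with the real inner product $\ip{v,w}=\int_0^1\operatorname{Re}\langle v(t),w(t)\rangle\,dt$, apply the real spectral theorem for compact self-adjoint operators there, and obtain a real orthonormal eigenbasis. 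Under the identification $\R^{2n}\cong\C^n$ this is exactly a basis of ``$L^2([0,1],\R^n)$'' in the paper's notation. I would also remark in passing that, because $A^{-1}$ is compact and injective, $\Lambda$ is infinite (else $H$ would be finite-dimensional), which is implicitly needed for $|\mu_m|\to\infty$ to be meaningful. The analogous $\R/\Z$ statement is identical, replacing the interval by the circle and dropping the boundary condition; the inclusion $W^{1,2}(\R/\Z)\hookrightarrow L^2(\R/\Z)$ is again compact.
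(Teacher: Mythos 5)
Your proposal is correct and follows essentially the same route as the paper: pass to the compact self-adjoint resolvent $A^{-1}:L^{2}\to W^{1,2}\hookrightarrow L^{2}$ and apply the spectral theorem for compact self-adjoint operators, with $0\notin\Lambda$ coming from Proposition \ref{lemma:nondegenerate-A}. The extra points you spell out (self-adjointness of $A^{-1}$, smoothness of eigenvectors via the ODE, and working over the real Hilbert space) are details the paper leaves implicit, and they are handled correctly.
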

\begin{proof}
  The key observation is that the following composition is a compact self-adjoint operator (called the \emph{resolvent} of $A$):
  \begin{equation*}
    \begin{tikzcd}
      {L^{2}}\arrow[r,"{A^{-1}}"] &{W^{1,2}}\arrow[r,"{\subset}"] &{L^{2}}.
    \end{tikzcd}
  \end{equation*}
  This is because $W^{1,2}\subset L^{2}$ is a compact inclusion (Proof: if $\bd_{t}f_{n}$ is bounded in $L^{2}$ then $\abs{f_{n}(x)-f_{n}(x+t)}\le ct^{1/2}$, and hence $f_{n}$ is equicontinuous). Self-adjoint compact operators have orthonormal eigenbases whose spectrum accumulates only at~$0$ (see \cite[Theorem 3.2.3]{barry-simon-4}). The desired result follows.
\end{proof}

\subsection{Formal adjoints}
\label{sec:formal-adjoints}
The purpose of this section is to define the \emph{formal adjoint} of a Cauchy-Riemann operator. Formal adjoints will play an important role in establishing the Fredholm property. A good reference in the case when $\bd\Sigma=\emptyset$ is \cite[Section 4.7]{wendl-sft}.

Let $D$ be a Cauchy-Riemann operator for the data $(\Sigma,\bd\Sigma,\Gamma_{\pm},C,E,F,[\tau])$ as explained above. Fix a $j$-invariant Riemannian metric $g$ on $T\Sigma$ which is the Euclidean metric in the cylindrical ends. The corresponding volume form is given by $\d\mathrm{vol}=g(j-,-)$.

Pick a Hermitian structure on $(E,F)$ which agrees with the asymptotically Hermitian structure in the cylindrical ends $C$. This means that $E$ is equipped with a fiber-wise metric $g$ which is $J$-invariant and $F$ is $g$-orthogonal to $JF$. In other words, $F$ is Lagrangian for the symplectic form $g(J-,-)$.

We define $\C$-valued Hermitian metrics on $E$ (and $T\dot\Sigma$) by the formulas: $$\mu(X,Y)=g(X,Y)+ig(JX,Y).$$ By our conventions, $\mu(X,JY)=i\mu(X,Y)$ and $\mu(JX,Y)=-i\mu(X,Y)$.

The bundle isomorphism $Y\mapsto \mu(-,Y)$ identifies $T\dot\Sigma$ with $\Lambda^{0,1}$. We use this to push forward a Hermitian metric onto $\Lambda^{0,1}$. 

Given two complex vector bundles $E_{1},E_{2}$ with Hermitian metrics $\mu_{1},\mu_{2}$ we can endow a Hermitian metric $\mu_{1}\otimes \mu_{2}$ on $E_{1}\otimes E_{2}$ by the formula:
\begin{equation*}
  \mu_{1}\otimes \mu_{2} (X_{1}\otimes X_{2},Y_{1}\otimes Y_{2})=  \mu_{1}(X_{1},Y_{1})\mu_{2}(X_{2},Y_{2}).
\end{equation*}
Via this construction, the bundles $E$ and $\Lambda^{0,1}\otimes E$ are both equipped with Hermitian metrics.

With these preliminaries out of the way, we say that $D^{*}:\Gamma(\Lambda^{0,1}\otimes E)\to \Gamma(E)$ is a \emph{formal adjoint} of $D$ if
\begin{equation}\label{eq:formal-adjoint}
  \mathrm{Re}\int_{\dot\Sigma}\mu(D(\xi),\eta)\,\d \mathrm{vol}=  \mathrm{Re}\int_{\dot\Sigma}\mu(\xi,D^{*}(\eta))\,\d \mathrm{vol},
\end{equation}
for all $\xi\in \Gamma_{0}(E,F)$ and $\eta\in \Gamma_{0}(\Lambda^{0,1}\otimes E,F^{*})$. Here $F^{*}\subset \Lambda^{0,1}\otimes E$ is the totally-real sub-bundle of maps which map $T\bd\Sigma$ into $F$, and $\Gamma_{0}(E,F)$ is the set of smooth compactly supported sections of $E$ which take boundary values in $F$.

Since $\mathrm{Re}(\mu)$ is a Riemannian metric, formal adjoints are necessarily unique. We will derive a formula for the formal adjoint in local trivializations below. By patching together the local descriptions we deduce that formal adjoints always exist.

Let $z=s+it$ be a holomorphic coordinate and $\tau:E\to \C^{n}$ a local unitary trivialization of $E$ defined on the domain of $z$. Let $X_{1},\cdots,X_{n}$ be the unitary frame of $E$ induced by $\tau$. Recall that we have an associated trivialization $\tau_{1}:\Lambda^{0,1}\otimes E\to \C^{n}$ which satisfies
\begin{equation*}
  \tau_{1}^{-1}(w)=\sum_{k}w_{k}(\d s-i\d t)\otimes X_{k}.
\end{equation*}
The equation \eqref{eq:trivialization} shows $\tau_{1}\circ D\circ \tau^{-1}(u)=\bd_{s}u+i\bd_{t}u+S(s,t)u$.

To incorporate the boundary conditions, we require that $z$ takes values in $\R\times [0,1]$, $z(\bd \dot\Sigma)\subset \R\times \set{0,1}$, and $\tau$ identifies $F$ with $\R^{n}$. We do not require that $z$ is surjective, e.g.,\ it could take values in $D(1)\cap \cl{\mathbb{H}}$, or $i/2+D(1/2)$.

\begin{lemma}
  If $D^{*}$ is a formal adjoint for $D$, then for sections $w$ with compact support in the above coordinate chart we have
  \begin{equation}\label{eq:formal-formula}
    \abs{\bd_{s}}^{2}\tau \circ D^{*}\circ \tau_{1}^{-1}(w)=-\bd_{s}w+i\bd_{t}w+S(s,t)^{T}w,
  \end{equation}
  where $\abs{\bd_{s}}^{2}=\mu(\bd_{s},\bd_{s})$ and $S(s,t)^{T}$ is the transpose matrix.
\end{lemma}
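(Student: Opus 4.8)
The plan is to unwind the defining identity \eqref{eq:formal-adjoint} directly in the given chart. Fix $w$ with compact support in the chart, set $\eta=\tau_{1}^{-1}(w)$, and write $D^{*}(\eta)=\tau^{-1}(v)$ over the chart for some function $v$ (just expanding $D^{*}(\eta)$ in the frame $X_{1},\dots,X_{n}$); the goal is to identify $v$. I would feed into \eqref{eq:formal-adjoint} test sections $\xi=\tau^{-1}(u)$ with $u$ compactly supported in the chart and $\R^{n}$-valued on $\set{t=0,1}$ (so that $\xi\in\Gamma_{0}(E,F)$), after which both sides localize to the chart and can be computed in coordinates.

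First I would record two metric computations, which together produce the factor $\abs{\bd_{s}}^{2}$ in \eqref{eq:formal-formula}. Since $g$ is $j$-invariant, $g(\bd_{s},\bd_{s})=g(\bd_{t},\bd_{t})$ and $g(\bd_{s},\bd_{t})=0$, so $\d\mathrm{vol}=g(j-,-)$ gives $\int f\,\d\mathrm{vol}=\int f\abs{\bd_{s}}^{2}\,\d s\,\d t$. For the Hermitian metric on $\Lambda^{0,1}\otimes E$: since $X_{1},\dots,X_{n}$ is unitary, $\mu\big(\tau_{1}^{-1}(w_{1}),\tau_{1}^{-1}(w_{2})\big)=\mu_{\Lambda^{0,1}}(\d s-i\,\d t,\d s-i\,\d t)\,\ip{w_{1},w_{2}}$ with $\ip{\cdot,\cdot}$ the standard Hermitian pairing on $\C^{n}$; and since the metric on $\Lambda^{0,1}$ is pushed forward from $T\dot\Sigma$ along $Y\mapsto\mu(-,Y)$, and $\d s-i\,\d t=\mu(-,\bd_{s}/\abs{\bd_{s}}^{2})$, one gets $\mu_{\Lambda^{0,1}}(\d s-i\,\d t,\d s-i\,\d t)=\abs{\bd_{s}}^{-2}$. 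Hence, using \eqref{eq:trivialization},
\[
  \mu(D\xi,\eta)\,\d\mathrm{vol}=\ip{\bd_{s}u+i\bd_{t}u+Su,\,w}\,\d s\,\d t,\qquad \mu(\xi,D^{*}\eta)\,\d\mathrm{vol}=\abs{\bd_{s}}^{2}\ip{u,v}\,\d s\,\d t,
\]
so the powers of $\abs{\bd_{s}}$ cancel on the left but survive on the right.

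Next I would integrate by parts on the left. The $\bd_{s}$ term has no boundary contribution (compact support, $\bd\dot\Sigma\subset\set{t=0,1}$) and gives $\ip{u,-\bd_{s}w}$. The $i\bd_{t}$ term gives an interior term $\ip{u,i\bd_{t}w}$ plus a boundary term $\int_{\R}\big[\mathrm{Re}\,\ip{iu,w}\big]_{t=0}^{t=1}\,\d s$; this is the one place the boundary conditions matter. On $\set{t=0,1}$, $u$ is $\R^{n}$-valued since $\tau(F)=\R^{n}$, and $w$ is $\R^{n}$-valued because $\eta$ takes boundary values in $F^{*}$ --- explicitly, $\tau_{1}^{-1}(w)$ applied to the boundary tangent vector $\bd_{s}$ equals $\sum_{k}w_{k}X_{k}$, which lies in $F$ exactly when $w\in\R^{n}$. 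Thus $\ip{iu,w}$ is purely imaginary on the boundary and the boundary term vanishes. Finally $S$ is real, so $\mathrm{Re}\,\ip{Su,w}=\mathrm{Re}\,\ip{u,S^{T}w}$. Collecting everything, \eqref{eq:formal-adjoint} becomes
\[
  \mathrm{Re}\int\ip{u,\,-\bd_{s}w+i\bd_{t}w+S^{T}w}\,\d s\,\d t=\mathrm{Re}\int\ip{u,\,\abs{\bd_{s}}^{2}v}\,\d s\,\d t
\]
for all admissible $u$. Testing against $u=\varphi e_{k}$ and $u=i\varphi e_{k}$ for real bump functions $\varphi$ supported away from $\set{t=0,1}$ forces the two integrands to agree in the interior, and by continuity they agree up to the boundary; this is exactly \eqref{eq:formal-formula}.

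The argument is mostly bookkeeping; the points needing care are (i) the conjugate-linearity conventions for $\mu$ and the identity $\d s-i\,\d t=\mu(-,\bd_{s}/\abs{\bd_{s}}^{2})$, which is what makes the factor $\abs{\bd_{s}}^{2}$ appear (and only on the right-hand side), and (ii) the vanishing of the $t$-boundary term. Point (ii) is the conceptually meaningful one --- it is why the totally real conditions $F$ and $F^{*}$ are paired as they are --- and it is the same integration-by-parts identity already used to prove self-adjointness in Proposition \ref{lemma:nondegenerate-A}.
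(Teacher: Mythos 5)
Your proposal is correct and follows essentially the same route as the paper: both unwind the defining identity \eqref{eq:formal-adjoint} in the chart, establish the same metric identities $\d\mathrm{vol}=\abs{\bd_{s}}^{2}\d s\,\d t$ and $\abs{\d s-i\d t}^{2}=\abs{\bd_{s}}^{-2}$ (whence the surviving factor of $\abs{\bd_{s}}^{2}$), and integrate by parts, with the $t$-boundary term killed by the reality of $u$ and $w$ along $\set{t=0,1}$. Your extra care in justifying why $\eta\in\Gamma_{0}(\Lambda^{0,1}\otimes E,F^{*})$ forces $w$ to be $\R^{n}$-valued on the boundary, and the final localization step testing against $u=\varphi e_{k}$ and $i\varphi e_{k}$, are welcome elaborations but not a different argument.
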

\begin{proof}
  The first thing we do is derive formulas for the Hermitian metrics $\mu$. Because $\tau$ is a unitary transformation, we have 
  \begin{equation*}
    \mu(\tau^{-1}(u),\tau^{-1}(v))=\sum_{k}\cl{u_{k}}v_{k}=:\mu_{0}(u,v).
  \end{equation*}
  Unfortunately, $\tau_{1}$ is not a unitary transformation because $\d s-i\d t$ is not a unitary frame of $\Lambda^{0,1}$. We easily compute $\mu(-,\bd s)=\abs{\bd_{s}}^{2}(\d s-i\d t)$ (by inserting $\bd_{s},\bd_{t}$ into both sides). Since the Hermitian metric on $\Lambda^{0,1}$ is pushed forward from $T\Sigma$ we have
  \begin{equation*}
    \abs{\bd_{s}}^{4}\mu(\d s-i\d t,\d s-i\d t)=\mu(\bd_{s},\bd_{s})=\abs{\bd_{s}}^{2}\implies \abs{\d s-i\d t}^{2}=\abs{\bd_{s}}^{-2}.
  \end{equation*}
  We can therefore compute
  \begin{equation*}
    \mu(\tau_{1}^{-1}(u),\tau_{1}^{-1}(v))=\sum_{k,\ell}\mu(u_{k}(\d s-i\d t)\otimes X_{k},v_{\ell}(\d s-i\d t)\otimes X_{\ell})=\abs{\bd_{s}}^{-2}\mu_{0}(u,v).
  \end{equation*}
  It is also easy to compute that $\d\mathrm{vol}=\abs{\bd_{s}}^{2}\d s\d t$.

  Now let $w$ and $u$ be $\C^{n}$ valued functions which takes values in $\R^{n}$ on $\R\times \set{0,1}$. We compute
  \begin{equation*}
    \mu(\tau^{-1}(u),D^{*}\circ \tau_{1}^{-1}(w))=\mu_{0}(u,\tau\circ D^{*}\circ \tau_{1}^{-1}(w)).
  \end{equation*}
  On the other hand, we have
  \begin{equation*}
    \mu(D\circ \tau^{-1}(u),\tau_{1}^{-1}(w))=\abs{\bd_{s}}^{-2}\mu_{0}(\tau_{1}\circ D\circ \tau^{-1}(u),w).
  \end{equation*}
  Since real linear combinations of $X_{k}$ lie in $F$, we are allowed to apply the formal adjoint property with $\xi=\tau^{-1}\circ u$ and $\eta=\tau_{1}^{-1}\circ w$:
  \begin{equation*}
    \mathrm{Re}\int \mu(D\circ \tau^{-1}(u),\tau_{1}^{-1}(w))\d \mathrm{vol}=\mathrm{Re}\int \mu(\tau^{-1}(u),D^{*}\circ \tau_{1}^{-1}(w))\d
    \mathrm{vol}.
  \end{equation*}
  This implies that:
  \begin{equation*}
    \mathrm{Re}\int \mu_{0}(\tau_{1}\circ D\circ \tau^{-1}(u),w)\d s\wedge\d t=\mathrm{Re}\int\mu_{0}(u,\tau\circ D^{*}\circ \tau_{1}^{-1}(w))\abs{\bd_{s}}^{2}\d s\wedge\d t.
  \end{equation*}
  In particular, $\abs{\bd_{s}}^{2}\tau\circ D^{*}\circ \tau_{1}^{-1}$ is the formal adjoint of $D_{\tau}:=\tau_{1}\circ D\circ \tau^{-1}$ with respect to the standard metric $\mu_{0}$ and volume form $\d s\wedge \d t$. Equation \eqref{eq:trivialization} gives a formula for $D_{\tau}$ and so we can explicitly compute its adjoint:
  \begin{equation*}
    \mathrm{Re}\int \mu_{0}(\bd_{s}u+i\bd_{t}u+S(s,t)u,w)\d s\d t=\mathrm{Re}\int \mu_{0}(u_{k},-\bd_{s}w+i\bd_{t}u+S(s,t)^{T}w)\d s\d t.
  \end{equation*}
  The boundary terms in the integration by parts are given by
  \begin{equation*}
    \mathrm{Re}\int_{\R\times \set{0,1}} \mu_{0}(iu,w)\d s\d t=0.
  \end{equation*}
  It follows that $\abs{\bd_{s}}^{2}\tau\circ D^{*}\circ \tau_{1}^{-1}=-\bd_{s}+i\bd_{t}+S(s,t)^{T}$ as desired.
\end{proof}

As a consequence, if $s+it$ is the coordinate system in a cylindrical end $C_{z}$ and $\tau$ is an asymptotic trivialization, then
\begin{equation*}
  \tau\circ D^{*}\circ \tau_{1}^{-1}=-\bd_{s}+i\bd_{t}+S(s,t)^{T}\to -\bd_{s}-A\text{ as $s\to\infty$},
\end{equation*}
where $A_{\tau}w=-i\bd_{t}w-S_{\infty}(t)w$ is the asymptotic operator for $D_{\tau}$ in the end $C_{z}$.


\section{Regularity and the Fredholm property}
The references for this section are \cite[Section 2.3]{salamon1997}, \cite[Chapter 4]{wendl-sft}, \cite[Chapter 3]{schwarz-diss}, and \cite[Appendix B]{mcduffsalamon} (for the local $L^{p}$ elliptic estimates).

\subsection{Local elliptic estimates}
Our first result is the following local elliptic estimate for $u\mapsto \bd_{s}u+i\bd_{t}u$. 
\begin{theorem}\label{theorem:local-reg}
  Fix $r<1$ and $q>1$. There is a constant $c_{q,r}$ so that for all smooth maps $u:\cl{D}(1)\cap \cl{\mathbb{H}}\to \C^{n}$ satisfying $u(D(1)\cap \R)\subset \R^{n}$ we have
  \begin{equation*}
    \int_{D(r)\cap \cl{\mathbb{H}}}\abs{u}^{q}+\abs{\bd_{x}u}^{q}+\abs{\bd_{y}u}^{q}\d x\d y\le c_{q,r}\int_{D(1)\cap \cl{\mathbb{H}}}\abs{u}^{q}+\abs{\bd_{x}u+i\bd_{y}u}^{q}\d x \d y.
  \end{equation*}
\end{theorem}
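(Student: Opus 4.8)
The statement to prove is a local $L^q$ elliptic estimate for the Cauchy–Riemann operator $\bar\partial = \partial_x + i\partial_y$ on the half-disk with totally real (in fact real, $\mathbb{R}^n$) boundary condition. Let me sketch how I'd prove it.

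The standard trick for boundary estimates with totally real conditions is doubling (Schwarz reflection). Given $u:\overline{D}(1)\cap\overline{\mathbb{H}}\to\mathbb{C}^n$ with $u$ real on $D(1)\cap\mathbb{R}$, define $\tilde u$ on $D(1)$ by $\tilde u(z) = \overline{u(\bar z)}$ for $z$ in the lower half-disk. Because $u$ is real on the real axis, $\tilde u$ is continuous across $\mathbb{R}$; moreover, if $u$ is smooth up to the boundary, one checks (via Morera, or weak derivatives) that $\tilde u \in W^{1,q}$ on the full disk $D(1)$ — the key computation is that $\overline{\partial}$ intertwines with reflection: $(\partial_x + i\partial_y)\tilde u(z) = \overline{(\partial_x + i\partial_y)u(\bar z)}$ (the $i\partial_y$ picks up two sign changes — one from differentiating the $\bar z$ argument, one from the outer conjugation — so it's preserved, while $\partial_x$ is obviously preserved). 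Actually I need to be careful: $\tilde u(z) = \overline{u(\bar z)}$, so $\partial_x \tilde u(z) = \overline{(\partial_x u)(\bar z)}$ and $\partial_y \tilde u(z) = \overline{-(\partial_y u)(\bar z)} = -\overline{(\partial_y u)(\bar z)}$, hence $(\partial_x + i\partial_y)\tilde u(z) = \overline{(\partial_x u)(\bar z)} - i\overline{(\partial_y u)(\bar z)} = \overline{(\partial_x u - i\partial_y u)(\bar z)}$. Hmm, that's the conjugate operator. Let me reconsider — perhaps use $\tilde u(z) = \overline{u(\bar z)}$ but then $\overline{\partial} u$ reflects to $\partial u$. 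That's fine for the estimate purpose: the point is that if $(\partial_x + i\partial_y)u \in L^q$ on the half-disk, then $(\partial_x - i\partial_y)\tilde u \in L^q$ on the lower half-disk with controlled norm, and one can still conclude $\tilde u \in W^{1,q}_{loc}$ — but then I'd want to apply the interior estimate for $\overline{\partial}$ to $u$ on the upper part and for $\partial$ on the lower part, glued. Alternatively, reflect so the operator is preserved: define $\tilde u(z) = \overline{u(\bar z)}$ — no. Let me just say: the reflection must be chosen (and it exists by a standard check) so that $\tilde u$ lies in $W^{1,q}(D(1))$ with $\|\tilde u\|_{W^{1,q}(D(1))} \le C\|u\|_{W^{1,q}(D(1)\cap\overline{\mathbb{H}})}$ and $\overline{\partial}\tilde u$ (in the distributional sense on all of $D(1)$) equals the reflection of $\overline{\partial} u$, so $\|\overline{\partial}\tilde u\|_{L^q(D(1))} \le C\|\overline{\partial}u\|_{L^q(D(1)\cap\overline{\mathbb{H}})}$. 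The crucial analytic input here is that the reflected function has no singular part of its distributional derivative along the real axis, which holds precisely because $u$ matches its reflection on $\mathbb{R}$ (value, not derivative — and that's enough for $W^{1,q}$).

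Once I have $\tilde u$ on the full disk $D(1)$, I invoke the interior $L^q$ elliptic estimate for $\overline\partial = \partial_x + i\partial_y$ (available from e.g. \cite[Appendix B]{mcduffsalamon}, cited in the section preamble): there's $c_{q,r}$ with $\|\tilde u\|_{W^{1,q}(D(r))} \le c_{q,r}(\|\tilde u\|_{L^q(D(1))} + \|\overline\partial\tilde u\|_{L^q(D(1))})$. This interior estimate for the scalar/vector $\overline\partial$ operator is itself a consequence of the Calderón–Zygmund theory (the Cauchy transform is bounded on $L^q$ for $1<q<\infty$) — I would cite it rather than reprove it. Restricting the resulting bound back to the upper half-disk $D(r)\cap\overline{\mathbb{H}}$ and using the two norm comparisons from the doubling step gives exactly
$$\|u\|_{W^{1,q}(D(r)\cap\overline{\mathbb{H}})} \le c_{q,r}\bigl(\|u\|_{L^q(D(1)\cap\overline{\mathbb{H}})} + \|\overline\partial u\|_{L^q(D(1)\cap\overline{\mathbb{H}})}\bigr),$$
which is the claimed inequality (after relabeling $x,y$ and absorbing constants).

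The main obstacle — the step deserving the most care — is verifying that the reflected map $\tilde u$ genuinely lies in $W^{1,q}$ across the real axis with the distributional $\overline\partial$ behaving correctly, i.e. that no distributional mass concentrates on $\{y=0\}$. For smooth $u$ (as in the hypothesis) this is a clean computation: integrate against a test function, split the integral over upper and lower half-disks, integrate by parts in each, and observe the boundary terms on $\mathbb{R}$ cancel because the traces of $u$ and $\tilde u$ agree there (this is where $u(D(1)\cap\mathbb{R})\subset\mathbb{R}^n$ is used — componentwise, $\mathrm{Re}\,u$ is even and $\mathrm{Im}\,u$ is odd across $\mathbb{R}$, and $\mathrm{Im}\,u$ vanishes on $\mathbb{R}$ so no jump). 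A secondary point is making sure the constant $c_{q,r}$ can be taken uniform; since everything is scale- and translation-invariant in the standard way and the interior estimate provides a uniform constant, this is routine. I would also remark (or defer to a later Remark \ref{remark:pg2}) that the same argument, applied to difference quotients or via density, upgrades the estimate to all $u\in W^{1,q}_{loc}(D(1)\cap\overline{\mathbb{H}},\mathbb{C}^n,\mathbb{R}^n)$, which is the form actually needed downstream.
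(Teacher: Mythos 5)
Your proposal is correct and follows the same route as the paper: double $u$ across the real axis by $\tilde u(z)=\cl{u(\cl{z})}$, use the boundary condition $u(D(1)\cap\R)\subset\R^{n}$ to check that no distributional mass concentrates on $\{y=0\}$ (the interface terms in the integration by parts cancel because the traces from above and below agree), and then quote an interior Calder\'on--Zygmund estimate on the full disk. Let me resolve the sign computation you left hanging: with $\tilde u(z)=\cl{u(\cl z)}$ one has $(\bd_{x}+i\bd_{y})\tilde u(z)=\cl{(\bd_{x}u)(\cl z)}-i\,\cl{(\bd_{y}u)(\cl z)}$, and since $-i\cl{w}=\cl{iw}$ (not $\cl{-iw}$), this equals $\cl{(\bd_{x}u+i\bd_{y}u)(\cl z)}$ --- so the operator $\bd_{x}+i\bd_{y}$ \emph{is} preserved by your reflection, up to an outer conjugation that does not affect the $L^{q}$ norm; this is exactly the identity $\eta(x,-y)=\cl{\eta(x,y)}$ recorded in the paper, and your fallback assertion is therefore justified. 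The only substantive difference from the paper's proof is the black box invoked at the end: you apply the first-order interior estimate for $\db$ directly to $\tilde u$, whereas the paper factors $(\bd_{x}-i\bd_{y})(\bd_{x}+i\bd_{y})=\Delta$ and appeals to the estimate for weak solutions of $\Delta w=f_{0}+\bd_{x}f_{1}+\bd_{y}f_{2}$ from \cite[Theorem B.3.2]{mcduffsalamon}; the latter formulation pairs only $u$ and $\eta$ (no derivatives of $u$) against second-order test data, which slightly streamlines the weak-formulation bookkeeping, but for smooth $u$, as in the hypothesis, both are equally valid.
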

\begin{proof}
  The theorem follows from \cite[Theorem B.3.2]{mcduffsalamon} which concerns weak solutions of the equation
  \begin{equation*}
    \Delta w=f_{0}+\bd_{x}f_{1}+\bd_{y}f_{2},
  \end{equation*}
  with $w,f_{0},f_{1},f_{2}\in L^{p}(D(1))$. The conclusion is that the $W^{1,q}$ size of $w$ on a smaller disk is bounded by the $L^{q}$ sizes of $w,f_{0},f_{1},f_{2}$. This uses the Calderon-Zygmund inequality proved in \cite[Section B.2]{mcduffsalamon}.

  To apply their result to our setting, we extend $u$ across the boundary by $$u(x,-y)=\cl{u(x,y)}.$$ The extended function is no longer smooth. Let $\eta=\bd_{x}u+i\bd_{y}u$ (which potentially has a jump discontinuity along $\R$, but is still in $L^{q}$). We note that
  \begin{equation*}
    \eta(x,-y)=\bd_{x}\cl{u}-i\bd_{y}\cl{u}=\cl{\bd_{x}u+i\bd_{y}u}=\cl{\eta(x,y)}.
  \end{equation*}
  In particular, the size $\abs{\eta}$ is invariant under $y\mapsto -y$.

  Using the fact that $(\bd_{x}-i\bd_{y})(\bd_{x}+i\bd_{y})=\Delta$ we have
  \begin{equation*}
    \int_{D(1)} g_{0}(u,\Delta\phi)\,\d x\d y=-\int_{D(1)}g_{0}(\eta,(\bd_{x}+i\bd_{y})\phi)\d x \d y.
  \end{equation*}
  To see why, apply Stokes' Theorem separately on the upper and lower half-disks, and then observe that the boundary terms will cancel; this uses $u(D(1)\cap \R)\subset \R^{n}$. The equality above satisfies the hypothesis of \cite[Theorem B.3.2]{mcduffsalamon} and allows us to conclude that $W^{1,q}$ size of $u$ is controlled by the $L^{q}$ size of $\eta$ and $u$. This implies the desired result.
\end{proof}

\subsection{Local elliptic regularity}
\label{sec:local-ell-reg}

In this section we wish to prove that weak solutions of $D^{*}(\eta)=f$ are in fact smooth, provided $f$ is smooth. In order to talk about $D^{*}(\eta)$, we require the choice of Hermitian metrics $\mu$ on $E,T\dot\Sigma$, as in Section \ref{sec:formal-adjoints}.

More precisely, we wish to prove the following:
\begin{prop}\label{prop:coord-inv-ell-reg}
  Let $q>1$. If $\eta\in L^{q}_{\mathrm{loc}}(\Lambda^{0,1}\otimes E)$, $f$ is smooth, and $D^{*}(\eta)=f$ weakly in the sense that
  \begin{equation*}
    \mathrm{Re}\int_{\dot\Sigma} \mu(D(\xi),\eta)\d\mathrm{vol}=\mathrm{Re}\int_{\dot\Sigma} \mu(\xi,f)\d\mathrm{vol}
  \end{equation*}
  for all $\xi\in \Gamma_{0}(E,F)$, then $\eta$ is in smooth and lies in $\Gamma(\Lambda^{0,1}\otimes E,F^{*})$.

  The same holds true with $(\eta,\Lambda^{0,1}\otimes E, F^{*},D^{*})$ swapped with $(\xi,E,F,D)$ throughout the statement.
\end{prop}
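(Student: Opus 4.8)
The plan is to reduce the proposition to the flat local model, namely Theorem~\ref{theorem:local-reg}, by a standard bootstrapping argument. First I would work locally, covering $\dot\Sigma$ by coordinate half-disks (or interior disks) equipped with unitary trivializations $\tau$ as in Section~\ref{sec:formal-adjoints}, so that on each chart the weak equation $D^{*}(\eta)=f$ becomes, after applying the formula \eqref{eq:formal-formula}, the statement that $w:=\tau_1\circ\eta$ is a weak solution of $-\bd_s w+i\bd_t w+S(s,t)^{T}w=g$ for a smooth $g$ depending on $f$ and $\abs{\bd_s}^2$. Conjugating by $w\mapsto\cl{w}$ turns $-\bd_s+i\bd_t$ into $\bd_s+i\bd_t$ (up to sign/relabelling of coordinates), so it suffices to treat weak solutions of $\bd_s w+i\bd_t w = h$ with $h\in W^{k,q}_{\mathrm{loc}}$, where the totally real boundary condition $F^{*}$ becomes (after checking the identification) the condition $w(D(1)\cap\R)\subset\R^n$ in the doubled sense. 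The key point here is to verify carefully that the weak formulation against $\xi\in\Gamma_0(E,F)$ translates exactly into the weak formulation that Theorem~\ref{theorem:local-reg}'s proof uses — i.e.\ testing against compactly supported $\phi$ with $\phi(D(1)\cap\R)\subset\R^n$ — and that the doubling construction (extension by $\cl{w}$ across $\R$) is consistent with distributional derivatives, which is the content of the footnote in the definition of $W^{1,p}_{\mathrm{loc}}(E,F)$.

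Next I would run the bootstrap. Starting from $\eta\in L^q_{\mathrm{loc}}$, hence $w\in L^q_{\mathrm{loc}}$, rewrite the equation as $\bd_s w+i\bd_t w = h - S^{T}w$ (absorbing lower-order terms): the right-hand side is in $L^q_{\mathrm{loc}}$, so Theorem~\ref{theorem:local-reg} (applied on slightly shrunken half-disks, after doubling) gives $w\in W^{1,q}_{\mathrm{loc}}$. Then $h-S^{T}w\in W^{1,q}_{\mathrm{loc}}$ — using that $S$ is smooth and the product rule — and differentiating the equation, $\bd_s(\bd_s w)+i\bd_t(\bd_s w)$ and similarly for $\bd_t w$ have right-hand sides in $L^q_{\mathrm{loc}}$; wait, more carefully, one applies the interior/boundary estimate to the difference quotients of $w$ to upgrade to $W^{2,q}_{\mathrm{loc}}$, and iterates to get $w\in W^{k,q}_{\mathrm{loc}}$ for all $k$. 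By Sobolev embedding $w$ is then smooth on each chart, hence $\eta$ is smooth. Finally, since $w$ is continuous and satisfies the boundary condition in the doubled-distributional sense, it genuinely takes boundary values in $\R^n$, so $\eta\in\Gamma(\Lambda^{0,1}\otimes E,F^{*})$. The symmetric statement for $(\xi,E,F,D)$ is identical: the coordinate form of $D$ is $\bd_s u+i\bd_t u+Su$ by \eqref{eq:trivialization}, the boundary condition is literally $u(D(1)\cap\R)\subset\R^n$, and the same doubling-plus-bootstrap argument applies verbatim.

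The main obstacle I expect is the boundary bookkeeping: making rigorous the claim that the weak equation tested against sections in $\Gamma_0(E,F)$ is \emph{equivalent} to the doubled equation on the full disk tested against arbitrary compactly supported test sections, and that no spurious distributional contribution (a measure supported on $D(1)\cap\R$) appears when one doubles $\eta$. This is exactly where the totally real condition on $F$ and the careful choice of $F^{*}$ enter — the boundary terms in the integration by parts must cancel, just as in the proof of Theorem~\ref{theorem:local-reg} and the formal adjoint lemma. A secondary technical point is that the difference-quotient argument near the boundary must use difference quotients only in the $s$-direction (tangent to $\bd\Sigma$) to preserve the boundary condition, and then recover the $\bd_t$-regularity from the equation itself; this is routine but must be stated to avoid circularity. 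Once these points are pinned down the iteration is mechanical.
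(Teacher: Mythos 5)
Your proposal is correct and follows essentially the same route as the paper: reduce to the flat local model via \eqref{eq:formal-formula} and conjugation by complex conjugation, establish $W^{1,q}$ regularity by doubling across the boundary and invoking Theorem \ref{theorem:local-reg}, then bootstrap using only tangential ($\bd_s$) derivatives of the weak equation and recover the $\bd_t$-derivatives from the equation itself. The technical obstacles you flag — that doubling introduces no distributional contribution along $D(1)\cap\R$, and that the tangential differentiation is compatible with the boundary condition on test functions — are exactly the points the paper addresses in the proof of Lemma \ref{lemma:coord-ell-reg}.
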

\begin{remark}
  Note that ``weakly'' solving the equation implicitly incorporates the boundary conditions, since we allow the test functions to be non-zero along the boundary. We do require, however, that the test functions take values the appropriate sub-bundle $F$.
\end{remark}

Since smoothness is a local property, we can prove Proposition \ref{prop:coord-inv-ell-reg} by restricting our attention to a coordinate chart $z=s+it$ on which we have a unitary trivialization $\tau:E\to \C^{n}$. Without loss of generality, let us suppose that $z$ takes values in $D(1)\cap \cl{\mathbb{H}}$. Writing $\tau(\xi)=u$, $\tau_{1}(\eta)=w$, and $f:=\tau(f)$, we compute
\begin{equation*}
  \begin{aligned}
    D^{*}(\eta)=f\text{ weakly}&\implies -\bd_{s}w+i\bd_{t}w+S(s,t)^{T}w=\abs{\bd_{s}}^{-2}f\text{ weakly}\\
    D(\xi)=f\text{ weakly}&\implies \bd_{s}u+i\bd_{t}u+S(s,t)u=f\text{ weakly}.
  \end{aligned}
\end{equation*}
We can simplify this a bit further by observing that
\begin{equation*}
  \begin{aligned}
    -\bd_{s}w+i\bd_{t}w+S(s,t)^{T}w&=\abs{\bd_{s}}^{-2}f\\
    \iff \bd_{s}\cl{w}+i\bd_{t}\cl{w}-CS(s,t)^{T}C\cl{w}&=-\abs{\bd_{s}}^{-2}\cl{f},
  \end{aligned}
\end{equation*}
where $C$ is the matrix representing complex conjugation. Thus, Proposition \ref{prop:coord-inv-ell-reg} follows from:
\begin{lemma}\label{lemma:coord-ell-reg}
  Let $q>1$. Write $\Omega(r)=D(r)\cap \cl{\mathbb{H}}$, and suppose that
  \begin{equation*}
    \text{$u\in L^{q}(\Omega(1),\mathbb{C}^{n})$, $f\in C^{\infty}(\cl{\Omega(1)},\mathbb{C}^{n})$, and $S\in C^{\infty}(\cl{\Omega(1)},\R^{2n\times 2n})$}
  \end{equation*}
  satisfy
  \begin{equation}\label{eq:weak-eq}
    \bd_{s}u+i\bd_{t}u+S(s,t)u=f\text{ weakly,}
  \end{equation}
  in the sense that
  \begin{equation*}
    \mathrm{Re}\int_{\Omega(1)} \mu_{0}(u,-\bd_{s}\varphi+i\bd_{t}\varphi+S(s,t)^{T}\varphi)=\mathrm{Re}\int_{\Omega(1)} \mu_{0}(f,\varphi),
  \end{equation*}
  for all compactly supported test functions $\varphi$ which take values in $\R^{n}$ on $\Omega(1)\cap \R$. Then $u$ is smooth and takes boundary values in $\R^{n}$. Moreover for $k\in \mathbb{N}$ and $r<1$, there exists a constant $c=c(k,q,S)$ so that
  \begin{equation}\label{eq:elliptic-estimate}
    \norm{u}_{W^{k,q}(D(r)\cap \mathbb{H})}\le c(\norm{u}_{L^{q}(D(1)\cap \mathbb{H})}+\norm{f}_{W^{k-1,q}(D(1)\cap \mathbb{H})}).
  \end{equation}
\end{lemma}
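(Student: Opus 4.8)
The plan is to follow the standard bootstrapping strategy for elliptic regularity: first upgrade $u$ from $L^q$ to $W^{1,q}$ on a slightly smaller half-disk, then iterate to get all higher Sobolev regularity, and finally invoke Sobolev embedding to conclude smoothness and that the boundary values lie in $\R^n$. The crucial point is to reduce the perturbed equation \eqref{eq:weak-eq} to the model operator $\bd_s + i\bd_t$ so that Theorem \ref{theorem:local-reg} applies.

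First I would establish the base case. Since $S\in C^\infty$ is bounded on $\cl{\Omega(1)}$, the term $S(s,t)u$ lies in $L^q$, so \eqref{eq:weak-eq} says $u$ weakly solves $\bd_s u + i\bd_t u = f - Su =: g \in L^q(\Omega(1),\C^n)$ with the reflection boundary condition implicit in the test-function class. The first obstacle — and the one I expect to be the main technical nuisance — is that Theorem \ref{theorem:local-reg} is stated as an a priori estimate for \emph{smooth} $u$, whereas here $u$ is only $L^q$. The standard fix is a mollification/difference-quotient argument: double $u$ and $g$ across $\R$ by $u(s,-t)=\cl{u(s,t)}$, $g(s,-t)=\cl{g(s,t)}$ (as in the proof of Theorem \ref{theorem:local-reg}), convolve with a smooth approximate identity to get $u_\varepsilon$ solving $\bd_s u_\varepsilon + i\bd_t u_\varepsilon = g_\varepsilon$ on a slightly smaller disk, apply the estimate of Theorem \ref{theorem:local-reg} to $u_\varepsilon$ (it is smooth, but one must check the mollified function still satisfies the reality condition on $\R$ — this holds because convolution commutes with the conjugation symmetry $t\mapsto -t$ when the mollifier is chosen even in $t$), and pass to the limit $\varepsilon\to 0$ using $u_\varepsilon\to u$, $g_\varepsilon\to g$ in $L^q$. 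This yields $u\in W^{1,q}(D(r_1)\cap \mathbb{H})$ for any $r_1<1$, together with the estimate $\norm{u}_{W^{1,q}(D(r_1)\cap\mathbb{H})}\le c(\norm{u}_{L^q(D(1)\cap\mathbb{H})}+\norm{g}_{L^q(D(1)\cap\mathbb{H})})$, and absorbing $\norm{Su}_{L^q}\le C\norm{u}_{L^q}$ into the right side gives \eqref{eq:elliptic-estimate} for $k=1$.

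Next I would run the induction on $k$. Suppose $u\in W^{k,q}$ on $D(r)\cap\mathbb{H}$ with the corresponding estimate. To gain one more derivative in the interior directions, differentiate \eqref{eq:weak-eq} in $s$: formally $v:=\bd_s u$ weakly solves $\bd_s v + i\bd_t v = \bd_s f - \bd_s(Su) = \bd_s f - (\bd_s S)u - S\bd_s u$, and the right-hand side lies in $W^{k-1,q}$ by the inductive hypothesis (using that $S$ is smooth, so $\bd_s S$ is bounded with bounded derivatives). The difference-quotient version of this argument — estimating finite differences $u(s+h,t)-u(s,t)$ rather than literally differentiating, to stay within the weak formulation — is what makes it rigorous; the reflection boundary condition is preserved under $s$-translation since it only constrains the $t=0$ slice. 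This controls $\bd_s u$ in $W^{k,q}$; then the equation \eqref{eq:weak-eq} itself, rewritten as $i\bd_t u = f - Su - \bd_s u$, expresses $\bd_t u$ in terms of quantities already known to be in $W^{k,q}$, so $u\in W^{k+1,q}$ on a slightly smaller half-disk, with the estimate \eqref{eq:elliptic-estimate} for $k+1$ following by chaining the estimates and shrinking radii finitely many times (absorbing all $S$-dependent constants).

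Finally, since $u\in W^{k,q}(D(r)\cap\mathbb{H})$ for every $k$ and every $r<1$, the Sobolev embedding theorem (\cite[Theorem B.1.11]{mcduffsalamon}) gives $u\in C^\infty$ on the open half-disk $D(r)\cap\mathbb{H}$ up to and including the flat boundary $D(r)\cap\R$; continuity of $u$ up to $\R$ then lets us make sense of its boundary trace. To see the trace is $\R^n$-valued, note that the doubled function $\tilde u$ (extension by $\cl{u(s,t)}$ across $\R$) is a weak solution of the doubled equation on the full disk $D(r)$, hence by interior elliptic regularity $\tilde u$ is continuous on $D(r)$; but $\tilde u(s,0^+)=u(s,0)$ and $\tilde u(s,0^-)=\cl{u(s,0)}$, so continuity forces $u(s,0)=\cl{u(s,0)}$, i.e.\ $u(s,0)\in\R^n$. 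This completes the proof of the lemma.
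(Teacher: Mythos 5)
Your proposal is correct and follows essentially the same bootstrapping route as the paper: reduce to the model operator $\bd_{s}+i\bd_{t}$, mollify the conjugate-doubled extension of $u$ to produce smooth approximants satisfying the reality condition (the paper uses a radially symmetric mollifier, you ask for one even in $t$ --- same thing), apply Theorem \ref{theorem:local-reg} to the approximants, gain $s$-derivatives inductively, recover $\bd_{t}$-derivatives from the equation itself, and finish with Sobolev embedding. Two small remarks. First, to pass from ``$u_{\varepsilon}\to u$ in $L^{q}$ with $\norm{u_{\varepsilon}}_{W^{1,q}}$ bounded'' to ``$u\in W^{1,q}$ with the same bound'' you need weak compactness (reflexivity of $W^{1,q}$ for $q>1$ plus Banach--Alaoglu); the paper spells this out and you elide it, but it is routine. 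Second, your argument that the boundary trace is $\R^{n}$-valued is genuinely different from the paper's: you observe that the doubled function $E(u)$ weakly solves the doubled equation on the full disk, hence is continuous by interior regularity, and matching one-sided limits forces $u(s,0)=\cl{u(s,0)}$; the paper instead integrates by parts against a real-valued test function and reads off that the boundary term $\mathrm{Re}\int\mu_{0}(u,i\varphi)\,\d s$ must vanish. Both work; your version reuses the doubling machinery already set up and avoids a separate computation, while the paper's is more self-contained in that it never needs interior regularity for the doubled equation with a merely $L^{p}$ right-hand side.
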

\begin{proof}  
  Throughout the argument we will need to shrink the domain countably many times. For this purpose, fix a sequence $1>r_{1}>r_{2}>\cdots>r_{\infty}=r$. Each time we need to shrink the domain we will pass from $\Omega(r_{j})$ to $\Omega(r_{j+1})$. To obtain the constant in \eqref{eq:elliptic-estimate}, we will only need to shrink the domain finitely many times. 
  
  Our first goal is to upgrade $u$ to a $W^{1,q}$ distribution. We observe that
  \begin{equation*}
    (\bd_{s}+i\bd_{t})u=-S(s,t)u+f\text{ weakly}.
  \end{equation*}
  Notice that the right hand side lies in $L^{q}$. More generally, let us consider equations of the form
  \begin{equation*}
    (\bd_{s}+i\bd_{t})u=F\text{ weakly},
  \end{equation*}
  where $F\in L^{q}$. Our strategy is to approximate $u$ by a sequence of smooth sections $u_{n}$ taking real values along the boundary so that:
  \begin{enumerate}
  \item $u_{n}\to u\in L^{q}$ and
  \item $\norm{(\bd_{s}+i\bd_{t})u_{n}}_{L^{q}(\Omega(r_{1}))}$ is bounded by $c_{1}\norm{F}_{L^{q}(\Omega(1))}$.
  \end{enumerate}
  We will explain how to do this approximation at the end of the proof. See \cite[Section 2.4]{wendl-sft} for another approach in the case with $\cl{\mathbb{H}}$ replaced by $\C$. See \cite[Section B.4]{mcduffsalamon} for a similar bootstrapping argument (in a non-linear context).
  The estimate from Theorem \ref{theorem:local-reg} then implies that $u_{n}$ is bounded in the $W^{1,q}$ topology (on a smaller domain $\Omega(r_{2})$). Indeed, we have
  \begin{equation*}
    \norm{u_{n}}_{W^{1,q}(\Omega(r_{2}))}\le c_{2}(\norm{u_{n}}_{L^{q}(\Omega(1))}+\norm{F}_{L^{q}(\Omega(1))}).
  \end{equation*}

  Since the $W^{1,q}$ spaces are reflexive, the Banach-Alaoglu theorem implies that some subsequence of $u_{n}$ converges in the weak topology to an element $u^{\prime}\in W^{1,q}$. Since $(L^{q})^{*}\subset (W^{1,q})^{*}$ we conclude that $\lim_{n\to\infty}\ip{u_{n},w}=\ip{u^{\prime},w}$ for all $w\in (L^{q})^{*}$. However, the same holds with $u^{\prime}$ replaced by $u$ (because $u_{n}$ converges to $u$ in the $L^{q}$ norm). Thus $u=u^{\prime}$, and hence $u\in W^{1,q}$. Moreover, the Banach-Alaoglu theorem implies the $W^{1,q}$ norm of $u$ is bounded above by $\limsup \norm{u_{n}}_{W^{1,q}}$, and hence we conclude that
  \begin{equation*}
    \norm{u}_{W^{1,q}(\Omega(r_{2}))}\le c_{2}(\norm{u}_{L^{q}(\Omega(1))}+\norm{F}_{L^{q}(\Omega(1))}).
  \end{equation*}
  
  Now, suppose that we have shown that $u$ is of class $W^{k,q}$ on some region $\Omega(r_{2k})$. Moreover, suppose that the $\norm{u}_{W^{k,q}(\Omega(r_{2k}))}$ is bounded by $c(\norm{u}_{L^{q}(\Omega(1))}+\norm{f}_{W^{k-1,q}(\Omega(1))})$ for some $c$. Then we can differentiate the equation \eqref{eq:weak-eq} $k$ times in the $s$-direction to conclude:
  \begin{equation}\label{eq:weak-k-eq}
    (\bd_{s}+i\bd_{t})\bd_{s}^{k}u=\bd_{s}^{k}f-\sum_{\ell=0}^{k}\bd_{s}^{\ell}S(s,t)\cdot\bd_{s}^{k-\ell}u=F_{k}\text{ weakly}.
  \end{equation}
  This differentiation is a bit subtle because the ``weak'' condition incorporates the boundary conditions; we will explain this step in greater detail at the end of the proof. 

  By our assumption on $u$, the right hand side is in $L^{q}$. The same argument given above implies that $\bd_{s}^{k}u$ is in $W^{1,q}$ on a smaller region $\Omega(r_{2k+2})$ and that
  \begin{equation*}
    \norm{\bd_{s}^{k}u}_{W^{1,q}(\Omega(r_{2k+2}))}\le c^{\prime}(\norm{\bd_{s}^{k}u}_{L^{q}(\Omega(r_{2k}))}+\norm{\bd_{s}^{k}f}_{L^{q}(\Omega(r_{2k}))}+C(S)\norm{u}_{W^{k,q}(\Omega(r_{2k}))}).
  \end{equation*}  
  Now it is straightforward to use \eqref{eq:weak-eq} to establish that, for $a+b=k$,
  \begin{equation*}
    \bd_{s}^{a}\bd_{t}^{b}u=i^{b}\bd_{s}^{k}u+\text{lower order terms}.
  \end{equation*}
  This equality should be interpreted as saying that both sides agree when integrated against a test function which is supported in the interior of the domain (i.e.,\ we do not need to worry about the boundary).
  Since $\bd_{s}^{k}u$ and the ``lower order terms'' are of class $W^{1,q}$ we conclude that all the $k$th order derivatives are in $W^{1,q}(\Omega(r_{2k+2}))$, and hence $u$ is in $W^{k+1,q}(\Omega(r_{2k+2}))$. Keeping track of the various estimates implies that
  \begin{equation*}
    \norm{u}_{W^{k+1,q}(\Omega(r_{2k+2}))}\le c^{\prime\prime}(\norm{u}_{L^{q}(\Omega(1))}+\norm{f}_{W^{k,q}(\Omega(1))}).
  \end{equation*}
  The Sobolev embedding theorem \cite[Theorem B.1.11]{mcduffsalamon} implies that $u$ is smooth on $\Omega(r_{\infty})$. Part of the conclusion of the Sobolev embedding theorem is that $u$ extends smoothly to the boundary. We claim that $u$ takes $\R^{n}$ values along the boundary. This follows from \eqref{eq:weak-eq}; pick any test function $\varphi$ taking boundary values in $\R^{n}$. It is easy to see (by integration by parts) that
  \begin{equation*}
    \mathrm{Re}\int_{D(1)\cap \cl{\mathbb{H}}}\mu_{0}(\bd_{s}u+i\bd_{t}u,\varphi)-\mu_{0}(u,-\bd_{s}\varphi+i\bd_{t}\varphi)\d s\d t=\mathrm{Re}\int_{D(1)\cap \mathbb{R}}\mu_{0}(u,i\varphi)\d s.
  \end{equation*}
  If $u$ did not take $\R^{n}$-values along $D(1)\cap \R$, we could pick $\varphi$ so that the right hand side was non-zero. This would contradict \eqref{eq:weak-eq}. 

  This completes the proof, modulo our explanation of how to pick the approximations $u_{n}\to u$ so that (i) and (ii) hold, and also why we can differentiate the weak equation with respect to $\bd_{s}$ to obtain \eqref{eq:weak-k-eq}.

  First we explain how to differentiate the weak equation. Suppose that $(\bd_{s}+i\bd_{t})w=F$ weakly and $w,F\in W^{1,q}$. Then for any test function $\varphi$ taking real-values along the boundary, $\bd_{s}\varphi$ still takes real values along the boundary, and hence
  \begin{equation}\label{eq:partial}
    \mathrm{Re}\int\mu_{0}(w,(-\bd_{s}+i\bd_{t})\bd_{s}\varphi)\d s\d t=\mathrm{Re}\int\mu_{0}(F,\bd_{s}\varphi)\d s\d t.
  \end{equation}
  The distributional derivative $\bd_{s}$ is defined (by duality) by how it integrates against sections $\psi$ supported in the \emph{interior} of $\Omega(r)$:
  \begin{equation*}
    \mathrm{Re}\int \mu_{0}(F,\bd_{s}\psi)\d s\d t=\mathrm{Re}\int -\mu_{0}(\bd_{s}F,\psi)\d s\d t,
  \end{equation*}
  However, the above holds even if $\psi$ is non-zero along the boundary $\Omega(r)\cap \R$. To see why, observe that
  \begin{equation*}
    \mathrm{Re}\int_{\Omega(r)} \mu_{0}(\bd_{s}F,\psi)\d s\d t=\mathrm{Re}\lim_{\delta\to 0}\int_{\Omega(r)} \mu_{0}(\bd_{s}F,\beta(t/\delta)\psi)\d s\d t,
  \end{equation*}
  where $\beta:[0,\infty)\to [0,1]$ vanishes near $0$ and equals $1$ on $[1,\infty)$. Since $\beta(t/\delta)$ is independent of $s$, we can integrate by parts and conclude
  \begin{equation*}
    \begin{aligned}
      \mathrm{Re}\int_{\Omega(r)} \mu_{0}(\bd_{s}F,\psi)\d s\d t&=\mathrm{Re}\lim_{\delta\to 0}\int_{\Omega(r)}\mu_{0}(F,\beta(t/\delta)\bd_{s}\psi)\d s\d t\\ &=\mathrm{Re}\int_{\Omega(r)}\mu_{0}(F,\bd_{s}\psi)\d s\d t.
    \end{aligned}
  \end{equation*}
  In particular, this observation applied to \eqref{eq:partial} yields
  \begin{equation*}
    \mathrm{Re}\int\mu_{0}(\bd_{s}w,(-\bd_{s}+i\bd_{t})\varphi)\d s\d t=\mathrm{Re}\int\mu_{0}(\bd_{s}F,\varphi)\d s\d t,
  \end{equation*}
  which implies that $(\bd_{s}+i\bd_{t})\bd_{s}w=\bd_{s}F$ still holds weakly.

  Finally, we explain how to choose the approximations $u_{n}\to u$ so that (i) and (ii) hold. First we extend $u$ as an $L^{q}$ distribution to $D(1)$ by $E(u)(s,-t)=\cl{u}(s,t)$ for $t\le 0$. This can be defined in the sense of distributions as
  \begin{equation*}
    \ip{E(u),\varphi}=\mathrm{Re}\int_{D(1)\cap \Omega} \mu_{0}(u(s,t),\varphi(s,t)+\cl{\varphi}(s,-t))\d s\d t.
  \end{equation*}
  Let $\Phi$ be a radially symmetric bump function of unit mass supported in $D(1)$, and let $\Phi_{n}(s,t)=\Phi(sn,tn)$. Then define $u_{n}=\Phi_{n}\ast E(u).$ Clearly (i) holds. It can be shown that
  \begin{equation*}
    \ip{\Phi_{n}\ast E(u),(-\bd_{s}+i\bd_{t})\varphi}=\ip{E(u),(-\bd_{s}+i\bd_{t})(\Phi_{n}\ast \varphi)}=\ip{E(F),(\Phi_{n}\ast \varphi)}.
  \end{equation*}
  This uses the distributional definition of $E(u)$ and the assumption that $(\bd_{s}+i\bd_{t})u=F$ weakly. It also uses the fact that convolution commutes with $\bd_{s}+i\bd_{t}$ (as it is a differential operator with constant coefficients). 
  
  We therefore conclude that $L^{q}$ size of $(\bd_{s}+i\bd_{t})(\Phi_{n}\ast E)(u)$ is bounded by the $L^{q}$ size of $F$. This proves (ii). We observe that since $\Phi_{n}$ is a radially symmetric and $u(s,-t)=\cl{u}(s,t)$, $u_{n}$ must take real values along the real axis. This completes the proof.
\end{proof}
Note that a consequence of the above proof is the following smooth approximation result:
\begin{prop}\label{prop:smooth-approximation}
  Let $\Omega(r)=\cl{\mathbb{H}}\cap D(r)$ and $q>1$. Suppose that $u\in L^{q}(\Omega(1),\C^{n})$ has the property that $\bd_{s}u+i\bd_{t}u=F$ holds weakly for some $F\in L^{q}(\Omega(1),\C^{n})$. Then for any $r<1$, the doubling $E(u)$ lies in $W^{1,q}(D(r))$ and there is a family of smooth functions $u_{n}$ on $\Omega(r)$ taking real values on $\bd\Omega(r)$ so that $u_{n}\to u$ in $W^{1,q}(\Omega(r))$.
\end{prop}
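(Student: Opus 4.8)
The plan is to extract Proposition \ref{prop:smooth-approximation} directly from the machinery already assembled in the proof of Lemma \ref{lemma:coord-ell-reg}, specialized to the case $S\equiv 0$, $k=1$, and $f=F$. Concretely, the last two paragraphs of that proof construct, for any $u\in L^q(\Omega(1),\C^n)$ with $(\bd_s+i\bd_t)u=F$ weakly, a sequence $u_n=\Phi_n\ast E(u)$ of smooth functions taking real values along the real axis, satisfying $u_n\to u$ in $L^q$ and the uniform bound $\norm{(\bd_s+i\bd_t)u_n}_{L^q(\Omega(r_1))}\le c\norm{F}_{L^q(\Omega(1))}$. So the content of the proposition is really just the observation that those same $u_n$ already converge in $W^{1,q}$ on a slightly smaller disk, and that $E(u)\in W^{1,q}(D(r))$.

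First I would record that $E(u)\in W^{1,q}(D(r))$: this is exactly the first ``bootstrapping'' step in the proof of Lemma \ref{lemma:coord-ell-reg}. One has $(\bd_s+i\bd_t)E(u)=E(F)\in L^q(D(1))$ weakly (here $E(F)(s,-t)=\cl{F(s,t)}$, using that the doubling of $\bd_s+i\bd_t$ applied to the doubling of $u$ picks up a conjugate, as checked in the proof of Theorem \ref{theorem:local-reg}), and then \cite[Theorem B.3.2]{mcduffsalamon} applied via $(\bd_x-i\bd_y)(\bd_x+i\bd_y)=\Delta$ gives $E(u)\in W^{1,q}(D(r))$ together with the estimate $\norm{E(u)}_{W^{1,q}(D(r))}\le c(\norm{u}_{L^q(D(1))}+\norm{F}_{L^q(D(1))})$. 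Restricting to $\cl{\mathbb{H}}$ shows $u\in W^{1,q}(\Omega(r))$, and by the reflection symmetry $E(u)(s,-t)=\cl{E(u)(s,t)}$ (which persists in $W^{1,q}$ by continuity of the trace) the boundary trace of $u$ on $\Omega(r)\cap \R$ is real-valued.

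Next I would establish the convergence $u_n\to u$ in $W^{1,q}(\Omega(r))$. Since $u_n=\Phi_n\ast E(u)$ and $E(u)\in W^{1,q}(D(r'))$ for some $r<r'<1$, the standard mollification estimate gives $u_n\to E(u)$ in $W^{1,q}(D(r))$ (the convolution commutes with $\bd_s$ and $\bd_t$, so $\bd u_n = \Phi_n\ast (\bd E(u))\to \bd E(u)$ in $L^q$ on any compactly contained subdomain); restricting to $\Omega(r)$ gives $u_n\to u$ in $W^{1,q}(\Omega(r))$. The fact that each $u_n$ is smooth and takes real values along $\bd\Omega(r)$ is exactly as noted at the end of the proof of Lemma \ref{lemma:coord-ell-reg}: radial symmetry of $\Phi_n$ together with $E(u)(s,-t)=\cl{E(u)(s,t)}$ forces $u_n(s,-t)=\cl{u_n(s,t)}$, hence $u_n(s,0)\in\R^n$.

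I do not expect any genuine obstacle here; the proposition is essentially a repackaging of steps already carried out. The only point requiring a little care is the bookkeeping of nested radii: the doubling lives in $W^{1,q}$ only on a disk strictly smaller than $D(1)$, and mollification requires a further slight shrinkage, so one should fix $r<r'<1$, prove $E(u)\in W^{1,q}(D(r'))$, and then run the mollification argument on $D(r)$. I would therefore phrase the proof as: invoke the first step of the proof of Lemma \ref{lemma:coord-ell-reg} verbatim to get $E(u)\in W^{1,q}(D(r'))$ with the reflection symmetry and the real boundary trace, then mollify on $D(r)$ to get the approximating sequence, observing the real-value property is preserved by radial symmetry.
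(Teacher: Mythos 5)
Your proposal is correct and follows essentially the same route as the paper: double $u$, observe $(\bd_{s}+i\bd_{t})E(u)=E(F)$ weakly on the full disk, conclude $E(u)\in W^{1,q}$ on a smaller disk, and then take $u_{n}=\Phi_{n}\ast E(u)$, using radial symmetry of $\Phi_{n}$ and the reflection symmetry of $E(u)$ to get real boundary values. The only (harmless) difference is that you obtain $E(u)\in W^{1,q}(D(r'))$ by applying the Calderon--Zygmund weak-solution estimate directly to $E(u)$, whereas the paper gets it by citing the mollify-then-Banach--Alaoglu step already carried out in the proof of Lemma \ref{lemma:coord-ell-reg}; both rest on the same elliptic estimate.
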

\begin{proof}
  Let $E(u)$ be the doubling of $u$, as in the previous proof, and recall that $\Phi_{n}\ast E(u)$ converges to $E(u)$ in $L^{q}(D(r^{\prime}))$ and is bounded in $W^{1,q}(D(r^{\prime}))$. As we argued above, this implies that some subsequence of $\Phi_{n}\ast E(u)$ converges to $E(u)$ in the weak topology for $W^{1,q}(\Omega(r^{\prime}))$. In particular $E(u)$ is in $W^{1,q}(D(r^{\prime}))$. Basic properties of convolutions ensure that $\Phi_{n}\ast E(u)$ converges to $E(u)$ in the $W^{1,q}(D(r))$ norm. Thus we can set $u_{n}=\Phi_{n}\ast E(u)$, as desired.
\end{proof}
\begin{remark}
  Let $u\in W^{1,q}(\Omega(1),\C^{n},\R^{n})$, i.e.,\  $E(u)\in W^{1,q}(D(1),\C^{n},\R^{n})$. Then $(\bd_{s}+i\bd_{t})\Phi_{n}\ast E(u)=F_{n}$ converges to some element $F\in L^{q}$ (in the sense of distributions). We claim that
  \begin{equation*}
    \bd_{s}u+i\bd_{t}u=F\text{ holds weakly}.
  \end{equation*}
  This is a sort of converse to the above proposition. Indeed, if $\varphi$ takes real-values along the boundary, we compute
  \begin{equation*}
    \ip{u,-\bd \varphi}=\lim_{n\to\infty}\ip{\Phi_{n}\ast E(u),-\bd\varphi}=\lim_{n\to\infty}\ip{\db \Phi_{n}\ast E(u),\varphi}=\ip{F,\varphi},
  \end{equation*}
  as desired.
\end{remark}
\begin{remark}\label{remark:pg2}  
  If $u\in W^{1,p}(\Omega(1),\C^{n})$ with $p>2$, then $u$ has well-defined boundary values. Suppose that $u$ takes real values along the boundary. We will show that $E(u)\in W^{1,p}_{\mathrm{loc}}(D(1),\C^{n})$. Let $\varphi$ be a test function taking real values along the boundary. Let $h:\cl{\mathbb{H}}\to [0,1]$ be a function which (a) vanishes on $\R\times [0,1]$, (b) which equals $1$ on $\R\times [2,\infty)$ and (c) which depends only on the $t$ coordinate. Let $F=\db u$ (an $L^{p}$ distribution). We compute:
  \begin{equation*}
    \ip{u,-\bd \varphi}=\lim_{\sigma\to \infty}\ip{h(\sigma t)u,-\bd \varphi}=\lim_{\sigma}[\sigma \ip{\db(h)(\sigma t)u,\varphi}+\ip{h(\sigma t)F,\varphi}].
  \end{equation*}
  Note that $\db(h)(\sigma t)$ is concentrated on a region $R_{\sigma}$ of area at most $\sigma^{-1}$. Since $\db(h)$ is purely imaginary, we can write
  \begin{equation*}
    \ip{\db(h)(\sigma t)u,\varphi}=\ip{\db(h)(\sigma t)\mathrm{Im}(u),\text{Re}(\varphi)}+\ip{\db(h)(\sigma t)\mathrm{Re}(u),\text{Im}(\varphi)}.
  \end{equation*}
  Our discussion of $R_{\sigma}$ and $\db(h)$ implies that
  \begin{equation*}
    \abs{\sigma\ip{\db(h)(\sigma t)u,\varphi}}\le C(\sup_{R_{\sigma}}\abs{\mathrm{Re}(\varphi)}\abs{\mathrm{Im}(u)}+\abs{\mathrm{Re}(u)}\abs{\mathrm{Im}(\varphi)}).
  \end{equation*}
  Because $\mathrm{Im}(u)$ and $\mathrm{Im}(\varphi)$ are both continuous and vanish on the boundary, we can take the limit as $\sigma\to\infty$ and conclude that $\lim_{\sigma}\sigma\ip{\db(h)(\sigma t)u,\varphi}$ vanishes. We are left with
  \begin{equation*}
    \ip{u,-\bd \varphi}=\ip{F,\varphi}
  \end{equation*}
  This says that $\db u=F$ weakly. As a consequence of Proposition \ref{prop:smooth-approximation} we conclude that $E(u)\in W^{1,p}(D(r))$ and $u$ can be approximated in $W^{1,p}(D(r))$ by smooth functions taking real values along the boundary. This completes the proof.
\end{remark}

\subsection{Injectivity estimates for translation invariant operators}\label{sec:injectivity-estimates}
The next result concerns various estimates for operators of the form $$u\mapsto \bd_{s}u+i\bd_{t}u+S(t)u=\bd_{s}u-Au$$ on the infinite strip $\R\times [0,1]$ with $A$ a non-degenerate asymptotic operator. See \cite[Lemma 2.4]{salamon1997}, \cite[Section 4.4]{wendl-sft}, and \cite[Section 3.1.2]{schwarz-diss} for similar results for the infinite cylinder.

\begin{prop}\label{prop:multi-est}
  Let $D(u)=\bd_{s}u-Au$ on the infinite strip $\R\times [0,1]$, where $A$ is a non-degenerate asymptotic operator. Let $\norm{-}$ denote the $L^{2}$ norm over $[0,1]$. 

  There exist constants $c_{1},c_{2},c_{3,p}$ so that, for all $u\in C^{\infty}_{0}(\R\times [0,1],\C^{n},\R^{n})$, we have
  \begin{enumerate}
  \item $\displaystyle\int_{\R}\norm{u}^{2}+\norm{\bd_{s}u}^{2}+\norm{\bd_{t}u}^{2}\d s\le c_{1}\int_{\R}\norm{D(u)}^{2}\d s$,
  \item $\displaystyle\int_{\R}\norm{u}^{p}\d s\le c_{2}^{p}\int_{\R}\norm{D(u)}^{p}\d s$,
  \item $\displaystyle\norm{u}_{W^{1,p}(\R\times [0,1])}\le c_{3,p}\norm{D(u)}_{L^{p}(\R\times [0,1])}$, for $p\ge 2$.
  \end{enumerate}

  The same result holds with $[0,1]$ replaced by $\R/\Z$.
\end{prop}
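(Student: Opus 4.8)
The plan is to diagonalise $A$ and reduce the three estimates to one--dimensional facts; the $s$-- and $t$--derivative bounds in (iii) that are invisible to the spectral picture will be supplied by the local Calder\'on--Zygmund estimate of Theorem~\ref{theorem:local-reg}. By Propositions~\ref{lemma:nondegenerate-A} and \ref{prop:spectral-prop}, fix an $L^2([0,1],\R^n)$--orthonormal basis $\{e_k\}$ of smooth real eigenvectors of $A$, with $Ae_k=\lambda_k e_k$ and $\abs{\lambda_k}\ge c:=\mathrm{dist}(0,\Lambda)>0$; each $e_k$ lies in $W^{1,2}([0,1],\C^n,\R^n)$. For $u\in C^\infty_0(\R\times[0,1],\C^n,\R^n)$ set $a_k(s)=\ip{u(s,\cdot),e_k}$ and $f=D(u)$, $f_k(s)=\ip{f(s,\cdot),e_k}$; pairing $D(u)=\bd_s u-Au=f$ with $e_k$ and using the self--adjointness of $A$ gives the decoupled family $a_k'-\lambda_k a_k=f_k$. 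Since $u$ has compact support, each $a_k$ is the unique bounded solution, $a_k=G_{\lambda_k}\ast f_k$, where $G_\lambda$ is supported on $(-\infty,0)$ and equals $-e^{\lambda r}$ there when $\lambda>0$, and is supported on $(0,\infty)$ and equals $e^{\lambda r}$ there when $\lambda<0$. In particular $\norm{G_{\lambda_k}}_{L^1(\R)}=\abs{\lambda_k}^{-1}\le c^{-1}$ and $\abs{G_{\lambda_k}(r)}\le e^{-c\abs r}$.

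For (i) one applies Plancherel in $s$: $\int_\R\norm{D(u)}^2\,\d s=\sum_k\int_\R(\xi^2+\lambda_k^2)\abs{\hat a_k(\xi)}^2\,\d\xi$, whereas $\int_\R(\norm u^2+\norm{\bd_s u}^2+\norm{Au}^2)\,\d s=\sum_k\int_\R(1+\xi^2+\lambda_k^2)\abs{\hat a_k}^2\,\d\xi\le(1+c^{-2})\int_\R\norm{D(u)}^2\,\d s$, using $1\le c^{-2}\lambda_k^2$; since $\bd_t u=i(Au+S(t)u)$ the $Au$--term controls the required $\bd_t u$--term, which gives (i). For (ii), $\abs{a_k(s)}\le\int_\R e^{-c\abs{s-s'}}\abs{f_k(s')}\,\d s'$, and Minkowski's integral inequality for the $\ell^2$--norm in $k$ yields $\norm{u(s,\cdot)}\le\int_\R e^{-c\abs{s-s'}}\norm{f(s',\cdot)}\,\d s'$; Young's inequality with $\norm{e^{-c\abs\cdot}}_{L^1}=2/c$ then gives (ii) with $c_2=2/c$, for every $p\ge1$.

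For (iii) I would first establish the translation--invariant bound $\norm u_{W^{1,p}(\R\times[0,1])}\le c_a\bigl(\norm u_{L^p(\R\times[0,1])}+\norm{D(u)}_{L^p(\R\times[0,1])}\bigr)$: cover $\R\times[0,1]$ by fixed--radius disks and half--disks, finitely many per unit $s$--interval and translated along $\R$, apply Theorem~\ref{theorem:local-reg} (resp.\ its interior analogue, and Lemma~\ref{lemma:coord-ell-reg} for the half--disks, after reflecting $t\mapsto 2-t$ near $\{t=1\}$) to $\bd_s u+i\bd_t u=f-S(t)u$ on each chart, and sum $p$--th powers using the uniformly bounded overlap. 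It then remains to bound $\norm u_{L^p(\R\times[0,1])}$ when $p\ge2$. For each $s$, the one--dimensional Sobolev inequality on $[0,1]$ gives $\norm{u(s,\cdot)}_{L^p_t}\le\norm{u(s,\cdot)}_{L^\infty_t}\le\norm{u(s,\cdot)}+\sqrt2\,\norm{u(s,\cdot)}^{1/2}\norm{\bd_t u(s,\cdot)}^{1/2}$; raising to the $p$--th power, integrating over $s$, applying Cauchy--Schwarz in $s$ to the cross term, invoking (ii) together with the finite--measure inequality $\norm{\cdot}_{L^2_t}\le\norm{\cdot}_{L^p_t}$ (valid since $p\ge2$), and noting $\int_\R\norm{\bd_t u(s,\cdot)}^p\,\d s\le\norm{\bd_t u}_{L^p(\R\times[0,1])}^p\le\norm u_{W^{1,p}(\R\times[0,1])}^p$, one arrives (after a further use of Young's inequality) at $\norm u_{L^p}^p\le\epsilon\norm u_{W^{1,p}}^p+C_\epsilon c_2^p\norm{D(u)}_{L^p}^p$ for every $\epsilon>0$. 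Inserting this into the local estimate and choosing $\epsilon=(2c_a^p)^{-1}$ absorbs the $W^{1,p}$--term and proves (iii). The $\R/\Z$ case is the same, with the circle Sobolev inequality and only interior local estimates.

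The main obstacle is (iii). The estimates (i)--(ii) only control the mixed norm $L^p_sL^2_t$, and a direct spectral attack on $\norm{\bd_s u}_{L^p(\R\times[0,1])}$ for $p>2$ fails, because for $p\neq 2$ the $L^p_t$--size of $\sum_k a_k'(s)e_k$ is not captured by $\ell^2$--type quantities, and the scalar Young bound for $(\bd_s-\lambda)^{-1}$ does not survive summation over the spectrum of $A$ (this is the borderline of maximal $L^p$--regularity). One is therefore forced to combine the global decay bound (ii) with the local Calder\'on--Zygmund estimate through the interval Sobolev inequality and an absorption argument, which is exactly where the hypothesis $p\ge2$ enters essentially. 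The remaining point --- arranging the local estimate with constants uniform along the noncompact strip --- is routine.
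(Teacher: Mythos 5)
Your proposal is correct, and in two places it takes a recognizably different route from the paper. For (i) the paper avoids the spectral decomposition entirely: it differentiates $\gamma(s)=\norm{u(s,\cdot)}^{2}$ twice, integrates over $\R$, and uses only the self-adjointness of $A$ together with the isomorphism $A:W^{1,2}\to L^{2}$ of Proposition \ref{lemma:nondegenerate-A}; your diagonalization-plus-Plancherel argument is precisely the ``Fourier transform in the $s$-variable'' alternative that the paper's remark attributes to \cite{wendl-sft}. For (ii) the two arguments are morally identical --- both solve the decoupled equations by a one-sided exponential kernel and apply Young's inequality --- except that the paper works with the two spectral projections onto $E_{\pm}$ and a duality characterization of the mixed $(2,p)$-norm, whereas your scalar Green's functions plus Minkowski's integral inequality give the same conclusion with the explicit constant $c_{2}=2/\mathrm{dist}(0,\Lambda)$. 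The genuine divergence is in (iii): after the common step of summing the local elliptic estimate over unit translates, the paper passes from the $L^{p}$-norm to the $L^{2}$-norm on each block via the two-dimensional Sobolev embedding $W^{1,2}(\Omega(1.5))\hookrightarrow L^{p}$ and then invokes (ii), which requires no absorption; you instead interpolate in the $t$-variable alone ($L^{\infty}_{t}$ against $L^{2}_{t}$ and $\bd_{t}$) and absorb the resulting $\norm{u}_{W^{1,p}}^{p}$-term, which is legitimate because $u\in C^{\infty}_{0}$ makes that quantity finite a priori. Both routes use $p\ge 2$ essentially, yours through $\norm{\cdot}_{L^{2}_{t}}\le\norm{\cdot}_{L^{p}_{t}}$ and the paper's through $\norm{D(u)}_{L^{2}(\Omega(2))}\le\kappa_{4}\norm{D(u)}_{L^{p}(\Omega(2))}$. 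One cosmetic point: the reflection $t\mapsto 2-t$ near $t=1$ is anti-holomorphic and turns $\db$ into $\bd$, so to quote Theorem \ref{theorem:local-reg} verbatim you should either compose with $s\mapsto -s$ or apply the theorem to $\cl{u}$; this does not affect the argument.
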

\begin{remark}
  Before we prove the theorem, we wish to make a few remarks.
  \begin{enumerate}[label=(\arabic*)]
  \item All of these estimates roughly measure the injectivity of $D$.
  \item The results are proved for smooth functions with compact support, although they imply estimates on various Banach space completions of $C^{\infty}_{0}$ by taking smooth approximations. For instance, the reflection plus convolution technique used in the proof of Lemma \ref{lemma:coord-ell-reg} can be used to approximate $u\in W^{1,p}(\R\times [0,1],\C^{n},\R^{n})$ by smooth $u_{n}$ taking real values along the boundary.
  \item Note that (i) is (iii) in the case $p=2$. After we prove the proposition, we will be able to upgrade (iii) to include the case $q<2$. See Theorem \ref{theorem:isomorphism}.
  \item Note that (ii) can be considered as an estimate on a mixed $(2,p)$ norm. 
  \item We will give an elementary proof of (i), which is similar to the one given in \cite{schwarz-diss}. See \cite{wendl-sft} for an alternate proof of (i) which considers the Fourier transformation in the $s$-variable. 
  \item Our proofs of (ii) and (iii) are directly inspired by \cite{salamon1997}. The proof of (ii) will use the spectral properties of $A$ proved in Proposition \ref{prop:spectral-prop}. See \cite{schwarz-diss} for an alternative proof of (iii). 
  \end{enumerate}
\end{remark}
\begin{proof}[of Proposition \ref{prop:multi-est}]
  Suppose that $D(u)=\eta$, i.e.,\ $\bd_{s}u=Au+\eta$. To prove (i), the idea is to consider the quantity $\gamma(s)=\norm{u(s,t)}^{2}=\ip{u,u}$, where $\ip{-,-}$ denotes the real inner product on $L^{2}([0,1],\mathbb{C}^{n})$. We differentiate $\gamma(s)$ twice:
  \begin{equation*}
    \begin{aligned}
      \gamma^{\prime\prime}(s)&=\ip{\bd_{s}u,\bd_{s}u}+\ip{u,\bd_{s}\bd_{s}u}=\norm{\bd_{s}u}^{2}+\ip{u,\bd_{s}(Au+\eta)}\\
      &=\norm{\bd_{s}u}^{2}+\ip{Au,\bd_{s}u}+\bd_{s}\ip{u,\eta}-\ip{\bd_{s}u,\eta}\\
      &=\norm{\bd_{s}u}^{2}+\norm{Au}^{2}+\ip{Au,\eta}+\bd_{s}\ip{u,\eta}-\ip{\bd_{s}u,\eta}.
    \end{aligned}
  \end{equation*}
  Here we have used the fact that $\bd_{s}Au=A\bd_{s}u$ and $\ip{f,Ag}=\ip{Af,g}$. Now we integrate this equality over $\R$. Since $u$ is smooth and compactly supported the integrals of $\gamma^{\prime\prime}(s)$ and $\bd_{s}\ip{u,\eta}$ both vanish. We are left with:
  \begin{equation*}
    \int \norm{\bd_{s}u}^{2}+\norm{Au}^{2}\d s=\int\ip{\bd_{s}u-Au,\eta}\d s.
  \end{equation*}
  Now using Cauchy-Schwarz and $2ab\le a^{2}+b^{2}$ we have
  \begin{equation*}
    \begin{aligned}
      \int\ip{\bd_{s}u-Au,\eta}\d s&\le \int \norm{\bd_{s}u}\norm{\eta}+\norm{Au}\norm{\eta}\d s\\ &\le \frac{1}{2}\int \norm{\bd_{s}u}^{2}+\norm{Au}^{2}\d s+\int \norm{\eta}^{2}\d s.
    \end{aligned}
  \end{equation*}
  Recalling that $D(u)=\eta$, it follows that
  \begin{equation*}
    \int \norm{\bd_{s}u}^{2}+\norm{Au}^{2}\d s\le 2\int\norm{D(u)}^{2}\d s.
  \end{equation*}
  Finally, using the fact that $A:W^{1,2}([0,1],\C^{n},\R^{n})\to L^{2}(\C^{n})$ is an isomorphism, we conclude a constant $c\ge 1$ so that $\norm{u}^{2}+\norm{\bd_{t}u}^{2}\le c\norm{Au}^{2}$, and hence
  \begin{equation*}
    \int \norm{u}^{2}+\norm{\bd_{s}u}^{2}+\norm{\bd_{t}u}^{2}\d s\le 2c\int\norm{D(u)}^{2}\d s,
  \end{equation*}
  as desired. This completes the proof of (i).

  For (ii) we use the spectral properties of $A$. Let $E_{\pm}$ denote the splitting of $L^{2}([0,1],\C^{n})$ into positive and negative eigenspaces of $A$. The operator $\exp(-sA)$ converges on $E_{+}$ for $s\ge 0$ while the operator $\exp(-sA)$ converges on $E_{-}$ for $s\le 0$.

  We can decompose $u=u_{+}+u_{-}$ where $u_{+}(s,-)\in E_{+}$ and $u_{-}(s,-)\in E_{-}$ for all $s$. It is straightforward to show that
  \begin{equation*}
    \begin{aligned}
      \text{for $s\ge 0$:}\hspace{.3cm}\bd_{s}(\exp(-sA)u_{+}(s+s_{0},t))&=\exp(-sA)\eta_{+}(s_{0}+s,t),\\
      \text{for $s\le 0$:}\hspace{.3cm}\bd_{s}(\exp(-sA)u_{-}(s+s_{0},t))&=\exp(-sA)\eta_{-}(s_{0}+s,t),
    \end{aligned}
  \end{equation*}
  where $\eta_{\pm}=\bd_{s}u_{\pm}-Au_{\pm}$. Integrate the first ODE over $[0,\infty)$ and integrate the second ODE over $(-\infty,0]$, concluding that
  \begin{equation}\label{eq:inverse-formula}
    \begin{aligned}
      u_{+}(s_{0},t)&=-\int_{0}^{\infty}\exp(-sA)\eta_{+}(s_{0}+s,t)\d s,\\
      u_{-}(s_{0},t)&=\int_{-\infty}^{0}\exp(-sA)\eta_{-}(s_{0}+s,t)\d s.
    \end{aligned}
  \end{equation}
  Following \cite[Lemma 2.4]{salamon1997}, the idea is now to interpret this as a convolution $u_{\pm}=K_{\pm}\ast \eta_{\pm}$, and then apply Young's convolution inequality to conclude (ii). 

  Here are the details of the argument. First, we show the mixed $(2,p)$ norm satisfies a variational definition:
  \begin{equation*}
    \left[\int_{\R}\norm{u}^{p}\d s\right]^{1/p}=\sup\set{\int_{\R}\ip{u,g}\d s:\int_{\R}\norm{g}^{q}\d s=1,\text{ where $p^{-1}+q^{-1}=1$}}.
  \end{equation*}
  It is easy to show that $\ge$ holds, and to show $\le$ it suffices to prove it when the left hand side is $1$. In this case we can simply take $g=\norm{u}^{p-2}u$ (if $p>2$ this is fine, if $p<2$ then we can take a sequence $g$ approximating $\norm{u}^{p-2}u$).

  Now fix $g$ and compute
  \begin{equation*}
    \int_{\R}\ip{u_{+}(s_{0}),g(s_{0})}\d s_{0}=-\int_{\R}\int_{0}^{\infty}\ip{\exp(-sA)\eta_{+}(s_{0}+s),g(s_{0})}\d s\d s_{0}.
  \end{equation*}
  It is straightforward to check that
  \begin{equation*}
    \ip{\exp(-sA)\eta_{+}(s_{0}+s),g(s_{0})}\le e^{-s\lambda_{\mathrm{min}}^{+}}\norm{\eta_{+}(s_{0}+s)}\norm{g(s_{0})},
  \end{equation*}
  where $\lambda_{\mathrm{min}}^{+}$ is the smallest positive eigenvalue.
  
  Switching the order of integration and using H\"older's inequality yields:
  \begin{equation*}
    \abs{\int_{\R}\int_{0}^{\infty}\ip{\exp(-sA)\eta_{+}(s_{0}+s),g(s_{0})}\d s\d s_{0}}\le \int_{0}^{\infty}e^{-s\lambda_{\mathrm{min}}^{+}}\d s \left[\int_{\R}\norm{\eta_{+}}^{p}\d s_{0}\right]^{1/p}.
  \end{equation*}
  As a consequence the variational definition of the mixed $2,p$ norm implies that
  \begin{equation*}
    \left[\int_{\R} \norm{u_{+}(s)}^{p}\d s\right]^{1/p}\le \frac{1}{\lambda_{\mathrm{min}}^{+}}\left[\int_{\R}\norm{\eta_{+}}^{p}\d s\right]^{1/p}.
  \end{equation*}
  A similar argument shows that
  \begin{equation*}
    \left[\int_{\R} \norm{u_{-}(s)}^{p}\d s\right]^{1/p}\le \frac{-1}{\lambda_{\mathrm{max}}^{-}}\left[\int_{\R}\norm{\eta_{-}}^{p}\d s\right]^{1/p},
  \end{equation*}
  where $\lambda_{\mathrm{max}}^{-}$ is the largest negative eigenvalue.
  
  Using the fact that $\norm{\eta_{\pm}}\le \norm{\eta}$ we conclude that
  \begin{equation*}
    \int_{\R}\norm{u}^{p}\d s\le (\frac{1}{\lambda_{\mathrm{min}}^{+}}-\frac{1}{\lambda_{\mathrm{max}}^{-}})^{p}\int_{\R}\norm{D(u)}^{p}\d s.
  \end{equation*}
  This proves (ii).

  To prove (iii) we again follow \cite[Lemma 2.4]{salamon1997}. Fix $p>2$. Let $\Omega(r)=[-r,r]\times [0,1]$. It is straightforward to apply Theorem \ref{theorem:local-reg} and Sobolev embedding \cite[Theorem B.1.11]{mcduffsalamon} to conclude constants $\kappa_{1},\kappa_{2},\kappa_{3},\kappa_{4}$ so that
  \begin{equation*}
    \begin{aligned}
      \norm{u}_{W^{1,p}(\Omega(1))}&\le \kappa_{1}(\norm{D(u)}_{L^{p}(\Omega(1.5))}+\norm{u}_{L^{p}(\Omega(1.5))}),\\
      \norm{u}_{L^{p}(\Omega(1.5))}&\le \kappa_{2}\norm{u}_{W^{1,2}(\Omega(1.5))},\\
      \norm{u}_{W^{1,2}(\Omega(1.5))}&\le \kappa_{3}(\norm{D(u)}_{L^{2}(\Omega(2))}+\norm{u}_{L^{2}(\Omega(2))}),\\
      \norm{D(u)}_{L^{2}(\Omega(2))}&\le \kappa_{4}\norm{D(u)}_{L^{p}(\Omega(2))}.
    \end{aligned}
  \end{equation*}
  The constant $\kappa_{4}$ can be explicitly computed as $4^{1-1/p}$. Combining these yields
  \begin{equation*}
    \norm{u}_{W^{1,p}(\Omega(1))}\le \kappa_{1}(1+\kappa_{2}\kappa_{3}\kappa_{4})\norm{D(u)}_{L^{p}(\Omega(1.5))}+\kappa_{1}\kappa_{2}\kappa_{3}\norm{u}_{L^{2}(\Omega(2))}.
  \end{equation*}
  Using $(a+b)^{p}\le 2^{p}(a^{p}+b^{p})$ we conclude
  \begin{equation*}
    \norm{u}^{p}_{W^{1,p}(\Omega(1))}\le 2^{p}\kappa^{p}_{1}(1+\kappa_{2}\kappa_{3}\kappa_{4})^{p}\norm{D(u)}^{p}_{L^{p}(\Omega(2))}+(\kappa_{1}\kappa_{2}\kappa_{3})^{p}\norm{u}_{L^{2}(\Omega(2))}^{p}.
  \end{equation*}
  It is straightforward to compute that
  \begin{equation*}
    \norm{u}_{L^{2}(\Omega(2))}^{p}=[\int_{-2}^{2}\norm{u}^{2}\d s]^{p/2}\le 4^{p/2-1}\int_{-2}^{2}\norm{u}^{p}\d s.
  \end{equation*}
  The above holds with $\Omega(r)$ replaced by $2k+\Omega(r)$ for the same constants since $D$ is translation invariant. Hence we conclude that there is a constant $C$ so that
  \begin{equation*}
    \norm{u}^{p}_{W^{1,p}(2k+\Omega(1))}\le C(\norm{D(u)}^{p}_{L^{p}(2k+\Omega(2))}+\int_{2k-2}^{2k+2}\norm{u}^{p}\d s).
  \end{equation*}
  Summing over all $k\in \Z$ yields
  \begin{equation*}
    \norm{u}^{p}_{W^{1,p}(\R\times [0,1])}\le 2C(\norm{D(u)}^{p}_{L^{p}(\R\times [0,1])}+\int_{\R}\norm{u}^{p}\d s).
  \end{equation*}
  The factor of $2$ is because the domains $2k+\Omega(2)$ cover $\R\times [0,1]$ ``twice over.'' Finally, using part (ii), we conclude that
  \begin{equation*}
    \norm{u}^{p}_{W^{1,p}(\R\times [0,1])}\le 2C(1+c_{2}^{p})\norm{D(u)}^{p}_{L^{p}(\R\times [0,1])}.
  \end{equation*}
  Setting $c_{3,p}=(2C(1+c_{2}^{p}))^{1/p}$ completes the proof.
\end{proof}

The formula \eqref{eq:inverse-formula} can be used to prove the following regularity result. We still assume that $D=\bd_{s}-A$ is a translation invariant operator on the infinite strip or cylinder. As in the proof of Proposition \ref{prop:multi-est} we assume that $\lambda^{-}_{\mathrm{max}}<0<\lambda^{+}_{\mathrm{min}}$ are the maximal negative and minimal positive eigenvalues of $A$.
\begin{lemma}\label{lemma:decay-regularity}
  Let $q>1$. If $u\in L^{q}$ and $D(u)=\eta$ for smooth $\eta$ with compact support, then $u$ is smooth. Moreover, for all $k,\ell\in \mathbb{N}$ there are constants $C^{+}_{k,\ell}$ and $C^{-}_{k,\ell}$ depending on $\eta$ so that
  \begin{equation}\label{eq:decay-expo}
    \begin{aligned}
      \abs{\bd^{k}\bd^{\ell}u(s,t)}&\le C_{k,\ell}^{+}e^{\lambda_{\max}^{-} s}\text{ as $s\to+\infty$},\\
      \abs{\bd^{k}\bd^{\ell}u(s,t)}&\le C_{k,\ell}^{-}e^{\lambda_{\min}^{+} s}\text{ as $s\to-\infty$}.
    \end{aligned}
  \end{equation}
  Both estimates are of exponential decay type. In particular, $u\in W^{k,q}$ for all $k$ and all $q$.
\end{lemma}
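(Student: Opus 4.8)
The plan is to run the standard three-step argument for solutions of a translation-invariant Cauchy--Riemann operator: first promote $u$ to a smooth solution by local elliptic regularity, then show that the cross-sectional norm $\norm{u(s,\cdot)}_{L^{2}([0,1])}$ decays exponentially at the advertised rates, and finally bootstrap this to pointwise decay of every derivative.

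\textit{Step 1 (smoothness and local estimates).} Since $\eta$ is smooth, $D(u)=\eta$ holds weakly in every coordinate disk or boundary half-disk covering $\R\times[0,1]$, so Lemma~\ref{lemma:coord-ell-reg} (together with the standard interior version of the same estimate) shows that $u$ is smooth up to the boundary and takes values in $\R^{n}$ along $\R\times\set{0,1}$ (in the cylinder case $u$ is simply smooth). Covering a unit box $[s,s+1]\times[0,1]$ by finitely many such charts and translating by the invariance of $D$, we record for every $k$ and $q>1$ a constant $c_{k,q}$, independent of $s$, with
\[
  \norm{u}_{W^{k,q}([s+\tfrac14,s+\tfrac34]\times[0,1])}\le c_{k,q}\big(\norm{u}_{L^{q}([s,s+1]\times[0,1])}+\norm{\eta}_{W^{k-1,q}([s,s+1]\times[0,1])}\big).
\]

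\textit{Step 2 (decay of cross-sections).} Fix $R$ with $\mathrm{supp}(\eta)\subseteq[-R,R]\times[0,1]$. Taking $k=1$ above, applying Sobolev embedding \cite[Theorem~B.1.11]{mcduffsalamon}, and invoking the global hypothesis $u\in L^{q}$ yields a uniform bound $\sup_{s\ge R+1}\norm{u(s,\cdot)}_{L^{\infty}([0,1])}\le C\norm{u}_{L^{q}}$; in particular $s\mapsto\norm{u(s,\cdot)}_{L^{2}([0,1])}$ is bounded on $[R+1,\infty)$. On $(R,\infty)\times[0,1]$ we have $\bd_{s}u=Au$ classically. Let $P_{\pm}$ be the spectral projections of $A$ onto its positive and negative eigenspaces (Proposition~\ref{prop:spectral-prop}); since $A$ is symmetric with $A\ge\lambda^{+}_{\mathrm{min}}$ on $\mathrm{ran}\,P_{+}$, the function $g(s):=\norm{P_{+}u(s,\cdot)}_{L^{2}}^{2}$ satisfies $g'(s)=2\ip{AP_{+}u,P_{+}u}\ge 2\lambda^{+}_{\mathrm{min}}g(s)$, hence $g(s)\ge g(R)e^{2\lambda^{+}_{\mathrm{min}}(s-R)}$; boundedness of $g$ forces $g(R)=0$. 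Therefore $u(s,\cdot)=\exp((s-R)A)u(R,\cdot)\in\mathrm{ran}\,P_{-}$ for $s\ge R$, and $\frac{d}{ds}\norm{u(s,\cdot)}^{2}=2\ip{Au,u}\le 2\lambda^{-}_{\mathrm{max}}\norm{u(s,\cdot)}^{2}$ gives $\norm{u(s,\cdot)}_{L^{2}([0,1])}\le\norm{u(R,\cdot)}_{L^{2}}e^{\lambda^{-}_{\mathrm{max}}(s-R)}$. (Alternatively, once the a priori boundedness is in hand one may justify the integral formula \eqref{eq:inverse-formula} for $u$ and read off the same conclusion directly, since $\eta_{+}$ vanishes to the right of $R$.) The mirror-image argument on $(-\infty,-R)\times[0,1]$, with $\lambda^{+}_{\mathrm{min}}$ replacing $\lambda^{-}_{\mathrm{max}}$, gives $\norm{u(s,\cdot)}_{L^{2}([0,1])}\le Ce^{\lambda^{+}_{\mathrm{min}}s}$ as $s\to-\infty$.

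\textit{Step 3 (bootstrap to all derivatives).} For $s\ge R$ the estimate of Step~1 with $k=1$, $q=2$ gives $\norm{u}_{W^{1,2}([s+\frac14,s+\frac34]\times[0,1])}\le c'e^{\lambda^{-}_{\mathrm{max}}s}$; Sobolev embedding then makes $\norm{u}_{L^{q'}}$ over such a box decay like $e^{\lambda^{-}_{\mathrm{max}}s}$ for every finite $q'$, and feeding this back into Step~1 with arbitrary $k,q$ (and $\eta\equiv 0$ there) yields $\norm{u}_{W^{k,q}([s+\frac14,s+\frac34]\times[0,1])}\le C_{k,q}e^{\lambda^{-}_{\mathrm{max}}s}$. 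Two-dimensional Sobolev embedding upgrades this to the pointwise bound \eqref{eq:decay-expo} (first line); the second line is symmetric. Summing the geometrically decaying box norms over $s\in\Z$ and adding the contribution of the precompact middle region $[-R-1,R+1]\times[0,1]$ shows $u\in W^{k,q}(\R\times[0,1])$ for all $k$ and all $q>1$. The $\R/\Z$ case is identical, using the spectral decomposition of $A$ on $L^{2}(\R/\Z)$. The crux of the whole argument is Step~2 — namely, showing that the exponentially growing $\mathrm{ran}\,P_{+}$-component of $u$ at the right end vanishes; this is the one place where the global integrability $u\in L^{q}$ is essential, being converted via the translation-invariant local elliptic estimate into a uniform pointwise bound on $u$ far from $\mathrm{supp}(\eta)$, which is incompatible with $g(s)\ge g(R)e^{2\lambda^{+}_{\mathrm{min}}(s-R)}$ unless $g(R)=0$. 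The remaining bookkeeping (in particular passing from $L^{2}$-decay of cross-sections to $L^{q}$-decay on boxes so the elliptic estimates iterate cleanly) is routine.
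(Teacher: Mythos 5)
Your proof is correct and follows the same overall architecture as the paper's: local elliptic regularity with translation-invariant constants, the spectral decomposition of $A$ to control the solution beyond $\mathrm{supp}(\eta)$, and a bootstrap through box estimates plus Sobolev embedding to obtain pointwise decay of all derivatives. The one place you genuinely diverge is Step~2: the paper integrates the ODE $\bd_{s}u_{\pm}-Au_{\pm}=\eta_{\pm}$ to the explicit Duhamel formula \eqref{eq:inverse-formula} and reads the decay rates off the support of $\eta$, whereas you run a Gronwall/monotonicity argument on $g(s)=\norm{P_{+}u(s,\cdot)}^{2}$ to kill the exponentially growing spectral component and then differentiate $\norm{u(s,\cdot)}^{2}$ to extract the rate $\lambda^{-}_{\mathrm{max}}$. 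Both hinge on exactly the same input — the a priori boundedness of $\norm{u(s,\cdot)}_{L^{2}}$ far from $\mathrm{supp}(\eta)$, extracted from $u\in L^{q}$ via the translation-invariant elliptic estimates — and you correctly identify this as the crux; your variant avoids justifying convergence of the improper integral, and the formula $u(s,\cdot)=\exp((s-R)A)u(R,\cdot)$ you quote is only meaningful after $P_{+}u(R,\cdot)=0$ is established, which your order of argument respects. One cosmetic slip: in two dimensions $W^{1,q}\not\hookrightarrow L^{\infty}$ for $q\le 2$, so the uniform sup bound in Step~2 requires $k=2$ in the box estimate (or an iteration of the Sobolev embedding); this does not affect the argument.
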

\begin{proof}
  Let $\Omega(r)=[-r,r]\times [0,1]$. By the local elliptic regularity result (Lemma \ref{lemma:coord-ell-reg}) we know that $u$ is smooth, and that it satisfies elliptic estimates of the form
  \begin{equation*}
    \norm{u}_{W^{k+1,q}(s+\Omega(1))}\le c(\norm{\eta}_{W^{k,q}(s+\Omega(2))}+\norm{u}_{L^{q}(s+\Omega(2))}).
  \end{equation*}
  We can take $c$ to be independent of $s$ since $D$ is translation invariant. In particular, it is clear that the $L^{2}([0,1])$ size $\norm{u(s,-)}$ decays as $s\to\infty$.

  Decompose $u=u_{+}+u_{-}$. It is straightforward to show that $u_{\pm}$ are still elements of $C^{\infty}(\R,W^{1,2}([0,1],\C^{n},\R^{n}))$. As in the proof of Proposition \ref{prop:multi-est}, we think of the equation $\bd_{s}u_{+}-Au_{+}=\eta_{+}$ as an ordinary differential equation which we can explicitly solve:
  \begin{equation*}
    \begin{aligned}
      &\bd_{s}(\exp(-sA)u_{+}(s_{0}+s))=\exp(-sA)\eta(s_{0}+s)\\ \implies& \exp(-NA)u_{+}(s_{0}+N)-u_{+}(s_{0})=\int_{0}^{N}\exp(-sA)\eta(s_{0}+s)\,\d s.
    \end{aligned}
  \end{equation*}
  Taking the limit as $N\to\infty$ and using the fact that $\lim_{N\to\infty}\norm{u_{+}(s_{0}+N)}=0$ we conclude that
  \begin{equation*}
    u_{+}(s_{0},t)=-\int_{0}^{\infty}\exp(-sA)\eta_{+}(s_{0}+s,t)\,\d s.
  \end{equation*}
  A similar argument shows that the other equation in \eqref{eq:inverse-formula} also holds, and hence we have:
  \begin{equation*}
    u(s_{0},t)=\int_{-\infty}^{0}\exp(-sA)\eta_{-}(s_{0}+s,t)\,\d s-\int_{0}^{\infty}\exp(-sA)\eta_{+}(s_{0}+s,t)\,\d s.
  \end{equation*}
  Now suppose that $\eta$ is supported in $[-R,R]$. Then for $s_{0}<-R$, the first integral always vanishes, and the second integrand is supported on the region where $s>-s_{0}-R$ and so we have:
  \begin{equation*}
    \norm{u(s_{0},-)}\le e^{(s_{0}+R)\lambda^{+}_{\mathrm{min}}}\int_{-\infty}^{\infty}\norm{\eta_{+}}\d s=C(\eta_{-},R)e^{s_{0}\lambda^{+}_{\mathrm{min}}}\hspace{.5cm}\text{ (as $s_{0}\to-\infty$)}.    
  \end{equation*}
  A similar deduction proves that
  \begin{equation*}
    \norm{u(s_{0},-)}\le C(\eta_{+},R)e^{s_{0}\lambda^{-}_{\mathrm{max}}}\hspace{.5cm}\text{ (as $s_{0}\to+\infty$)}.
  \end{equation*}
  Now, by simply integrating the norm $\norm{u(s,-)}$ over $s\in[s_{0}-2,s_{0}+2]$ we conclude that
  \begin{equation}\label{eq:like-me}
    \begin{aligned}
      \norm{u}_{L^{2}(s+\Omega(2))}&\le C_{2}e^{s\lambda^{-}_{\mathrm{max}}}\text{ as $s\to+\infty$}\\
      \norm{u}_{L^{2}(s+\Omega(2))}&\le C_{2}e^{s\lambda^{+}_{\mathrm{max}}}\text{ as $s\to-\infty$}.
    \end{aligned}
  \end{equation}
  Using the elliptic estimates for $q=2$, we conclude that the $W^{k,2}$ size of $u$ on $s+\Omega(1)$ also decays exponentially like \eqref{eq:like-me}. Since the $C^{\ell}$ size is controlled by the $W^{k+2,2}$ size, we ultimately conclude the desired result \eqref{eq:decay-expo}.
\end{proof}

We can upgrade the injectivity estimates to the following important result:
\begin{theorem}\label{theorem:isomorphism}
  Let $D=\bd_{s}u-Au$ with $A$ a non-degenerate asymptotic operator. Let $q>1$. The induced map $D:W^{1,q}(\R\times [0,1],\C^{n},\R^{n})\to L^{q}(\R\times [0,1],\C^{n})$ is an isomorphism.
\end{theorem}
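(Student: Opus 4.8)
\emph{Overview.} The plan is to show that $D$ is a bijection and then invoke the open mapping theorem. Throughout I use the observation that reflecting $s\mapsto -s$ turns the formal adjoint $D^{\ast}=-\bd_{s}-A$ into an operator of the same type $\bd_{\tilde s}-A$ with $A$ still non-degenerate, so every result proved above for $D$ (estimates (i)--(iii) of Proposition \ref{prop:multi-est}, Lemma \ref{lemma:decay-regularity}, local elliptic regularity) applies verbatim to $D^{\ast}$ as well. Injectivity for all $q>1$ is the easy direction: if $u\in W^{1,q}(\R\times[0,1],\C^{n},\R^{n})$ has $Du=0$, then since $0$ is smooth with compact support Lemma \ref{lemma:decay-regularity} shows $u$ is smooth and exponentially decaying, hence $u\in W^{1,2}$; estimate (i) of Proposition \ref{prop:multi-est} (extended from $C^{\infty}_{0}$ to the Banach completion $W^{1,2}(\R\times[0,1],\C^{n},\R^{n})$ by smooth approximation, or just by truncating $u$ in $s$ and noting the commutator term goes to zero in $L^{2}$) then forces $u=0$. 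The same reasoning shows $D^{\ast}$ is injective on $W^{1,2}$.

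\emph{The a priori estimate $\norm{u}_{W^{1,q}}\le C\norm{Du}_{L^{q}}$.} For $q\ge 2$ this is estimate (iii) of Proposition \ref{prop:multi-est}, extended from $C^{\infty}_{0}$ to all of $W^{1,q}(\R\times[0,1],\C^{n},\R^{n})$ by smooth approximation. For $1<q<2$ I would obtain it by duality, bootstrapping off the $q\ge 2$ case: since $q'>2$, the map $D^{\ast}\colon W^{1,q'}(\R\times[0,1],\C^{n},\R^{n})\to L^{q'}$ is already known to be an isomorphism (the $q\ge 2$ case applied to $D^{\ast}$). Given $u\in W^{1,q}$ and a test function $\phi$ with real boundary values, solve $D^{\ast}v=\phi$ with $\norm{v}_{W^{1,q'}}\le C\norm{\phi}_{L^{q'}}$ and integrate by parts; there is no boundary contribution because $u$ and $v$ both take real boundary values, so $\mathrm{Re}\int\mu_{0}(u,\phi)=\mathrm{Re}\int\mu_{0}(Du,v)$ and hence $\abs{\mathrm{Re}\int\mu_{0}(u,\phi)}\le C\norm{Du}_{L^{q}}\norm{\phi}_{L^{q'}}$. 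Since test functions with real boundary values are $L^{q'}$-dense, this gives $\norm{u}_{L^{q}}\le C\norm{Du}_{L^{q}}$, and then the local elliptic estimates on the translates $[-1,1]\times[0,1]+2k$, $k\in\Z$, summed as in the proof of Proposition \ref{prop:multi-est}(iii), upgrade this to $\norm{u}_{W^{1,q}}\le C'\norm{Du}_{L^{q}}$.

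\emph{Surjectivity (all $q>1$).} The estimate just discussed shows $\mathrm{im}(D)\subset L^{q}$ is closed. To see it is dense I identify the annihilator of $\mathrm{im}(D)$ in $(L^{q})^{\ast}=L^{q'}$: an element $w$ annihilates the image exactly when $\mathrm{Re}\int\mu_{0}(Du,w)=0$ for all $u\in W^{1,q}(\R\times[0,1],\C^{n},\R^{n})$, which is precisely the assertion that $w$ solves $D^{\ast}w=0$ weakly with real boundary values (the boundary conditions being encoded in the weak equation, cf.\ the remark after Proposition \ref{prop:coord-inv-ell-reg}). Local elliptic regularity makes $w$ smooth, Lemma \ref{lemma:decay-regularity} applied to $D^{\ast}$ makes it exponentially decaying and hence in $W^{1,2}$, and injectivity of $D^{\ast}$ on $W^{1,2}$ forces $w=0$. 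Therefore $\mathrm{im}(D)=L^{q}$, so $D$ is a continuous bijection and the open mapping theorem finishes the proof.

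\emph{Expected difficulty.} The routine part is $q\ge 2$, where estimate (iii) does all the analytic work. The main obstacle is the range $1<q<2$: here one has to set up the duality argument with care --- checking that pairing $u$ against $D^{\ast}v$ leaves no boundary term (this is where it matters that both $u$ and $v$ take values in $\R^{n}$ on the boundary), that test functions with real boundary values really are dense in $L^{q'}$, and that the chain ``$q=2$ isomorphism $\Rightarrow$ $D^{\ast}$ isomorphism at $q'>2$ $\Rightarrow$ estimate for $D$ at $q<2$'' is genuinely non-circular. The annihilator computation for surjectivity is also slightly delicate because the weak formulation silently carries the totally-real boundary condition $F^{\ast}$ for $D^{\ast}$.
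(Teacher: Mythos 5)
Your proof is correct, and its skeleton is the same as the paper's: the range $q\ge 2$ rests on Proposition \ref{prop:multi-est}(iii), and the range $1<q<2$ is handled by duality against $D^{*}=-\bd_{s}-A$ at the conjugate exponent $q'>2$, where the first case (applied to $D^{*}$, which after $s\mapsto -s$ is again of the form $\bd_{s}-A$ with $A$ non-degenerate) already yields an isomorphism. The two places where you genuinely diverge are worth recording. First, to upgrade $\norm{u}_{L^{q}}\le C\norm{Du}_{L^{q}}$ to the full $W^{1,q}$ estimate for $q<2$, you sum the local elliptic estimates over the translates $2k+[-1,1]\times[0,1]$; the paper instead introduces the norm $\norm{\cdot}_{-1,q}$ on $(W^{1,q'})^{*}$, proves $\norm{u}_{L^{q}}\le c\norm{Du}_{-1,q}$, and applies this inequality to $\bd_{s}u$ together with $D(\bd_{s}u)=\bd_{s}D(u)$ and $\norm{\bd_{s}D(u)}_{-1,q}\le c\norm{Du}_{L^{q}}$. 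Your route avoids the negative Sobolev space and the question of which space $\bd_{s}u$ lives in, at the price of re-invoking the boundary elliptic estimates; both are sound. Second, for surjectivity the paper is constructive: it solves $Du=\eta$ explicitly via \eqref{eq:inverse-formula} for smooth compactly supported $\eta$ and uses Lemma \ref{lemma:decay-regularity} to check the solution lies in $W^{1,q}$, so the closed image contains a dense set. You instead identify the annihilator of the image in $L^{q'}$ with weak solutions of $D^{*}w=0$ (carrying the boundary condition), and kill them by elliptic regularity, exponential decay, and injectivity of $D^{*}$ on $W^{1,2}$ --- the same pattern the paper later uses for the global Fredholm property. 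This is non-circular, since the regularity and decay lemmas do not depend on the present theorem, but it leans on the full weak-regularity package where an explicit inverse on a dense subspace suffices.
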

\begin{proof}
  First we prove the case when $q\ge 2$. Part (iii) of Proposition \ref{prop:multi-est} implies that $D$ is injective and has closed image. Thus it suffices to prove that the image of $D$ is dense. If $\eta$ is a smooth function with compact support, then \eqref{eq:inverse-formula} gives an explicit formula for some $u$ satisfying $D(u)=\eta$. As in the proof of Lemma \ref{lemma:decay-regularity}, $u$ is smooth and the formula \eqref{eq:inverse-formula} implies that $u$ and its derivatives decay exponentially as $s\to\pm\infty$, hence $u\in W^{1,p}$.

  Now we prove the case when $q<2$. We follow the argument outlined in \cite[Exercise 2.5]{salamon1997}. The idea is to prove an injectivity estimate for $D:L^{q}\to W^{-1,q}$, and then upgrade this to a $D:W^{1,q}\to L^{q}$ injectivity estimate.

  By definition, we set $W^{-1,q}=(W^{1,p})^{*}$ where $p$ is H\"older dual to $q$, and
  \begin{equation*}
    \norm{u}_{-1,q}=\sup_{\norm{\varphi}_{1,p}=1}\ip{u,\varphi}.
  \end{equation*}

  Let $D^{*}=-\bd_{s}-A$. By the above results (applied to $-D^{*}$) we conclude $D^{*}$ is an isomorphism $W^{1,p}\to L^{p}$. Thus
  \begin{equation*}
    c^{-1}\norm{u}_{L^{q}}\le \sup_{\norm{\varphi}_{1,p}=1}\ip{u,D^{*}(\varphi)}\le c\norm{u}_{L^{q}}.
  \end{equation*}
  Observe that $D^{*}=-\bd_{s}-A$ is the formal adjoint to $D$, and hence (using distributional definitions) we have:
  \begin{equation*}
    \norm{u}_{L^{q}}\le c\norm{D(u)}_{-1,q}.
  \end{equation*}
  In particular, if $v\in W^{1,q}$, then we can apply the above to $u=\bd_{s}v$ and conclude that
  \begin{equation*}
    \norm{\bd_{s}u}_{L^{q}}\le c\norm{D(\bd_{s}u)}_{-1,q}.
  \end{equation*}
  Now it is clear that, in the sense of distributions, we have $D(\bd_{s}u)=\bd_{s}D(u)$. We claim that
  \begin{equation*}
    \norm{\bd_{s}D(u)}_{-1,q}\le c_{2}\norm{D(u)}_{L^{q}}.
  \end{equation*}
  This is easy to see using the above variational definition of the $W^{-1,q}$ norm. Thus we conclude that
  \begin{equation*}
    \norm{\bd_{s}u}_{L^{q}}\le c_{3}\norm{D(u)}_{L^{q}}.
  \end{equation*}
  It is clear that $W^{-1,q}$ norm is less than the $L^{q}$ norm, hence $\norm{u}_{L^{q}}\le c\norm{D(u)}_{L^{q}}$. It then follows easily that $\norm{\bd_{t}u}_{L^{q}}\le c_{4}\norm{D(u)}_{L^{q}}$, and so we conclude the desired injectivity estimate
  \begin{equation*}
    \norm{u}_{W^{1,q}}\le c\norm{D(u)}_{L^{q}}.
  \end{equation*}
  It follows easily that $D(u)$ has closed range and hence it suffices to prove that the image of $D$ is dense. The arguments given in \eqref{eq:inverse-formula} and Lemma \ref{lemma:decay-regularity} show that we can (explicitly) solve for compactly supported smooth functions and the solutions are certainly of class $W^{1,q}$. Thus $D$ is surjective. This completes the proof that $D$ is an isomorphism.
\end{proof}

\subsection{Proof of the Fredholm property}
\label{sec:proof-of-fredholm}
The main result of this section is the following:
\begin{prop}
  Let $p>1$ and let $D$ be an asymptotically non-degenerate Cauchy-Riemann operator for the data $(\Sigma,\Gamma_{\pm},E,F,C,[\tau])$. Then the induced maps
  \begin{equation*}
    D:W^{1,p}(E,F)\to L^{p}(\Lambda^{0,1}\otimes E)\text{ and }D^{*}:W^{1,p}(E,F)\to L^{p}(\Lambda^{0,1}\otimes E)
  \end{equation*}
  are Fredholm.
\end{prop}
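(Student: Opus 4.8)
The plan is to reduce everything to a single \emph{semi-Fredholm estimate}
\begin{equation*}
  \norm{\xi}_{W^{1,p}(E,F)}\le C\left(\norm{D\xi}_{L^{p}}+\norm{\xi}_{L^{p}(\Sigma(\rho))}\right)\qquad\text{for all }\xi\in W^{1,p}(E,F),
\end{equation*}
valid for $\rho$ sufficiently large, together with the analogous estimate for $D^{*}$ (on its natural domain $W^{1,p}(\Lambda^{0,1}\otimes E,F^{*})$). Granting this, the standard functional-analytic lemma --- a bounded operator $T\colon X\to Y$ of Banach spaces satisfying $\norm{x}_{X}\le C(\norm{Tx}_{Y}+\norm{Kx}_{Z})$ with $K\colon X\to Z$ compact has finite-dimensional kernel and closed range --- applies with $K$ the restriction map $W^{1,p}(E,F)\to L^{p}(\Sigma(\rho))$; this $K$ is compact because $\Sigma(\rho)$ is precompact and $W^{1,p}(\Sigma(\rho))\hookrightarrow L^{p}(\Sigma(\rho))$ is a compact inclusion (Rellich). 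Hence $D$, and likewise $D^{*}$, has finite-dimensional kernel and closed range.

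To prove the estimate I would glue interior, boundary, and cylindrical-end contributions. On the precompact region $\Sigma(\rho+1)$, a finite cover by coordinate charts together with the interior estimate of Theorem \ref{theorem:local-reg} and its boundary counterpart Lemma \ref{lemma:coord-ell-reg} give
\begin{equation*}
  \norm{\xi}_{W^{1,p}(\Sigma(\rho+1))}\le C\left(\norm{D\xi}_{L^{p}}+\norm{\xi}_{L^{p}(\Sigma(\rho+2))}\right).
\end{equation*}
On an end $C_{z}$, equation \eqref{eq:trivialization} writes $D$ in the asymptotic trivialization $\tau$ as $\bd_{s}+i\bd_{t}+S(s,t)$, and \eqref{eq:asymptotic} gives $\sup_{t}\abs{S(s,t)-S_{\infty}(t)}\to 0$ as $\abs{s}\to\infty$. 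Choose a cutoff $\beta_{z}$ equal to $1$ on $C_{z}(\rho+1)$ and supported in $C_{z}(\rho)$, extend $\beta_{z}\xi$ by zero to the full strip $\R\times[0,1]$ (or cylinder $\R\times\R/\Z$), and compare $D$ with the translation-invariant model $\bd_{s}-A^{\tau}_{z}=\bd_{s}+i\bd_{t}+S_{\infty}(t)$, whose inverse is bounded by Theorem \ref{theorem:isomorphism}:
\begin{equation*}
  \norm{\beta_{z}\xi}_{W^{1,p}}\le c\,\norm{(\bd_{s}-A^{\tau}_{z})(\beta_{z}\xi)}_{L^{p}}\le c\,\norm{D(\beta_{z}\xi)}_{L^{p}}+c\left(\sup_{C_{z}(\rho)}\abs{S-S_{\infty}}\right)\norm{\beta_{z}\xi}_{W^{1,p}}.
\end{equation*}
Taking $\rho$ large enough that $c\sup_{C_{z}(\rho)}\abs{S-S_{\infty}}\le\tfrac12$ absorbs the last term, while the zeroth-order commutator $[D,\beta_{z}]$ (supported in $C_{z}(\rho)\setminus C_{z}(\rho+1)\subset\Sigma(\rho+2)$) contributes only $\norm{\xi}_{L^{p}(\Sigma(\rho+2))}$; this yields $\norm{\xi}_{W^{1,p}(C_{z}(\rho+1))}\le C(\norm{D\xi}_{L^{p}}+\norm{\xi}_{L^{p}(\Sigma(\rho+2))})$. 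Since $\Sigma(\rho+1)$ and the $C_{z}(\rho+1)$ cover $\dot\Sigma$, summing these gives the estimate. The identical argument applies to $D^{*}$, whose coordinate form $-\bd_{s}+i\bd_{t}+S(s,t)^{T}$ is asymptotic to $-\bd_{s}-A^{\tau}_{z}$ (as recorded at the end of Section \ref{sec:formal-adjoints}, using that $S_{\infty}$ is symmetric), and $-\bd_{s}-A^{\tau}_{z}$ is an isomorphism on the full strip/cylinder by Theorem \ref{theorem:isomorphism} after conjugating by $s\mapsto -s$; moreover the estimate holds for every exponent $q>1$.

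It remains to see the cokernels are finite-dimensional. Using $(L^{p})^{*}=L^{p'}$ via the pairing $(\alpha,\beta)\mapsto\mathrm{Re}\int_{\dot\Sigma}\mu(\alpha,\beta)\,\d\mathrm{vol}$, the cokernel of $D\colon W^{1,p}(E,F)\to L^{p}(\Lambda^{0,1}\otimes E)$ is the annihilator of its (closed) image, namely the set of $\eta\in L^{p'}(\Lambda^{0,1}\otimes E)$ with $\mathrm{Re}\int\mu(D\xi,\eta)\,\d\mathrm{vol}=0$ for all $\xi\in\Gamma_{0}(E,F)$ --- precisely the condition that $D^{*}\eta=0$ weakly. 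By Proposition \ref{prop:coord-inv-ell-reg} every such $\eta$ is smooth and lies in $\Gamma(\Lambda^{0,1}\otimes E,F^{*})$, and by the exponential decay in the ends (cf.\ Lemma \ref{lemma:decay-regularity}, applied where the leading part of $D^{*}$ is translation invariant) $\eta$ and all its derivatives decay, so $\eta\in W^{1,q}(\Lambda^{0,1}\otimes E,F^{*})$ for every $q>1$. Hence $\mathrm{coker}(D)\cong\ker(D^{*}\colon W^{1,p'}\to L^{p'})$, finite-dimensional by the estimate above; symmetrically $\mathrm{coker}(D^{*})\cong\ker(D\colon W^{1,p'}\to L^{p'})$ is finite-dimensional. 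Thus $D$ and $D^{*}$ are Fredholm. I expect the genuine obstacle to be the end estimate: one really needs the sharp isomorphism statement of Theorem \ref{theorem:isomorphism} for the translation-invariant model (an estimate on a finite cylinder would not let the error $S-S_{\infty}$ be absorbed into the left side), and one must carry the totally real boundary conditions --- $F$ for $D$, $F^{*}$ for $D^{*}$ --- correctly through the reflection and regularity arguments, which is exactly what Lemma \ref{lemma:coord-ell-reg} and Proposition \ref{prop:coord-inv-ell-reg} were built to handle.
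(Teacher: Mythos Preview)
Your proposal is correct and follows essentially the same approach as the paper: split $\dot\Sigma$ into a precompact piece (handled by the local elliptic estimates of Lemma~\ref{lemma:coord-ell-reg}) and cylindrical ends (handled by Theorem~\ref{theorem:isomorphism} for the translation-invariant model, with the error $S-S_{\infty}$ absorbed for $\rho$ large), deduce the semi-Fredholm estimate for both $D$ and $D^{*}$, and then identify the cokernel of $D$ with $\ker D^{*}$ via Hahn--Banach and the regularity result Proposition~\ref{prop:coord-inv-ell-reg}. One small point: your appeal to Lemma~\ref{lemma:decay-regularity} for the decay of cokernel elements is slightly off, since that lemma assumes a compactly supported right-hand side whereas here $(-\bd_{s}-A)\eta=\Delta^{*}\eta$ with $\Delta^{*}\to 0$; the paper instead invokes the injectivity estimate for $\bd_{s}+A$ directly (one cuts off $\eta$ in the end and uses that the resulting right-hand side is in $L^{q}$), which gives $\eta\in W^{1,q}$ without needing the sharper exponential decay.
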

Similar arguments can be found in \cite[Section 2.3]{salamon1997}, \cite[Proposition 3.1.30]{schwarz-diss}, and \cite[Section 4.5]{wendl-sft}. 
\begin{proof}
  Let $\varphi_{\rho}$ be a cutoff function supported in the ends which equals $0$ on $\Sigma(\rho-1)$ and equals $1$ on $C(\rho)$. We can choose $\varphi_{\rho}$ so that its derivatives are bounded as $\rho\to\infty$. 

  We observe that
  \begin{equation*}
    D(\varphi_{\rho}u)=\bd_{s}(\varphi_{\rho}u)-Au(\varphi_{\rho}u)+\Delta(s)\varphi_{\rho}u,
  \end{equation*}
  where $\Delta(s)$ is a lower order term which converges to $0$ as $s\to\pm \infty$. We know that $\bd-A:W^{1,p}\to L^{p}$ is an isomorphism and so $\norm{\varphi_{\rho}u}_{W^{1,p}}\le C\norm{(\bd_{s}-A)(\varphi_{\rho}u)}$ for some $C$. We estimate
  \begin{equation*}
    \begin{aligned}
      \norm{\varphi_{\rho}u}_{W^{1,p}}&\le C(\norm{D(\varphi_{\rho}u)}_{L^{p}}+\norm{\Delta(s)\varphi_{\rho}u}_{L^{p}})\\
      &\le C(\norm{D(u)}_{L^{p}}+\norm{\Delta(s)\varphi_{\rho}u}_{L^{p}}+\norm{\db \varphi_{\rho}\cdot u}_{L^{p}})\\
      \implies \norm{\varphi_{\rho}u}_{L^{p}}&\le C^{\prime}(\norm{D(u)}_{L^{p}}+\norm{u}_{L^{p}(\Sigma(\rho))}),
    \end{aligned}
  \end{equation*}
  where we pick $\rho$ large enough so that $C\abs{\Delta(s)}<0.5$ on the support of $\varphi_{\rho}$. We also use that $\db\varphi_{\rho}$ is supported in $\Sigma(\rho)$.

  Next we combine the local elliptic estimates from \ref{lemma:coord-ell-reg} (to finitely many disks covering $\Sigma(\rho)$) and conclude some constant $C(\rho)$ so that
  \begin{equation*}
    \norm{(1-\varphi_{\rho})u}_{W^{1,p}}\le C(\rho)(\norm{Du}_{L^{p}}+\norm{u}_{L^{p}(\Sigma(\rho+1))}).
  \end{equation*}  
  Combining our two estimates (and updating the constant) yields
  \begin{equation}\label{eq:semi-fredholm}
    \norm{u}_{W^{1,p}}\le C(\norm{Du}_{L^{p}}+\norm{u}_{L^{p}(\Sigma(\rho+1))}).
  \end{equation}
  Crucially, $\rho$ does not depend on $u$. Since $W^{1,p}\to L^{p}(\Sigma(\rho+1))$ (inclusion followed by restriction) is a compact operator, we conclude from \eqref{eq:semi-fredholm} that $D$ is semi-Fredholm; i.e.,\ has closed image and finite dimensional kernel. See \cite[Appendix A]{mcduffsalamon} for the argument. The same argument shows that $D^{*}$ is semi-Fredholm.

  Now suppose that $D$ were not surjective. Since the image of $D$ is closed, we can apply the Hahn-Banach theorem to find $w\in L^{q}(\Lambda^{0,1}\otimes E)$ so that $\ip{D(u),w}=0$ for all $u\in W^{1,p}(E,F)$ and $w\ne 0$. But then $w$ is smooth and takes boundary values in $F^{*}$ by the local regularity results. Moreover, in the ends we have $(\bd_{s}+A)w=\Delta^{*} w$ which implies that $w\in W^{1,q}(\Lambda^{0,1}\otimes E,F^{*})$ (using the injectivity estimates for $\bd_{s}+A$). We conclude that $w\in \ker D^{*}$. Since $D^{*}$ is semi-Fredholm, its kernel is finite dimensional. This implies that $\mathrm{coker}\,D$ is finite dimensional, and this completes the proof that $D$ is Fredholm. The same argument works for $D^{*}$.
\end{proof}

\section{Conley-Zehnder indices and kernel gluing}\label{sec:cz-index}
In this section our goal is to prove that the index behaves additively under a gluing operation. See \cite[Section 3.2]{schwarz-diss} for a similar argument.

Throughout this section we fix an asymptotic trivialization $\tau$ (i.e.,\ fix $\tau_{z}$ for each $z\in \Gamma$). Suppose that $D$ is an asymptotic operator on $(\Sigma,\Gamma_{\pm},E,F)$ whose restriction to the cylindrical ends $C_{z}$ equals $D=\bd_{s}-A_{z}$ with respect to $\tau_{z}$ and where each $A_{z}$ is a non-degenerate asymptotic operator. We have shown that $D$ is Fredholm. Moreover, it is clear that if $D^{\prime}$ has the same asymptotic operators $A_{z}$ (in the same trivialization), then we can homotope $D$ to $D^{\prime}$ while remaining in the space of Fredholm operators. Then the index of $D$ will equal the index of $D^{\prime}$. Therefore, the index depends only on the choice of non-degenerate asymptotic operators $z\mapsto A_{z}$ (and $(\Sigma,\Gamma,E,F)$ of course).

Introduce the reference operator $D^{\mathrm{al}}$ whose restrictions to the cylindrical ends equals $\bd_{s}+i\bd_{t}+C$ with respect to the same trivialization $\tau$. Here $C$ is the matrix of complex conjugation, i.e., in each end we have $D^{\mathrm{al}}(u)=\bd_{s}u+i\bd_{t}u+\cl{u}$. The ``$\mathrm{al}$'' stands for ``anti-linear.'' The associated asymptotic operator is $A^{\mathrm{al}}=-i\bd_{t}-C$. In other words, $D^{\mathrm{al}}$ has all of its asymptotics equal to $A^{\mathrm{al}}$.

In this section we will prove the following formula for index difference
\begin{equation*}
  \mathrm{ind}(D)-\mathrm{ind}(D^{\mathrm{al}})=\sum_{z\in \Gamma_{+}}\mu_{\mathrm{CZ}}(A_{z})-\sum_{z\in \Gamma_{+}}\mu_{\mathrm{CZ}}(A_{z}),
\end{equation*}
where $\mu_{\mathrm{CZ}}(A_{z})$ is the \emph{Conley-Zehnder} index of $A_{z}$, defined in Section \ref{sec:cz-indexs} below. This formula determines how the index depends on changing asymptotic operators (i.e.\ we can compute $\mathrm{ind}(D_{1})-\mathrm{ind}(D_{2})$ for any pair $D_{1},D_{2}$). In Section \ref{sec:final-frontier} we will prove that $\mathrm{ind}(D^{\mathrm{al}})=n\mathrm{X}+\mu_{\mathrm{Mas}}^{\tau}(E,F)$, which will complete the proof of the index formula.

\subsection{Conley-Zehnder indices}
\label{sec:cz-indexs}

First we need to show that $D^{\mathrm{al}}$ is actually Fredholm. This follows from:
\begin{lemma}\label{lemma:reference-non-degen}
  For $\sigma>0$, the reference operator $A^{\mathrm{al},\sigma}=-i\bd_{t}-\sigma C$ is non-degenerate.
\end{lemma}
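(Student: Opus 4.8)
The plan is to compute the kernel of $A^{\mathrm{al},\sigma}$ acting on $W^{1,2}([0,1],\C^n,\R^n)$ and show it is trivial, since by Proposition \ref{lemma:nondegenerate-A} (or rather its proof) non-degeneracy is equivalent to injectivity on smooth functions, and kernel elements are automatically smooth by one-dimensional elliptic regularity. An element of the kernel satisfies $-i\bd_t u - \sigma C u = 0$, i.e.\ $\bd_t u = i\sigma \cl{u}$, with the boundary conditions $u(0), u(1)\in \R^n$.

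The key observation is that complex conjugation $C$ is anti-linear, so the equation $\bd_t u = i\sigma\cl u$ is a \emph{real-linear} ODE; it is cleanest to decompose $u = a + ib$ with $a, b$ real $\R^n$-valued functions. Then $\bd_t a + i\bd_t b = i\sigma(a - ib) = \sigma b + i\sigma a$, so the system becomes $\bd_t a = \sigma b$ and $\bd_t b = \sigma a$. This is a decoupled $2\times 2$ real system (component-wise in $\R^n$) with general solution $a(t) = \alpha e^{\sigma t} + \beta e^{-\sigma t}$ and $b(t) = \alpha e^{\sigma t} - \beta e^{-\sigma t}$ for constant vectors $\alpha,\beta\in\R^n$. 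The boundary condition at $t = 0$ forces $b(0) = 0$, i.e.\ $\alpha = \beta$; the boundary condition at $t = 1$ forces $b(1) = 0$, i.e.\ $\alpha e^{\sigma} - \beta e^{-\sigma} = 0$. Combining, $\alpha(e^{\sigma} - e^{-\sigma}) = 0$, and since $\sigma > 0$ we get $\alpha = \beta = 0$, hence $u \equiv 0$. This proves injectivity, hence non-degeneracy.

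I do not anticipate any serious obstacle here — the content is an explicit two-line ODE computation. The only point requiring a word of care is the logical link between ``non-degenerate'' (as defined in Section \ref{sec:asymptotics}, i.e.\ injectivity of $A$ on smooth sections satisfying the boundary conditions) and triviality of the kernel; this is immediate since any $L^2$ or $W^{1,2}$ solution of the first-order ODE is smooth. One should also remark that the identical argument applies to the $\R/\Z$ case: there the solution must be $1$-periodic, $a(0) = a(1)$ and $b(0) = b(1)$, which again forces $\alpha(e^\sigma - 1) = 0$ and $\beta(e^{-\sigma}-1) = 0$, hence $u\equiv 0$. Thus $A^{\mathrm{al},\sigma}$ is non-degenerate for every $\sigma > 0$, and in particular $D^{\mathrm{al}}$ (the case $\sigma = 1$) has non-degenerate asymptotics and is therefore Fredholm by the results of the previous section.
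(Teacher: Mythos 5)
Your proof is correct and follows essentially the same route as the paper: writing $u=a+ib$, solving the real-linear ODE explicitly, and observing that the boundary (or periodicity) conditions force the solution to vanish — the paper just packages the same computation as $u_{j}=x_{j}(0)(\cosh(\sigma t)+i\sinh(\sigma t))$ and notes $\sinh(\sigma t)>0$ for $t>0$. Your added remarks on the equivalence of non-degeneracy with triviality of the $W^{1,2}$ kernel and on the $\R/\Z$ case are both accurate.
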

\begin{proof}
  We prove the strip case, leaving the (very similar) $\R/\Z$ case to the reader. Suppose $u:[0,1]\to \C^{n}$ takes real values when $t=0$ and $A^{\mathrm{al},\sigma}(u)=0$. A straightforward computation shows that  
  \begin{equation*}
    \bd_{t}u_{j}=i\sigma\cl{u_{j}}\iff \bd_{t}(x_{j}+iy_{j})=\sigma(y_{j}+ix_{j})\implies u_{j}=x_{j}(0)(\cosh(\sigma t)+i\sinh(\sigma t)).
  \end{equation*}
  In particular, since $\sinh(\sigma t)>0$ for $t>0$, we cannot also have $u_{j}(1)\in \R^{n}$. This proves that $A^{\mathrm{al},\sigma}$ is non-degenerate. 
\end{proof}

Now fix a non-degenerate asymptotic operator $A$. We will define a special Cauchy-Riemann operator on the infinite strip/cylinder which will interpolate between $\bd_{s}-A^{\mathrm{al}}$ and $\bd_{s}-A$. Let $s\mapsto \beta(s)$ be a $[0,1]$-valued bump function which equals $0$ on $(-\infty,0]$ and $1$ on $[1,\infty)$, and define:
\begin{equation*}
  D^{\mathrm{CZ}}_{A}:=\bd_{s}-(1-\beta(s))A^{\mathrm{al}}-\beta(s)A.
\end{equation*}
As a corollary to Lemma \ref{lemma:reference-non-degen}, the operator $D_{A}^{\mathrm{CZ}}$ is Fredholm. We define the \emph{Conley-Zehnder index} of $A$ as the Fredholm index of $D_{A}^{\mathrm{CZ}}$:
\begin{equation*}
  \mu_{\mathrm{CZ}}(A):=\mathrm{ind}(D_{A}^{\mathrm{CZ}}).
\end{equation*}
It is clear that $\mu_{\mathrm{CZ}}(A)$ is independent of the choice of $\beta$ used to define $D^{\mathrm{CZ}}_{A}$, since any deformation of bump functions will keep $D_{A}^{\mathrm{CZ}}$ in the space of Fredholm operators.

\begin{remark}
  See \cite[page 595]{floer-ham} for an argument which explains why $\mathrm{ind}(D_{A}^{\mathrm{CZ}})$ is the \emph{spectral flow} of the path of self-adjoint operators $A(s)=(1-\beta(s))A^{\mathrm{al}}+\beta(s)A$.
\end{remark}

Note that, since $\bd_{s}-A^{\mathrm{al}}$ is an isomorphism $W^{1,p}\to L^{p}$ (Theorem \ref{theorem:isomorphism}) we conclude that $\mu_{\mathrm{CZ}}(A^{\mathrm{al}})=0$.

The main result of this section is:
\begin{prop}\label{prop:main-gluing}
  Let $D,D^{\mathrm{al}}$ be as above, i.e., for a fixed choice of trivialization $\tau$ and for each $z\in \Gamma$ the restrictions of $D,D^{\mathrm{al}}$ are
  \begin{equation*}
    \begin{aligned}
      D&=\bd_{s}-A_{z},\hspace{1cm}&D^{\mathrm{al}}&=\bd_{s}-A^{\mathrm{al}}.
    \end{aligned}
  \end{equation*}
  Note that the operators $D^{\mathrm{al}}$ and $A_{z}$ depend on the choice of triviliazation. We have
  \begin{equation*}
    \mathrm{ind}(D)=\mathrm{ind}(D^{\mathrm{al}})+\sum_{z\in \Gamma_{+}}\mu_{\mathrm{CZ}}(A_{z})-\sum_{z\in \Gamma_{-}}\mu_{\mathrm{CZ}}(A_{z}).
  \end{equation*}
\end{prop}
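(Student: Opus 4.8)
The plan is to prove the index formula by a gluing argument that decomposes the surface $\dot\Sigma$ along the necks of the cylindrical ends at parameter $\rho$, matching $D$ (with asymptotics $A_z$) to the reference operator $D^{\mathrm{al}}$ (with asymptotics $A^{\mathrm{al}}$) across a collection of copies of the interpolating operators $D^{\mathrm{CZ}}_{A_z}$. Concretely, I would first observe that by the homotopy-invariance remark preceding the proposition, both $D$ and $D^{\mathrm{al}}$ may be assumed to be \emph{exactly} translation-invariant on the ends $C_z(1)$, equal to $\bd_s - A_z$ and $\bd_s - A^{\mathrm{al}}$ respectively. Then I would build, for each positive puncture $z \in \Gamma_+$, a ``glued'' operator on $\dot\Sigma$ whose end near $z$ now reads $\bd_s - A^{\mathrm{al}}$ but which on a long inserted finite cylinder $[0,\ell]\times[0,1]$ (or $\times\R/\Z$) interpolates via $D^{\mathrm{CZ}}_{A_z}$; similarly for $z\in\Gamma_-$, where the orientation of the end is reversed, which is the source of the sign. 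The core claim is then the \textbf{linear gluing formula}: for $\ell$ large, the glued operator is Fredholm with index equal to $\mathrm{ind}(D) + \sum_{z\in\Gamma_-}\mu_{\mathrm{CZ}}(A_z) - \sum_{z\in\Gamma_+}\mu_{\mathrm{CZ}}(A_z)$ before the final simplification, i.e. after gluing in all the $D^{\mathrm{CZ}}_{A_z}$ pieces one arrives at an operator with all asymptotics $A^{\mathrm{al}}$, which is homotopic through Fredholm operators to $D^{\mathrm{al}}$ itself.

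The key technical step is the \emph{linear kernel gluing operation} alluded to in Section~\ref{sec:cz-index}. The mechanism is standard: given two Cauchy-Riemann-type operators $D_1, D_2$ on surfaces-with-cylindrical-ends that share a matching asymptotic operator $A$ along an end of opposite sign, one forms the connect-sum $D_1 \#_\ell D_2$ on the glued surface with a neck of length $\ell$, and shows
\begin{equation*}
  \mathrm{ind}(D_1 \#_\ell D_2) = \mathrm{ind}(D_1) + \mathrm{ind}(D_2)
\end{equation*}
for $\ell \gg 0$. The proof of this is a pregluing/Newton-iteration argument: one constructs an approximately-isomorphic map between $\ker D_1 \oplus \ker D_2$ and $\ker(D_1\#_\ell D_2)$ (and likewise for cokernels) using cutoff functions supported away from the neck, controlled by the exponential decay estimates of Lemma~\ref{lemma:decay-regularity} and the uniform injectivity estimate of Proposition~\ref{prop:multi-est}(iii) on the neck; the point is that an element of $\ker(D_1\#_\ell D_2)$ is forced to be exponentially small in the middle of the neck, so it decomposes into near-kernel elements of the pieces. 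Here I would take $D_1 = D$ (or the partially-glued operator) and $D_2 = D^{\mathrm{CZ}}_{A_z}$, whose index is by \emph{definition} $\mu_{\mathrm{CZ}}(A_z)$; since the free end of $D^{\mathrm{CZ}}_{A_z}$ carries $A^{\mathrm{al}}$, iterating over all punctures replaces every $A_z$ by $A^{\mathrm{al}}$. The sign bookkeeping is the one subtlety: gluing a $D^{\mathrm{CZ}}_{A_z}$ onto a \emph{positive} end (where $s\to+\infty$ and $\mu_{\mathrm{CZ}}$ is the index of $\bd_s$ going from $A^{\mathrm{al}}$ to $A$) adds $\mu_{\mathrm{CZ}}(A_z)$, whereas gluing onto a \emph{negative} end requires the ``reversed'' interpolating operator $\bd_s - (1-\beta)A - \beta A^{\mathrm{al}}$, whose index is $-\mu_{\mathrm{CZ}}(A_z)$ (this reversal of the path reverses the spectral flow); carefully tracking which of $\Gamma_\pm$ gets which sign yields the stated formula.

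I expect the \textbf{main obstacle} to be making the linear gluing estimate fully rigorous in this boundary-puncture setting — in particular, checking that the exponential decay and injectivity estimates hold uniformly across both the strip ($[0,1]$) and cylinder ($\R/\Z$) cases and interact correctly with the totally real boundary condition $F$ during pregluing, and verifying the surjectivity half of the gluing isomorphism (that every near-kernel pair of the pieces actually perturbs to a genuine kernel element of the glued operator via a contraction-mapping/right-inverse argument). The ingredients are all in hand — Theorem~\ref{theorem:isomorphism} gives that $\bd_s - A^{\mathrm{al}}$ and $\bd_s - A$ are isomorphisms on the infinite cylinder so the relevant uniform right inverses on the neck exist, Lemma~\ref{lemma:decay-regularity} gives the decay, and Lemma~\ref{lemma:coord-ell-reg} handles interior regularity — but assembling them into a clean statement of the ``linear kernel gluing operation'' and its proof is where the real work lies. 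Once that is done, the rest of the proposition is formal: apply gluing once per puncture, sum the index contributions, and homotope the fully-glued operator (all asymptotics $A^{\mathrm{al}}$) to $D^{\mathrm{al}}$.
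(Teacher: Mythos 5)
Your proposal is correct and follows essentially the same route as the paper: deform $D$ so that each end becomes a translated copy of the interpolating operator, prove a linear gluing formula $\mathrm{ind}(D_{1}\mathbin{\#}_{\ell}D_{2})=\mathrm{ind}(D_{1})+\mathrm{ind}(D_{2})$ via pregluing together with a uniformly bounded right inverse and the injectivity estimates on the neck, and handle negative punctures with the reversed interpolating operator $D^{\mathrm{ZC}}_{A}$. The only technical differences are that the paper first \emph{stabilizes} $D^{\pm}$ by compactly supported cokernel elements so that only kernels need to be matched (your version, matching kernels and cokernels separately, also works but requires the dual argument twice), and that the paper obtains $\mathrm{ind}(D^{\mathrm{ZC}}_{A})=-\mu_{\mathrm{CZ}}(A)$ by applying the same gluing lemma to $D^{\mathrm{ZC}}_{A}$ glued to $D^{\mathrm{CZ}}_{A}$, which deforms to the isomorphism $\bd_{s}-A$, rather than by appealing to spectral-flow reversal.
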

\begin{proof}
  The proposition follows from a kernel gluing lemma for stabilized operators (Lemma \ref{lemma:kernel-gluing}) proved below, as explained in Remark \ref{remark:main-gluing}.
\end{proof}

The kernel gluing argument we will use is similar to the one used in \cite[Section 3.2]{schwarz-diss}. See also \cite[Proposition 9]{fh-coherent}. The rough idea is to deform $D$ by a parameter $\rho$ so that it equals a ``glued'' operator $D^{\rho}$ obtained from $D^{\mathrm{al}}$ by gluing on the asymptotic operator $D^{\mathrm{CZ}}_{A_{z}}$ for each $z\in \Gamma$, as suggested by in Figure \ref{fig:gluing-step}.

Note that at negative ends we actually need to glue $D^{\mathrm{CZ}}_{A_{z}}$ on ``backwards.'' For this reason, we define:
\begin{equation*}
  D^{\mathrm{ZC}}_{A}:=\bd_{s}-(1-\beta(s))A-\beta(s)A^{\mathrm{al}},
\end{equation*}
which interpolates from $\bd_{s}-A$ on the negative end to $\bd_{s}-A^{\mathrm{al}}$ at the positive end.

Our kernel gluing argument will imply two things:
\begin{enumerate}
\item $\mathrm{ind}(D)=\mathrm{ind}(D^{\mathrm{al}})+\sum_{z\in \Gamma_{+}}\mathrm{ind}(D^{\mathrm{CZ}}_{A})+\sum_{z\in \Gamma_{-}}\mathrm{ind}(D^{\mathrm{ZC}}_{A})$,
\item $\mathrm{ind}(D^{\mathrm{CZ}}_{A})+\mathrm{ind}(D^{\mathrm{ZC}}_{A})=0\implies \mathrm{ind}(D^{\mathrm{ZC}}_{A})=-\mu_{\mathrm{CZ}}(A)$. 
\end{enumerate}
These results together imply Proposition \ref{prop:main-gluing}.

Before we perform the gluing argument we will explain how to \emph{stabilize} the relevant operators in order to make them surjective. This is the topic of the next subsection.

\begin{figure}[H]
  \centering
  \begin{tikzpicture}[yscale=0.75]
    \begin{scope}[xscale=1.55]
      \draw (0,0)to[out=90,in=-90](1.75,3) (2.5,3)to[out=-90,in=90](4,0);
      \draw (0.75,0) arc (180:0:1.25)node[pos=0.5,above,shift={(0,0.2)}]{$D^{\mathrm{al}}$};
      \path[shift={(0,-2.2)}] (0.36,1) node{$D_{A_{z_{1}}}^{\mathrm{ZC}}$};
      \path[shift={(3.25,-2.2)}] (0.36,1) node{$D_{A_{z_{2}}}^{\mathrm{ZC}}$};
      \path[shift={(1.75,3.2)}] (0.36,1) node{$D_{A_{z_{0}}}^{\mathrm{CZ}}$};

      \draw[shift={(0,-2.2)}] (0,0)node[left]{$\bd_{s}-A_{z_{1}}$}--+(0,2)node[left]{$\bd_{s}-A^{\mathrm{al}}$} (0.75,0)--+(0,2);
      \draw[shift={(3.25,-2.2)}] (0,0)--+(0,2) (0.75,0)node[right]{$\bd_{s}-A_{z_{2}}$}--+(0,2)node[right]{$\bd_{s}-A^{\mathrm{al}}$};
      \draw[shift={(1.75,3.2)}] (0,0)node[left]{$\bd_{s}-A^{\mathrm{al}}$}--+(0,2)node[left]{$\bd_{s}-A_{z_{0}}$} (0.75,0)--+(0,2);
    \end{scope}
    \draw[->] (7,1.5)--node[above]{glue ($\text{parameter}=\rho$)}(11,1.5) node[right,draw,outer sep=15pt] {$D^{\rho}$};
  \end{tikzpicture}
  \caption{Gluing together operators $D^{\mathrm{al}}$, $D^{\mathrm{CZ}}_{A}$, and backwards versions $D^{\mathrm{ZC}}_{A}$
    to form $D^{\rho}$, which can be deformed back to $D$ through Fredholm operators. For large gluing parameter $\rho$, we will be able to relate the kernel of $D^{\rho}$ to the kernels of $D^{\mathrm{CZ}}_{A_{z}}$, $D^{\mathrm{ZC}}_{A_{z}}$, and $D^{\mathrm{al}}$.}
  \label{fig:gluing-step}
\end{figure}
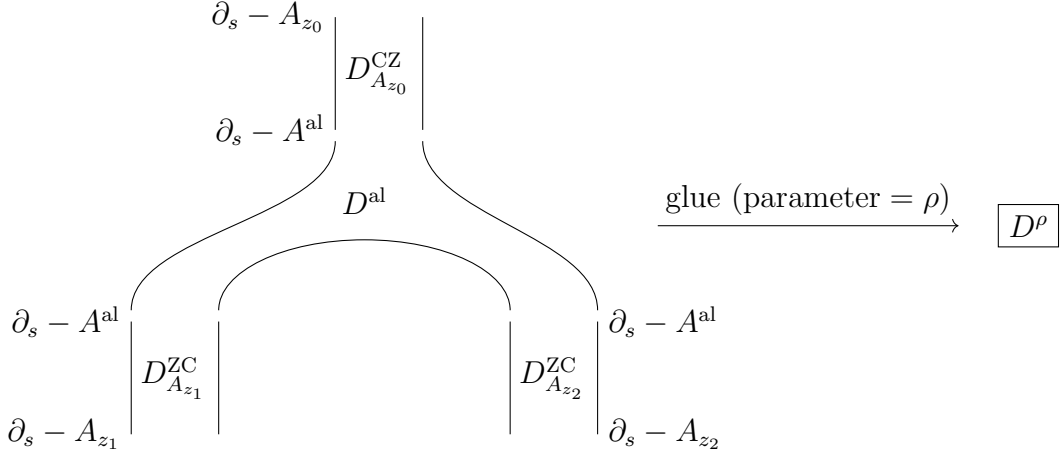

\subsection{Stabilizing Cauchy-Riemann operators}
\label{sec:stabilized-operators}
Let $D$ be a Cauchy-Riemann operator on $(E,F,\Sigma,\Gamma,C,[t])$ as usual.

As we have seen in Section \ref{sec:proof-of-fredholm}, the operator $D:W^{1,p}(E,F)\to L^{p}(\Lambda^{0,1}\otimes E)$ has a finite dimensional cokernel which can be identified with $\ker D^{*}\subset W^{1,p}(\Lambda^{0,1}\otimes E,F^{*})$.

Pick a basis $c:\R^{d}\to \ker(D^{*})$, considered as a map $c:\R^{d}\to L^{p}(\Lambda^{0,1}\otimes E)$.
\begin{equation}\label{eq:stabilized}
  \begin{dmatrix}
    {D}&{c}
  \end{dmatrix}:W^{1,p}(E,F)\oplus \R^{d}\to L^{p}(\Lambda^{0,1}\otimes E).
\end{equation}
For our choice of $c$, this operator is surjective and its kernel is $\ker(D)\oplus 0$. Since $\ker(D)$ is finite dimensional, the above operator has a right inverse. Since having a right inverse is open in the norm topology, we can smoothly ``cut-off'' the cokernel elements $c_{1},\cdots,c_{d}$ so that they vanish outside of $\Sigma(\rho_{0})$ for $\rho_{0}$ sufficiently large (i.e.\ they vanish on the ends $C(\rho_{0})$). 

This leads us to the following definition: a \emph{stabilized} operator for $D$ is any surjective operator $D_{\mathrm{st}}$ of the form \eqref{eq:stabilized} where $d=\dim \mathrm{coker}(D)$ and the cokernel elements $c_{1},\cdots,c_{d}$ are smooth and supported in $\Sigma(\rho_{0})$ for some $\rho_{0}$. The preceding discussion shows that stabilized operators always exist.

By computing the Fredholm index of \eqref{eq:stabilized} when $c=0$, we easily see that
\begin{equation*}
  \mathrm{ind}(D^{\mathrm{st}})=\mathrm{ind}(D)+d=\mathrm{ind}(D)+\dim \mathrm{coker}(D)=\dim \ker(D).
\end{equation*}
Since $D^{\mathrm{st}}$ is surjective, $\dim\ker(D^{\mathrm{st}})=\dim \ker(D)$, and hence
\begin{equation}\label{eq:kernal-d-st}
  \ker D^{\mathrm{st}}=\ker D\oplus 0.
\end{equation}

\subsection{The kernel gluing argument}
\label{sec:kernel-gluing}
Let $D$ be a Cauchy-Riemann operator as above. Fix a single positive puncture $z$ with asymptotic trivialization $\tau$, and suppose that $D$ is asymptotic to $\bd_{s}-A$ in the end $C_{z}$. 

By perturbing $D$ through the space of Fredholm operators, we may suppose that on $C_{z}$ we have
\begin{equation*}
  D=\bd_{s}-(1-\beta(s))A^{\mathrm{al}}-\beta(s)A.
\end{equation*}
Here $\beta$ is the bump function from before (i.e.,\ $0$ on $(-\infty,0]$ and $1$ on $[1,\infty)$). This local model is nice because it is the beginning of a family of Fredholm operators, namely
\begin{equation*}
  D^{\rho}=\bd_{s}-(1-\beta(s-3\rho))A^{\mathrm{al}}-\beta(s-3\rho)A.
\end{equation*}
We suppose that $D^{\rho}$ is fixed away from $C_{z}$. Consequently, the index of $D^{\rho}$ is constant since it is always Fredholm (its asymptotics are fixed).

Introduce the operator $D^{-}=\lim_{\rho\to\infty}D^{\rho}$ (pointwise limit). In other words, $D^{-}$ agrees with $D$ on the complement of $C_{z}$ and equals $\bd_{s}-A^{\mathrm{al}}$ on $C_{z}$.

Observe that the restriction of $D^{\rho}$ to $C_{z}$ is a translated copy of 
\begin{equation*}
  D_{+}:=D^{\mathrm{CZ}}_{A}=\bd_{s}-(1-\beta(s))A^{\mathrm{al}}-\beta(s)A.
\end{equation*}
We can therefore think of $D^{\rho}$ as obtained by \emph{gluing} $D^{+}$ to the positive end of $D^{-}$. See Figure \ref{fig:agree-on-overlap}.

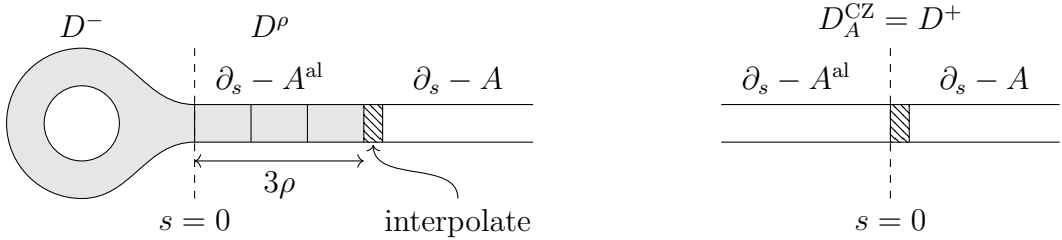
\begin{figure}[H]
  \centering
  \begin{tikzpicture}[scale=0.5]
    \fill[black!10!white,even odd rule] (0,0) circle (1) (7.5,0.5)--(3,0.5)to[out=180,in=0](0,2) arc (90:270:2) to[out=0,in=180] (3,-0.5)--(7.5,-0.5);
    \fill[pattern=north west lines] (7.5,-0.5) rectangle +(0.5,1);
    \node at ({3+2},0.5)[above]{$\bd_{s}-A^{\mathrm{al}}$};
    \node at ({10},0.5)[above]{$\bd_{s}-A$};
    \path (10,-1)--(7.75,-0.5)node[outer sep=0pt,inner sep=1pt](A){};
    \draw[->] (10,-2)node[below]{interpolate}to[out=90,in=-90](A);
    \draw[<->] (3,-1)--node[below]{$3\rho$}(7.5,-1);
    \draw[dashed] (3,2)--(3,-2)node[below]{$s=0$};
    
    \foreach \x in {0,1.5,3,4.5,5} {
      \draw ({3+\x},-0.5)--+(0,1);
    }
    \draw (0,0) circle (1) (12,0.5)--(3,0.5)to[out=180,in=0](0,2) arc (90:270:2) to[out=0,in=180] (3,-0.5)--(12,-0.5);
    \node at (0,2)[above]{$D^{-}$};
    \node at (5,2)[above]{$D^{\rho}$};
    \begin{scope}[shift={(14,0)}]
      \fill[pattern=north west lines] (7.5,-0.5) rectangle +(0.5,1);
      \node at ({3+2},0.5)[above]{$\bd_{s}-A^{\mathrm{al}}$};
      \node at ({10},0.5)[above]{$\bd_{s}-A$};
      \path (10,-1)--(7.75,-0.5)node[outer sep=0pt,inner sep=1pt](A){};
      \draw (3,0.5)--(12,.5);
      \draw (3,-0.5)--(12,-.5);
      \draw[dashed] (7.5,2)node[above]{$D^{\mathrm{CZ}}_{A}=D^{+}$}--(7.5,-2)node[below]{$s=0$};
      \foreach \x in {4.5,5} {
      \draw ({3+\x},-0.5)--+(0,1);
    }
    \end{scope}
  \end{tikzpicture}
  \caption{The relationship between $D_{-}$, $D^{\rho}$ and $D^{+}$. We can think of $D_{-}$ as the pointwise limit of $D^{\rho}$. However, $D^{\rho}$ is always a translated (and truncated) version of $D^{+}$ on $C_{z}$.}
  \label{fig:agree-on-overlap}
\end{figure}
   

To perform the actual gluing argument, we will need to stabilize the operators. Let $c=(c_{1},\cdots,c_{d})$ be cokernel elements for $D^{-}$ and let $\gamma=(\gamma_{1},\cdots,\gamma_{\delta})$ be cokernel elements for $D^{+}$. We suppose that the $c_{j}$ are supported in $\Sigma(\rho_{0})$ and similarly the $\gamma_{k}$ are supported where $\abs{s}<\rho_{0}$. These choices define stabilized operators:
\begin{equation*}
  \begin{aligned}
    D^{-}_{\mathrm{st}}:W^{1,p}(E,F)\oplus \R^{d}\to L^{p}(\Lambda^{1,0}\otimes E)\hspace{1cm}(\xi_{1},a)&\mapsto D^{-}(\xi_{1})+\sum a_{j}c_{j},\\
    D^{+}_{\mathrm{st}}:W^{1,p}(\C^{n},\R^{n})\oplus \R^{\delta}\to L^{p}(\C^{n})\hspace{1cm}(\xi_{2},b)&\mapsto D^{+}(\xi_{2})+\sum b_{k}\gamma_{k}.
  \end{aligned}
\end{equation*}
Then, for $\rho>\rho_{0}$, we define:
\begin{equation}
  \begin{aligned}
    D^{\rho}_{\mathrm{st}}&:W^{1,p}(E,F)\oplus \R^{d}\oplus \R^{\delta}\to L^{p}(\Lambda^{0,1}\otimes E)\\&\hspace{3cm}\text{ by }(\xi,a,b)\mapsto D^{\rho}(\xi)+\sum a_{j}c_{j}+\sum b_{k}\gamma_{k}(s-3\rho).
  \end{aligned}
\end{equation}
Notice that $D^{\rho}_{\mathrm{st}}$ is well-defined since $\gamma_{k}(s-3\rho)$ is supported in $C_{z}(2\rho)$ for $\rho>\rho_{0}$.

The following lemma establishes a relationship between $D^{-}_{\mathrm{st}},D^{\rho}_{\mathrm{st}}$ and $D^{+}_{\mathrm{st}}$.
\begin{lemma}[Kernel gluing lemma]\label{lemma:kernel-gluing}
  For $\rho$ sufficiently large,
  \begin{enumerate}
  \item $D^{\rho}_{\mathrm{st}}$ is surjective,
  \item $\dim \ker D^{\rho}_{\mathrm{st}}=\dim \ker D^{-}_{\mathrm{st}}+\dim \ker D^{+}_{\mathrm{st}}$.
  \end{enumerate}
\end{lemma}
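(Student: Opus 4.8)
The plan is to run a standard linear \emph{pre-gluing and correction} argument, exploiting the fact that on the overlap region $C_z(\rho)\setminus C_z(2\rho)$ all three operators $D^-$, $D^\rho$, and the relevant translate of $D^+=D^{\mathrm{CZ}}_A$ coincide with $\bd_s-A^{\mathrm{al}}$, which is an isomorphism $W^{1,p}\to L^p$ by Theorem \ref{theorem:isomorphism}. Fix cutoff functions $\beta^-_\rho,\beta^+_\rho$ forming a partition of unity subordinate to the decomposition of $\dot\Sigma$ into the ``$D^-$ region'' (the complement of $C_z$ together with $\{s<2\rho\}$) and the ``$D^+$ region'' $\{s>\rho\}$, chosen so that $\db\beta^\pm_\rho$ is supported in the overlap and bounded uniformly in $\rho$. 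Note that $D^+$ is Fredholm since its asymptotic operators $A^{\mathrm{al}}$ and $A$ are non-degenerate (Lemma \ref{lemma:reference-non-degen}), so $D^+_{\mathrm{st}}$ and $D^-_{\mathrm{st}}$ both make sense and are surjective by construction.

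First I would prove surjectivity (item (i)). Choose right inverses $Q^-$ of $D^-_{\mathrm{st}}$ and $Q^+$ of $D^+_{\mathrm{st}}$ and define an approximate right inverse $Q^\rho$ of $D^\rho_{\mathrm{st}}$ by applying $Q^-$ and $Q^+$ to suitably localized pieces of the input and patching the two outputs together with $\beta^\pm_\rho$. A direct computation gives $D^\rho_{\mathrm{st}}\circ Q^\rho=\mathrm{Id}+E_\rho$, where $E_\rho$ is supported in the overlap and is controlled by $\db\beta^\pm_\rho$ applied to the local solutions. The key point is the uniform bound $\norm{E_\rho}\to 0$ as $\rho\to\infty$: the local solutions solve $(\bd_s-A^{\mathrm{al}})(\,\cdot\,)=0$ on the neck away from a fixed compact set, hence decay exponentially in $s$ by Lemma \ref{lemma:decay-regularity} (applied with $A=A^{\mathrm{al}}$), so their tails on the increasingly long overlap are exponentially small. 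Once $\norm{E_\rho}<1$, the operator $Q^\rho(\mathrm{Id}+E_\rho)^{-1}$ is a genuine right inverse, proving (i).

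For the dimension formula (ii) I would prove the two inequalities separately. For $\ge$: given $(\xi_1,a)\in\ker D^-_{\mathrm{st}}$ and $(\xi_2,b)\in\ker D^+_{\mathrm{st}}$ — whose section components decay exponentially in the cylindrical ends, again by Lemma \ref{lemma:decay-regularity} — form the preglued approximate kernel element $\xi^\rho:=\beta^-_\rho\xi_1+\beta^+_\rho\,\xi_2(\,\cdot\,-3\rho)$ with stabilization data $(a,b)$; then $D^\rho_{\mathrm{st}}(\xi^\rho,a,b)$ is supported in the overlap and exponentially small, so subtracting $Q^\rho$ applied to this error lands us exactly in $\ker D^\rho_{\mathrm{st}}$. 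Since the correction is $o(1)$ while $\norm{\xi^\rho}$ is bounded below uniformly in $\rho$ for a fixed nonzero input, this defines an injective linear map $\ker D^-_{\mathrm{st}}\oplus\ker D^+_{\mathrm{st}}\hookrightarrow\ker D^\rho_{\mathrm{st}}$ for $\rho$ large. For $\le$: argue by contradiction. If the inequality failed along some $\rho_n\to\infty$, pick $\xi^n\in\ker D^{\rho_n}_{\mathrm{st}}$ of norm $1$ that is $L^2$-orthogonal to the image of the pregluing map; splitting $\xi^n$ by the cutoffs and using the semi-Fredholm estimate \eqref{eq:semi-fredholm} for $D^-$, the isomorphism estimate of Theorem \ref{theorem:isomorphism} for $\bd_s-A^{\mathrm{al}}$ on the neck, and estimate (iii) of Proposition \ref{prop:multi-est} for $D^+$, one shows that no $W^{1,p}$-mass of $\xi^n$ concentrates deep in the neck, and that the two pieces converge (after a subsequence) to elements of $\ker D^-_{\mathrm{st}}$ and $\ker D^+_{\mathrm{st}}$; the orthogonality condition forces both limits to vanish, so $\norm{\xi^n}\to 0$, a contradiction.

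I expect the main obstacle to be the uniform-in-$\rho$ analysis on the neck: making precise that $E_\rho$ and the pregluing corrections are exponentially small requires a careful bookkeeping of the decay rates coming from Lemma \ref{lemma:decay-regularity} together with a right-inverse bound for $D^\pm_{\mathrm{st}}$ independent of $\rho$ (the latter being automatic, as $D^\pm_{\mathrm{st}}$ do not depend on $\rho$), and in the $\le$ direction the step ruling out kernel concentration in the neck is the genuinely quantitative input, where the spectral gap of the non-degenerate operator $A^{\mathrm{al}}$ is essential.
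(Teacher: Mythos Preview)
Your overall strategy (approximate right inverse plus Neumann series for (i); preglue-and-correct for $\ge$, and a splitting argument for $\le$) is exactly the paper's strategy. But there is a genuine gap in your surjectivity argument.

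You claim that $\norm{E_\rho}\to 0$ because ``the local solutions solve $(\bd_s-A^{\mathrm{al}})(\,\cdot\,)=0$ on the neck away from a fixed compact set, hence decay exponentially.'' This is false as stated. In the surjectivity step you are solving $D^-_{\mathrm{st}}(\xi_1,a)=\beta^-_\rho\eta$ for \emph{arbitrary} $\eta$, and the support of $\beta^-_\rho\eta$ is not fixed: it reaches all the way into the overlap region where $\db\beta^-_\rho$ lives. So $\xi_1$ does \emph{not} solve the homogeneous equation on the support of $\db\beta^-_\rho$, and Lemma~\ref{lemma:decay-regularity} gives you nothing there. (Your exponential-decay mechanism is correct for the $\ge$ direction of (ii), since kernel elements of $D^\pm_{\mathrm{st}}$ genuinely satisfy $D^\pm\xi=0$ on the ends by \eqref{eq:kernal-d-st}; the mistake is transplanting that reasoning to the right-inverse step.)

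The paper fixes this by a different mechanism: it uses three cutoffs $b_1^\rho,b_2^\rho,b_3^\rho$ of the form $\beta(s/\rho)$, whose derivatives are $O(\rho^{-1})$ because they are stretched over a neck of length $\sim\rho$. The input $\eta$ is split by $b_2^\rho$, the outputs are patched by the broader $b_3^\rho$ and narrower $b_1^\rho$, and the error $\db b_i^\rho\otimes\xi$ is small simply because $|\db b_i^\rho|\le C\rho^{-1}$---no decay of $\xi$ is needed. This gives \eqref{eq:important-3} and the iteration goes through. Your cutoffs, with $\db\beta^\pm_\rho$ ``bounded uniformly in $\rho$'', cannot produce a small error this way.

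For the $\le$ direction of (ii) your compactness/contradiction sketch is a legitimate alternative, but the paper's argument is more direct and avoids subsequences: it defines the splitting map $\Phi:\ker D^\rho_{\mathrm{st}}\to (L^p\times\R^d)\oplus(L^p\times\R^\delta)$, observes that $\Phi$ has a fixed left inverse (just add the pieces back together), and shows $\Pi\circ\Phi$ is injective for large $\rho$ because $\norm{(1-\Pi)\Phi}=O(\rho^{-1})$, again using only the $O(\rho^{-1})$ cutoff derivatives.
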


\begin{remark}\label{remark:main-gluing}
  Before we give the proof we explain why Lemma \ref{lemma:kernel-gluing} implies Proposition \ref{prop:main-gluing}. First we observe that (i) and (ii) above imply $$\mathrm{ind}(D^{\rho}_{\mathrm{st}})=\mathrm{ind}(D^{-}_{\mathrm{st}})+\mathrm{ind}(D^{+}_{\mathrm{st}}),$$ since all the operators are surjective. Using $\mathrm{ind}(D^{\rho}_{\mathrm{st}})=\mathrm{ind}(D^{\rho})+d+\delta$ and similar formulas for $\mathrm{ind}(D^{\pm}_{\mathrm{st}})$, we conclude
  \begin{equation}\label{eq:index-relate}
    \mathrm{ind}(D)=\mathrm{ind}(D^{\rho})=\mathrm{ind}(D^{-})+\mathrm{ind}(D^{+}).
  \end{equation}
  Once we recall the definitions of $D^{+}$, $D^{-}$ and how they compare with $D^{\mathrm{al}}$ and $D^{\mathrm{CZ}}_{A}$, we conclude Proposition \ref{prop:main-gluing} in the case when $\Gamma_{+}=\set{z}$ and $\Gamma_{-}=\emptyset$.

  More generally, we can apply Lemma \ref{lemma:kernel-gluing} one time for each positive puncture and conclude that Proposition \ref{prop:main-gluing} holds when $\Gamma^{-}=\emptyset$.

  There is an obvious variant of Lemma \ref{lemma:kernel-gluing} in the case of a negative puncture $z$, where we consider the deformation
  \begin{equation*}
    D^{\rho}=\bd_{s}-(1-\beta(s+3\rho))A-\beta(s+3\rho)A^{\mathrm{al}},
  \end{equation*}
  defined for $s\le 0$. As above, we suppose $D^{\rho}$ is fixed on the complement of $C_{z}$. The same gluing argument shows that $\mathrm{ind}(D^{\rho})$ agrees with the sum of the indices of the operators 
  \begin{equation*}
    D^{+}=\bd_{s}-A^{\mathrm{al}}\hspace{1cm}D^{-}=\bd_{s}-(1-\beta(s))A-\beta(s)A^{\mathrm{al}}=:D^{\mathrm{ZC}}_{A}.
  \end{equation*}
  Here $D^{+}$ extends to $\dot\Sigma$ (i.e.,\ $D^{\rho}=D^{+}$ is fixed on the complement of $C_{z}$) while $D^{-}$ is defined on an infinite strip or cylinder.

  By performing these deformations at all punctures (one at a time), we ultimately conclude that
  \begin{equation}\label{eq:ultimate-conclusion}
    \mathrm{ind}(D)=\mathrm{ind}(D^{\mathrm{al}})+\sum_{z\in \Gamma_{+}}\mathrm{ind}(D^{\mathrm{CZ}}_{A})+\sum_{z\in \Gamma_{-}}\mathrm{ind}(D^{\mathrm{ZC}}_{A}).
  \end{equation} 
  Finally, consider the following family of operators on the infinite cylinder or strip:
  \begin{equation*}
    D^{\rho}=\bd_{s}-(1-\beta(s))A-\beta(s)(1-\beta(s-3\rho))A^{\mathrm{al}}-\beta(s)(\beta(s-3\rho))A.
  \end{equation*}
  We can think of this as gluing $D^{\mathrm{CZ}}_{A}$ to the positive end of $D^{\mathrm{ZC}}_{A}$. Indeed, this fits into the framework considered in Lemma \ref{lemma:kernel-gluing}, and so we conclude that
  \begin{equation*}
    \mathrm{ind}(D^{\rho})=\mathrm{ind}(D^{\mathrm{ZC}}_{A})+\mathrm{ind}(D^{\mathrm{CZ}}_{A}).
  \end{equation*}
  It is clear that if we let $\rho$ become very negative, then $D^{\rho}$ agrees with $\bd_{s}-A$, which has Fredholm index $0$ (by Theorem \ref{theorem:isomorphism}). Since the Fredholm index of $D^{\rho}$ is constant as a function of $\rho$ we must have
  \begin{equation*}
    \mathrm{ind}(D^{\mathrm{ZC}}_{A})=-\mathrm{ind}(D^{\mathrm{CZ}}_{A})=-\mu_{\mathrm{CZ}}(A).
  \end{equation*}
  This combined with \eqref{eq:ultimate-conclusion} completes the proof of Proposition \ref{prop:main-gluing}.
\end{remark}

\begin{proof}[of Lemma \ref{lemma:kernel-gluing}]
  To prove that $D^{\rho}_{\mathrm{st}}$ is surjective, we will attempt to solve the equation $D^{\rho}_{\mathrm{st}}(\xi)=\eta$ for some $\eta\in L^{p}$. Fix three bump functions $b_{1}^{\rho},b_{2}^{\rho},b_{3}^{\rho}$, all supported in $C_{z}$ by the formulas
  \begin{equation*}
    b_{1}^{\rho}(s)=\beta(s/\rho)\hspace{1cm}    b_{2}^{\rho}(s)=\beta(1-s/\rho)\hspace{1cm}    b_{3}^{\rho}(s)=\beta(2-s/\rho).
  \end{equation*}
  See Figure \ref{fig:bump-fxns}. 
  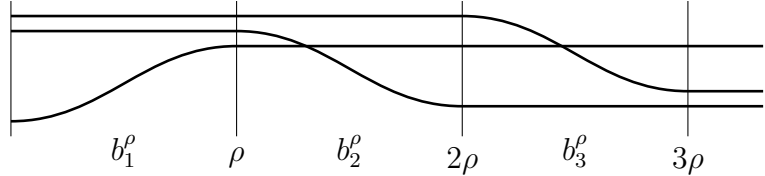
\begin{figure}[H]
    \centering
    \begin{tikzpicture}
      \draw (0,-0.2)--(0,1.6) (3,-0.2)node[below]{$\rho$}--(3,1.6) (6,-0.2)node[below]{$2\rho$}--(6,1.6) (9,-0.2)node[below]{$3\rho$}--(9,1.6);
      \draw[shift={(0,0.2)},line width=1pt] (0,1)--(3,1) to[out=0,in=180] (6,0) -- (10,0);
      \draw[shift={(0,0.4)},line width=1pt] (0,1)--(6,1) to[out=0,in=180] (9,0) -- (10,0);
      \draw[line width=1pt] (0,0) to[out=0,in=180] (3,1) -- (10,1);
      \path (0,-0.4)--+(1.5,0)node{$b_{1}^{\rho}$}--+(4.5,0)node{$b_{2}^{\rho}$}--+(7.5,0)node{$b_{3}^{\rho}$};
    \end{tikzpicture}
    \caption{Three bump functions drawn with slight vertical offsets to better show their behavior.}
    \label{fig:bump-fxns}
  \end{figure}
  By picking $\rho$ large enough, we may suppose that $(c_{1},\cdots,c_{d})$ are supported where $b_{2}^{\rho}=1$ and $(\gamma_{1}(s-3\rho),\cdots,\gamma_{\delta}(s-3\rho))$ are supported where $b_{2}^{\rho}=0$. This assumption will simplify some calculations later on.
  
  Now let $\eta\in L^{p}(E)$ be some section. Since $D^{-}_{\mathrm{st}}$ has a bounded right inverse, we can find $\xi_{1}$ and $\mathfrak{c}_{1}=\sum a_{j}c_{j}$ so that
  \begin{equation*}
    D^{-}(\xi_{1})+\mathfrak{c}_{1}=b_{2}^{\rho}\eta.
  \end{equation*}
  Moreover we can achieve this so that $\norm{(\xi_{1},\mathfrak{c}_{1})}=\norm{\mathfrak{c}_{1}}+\norm{\xi_{1}}_{W^{1,p}}$ is bounded by $C^{-}\norm{\eta}_{L^{p}}$ for a fixed constant $C^{-}$ (by fixing a bounded right inverse for $D^{-}_{\mathrm{st}}$). Here $\norm{\mathfrak{c}_{1}}$ is any norm on $\R^{d}$ (which we fix throughout the proof).
  
  Because of $b_{3}^{\rho}\mathfrak{c}_{1}=\mathfrak{c}_{1}$ and $b_{3}^{\rho}b_{2}^{\rho}=b_{2}^{\rho}$, we have
  \begin{equation*}
D^{-}(b_{3}^{\rho}\xi_{1})+\mathfrak{c}_{1}-b_{2}^{\rho}\eta=    D^{-}(b_{3}^{\rho}\xi_{1})+b_{3}^{\rho}\mathfrak{c}_{1}-b_{3}^{\rho}b_{2}^{\rho}\eta=\cl{\bd}(b_{3}^{\rho})\otimes \xi_{1}.
  \end{equation*}
  Since $b_{3}^{\rho}\xi_{1}$ is supported in the region where $D^{-}=D^{\rho}$, we can rewrite the above as
  \begin{equation*}
    D^{\rho}(b_{3}^{\rho}\xi_{1})+\mathfrak{c}_{1}=b_{2}^{\rho}\eta+(\cl{\bd}b_{3}^{\rho})\otimes \xi_{1}.
  \end{equation*}
  Now observe that $\Delta=\eta-b_{2}^{\rho}\eta$ is supported in the region where $s\ge \rho$. Since $D^{+}_{\mathrm{st}}$ is surjective, we can find $\xi^{\prime}_{2}$ and $\mathfrak{c}^{\prime}_{2}=\sum b_{k}\gamma_{k}$ so that
  \begin{equation*}
D^{+}(\xi_{2}^{\prime})+\mathfrak{c}^{\prime}_{2}=    \Delta(s+3\rho,t).
  \end{equation*}
  We can achieve this with $\norm{\mathfrak{c}_{2}^{\prime}}+\norm{\xi_{2}^{\prime}}_{W^{1,p}}\le C^{+}\norm{\eta}_{L^{p}}$ for a fixed constant~$C^{+}$.
  
  Let $\xi_{2}(s,t)=\xi_{2}(s-3\rho,t)$ and $\mathfrak{c}_{2}(s,t)=\mathfrak{c}^{\prime}_{2}(s-3\rho,t)$. Since $b_{1}^{\rho}\Delta=\Delta$ and $b_{1}^{\rho}\mathfrak{c}_{2}=\mathfrak{c}_{2}$, we conclude that
  \begin{equation*}
    D^{\rho}(b_{1}^{\rho}\xi_{2})+\mathfrak{c}_{2}=\Delta+(\cl{\bd}b_{1}^{\rho})\otimes \xi_{2}.
  \end{equation*}
  Consequently, we have
  \begin{equation*}
    D^{\rho}(b_{3}^{\rho}\xi_{1}+b_{1}^{\rho}\xi_{2})+\mathfrak{c}_{1}+\mathfrak{c}_{2}=\eta+(\cl{\bd}b_{3}^{\rho})\otimes \xi_{1}+(\cl{\bd}b_{1}^{\rho})\otimes \xi_{2}.
  \end{equation*}
  Observe that the derivatives of $b^{\rho}_{i}$ are of order $\rho^{-1}$. We think of this as \emph{approximately} solving $D^{\rho}_{\mathrm{st}}(b_{3}^{\rho}\xi_{1}+b_{1}^{\rho}\xi_{2},\mathfrak{c}_{1},\mathfrak{c}_{2})=\eta$. Indeed, we have just shown that for any $\eta$ we can find $\xi,\mathfrak{c}_{1},\mathfrak{c}_{2}$ so that
  \begin{equation}\label{eq:important-3}
    \norm{\xi}_{W^{1,p}}+\norm{\mathfrak{c}_{1}}+\norm{\mathfrak{c}_{2}}\le C\norm{\eta}_{L^{p}} \text{ and }\norm{D^{\rho}_{\mathrm{st}}(\xi,\mathfrak{c}_{1},\mathfrak{c}_{2})-\eta}_{L^{p}}\le C\rho^{-1}\norm{\eta}_{L^{p}},
  \end{equation}
  for constants $C$ independent of $\rho$.

  The equation \eqref{eq:important-3} implies that $D^{\rho}_{\mathrm{st}}$ is surjective for $\rho$ large enough, as follows: pick $\rho$ so $C\rho^{-1}<1/2$. By \eqref{eq:important-3} with $\eta:=\eta-D^{\rho}_{\mathrm{st}}(\xi,\mathfrak{c}_{1},\mathfrak{c}_{2})$ we obtain $\xi^{1},\mathfrak{c}_{1}^{1},\mathfrak{c}_{2}^{1}$ so that
  \begin{equation*}    \norm{D^{\rho}_{\mathrm{st}}(\xi^{1},\mathfrak{c}_{1}^{1},\mathfrak{c}_{2}^{1})-(\eta-D^{\rho}_{\mathrm{st}}(\xi,\mathfrak{c}_{1},\mathfrak{c}_{2}))}_{L^{p}}\le \frac{1}{4}\norm{\eta}_{L^{p}},
  \end{equation*}
  and $\norm{(\xi^{1},\mathfrak{c}_{1}^{1},\mathfrak{c}_{2}^{1})}\le C2^{-1}\norm{\eta}_{L^{p}}$. In other words, if we try to solve for the \emph{error} arising from our first attempt to solve for $\eta$, then $D^{\rho}_{\mathrm{st}}(\xi,\mathfrak{c}_{1},\mathfrak{c}_{2})+D^{\rho}_{\mathrm{st}}(\xi^{1},\mathfrak{c}_{1}^{1},\mathfrak{c}_{2}^{1})$ is a better approximation by a factor of $1/2$ than our initial attempt. 
  
  By repeating this process, we can find a sequence $\xi^{n},\mathfrak{c}_{1}^{n},\mathfrak{c}_{2}^{n}$ so that
  \begin{equation*}
    \norm{(\xi^{n},\mathfrak{c}_{1}^{n},\mathfrak{c}_{2}^{n})}\le C2^{-n}\norm{\eta}_{L^{p}}\text{ and }\norm{\textstyle{\sum_{j=0}^{n}}D^{\rho}_{\mathrm{st}}(\xi^{j},\mathfrak{c}_{1}^{j},\mathfrak{c}_{2}^{j})-\eta}_{L^{p}}\le 2^{-n-1}\norm{\eta}_{L^{p}}.
  \end{equation*}
  The above series then converges to an element in the preimage of $\eta$, as desired. This completes the proof that $D^{\rho}_{\mathrm{st}}$ is surjective for $\rho$ sufficiently large. Moreover, we see that $D^{\rho}_{\mathrm{st}}$ actually has a right inverse which is bounded in norm by $2C$. This uniformly bounded right inverse will play a role later on.

  Next we need to prove that $\dim \ker D^{\rho}_{\mathrm{st}}=\dim \ker D^{-}_{\mathrm{st}}+\dim \ker D^{+}_{\mathrm{st}}$. First we will prove that
  \begin{equation}\label{eq:inequ}
    \dim \ker D^{\rho}_{\mathrm{st}}\le \dim \ker D^{-}_{\mathrm{st}}+\dim \ker D^{+}_{\mathrm{st}}.
  \end{equation}

  Suppose that $(\xi,\mathfrak{c}_{1},\mathfrak{c}_{2})$ lies in the kernel of $D^{\rho}_{\mathrm{st}}$. Using the same bump functions from before, we compute
  \begin{equation*}
    D^{-}(b_{2}^{\rho}\xi)+\mathfrak{c}_{1}=D^{\rho}(b_{2}^{\rho}\xi)+\mathfrak{c}_{1}=b_{2}^{\rho}(D^{\rho}(\xi)+\mathfrak{c}_{1}+\mathfrak{c}_{2})+\cl{\bd}(b_{2}^{\rho})\otimes \xi=\cl{\bd}(b_{2}^{\rho})\otimes \xi.
  \end{equation*}
  In particular, $(b_{2}^{\rho}\xi,\mathfrak{c}_{1})$ is \emph{close} to lying in the kernel of $D^{-}_{\mathrm{st}}$ (up to an error of size $\rho^{-1}\norm{\xi}$). Indeed, using the bounded right inverse for $D^{-}_{\mathrm{st}}$ we can estimate
  \begin{equation*}
    \norm{(b_{2}^{\rho}\xi,\mathfrak{c}_{1})-\ker(D_{\mathrm{st}}^{-})}\le C\norm{\xi}\rho^{-1}.
  \end{equation*}

  On the other hand, we have
  \begin{equation*}    0=(1-b_{2}^{\rho})(D^{\rho}(\xi)+\mathfrak{c}_{1}+\mathfrak{c}_{2})=D^{\rho}((1-b_{2}^{\rho})\xi)+\mathfrak{c}_{2}-\cl{\bd}b^{\rho}_{2}\otimes \xi,
  \end{equation*}
  so the translated element $((1-b_{2}^{\rho})\xi,\mathfrak{c}_{2})(s+3\rho,t)$ is close to the kernel of $D^{+}_{\mathrm{st}}$.

  We can encode these as a linear map $\Phi:\ker D^{\rho}_{\mathrm{st}}\to (L^{p}\times \R^{d})\oplus (L^{p}\times \R^{\delta})$:
  \begin{equation*}
    \Phi(\xi,\mathfrak{c}_{1},\mathfrak{c}_{2})=\begin{dmatrix}
      {(b_{2}^{\rho}\xi,\mathfrak{c}_{1})}\\
      {((1-b_{2}^{\rho})\xi,\mathfrak{c}_{2})(s+3\rho,t)}
    \end{dmatrix}.
  \end{equation*}
  It is clear that $\Phi$ is uniformly injective (we simply add together its components to recover $\xi,\mathfrak{c}_{1},\mathfrak{c}_{2}$ -- this defines a fixed left inverse). We will now estimate the rank of $\Phi$. By the preceding remarks, we have
  \begin{equation*}  \norm{\Phi(\xi,\mathfrak{c}_{1},\mathfrak{c}_{2})-\ker(D^{-}_{\mathrm{st}})\oplus \ker(D^{+}_{\mathrm{st}})}\le C\norm{\xi}\rho^{-1}.
  \end{equation*}
  Let $\Pi$ be a projection onto $\ker(D^{-}_{\mathrm{st}})\oplus \ker (D^{+}_{\mathrm{st}})$. We have
  \begin{equation*}
    \norm{(1-\Pi)\circ \Phi(\xi,\mathfrak{c}_{1},\mathfrak{c_{2}})}\le C^{\prime}\norm{\xi}\rho^{-1}.
  \end{equation*}
  Thus $\Phi=\Pi\circ \Phi+\text{(error of size $\rho^{-1}$)},$ where the error is measured in the operator norm. Since $\Phi$ is uniformly injective, we conclude that $\Pi\circ \Phi$ must also be injective for $\rho$ large enough. Hence $\Pi\circ \Phi$ is an injection from $\ker(D^{\rho}_{\mathrm{st}})$ into $\ker(D^{-}_{\mathrm{st}})\oplus \ker(D^{+}_{\mathrm{st}})$, proving \eqref{eq:inequ}.
  
  Finally we prove the reverse inequality:
  \begin{equation}
    \label{eq:revinequ}
    \dim \ker D^{\rho}_{\mathrm{st}}\ge \dim \ker(D^{-}_{\mathrm{st}})+\dim \ker(D^{+}_{\mathrm{st}}).
  \end{equation}
  The strategy will be ``glue'' together elements in the kernels of $D^{\pm}_{\mathrm{st}}$ and obtain elements approximately in the kernel of $D^{\rho}_{\mathrm{st}}$, and then use the fact that $D^{\rho}_{\mathrm{st}}$ has a (uniformly) bounded right  inverse (which we proved above) to show that we can deform these approximate kernel elements into actual kernel elements.

  So, let $(\xi_{1},\mathfrak{c}_{1})\in \ker(D^{-}_{\mathrm{st}})$ and let $(\xi_{2}^{\prime},\mathfrak{c}_{2}^{\prime})\in \ker(D^{+}_{\mathrm{st}})$. Let $(\xi_{2},\mathfrak{c}_{2})=(\xi_{2}^{\prime},\mathfrak{c}_{2}^{\prime})(s-3\rho,t)$. Recall from Section \ref{sec:stabilized-operators} that we must have $\mathfrak{c}_{1}=\mathfrak{c}_{2}=0$.

  Then it is straightforward to check that:
  \begin{equation}\label{eq:DrhoP}
    D^{\rho}(\beta_{2}^{\rho}\xi_{1}+(1-\beta_{2}^{\rho})\xi_{2})=(\cl{\bd}\beta_{2}^{\rho})(\xi_{1}-\xi_{2}).
  \end{equation}
  Let $\Phi(\xi_{1},\xi_{2}^{\prime})=\beta_{2}^{\rho}\xi_{1}+(1-\beta_{2}^{\rho})\xi_{2}$. First we show that $\Phi$ is uniformly injective. Indeed, the injectivity estimates for $\bd_{s}u-Au=0$ from Section \ref{sec:injectivity-estimates} imply that 
  \begin{equation}\label{eq:unif-injective}
    \begin{aligned}
      \norm{\xi_{1}}\le C\norm{\xi_{1}}_{\Sigma(\rho)}\le C\norm{\Phi(\xi_{1},\xi_{2})},\\
      \norm{\xi_{2}^{\prime}}\le C\norm{\xi_{2}^{\prime}}_{(-\rho,\infty)\times I}\le C\norm{\Phi(\xi_{1},\xi_{2}^{\prime})},
    \end{aligned}
  \end{equation}
  for a uniform constant $C$.\footnote{The idea is to write
    \begin{equation*}
      \xi_{1}=(1-\beta_{1}^{\rho})\xi_{1}+\beta_{1}^{\rho}\xi_{1}.
    \end{equation*}
    Then $D^{-}(\beta_{1}^{\rho}\xi_{1})=\cl{\bd}{\beta_{1}^{\rho}}\otimes \xi_{1}$. Observe that $\beta_{1}^{\rho}\xi_{1}$ is supported in the region where $D^{-}$ is translation invariant. Thus we can apply the injectivity estimates and conclude that the $W^{1,p}$ size of $\beta_{1}^{\rho}\xi_{1}$ is controlled by the $L^{p}$ size of $\xi_{1}$ on $[0,\rho]\times I$. The constant $C$ gets better (closer to $1$) as $\rho$ increases. Similar considerations establish the second part of \eqref{eq:unif-injective}.} In particular
  \begin{equation}\label{eq:unif-inj}
    \norm{\xi_{1}}+\norm{\xi_{2}^{\prime}}\le2C\norm{\Phi(\xi_{1},\xi_{2})}.
  \end{equation}

  Now let $B$ be a bounded right inverse for $D^{\rho}_{\mathrm{st}}$, and consider
  \begin{equation*}
    \Phi^{\prime}=\Phi-B\circ D^{\rho}_{\mathrm{st}}\circ \Phi.
  \end{equation*}
  Because of \eqref{eq:DrhoP}, $B\circ D^{\rho}_{\mathrm{st}}\circ \Phi$ has operator norm bounded by $\rho^{-1}$ (here we assume that the operator norm of $B$ is bounded as $\rho\to\infty$; the first part of our proof shows that this can be achieved).

  Then for $\rho$ large enough, $\Phi'$ is also injective as it is a small perturbation of an injective operator (i.e.,\ the estimate \eqref{eq:unif-inj} will still hold, modulo increasing $C$ slightly).

  Thus $\Phi^{\prime}$ injects $\ker(D^{-})\oplus \ker(D^{+})$ into $\ker(D^{\rho}_{\mathrm{st}})$, establishing \eqref{eq:revinequ}. This completes the proof of the Lemma.
\end{proof}

\section{The index formula for large anti-linear deformations}\label{sec:final-frontier}

Fix a trivialization $\tau$ of $(\Sigma,\Gamma,E,F,C,[\tau])$, as above, and let $D^{\mathrm{al}}$ be a Cauchy-Riemann operator whose restriction to each end $C_{z}$ is equal to $\bd_{s}-A^{\mathrm{al}}$ (in the trivialization $\tau$).

Our goal in this section is to compute the Fredholm index $\mathrm{ind}(D^{\mathrm{al}})$. The formula will be in terms of the following invariants:
\begin{enumerate}
\item The \emph{Euler characteristic} $\mathrm{X}:=\mathrm{X}(\Sigma,\bd\Sigma,\Gamma_{\pm})$ is the weighted count of zeros of a transverse vector field which equals $\bd_{s}$ in each end $C_{z}$ and is everywhere tangent to $\bd\Sigma$ (the zeros are counted as explained in Section \ref{sec:euler}, see also Figure \ref{fig:boundary-zeros}).

\item The \emph{Maslov index} $\mu^{\tau}_{\mathrm{Mas}}:=\mu^{\tau}_{\mathrm{Mas}}(E,F)$ is the signed count of zeros of a transverse section of $(\det E)^{\otimes 2}$ which (a) restricts to the canonical generator of $(\det F)^{\otimes 2}$ along the boundary and (b) equals $1$ in each end $C_{z}$ (this last part uses $\tau$). Notice that all the zeros will necessarily be interior. 
\end{enumerate}
The main result in this section is:
\begin{prop}\label{prop:20}
  The Fredholm index of $D^{\mathrm{al}}:W^{1,p}(E,F)\to L^{p}(\Lambda^{1,0}\otimes E)$ is
  \begin{equation*}
    \mathrm{ind}(D^{\mathrm{al}})=n\mathrm{X}+\mu^{\tau}_{\mathrm{Mas}},
  \end{equation*}
  where $n$ is the complex rank of $E$.
\end{prop}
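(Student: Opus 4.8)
The plan is to compute $\mathrm{ind}(D^{\mathrm{al}})$ as a weighted count of zeros of a large anti-linear perturbation, following \cite[Section 7]{taubes} and \cite[Chapter 3]{gerig}. First I would use the fact, established earlier, that the Fredholm index depends only on the asymptotic operators up to homotopy through Fredholm operators: any operator all of whose asymptotics equal $A^{\mathrm{al}}=-i\bd_{t}-C$ is homotopic, through operators with these fixed asymptotics, to one equal to $\bd_{s}+i\bd_{t}+C$ on each end $C_{z}$. Thus I may write $D^{\mathrm{al}}=D_{0}+B$ with $D_{0}$ a complex-linear Cauchy-Riemann operator and $B$ a complex-anti-linear bundle map $E\to\Lambda^{0,1}\otimes E$ equal to $u\mapsto\cl{u}\,(\d s-i\d t)$ on each end, so in particular $B$ is an isomorphism there. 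Since every such pair $(D_{0},B)$ gives a legitimate $D^{\mathrm{al}}$, I am free to choose $B$, and will do so of a special form below. Setting $D^{\sigma}:=D_{0}+\sigma B$ for $\sigma\ge 1$, the asymptotic operator on each end is $A^{\mathrm{al},\sigma}=-i\bd_{t}-\sigma C$, which is non-degenerate by Lemma \ref{lemma:reference-non-degen}; hence $D^{\sigma}$ is Fredholm with $\sigma$-independent index, and $\mathrm{ind}(D^{\mathrm{al}})=\mathrm{ind}(D^{\sigma})$ for every $\sigma\ge 1$. The computation then reduces to analyzing $D^{\sigma}$ as $\sigma\to\infty$.

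The heart of the argument -- and the step I expect to be the main obstacle -- is the localization of $\ker D^{\sigma}$ and $\mathrm{coker}\,D^{\sigma}\cong\ker(D^{\sigma})^{*}$ as $\sigma\to\infty$. Assuming $B$ is chosen so that the locus $Z\subset\dot\Sigma$ on which $B$ fails to be an isomorphism is finite, disjoint from the ends, and consists of zeros with non-degenerate linearizations (in the sense appropriate to interior and to boundary points), I would prove: (i) normalized elements of $\ker D^{\sigma}$ and of $\ker(D^{\sigma})^{*}$ concentrate near $Z$ as $\sigma\to\infty$; (ii) rescaling coordinates by $\sqrt{\sigma}$ near a point $p\in Z$ produces a translation-invariant model operator $u\mapsto\bd_{s}u+i\bd_{t}u+L(s,t)\cl{u}$ with $L$ linear in $(s,t)$, posed on $\C$ if $p$ is interior and on $\cl{\mathbb{H}}$ with the totally real boundary condition if $p\in\bd\dot\Sigma$, whose space of exponentially decaying solutions and whose analogous space for the formal adjoint can be computed explicitly and are each $0$- or $1$-dimensional; and (iii) a gluing argument identifies $\ker D^{\sigma}$ and $\mathrm{coker}\,D^{\sigma}$ with the direct sums of these local spaces. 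The conclusion is that $\mathrm{ind}(D^{\sigma})$ equals the weighted count of the points of $Z$, where an interior point contributes $\pm 1$ by its local sign and a boundary point contributes $+1$, $0$, $0$ or $-1$ according to the four linearizations compatible with preserving $T\bd\Sigma$, exactly as in Figure \ref{fig:boundary-zeros}. The concentration estimates, the model computations on $\cl{\mathbb{H}}$, and the gluing identification are where the genuine work lies; in particular it is the explicit solution of the boundary models that reveals why two of the four configurations contribute nothing to the index.

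It remains to evaluate the weighted count of $Z$ for a convenient $B$, and here the plan is to take $B$ ``split.'' Using the standard fact that $(E,F)$ decomposes, compatibly with the asymptotic trivialization $\tau$, as a direct sum of line pairs $(L_{1},\ell_{1})\oplus\cdots\oplus(L_{n},\ell_{n})$, I would take $B=B_{1}\oplus\cdots\oplus B_{n}$, so that $Z=\bigsqcup_{i}Z_{i}$ with $Z_{i}$ the zero set of $B_{i}$, reducing everything to $n=1$. Via the fibrewise Hermitian metric, an anti-linear bundle map $B_{i}\colon L_{i}\to\Lambda^{0,1}\otimes L_{i}$ is a section of the line bundle $\Lambda^{0,1}\otimes L_{i}^{\otimes 2}$, and I would take it to be the tensor product of $\mu(-,V)$ -- the section of $\Lambda^{0,1}$ determined by an admissible vector field $V$, which equals $\d s-i\d t$ on the ends and vanishes exactly where $V$ does -- with a section $\theta$ of $L_{i}^{\otimes 2}$ equal to $1$ on the ends and to the canonical generator of $\ell_{i}^{\otimes 2}$ along $\bd\dot\Sigma$. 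With $V$ and $\theta$ generic within these constraints, $Z_{i}=\set{V=0}\sqcup\set{\theta=0}$ consists of non-degenerate zeros and $B_{i}$ restricts to the conjugation map on the ends, as needed. The zeros of $\theta$ are interior and count $\mu^{\tau}_{\mathrm{Mas}}(L_{i},\ell_{i})$ by definition; the zeros of $\mu(-,V)$ are the zeros of $V$, and directly from Figure \ref{fig:boundary-zeros} and Section \ref{sec:euler} the weight attached to such a zero by the previous step agrees with the weight attached to the zero of $V$, so these count $\mathrm{X}$. Summing over $i$ and using $\det\bigl(\bigoplus_{i}L_{i}\bigr)=\bigotimes_{i}L_{i}$, which yields $\sum_{i}\mu^{\tau}_{\mathrm{Mas}}(L_{i},\ell_{i})=\mu^{\tau}_{\mathrm{Mas}}(E,F)$, I conclude $\mathrm{ind}(D^{\mathrm{al}})=\mathrm{ind}(D^{\sigma})=n\mathrm{X}+\mu^{\tau}_{\mathrm{Mas}}$.
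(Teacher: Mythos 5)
Your proposal is correct and follows essentially the same route as the paper: reduce to line bundles by splitting $(E,F)$ compatibly with $\tau$, take $B=\mu(-,V)\otimes(\text{section of }L^{\otimes 2})$, and compute $\mathrm{ind}(D^{\sigma})$ for $\sigma\to\infty$ via concentration, rescaled local models $\db\pm zC$ and $\db\pm\cl{z}C$ on $\C$ and $\cl{\mathbb{H}}$, and a gluing/stabilization argument. The only cosmetic difference is in the splitting step: the paper extends the asymptotic unitary frame to write $(E,F)$ as $n-1$ trivial line pairs plus $(\det E,\det F)$, so that only the determinant factor carries Maslov index, whereas you use a general line-pair decomposition together with additivity of $\mu^{\tau}_{\mathrm{Mas}}$ under direct sums -- both are standard and valid.
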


The proof of Proposition \ref{prop:20} breaks into two parts. In Section \ref{sec:lb-reduc} we reduce to the case when $E$ is a line bundle (so $E=\det(E)$ and $F=\det(F)$). In Section \ref{sec:large-antilinear} we prove Proposition \ref{prop:20} in the case when $E$ is a line bundle by considering the $\sigma\to\infty$ limiting behavior of $D^{\mathrm{al}}+\sigma B^{\mathrm{al}}$ where $B^{\mathrm{al}}$ is a special anti-linear deformation (we only deform the lower order terms). This is the strategy introduced in \cite[Section 7]{taubes} and generalized in \cite[Chapter 3]{gerig}.

\subsection{Reduction to the case of line bundles}
\label{sec:lb-reduc}

In this section we assume that Proposition \ref{prop:20} is true for line bundles, and we deduce it holds for all bundles. We will split $(E,F)$ into a direct sum $$(E,F)=\underbrace{(\C,\R)\oplus \cdots\oplus (\C,\R)}_{n-1\text{ copies}}\oplus (\det(E),\det(F))$$ in a way compatible with the trivialization $\tau$. In order to do the splitting, we fix a Hermitian metric $\mu$ on $(E,F)$ extending the Hermitian metric in the ends $C_{z}$.

Consider the trivialization $\tau$. This defines a unitary frame $X_{1},\cdots,X_{n}$ in the ends. If $n>1$, we can extend $X_{1}$ over $\bd\dot\Sigma$ as a non-zero section of $F$, which we may normalize so $\abs{X_{1}}=1$. Let $E_{1}$ denote the $\mu$-orthogonal complement of $X_{1}$, and let $F_{1}=E_{1}\cap F$. Note that $F_{1}$ is $n-1$ dimensional and is totally real for $E_{1}$.

Notice that $X_{2},\cdots,X_{n}$ are all sections of $(E_{1},F_{1})\subset (E,F)$ in the ends. If $n>2$ then we can extend $X_{2}$ as a nonzero section of $(E_{1},F_{1})$. We continue in this fashion until we conclude that $X_{1},\cdots,X_{n-1}$ extend as a global unitary frame in $(E,F)$ (in the sense that they are mutually $\mu$-orthogonal and all unit vectors).

Let $E_{n}$ be the $\mu$ orthogonal complement to $X_{1},\cdots,X_{n-1}$ and $F_{n}=E_{n}\cap F$, and notice that $X_{n}$ trivializes $(E_{n},F_{n})$ in the ends.

By construction, $D^{\mathrm{al}}$ is given by
\begin{equation*}
  D^{\mathrm{al}}(\sum u_{k}X_{k})=(\bd_{s}u_{k}+i\bd_{k}u_{k}+\cl{u_{k}})(\d s-i\d t\otimes X_{k})
\end{equation*}
in the ends. In particular $D^{\mathrm{al}}$ \emph{splits} in the ends. By perturbing $D^{\mathrm{al}}$ away from the ends, we may suppose it splits everywhere. This means that if $u$ takes values in the line $\C X_{k}$ (resp.\ $E_{n}$), then $D^{\mathrm{al}}(u)$ takes values in $\Lambda^{0,1}\otimes \C X_{k}$ (resp.\ $F_{n}$). It follows that the induced operator splits as a diagonal matrix of Cauchy-Riemann operators asymptotic to the one-dimensional version of $\bd_{s}-A^{\mathrm{al}}$:
\begin{equation*}
  D^{\mathrm{al}}:[\bigoplus_{k=1}^{n-1}W^{1,p}(\C X_{k},\R X_{k})]\oplus (E_{n},F_{n})\to [\bigoplus_{k=1}^{n-1}L^{p}(\Lambda^{0,1}\otimes \C X_{k})]\oplus L^{p}(\Lambda^{0,1}\otimes E_{n}).
\end{equation*}
Let $D^{\mathrm{al}}_{k}$ be the $k$th factor in the above decomposition. The Fredholm index is additive under diagonal decompositions. Since Proposition \ref{prop:20} applies to $D^{\mathrm{al}}_{k}$ we conclude that
\begin{equation*}  \mathrm{ind}(D^{\mathrm{al}})=[\sum_{k=1}^{n-1}\mathrm{ind}(D^{\mathrm{al}}_{k})]+\mathrm{ind}(D^{\mathrm{al}}_{n})=n\mathrm{X}+\mu_{\mathrm{Mas}}^{\tau}(E_{n},F_{n}).
\end{equation*}
Finally, fix $\mathfrak{s}$ a transverse section of $E_{n}^{\otimes 2}$ which restricts to the canonical generator of $F_{n}^{\otimes 2}$ and which equals $1\simeq X_{n}^{\otimes 2}$ in the end. Locally write $\mathfrak{s}=\mathfrak{s}_{1}\otimes \mathfrak{s}_{2}$, and define
\begin{equation}\label{eq:identification}
  \mathfrak{s}'=(X_{1}\wedge \cdots \wedge X_{n-1}\wedge \mathfrak{s}_{1})\otimes(X_{1}\wedge \cdots \wedge X_{n-1}\wedge \mathfrak{s}_{2}).
\end{equation}
This does not depend on the decomposition $\mathfrak{s}=\mathfrak{s}_{1}\otimes\mathfrak{s}_{2}$ since $E_{n}$ is a complex line bundle.

Then $\mathfrak{s}^{\prime}$ is a transverse section of $\det(E)^{\otimes 2}$ which restricts to the canonical generator of $\det(F)^{\otimes 2}$. The signed count of zeros of $\mathfrak{s}^{\prime}$ agrees with the count of zeros of $\mathfrak{s}$ as they locally differ by application of a fiber-wise complex linear isomorphism (namely, the map induced by \eqref{eq:identification}). Thus we conclude $\mu^{\tau}_{\mathrm{Mas}}(E,F)=\mu^{\tau}_{\mathrm{Mas}}(E_{n},F_{n}).$ This completes the proof of the reduction to the line bundle case.

\subsection{Large anti-linear deformations}
\label{sec:large-antilinear}
Let $(E,F)$ be a line bundle with asymptotic trivialization $\tau$. As in the previous section, we can consider $\tau$ as defining a non-vanishing section $X$ in the ends which takes boundary values in $F$. In other words $(E,F)=(\C X,\R X)$ in each end.

Our strategy will be to define a particular family of operators $D^{\sigma}$, $\sigma>0$, whose asymptotic form with respect to the trivialization $\tau$ is equal to $\bd_{s}+i\bd_{t}+\sigma C$. Since we have shown $A^{\mathrm{al},\sigma}=-i\bd_{t}-\sigma C$ is non-degenerate for all $\sigma>0$ (Lemma \ref{lemma:reference-non-degen}), we conclude that $D^{\sigma}$ is always Fredholm. Moreover, when $\sigma=1$, $D^{\sigma}=D^{\mathrm{al}}$. Therefore
\begin{equation*}
  \mathrm{ind}(D^{\mathrm{al}})=\lim_{\sigma\to\infty}\mathrm{ind}(D^{\sigma}).
\end{equation*}
Via another index gluing argument, we will be able to relate $\mathrm{ind}(D^{\sigma})$ for large $\sigma$ to the weighted count of zeros of a certain section used to define $D^{\sigma}$, and ultimately conclude
\begin{equation*}
  \mathrm{ind}(D^{\sigma})=\mathrm{X}+\mu^{\tau}_{\mathrm{Mas}}\text{ for $\sigma\gg 0$}.
\end{equation*}
This will complete the proof of Proposition \ref{prop:20}.

\subsubsection{Defining the family $D^{\sigma}$}
\label{sec:defining-d-sigma}

We will now carefully define the family $D^{\sigma}$ in such a way which will facilitate the later analysis. Pick a Hermitian metric $\mu$ on all of $E$ so that $\abs{X}=1$. Now consider the section $M=X\otimes X$ of $F^{\otimes 2}\to  \bd C_{z}.$

We can extend this section as a \emph{non-vanishing} section of $F^{\otimes 2}\to \bd\dot\Sigma$ as follows: on any contractible open subset of $\bd\dot\Sigma$, let $M=Y\otimes Y$ where $Y\in \Gamma(F)$ satisfies $\abs{Y}=1$ using the metric $\mu$. Since there is a unique unit vector lying in $F$ up to $\pm 1$, we conclude that these local descriptions of $M$ agree on their overlaps. Clearly, in each end, $M=X\otimes X$. We should note that $X$ may \emph{not} extend over the boundary $\bd\dot\Sigma$, (but, as we have seen, $M$ always does).

Now extend $M$ to the interior of $\dot\Sigma$ as a section of $E\otimes E$ so that all of its zeros are transverse. By the same considerations of the linearization of a vector field given in Section \ref{sec:euler}, we can deform $M$ near each zero $\zeta$ so that, for some $D(1)$-valued holomorphic coordinate $z$ centered at $\zeta$, and some unitary frame $Y$ for $E$, we have $M=-z Y\otimes Y$ or $M=\cl{z}Y\otimes Y$, depending on the sign of the determinant of the linearization of $M$ at $\zeta$. By definition, $\mu^{\tau}_{\mathrm{Mas}}(E,F)$ is the signed count of zeros of $M$. See Figure \ref{fig:standard-rep-int}.

It will be useful to recall that $E\otimes E$ is complex linearly isomorphic to $\mathrm{Hom}^{0,1}(E,E)$ via the map $Y\otimes X\mapsto \mu(-,Y)X$, where $\mu$ is our chosen Hermitian metric. Let $M_{*}$ denote the image of $M$ under this isomorphism (so $M_{*}$ is a section of $\mathrm{Hom}^{0,1}(E,E)$).

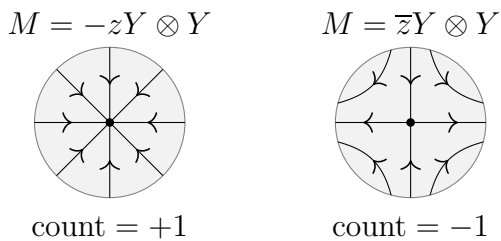
\begin{figure}[H]
  \centering
  \begin{tikzpicture}[scale=0.5]
    \begin{scope}[shift={(0,0)}]
          \node at (0,2) [above] {$M=-zY\otimes Y$};
          \draw[black!50!white,fill=black!5!white] (0,0) circle (2);
          
          \draw[postaction={decorate,decoration={markings,mark=at position 0.5 with {\arrow[scale=1.5]{>};}}}] (-2,0)--(0,0);
          \draw[postaction={decorate,decoration={markings,mark=at position 0.5 with {\arrow[scale=1.5]{>};}}}] (2,0)--(0,0);
          \draw[postaction={decorate,decoration={markings,mark=at position 0.5 with {\arrow[scale=1.5]{>};}}}] (0,2)--(0,0);
          \draw[postaction={decorate,decoration={markings,mark=at position 0.5 with {\arrow[scale=1.5]{>};}}}] (0,-2)--(0,0);
          \draw[postaction={decorate,decoration={markings,mark=at position 0.5 with {\arrow[scale=1.5]{>};}}}] (45:2)--(0,0);
          \draw[postaction={decorate,decoration={markings,mark=at position 0.5 with {\arrow[scale=1.5]{>};}}}] (135:2)--(0,0);
          \draw[postaction={decorate,decoration={markings,mark=at position 0.5 with {\arrow[scale=1.5]{>};}}}] (225:2)--(0,0);
          \draw[postaction={decorate,decoration={markings,mark=at position 0.5 with {\arrow[scale=1.5]{>};}}}] (315:2)--(0,0);
          \node[draw,circle,inner sep=1pt,fill] at (0,0){};
          \node at (0,-2.2)[below]{$\mathrm{count}=+1$};
        \end{scope}

\begin{scope}[shift={(8,0)}]
          \node at (0,2) [above] {$M=\cl{z}Y\otimes Y$};
          \draw[black!50!white,fill=black!5!white] (0,0) circle (2);
          \begin{scope}
            \clip (0,0) circle (2);
            \draw[postaction={decorate,decoration={markings,mark=at position 0.5 with {\arrow[scale=1.5]{>};}}}] plot[domain=0.5:2] ({\x},{1/\x});
            \draw[postaction={decorate,decoration={markings,mark=at position 0.5 with {\arrow[scale=1.5]{>};}}}] plot[domain=0.5:2] ({\x},{-1/\x});
            \draw[postaction={decorate,decoration={markings,mark=at position 0.5 with {\arrow[scale=1.5]{>};}}}] plot[domain=0.5:2] ({-\x},{1/\x});
            \draw[postaction={decorate,decoration={markings,mark=at position 0.5 with {\arrow[scale=1.5]{>};}}}] plot[domain=0.5:2] ({-\x},{-1/\x});            
          \end{scope}

          \draw[postaction={decorate,decoration={markings,mark=at position 0.5 with {\arrow[scale=1.5]{<};}}}] (-2,0)--(0,0);
          \draw[postaction={decorate,decoration={markings,mark=at position 0.5 with {\arrow[scale=1.5]{<};}}}] (2,0)--(0,0);
          \draw[postaction={decorate,decoration={markings,mark=at position 0.5 with {\arrow[scale=1.5]{>};}}}] (0,2)--(0,0);
          \draw[postaction={decorate,decoration={markings,mark=at position 0.5 with {\arrow[scale=1.5]{>};}}}] (0,-2)--(0,0);
          \node[draw,circle,inner sep=1pt,fill] at (0,0){};
          \node at (0,-2.2)[below]{$\mathrm{count}=-1$};
        \end{scope}
  \end{tikzpicture}
  \caption{After a slight deformation in a neighborhood of each zero, we may assume the zeros of $M$ have coordinate representations as either $-z$ or $\cl{z}$.}
  \label{fig:standard-rep-int}
\end{figure}

Next, we extend the vector field $\bd_{s}$ (defined in the ends) to all of $\Sigma$. We let $V$ be a vector field which (a) is everywhere tangent to $\bd\dot\Sigma$, (b) equals $\bd_{s}$ in the ends, (c) has non-degenerate zeros, and (d) its zeros are disjoint from the zeros of $M$. Unlike the section $M=X\otimes X$, we expect $V$ to have boundary zeros.

As explained in Section \ref{sec:euler} we can slightly deform $V$ (away from the ends), so that near each interior zero $p$ there is a holomorphic coordinate $z=s+it$ so that $V=-z\bd_{s}$ or $V=\cl{z}\bd_{s}$ (similarly to Figure \ref{fig:standard-rep-int}), and near each boundary zero we have one of four possibilities $V=\pm z\bd_{s}$, $V=\pm \cl{z}\bd_{s}$, as shown in Figure \ref{fig:boundary-zeros}. 
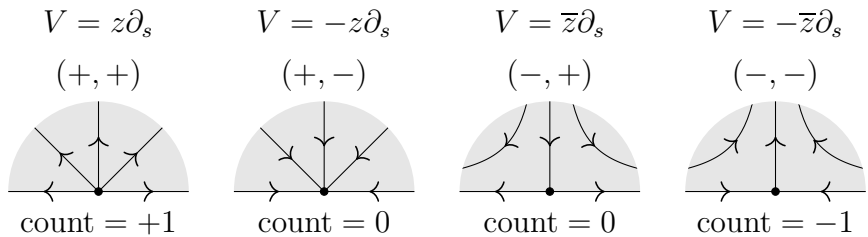
\begin{figure}[H]
  \centering
  \begin{tikzpicture}[scale=.6]
    \begin{scope}[shift={(0,0)}]
      \fill[black!10!white] (2,0) arc (0:180:2)--cycle;
      \node (A) at (0,2) [above] {$(+,+)$};
      \node at (A.north) [above] {$V=z\bd_{s}$};
      \draw[postaction={decorate,decoration={markings,mark=at position 0.5 with {\arrow[scale=1.5]{<};}}}] (-2,0)--(0,0);
      \draw[postaction={decorate,decoration={markings,mark=at position 0.5 with {\arrow[scale=1.5]{<};}}}] (2,0)--(0,0);
      \draw[postaction={decorate,decoration={markings,mark=at position 0.5 with {\arrow[scale=1.5]{<};}}}] (0,2)--(0,0);
      \draw[postaction={decorate,decoration={markings,mark=at position 0.5 with {\arrow[scale=1.5]{<};}}}] (45:2)--(0,0);
      \draw[postaction={decorate,decoration={markings,mark=at position 0.5 with {\arrow[scale=1.5]{<};}}}] (135:2)--(0,0);
      \node[draw,circle,inner sep=1pt,fill] at (0,0){};
      \node at (0,-0.2)[below]{$\mathrm{count}=+1$};
    \end{scope}
    \begin{scope}[shift={(5,0)}]
      \node (A) at (0,2) [above] {$(+,-)$};
      \node at (A.north) [above] {$V=-z\bd_{s}$};
      \fill[black!10!white] (2,0) arc (0:180:2)--cycle;
      \draw[postaction={decorate,decoration={markings,mark=at position 0.5 with {\arrow[scale=1.5]{>};}}}] (-2,0)--(0,0);
      \draw[postaction={decorate,decoration={markings,mark=at position 0.5 with {\arrow[scale=1.5]{>};}}}] (2,0)--(0,0);
      \draw[postaction={decorate,decoration={markings,mark=at position 0.5 with {\arrow[scale=1.5]{>};}}}] (0,2)--(0,0);
      \draw[postaction={decorate,decoration={markings,mark=at position 0.5 with {\arrow[scale=1.5]{>};}}}] (45:2)--(0,0);
      \draw[postaction={decorate,decoration={markings,mark=at position 0.5 with {\arrow[scale=1.5]{>};}}}] (135:2)--(0,0);
      \node[draw,circle,inner sep=1pt,fill] at (0,0){};
      \node at (0,-0.2)[below]{$\mathrm{count}=0$};
    \end{scope}
    \begin{scope}[shift={(10,0)}]
      \node (A) at (0,2) [above] {$(-,+)$};
      \node at (A.north) [above] {$V=\cl{z}\bd_{s}$};
      \fill[black!10!white] (2,0) arc (0:180:2)--cycle;
      \begin{scope}
        \clip (2,0) arc (0:180:2)--cycle;
        \draw[postaction={decorate,decoration={markings,mark=at position 0.5 with {\arrow[scale=1.5]{>};}}}] plot[domain=0.5:2] ({\x},{1/\x});
        \draw[postaction={decorate,decoration={markings,mark=at position 0.5 with {\arrow[scale=1.5]{>};}}}] plot[domain=0.5:2] ({-\x},{1/\x});
      \end{scope}
      \draw[postaction={decorate,decoration={markings,mark=at position 0.5 with {\arrow[scale=1.5]{<};}}}] (-2,0)--(0,0);
      \draw[postaction={decorate,decoration={markings,mark=at position 0.5 with {\arrow[scale=1.5]{<};}}}] (2,0)--(0,0);
      \draw[postaction={decorate,decoration={markings,mark=at position 0.5 with {\arrow[scale=1.5]{>};}}}] (0,2)--(0,0);
      \node[draw,circle,inner sep=1pt,fill] at (0,0){};
      \node at (0,-0.2)[below]{$\mathrm{count}=0$};
    \end{scope}
    \begin{scope}[shift={(15,0)}]
      \node (A) at (0,2) [above] {$(-,-)$};
      \node at (A.north) [above] {$V=-\cl{z}\bd_{s}$};
      \fill[black!10!white] (2,0) arc (0:180:2)--cycle;
      \begin{scope}
        \clip (2,0) arc (0:180:2)--cycle;
        \draw[postaction={decorate,decoration={markings,mark=at position 0.5 with {\arrow[scale=1.5]{<};}}}] plot[domain=0.5:2] ({\x},{1/\x});
        \draw[postaction={decorate,decoration={markings,mark=at position 0.5 with {\arrow[scale=1.5]{<};}}}] plot[domain=0.5:2] ({-\x},{1/\x});
      \end{scope}
      \draw[postaction={decorate,decoration={markings,mark=at position 0.5 with {\arrow[scale=1.5]{>};}}}] (-2,0)--(0,0);
      \draw[postaction={decorate,decoration={markings,mark=at position 0.5 with {\arrow[scale=1.5]{>};}}}] (2,0)--(0,0);
      \draw[postaction={decorate,decoration={markings,mark=at position 0.5 with {\arrow[scale=1.5]{<};}}}] (0,2)--(0,0);
      \node[draw,circle,inner sep=1pt,fill] at (0,0){};
      \node at (0,-0.2)[below]{$\mathrm{count}=-1$};
    \end{scope}
  \end{tikzpicture}
  \caption{The four models for a boundary zero of $V$. The first sign is from the linearization of $V:\Sigma\to T\Sigma$ and the second sign is from the linearization of the restriction $V:\bd\Sigma\to T\bd\Sigma$.}
  \label{fig:two-signs-2}
\end{figure}
By definition, the weighted count of the zeros of $V$ is the Euler characteristic $\mathrm{X}$.

It will be important to fix a Hermitian metric $\mu$ on $T\dot\Sigma$. We can do this so that $\abs{\bd_{s}}=1$ on all the coordinate charts introduced above (including the coordinate charts on the ends $C_{z}$, of course).

We are almost ready to define the operator $D^{\sigma}$. Two further simplifications we can do are the following:
\begin{enumerate}
\item Via a small deformation of $V$ away from the ends and its zeros, we may suppose that in the coordinate charts $z=s+it$ centered on the zeros of $M$, $V$ takes the form $\bd_{s}$, and
\item via a small deformation of $M$ away from the ends and its zeros, we may suppose that $M=Y\otimes Y$ for a unitary frame (with $Y|_{\bd\Sigma}\in F$) on the coordinate charts near the zeros of $V$.
\end{enumerate}

To summarize our setup, we have the following:
\begin{enumerate}[label=(\alph*)]
\item Holomorphic coordinate charts $z=s+it$ centered on each zero of $M$ and $V$. Boundary holomorphic coordinate charts are valued in $D(1)\cap \cl{\mathbb{H}}$.
\item Unitary metrics on $E$ and $T\dot\Sigma$ extending the metrics in the ends. Moreover we fix unitary sections $Y$ for $(E,F)$ defined on the domains of the coordinate charts from (a), and also suppose that $\abs{\bd_{s}}=1$ in each chart.
\item Near each zero of $M$, $V=\bd_{s}$ and $M$ equals $-zY\otimes Y$ or $\cl{z}Y\otimes Y$,
\item Near each interior zero of $V$, $M=Y\otimes Y$ and $V$ equals $-z\bd_{s}$ or $\cl{z}\bd_{s}$,
\item Near each boundary zero of $V$, $M=Y\otimes Y$ and $V$ equals $\pm z\bd_{s}$ or $\pm\cl{z}\bd_{s}$, (the $\pm$ signs are independent).
\end{enumerate}

Now fix $D_{0}$ to be a Cauchy-Riemann operator on $(E,F)$ which equals $\bd_{s}+i\bd_{t}$ in $C$ (with respect to $\tau$) and equals $\bd_{s}+i\bd_{t}$ with on the local trivializations induced by (a) and (b) above. This operator $D_{0}$ is not Fredholm, since its asymptotics are degenerate. We will perturb $D_{0}$ by the following lower order term
\begin{equation*}
  \xi\in \Gamma(E)\mapsto B(\xi)=\mu(-,V)\otimes M_{*}(\xi)\in \Gamma(\Lambda^{0,1}\otimes E).
\end{equation*}
Note that since $M_{*}$ is a section of $\mathrm{Hom}^{0,1}(E,E)$, $\xi\mapsto B(\xi)$ is anti-linear. We define
\begin{equation*}
  D^{\sigma}=D_{0}+\sigma B.
\end{equation*}
Before we proceed, let us verify that $D^{\sigma}$ has the correct ``$\mathrm{al}$'' asymptotics for $\sigma>0$.

In any of the asymptotic coordinate charts, we have $M_{*}=\mu(-,X)X$ and $V=\bd_{s}$ hence $M_{*}(uX)=\cl{u}X$ and
\begin{equation*}
  \begin{aligned}
    D^{\sigma}(uX)&=(\bd_{s}u+i\bd_{t}u)(\d s+i\d t)\otimes X+\sigma \cl{u}\mu(-,\bd_{s})\otimes X.\\
    &=(\bd_{s}u+i\bd_{t}u+\sigma\cl{u})(\d s+i\d t)\otimes X,
  \end{aligned}
\end{equation*}
where we have used the fact that $\mu(-,\bd_{s})=\d s-i\d t$ in the ends (indeed, this holds in all of our coordinate charts by the assumption that $\abs{\bd_{s}}=1$). Thus the local representation of $D^{\sigma}$ indeed equals $\bd_{s}+i\bd_{t}+\sigma C$, as desired.

As explained at the start of this section, this implies that the Fredholm index of $D^{\sigma}$ is constant for $\sigma>0$. Our task therefore reduces to the following lemma, which we will prove by deforming $\sigma\to+\infty$:
\begin{lemma}\label{lemma:final-baby-boy}
  The Fredholm index of $D^{\sigma}$ is equal
  \begin{equation*}
    \mathrm{ind}(D^{\sigma})=\mathrm{X}+\mu^{\tau}_{\mathrm{Mas}}.
  \end{equation*}
\end{lemma}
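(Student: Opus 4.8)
The plan is to carry out the large anti-linear deformation argument of \cite[Section 7]{taubes} and \cite[Chapter 3]{gerig}, taking care of the new phenomenon that $B$ may have boundary zeros. Recall that the zeros of $B$ form the disjoint union of the zeros of $M$ and the zeros of $V$, since $B(\xi)=\mu(-,V)\otimes M_{*}(\xi)$ vanishes exactly where $V=0$ or $M_{*}=0$. Near a zero $\zeta$ of $M$ we have arranged $V=\bd_{s}$ and $M=-zY\otimes Y$ or $M=\cl{z}Y\otimes Y$; near a zero $\zeta$ of $V$ we have arranged $M=Y\otimes Y$ and $V=\pm z\bd_{s}$ or $V=\pm\cl{z}\bd_{s}$. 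In every case, writing $\xi=uY$ and using $\mu(-,\bd_{s})=\d s-i\d t$ together with linearity of $\mu$ in its second argument, the coordinate representation of $D^{\sigma}$ near $\zeta$ becomes
\begin{equation*}
  uY\longmapsto \bigl(\bd_{s}u+i\bd_{t}u+\sigma\, g(z)\,\cl{u}\bigr)(\d s-i\d t)\otimes Y,
\end{equation*}
where $g$ is one of the linear functions $-z$, $\cl{z}$ (interior $\zeta$) or $\pm z$, $\pm\cl{z}$ (boundary $\zeta$, where we impose in addition $u\in\R$ along $\R$). Substituting $z\mapsto z/\sqrt{\sigma}$ and using that $g$ is homogeneous of degree one turns this into a fixed \emph{model operator}
\begin{equation*}
  D_{\zeta}:\ u\longmapsto \bd_{s}u+i\bd_{t}u+g(z)\,\cl{u}
\end{equation*}
on $\C$-valued functions on $\C$ (interior $\zeta$) or on $\cl{\mathbb{H}}$ with the boundary condition $u(\R)\subset\R$ (boundary $\zeta$).

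The first step is to prove that each $D_{\zeta}$ is Fredholm between the appropriate Sobolev completions, that its kernel and cokernel are smooth and decay like $e^{-c\abs{z}^{2}}$, and that $\mathrm{ind}(D_{\zeta})$ equals the weight of $\zeta$: namely $\pm 1$ in the two interior cases (Figure \ref{fig:standard-rep-int}) and $+1,0,0,-1$ in the four boundary cases (Figure \ref{fig:boundary-zeros}); moreover $\mathrm{coker}\,D_{\zeta}=0$ when this index is $\ge 0$ and $\ker D_{\zeta}=0$ when it is $\le 0$, so the two weight-$0$ boundary models are isomorphisms. For the interior models this is the classical computation of \cite[Section 7]{taubes}, solving $\db v+g\cl{v}=0$ explicitly. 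For the four boundary models the analogous computation on $\cl{\mathbb{H}}$ is carried out in Section \ref{sec:D1-kernel-class}, and its output is exactly the non-standard weighting of boundary zeros built into the definitions of $\mathrm{X}$ and $\mu^{\tau}_{\mathrm{Mas}}$. The essential new point is that $g=+z$ and $g=-z$ behave differently on $\cl{\mathbb{H}}$ — one model has index $+1$, the other $0$ — even though on $\C$ the two operators are related by a rotation of the plane and hence have equal index; this is ultimately why the Euler characteristic term depends on the signs of the punctures.

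The second step is a gluing argument formally identical to the one in Section \ref{sec:kernel-gluing}. We deform $D^{\sigma}$ through Fredholm operators with fixed asymptotics — hence fixed index — so that in a small disk or half-disk around each zero $\zeta$ of $B$ it agrees with the rescaled model $D_{\zeta}$ (in the coordinate $\sqrt{\sigma}z$), and on the complement of these regions it agrees with an operator $D^{\infty}$ whose anti-linear term $\sigma B^{\infty}$ is nowhere vanishing. On that complement $\sigma B^{\infty}$ is an anti-linear bundle isomorphism of pointwise norm at least $c\sigma$, and a routine a priori estimate shows that $D^{\infty}:W^{1,p}(E,F)\to L^{p}(\Lambda^{0,1}\otimes E)$ is an isomorphism once $\sigma$ is large. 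Stabilizing the models and $D^{\infty}$ as in Section \ref{sec:stabilized-operators} and gluing them together across the annular (respectively half-annular) necks of inner radius $\sim\sigma^{-1/2}$ and fixed outer radius — on which the dominant term $\sigma B$ forces exponential decay, playing the role that the non-degenerate asymptotic operators played in Lemma \ref{lemma:kernel-gluing} — the cut-off plus Newton-iteration scheme of that lemma shows that, for $\sigma$ large, the stabilized $D^{\sigma}$ is surjective and
\begin{equation*}
  \ker D^{\sigma}\cong\bigoplus_{\zeta}\ker D_{\zeta},\qquad \mathrm{coker}\,D^{\sigma}\cong\bigoplus_{\zeta}\mathrm{coker}\,D_{\zeta}.
\end{equation*}
Taking dimensions and recalling the definitions of $\mu^{\tau}_{\mathrm{Mas}}$ and $\mathrm{X}$,
\begin{equation*}
  \mathrm{ind}(D^{\sigma})=\sum_{\zeta}\mathrm{ind}(D_{\zeta})=\sum_{\zeta\in M^{-1}(0)}(\text{weight of }\zeta)+\sum_{\zeta\in V^{-1}(0)}(\text{weight of }\zeta)=\mu^{\tau}_{\mathrm{Mas}}+\mathrm{X}.
\end{equation*}
Since $\mathrm{ind}(D^{\sigma})$ is independent of $\sigma$, the lemma follows.

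I expect the main obstacle to be the $\sigma$-uniform analysis behind the second step: one must set up $\sigma$-dependent weighted norms in which $D^{\infty}$ admits a right inverse bounded uniformly in $\sigma$, and in which every element of $\ker D^{\sigma}$ and of $\mathrm{coker}\,D^{\sigma}$ concentrates, up to an error of order $\sigma^{-1/2}$, inside the model regions $\{\,\abs{z-\zeta}\le c\sigma^{-1/2}\,\}$; only then are the gluing and ungluing maps mutually inverse. A second, self-contained difficulty — and the one responsible for the novel feature of the theorem — is the boundary model computation of Section \ref{sec:D1-kernel-class}: showing that the two weight-$0$ models on $\cl{\mathbb{H}}$ are genuine isomorphisms, not merely Fredholm of index $0$, requires exploiting the interaction between the anti-linear term $g(z)\cl{u}$ and the totally real boundary condition $u(\R)\subset\R$.
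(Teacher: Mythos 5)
Your proposal is correct in outline and rests on the same local input as the paper --- the six model operators $\db u+g(z)\cl{u}$ with $g=-z,\cl{z}$ at interior zeros and $g=\pm z,\pm\cl{z}$ at boundary zeros, their explicit Gaussian kernels/cokernels, and the observation that the two weight-$0$ boundary models are isomorphisms --- but your global step takes a genuinely different route. You propose an excision-plus-gluing scheme in the style of Lemma \ref{lemma:kernel-gluing}: a complementary operator $D^{\infty}$ with nowhere-vanishing anti-linear term, a $\sigma$-uniformly bounded right inverse, and a Newton iteration across necks of inner radius $\sigma^{-1/2}$. The paper instead never constructs a right inverse for a complementary operator. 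It proves a global Bochner--Weitzenb\"ock estimate $\norm{B\xi}^{2}_{L^{2}}\le\sigma^{-2}\norm{D^{\sigma}\xi}^{2}_{L^{2}}+C\sigma^{-1}\norm{\xi}^{2}_{L^{2}}$, which (since $\abs{B}$ is bounded below off the zeros) forces any bounded sequence with bounded $\norm{D^{\sigma_{n}}\xi_{n}}$ to concentrate at the zeros; it then uses the rescaling maps $\Phi_{\sigma},\Pi_{\sigma}$ and a linear compactness result to show the rescaled limits lie in $\ker D^{1}$, stabilizes $D^{\sigma}$ directly by $(\xi,\mathbf{c})\mapsto(D^{\sigma}\xi+\Phi_{\sigma}(\mathbf{c}),\,\mathrm{proj}_{\ker D^{1}}\Pi_{\sigma}\xi)$, and proves this is an isomorphism for large $\sigma$ by contradiction. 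The trade-off: the paper's compactness-by-contradiction argument never has to exhibit uniform weighted norms or track constants on the shrinking necks, which is exactly the step you flag as the main obstacle in your scheme; your scheme, if carried out, is more constructive and would give quantitative control of the kernel elements.

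Two places where your ``routine'' hides the paper's actual work. First, the a priori estimate showing that a nowhere-vanishing anti-linear term of size $\sigma$ dominates is not a pointwise argument: it is the Weitzenb\"ock identity, and on a surface with boundary the naive double integration by parts fails because $D_{0}\xi$ need not take values in $F^{*}$; the paper shows the two failures of formal adjointness cancel (using that $B\xi$ \emph{does} take values in $F^{*}$), and the analogous cancellation is needed again in the local estimates of Lemma \ref{lemma:baby-bochner}. Second, the ``exponential decay on the neck'' you invoke does not come from a spectral gap of a translation-invariant asymptotic operator as in Lemma \ref{lemma:kernel-gluing}; it comes from the quadratic potential $\abs{z}^{2}\abs{v}^{2}$ in the local Weitzenb\"ock identity, so the decay and the compactness of $\set{v:\norm{v}_{W^{1,2}}+\norm{zv}_{L^{2}}\le C}\hookrightarrow L^{2}$ must be established separately, as the paper does. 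Neither point invalidates your plan, but both must be supplied for it to close.
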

This lemma will complete the proof of Proposition \ref{prop:20}.

\subsubsection{Computing the local coordinate representations of $D^{\sigma}$}
\label{sec:computing-d-sigma}

In this section we will derive various formulas for $D^{\sigma}$ in coordinate charts. We have just shown that
\begin{equation}
  \label{eq:cr1}
  \text{in the ends $C_{z}$ we have: }D^{\sigma}=\bd_{s}+i\bd_{t}+\sigma C. 
\end{equation}
Near the zeros of $V$ and $M$, we compute the coordinate representation of $D^{\sigma}$ using the $s+it$ coordinate and the frame $Y$.

On a chart centered on a zero of $M$, we have $M_{*}=\alpha\mu(-,Y)Y$, where $\alpha=-z$ or $\alpha=\cl{z}$, and $V=\bd_{s}$. Similarly, near an interior zero of $V$, we have $M_{*}=\mu(-,Y)Y$ and $V=\alpha \bd_{s}$. In either case, we conclude:
\begin{equation}\label{eq:cr2}
  \begin{aligned}
    \text{at interior positive zeros: }D^{\sigma}(u)&=\bd_{s}u+i\bd_{t}u-\sigma z\cl{u},\\
    \text{at interior negative zeros: }D^{\sigma}(u)&=\bd_{s}u+i\bd_{t}u+\sigma \cl{z}\cl{u}.
  \end{aligned}
\end{equation}
Next we compute the coordinate representation of $D^{\sigma}$ near the boundary zeros, which we partition by their pair of signs $(\pm,\pm)$ as in Figure \ref{fig:two-signs-2}:
\begin{equation}\label{eq:cr3}
  \begin{aligned}
    \text{at $(+,\pm)$ type zeros: }D^{\sigma}(u)&=\bd_{s}u+i\bd_{t}u\pm\sigma z\cl{u},\\
    \text{at $(-,\pm)$ type zeros: }D^{\sigma}(u)&=\bd_{s}u+i\bd_{t}u\pm\sigma \cl{z}\cl{u}.
  \end{aligned}
\end{equation}

\subsection{Bochner-Weitzenb\"ock estimates and a linear compactness result}
\label{sec:weitzen}
Following \cite[Section 7]{taubes} and \cite[Chapter 5]{wendl-sft}, we show that $D^{\sigma}=D_{0}+\sigma B$ satisfies a ``Bochner-Weitzenb\"ock'' type estimate which will imply that kernel elements $\xi\in \ker D^{\sigma}$ and cokernel elements $\eta\in \ker D^{\sigma,\ast}$ concentrate near zeros of $B$. The key step is the following $L^{2}$ estimate:
\begin{lemma}[Bochner-Weitzenb\"ock estimates]
  Let $\xi\in W^{1,2}(E,F)$, then
  \begin{equation*}
    \norm{D_{0}\xi}^{2}_{L^{2}}+\sigma^{2}\norm{B(\xi)}_{L^{2}}^{2}\le \norm{D^{\sigma}\xi}_{L^{2}}^{2}+\sigma \norm{\xi}_{L^{2}}\norm{D^{*}_{0}(B(\xi))+B^{*}(D_{0}(\xi))}_{L^{2}}.
  \end{equation*}
  Moreover, $\xi\mapsto D_{0}^{*}(B(\xi))+B^{*}(D_{0}(\xi))$ is a \emph{zeroth} order operator (which is translation invariant in the ends). Similarly, if $\eta\in W^{1,2}(\Lambda^{1,0}\otimes E,F^{*})$ then
  \begin{equation*}
    \norm{D^{\ast}_{0}\eta}^{2}_{L^{2}}+\sigma^{2}\norm{B^{*}\eta}_{L^{2}}^{2}\le \norm{D^{\sigma,\ast}\eta}_{L^{2}}^{2}+\sigma \norm{\xi}_{L^{2}}\norm{D_{0}(B^{*}(\eta))+B(D^{*}_{0}(\eta))}_{L^{2}}.
  \end{equation*}
  and $\eta\mapsto D_{0}(B^{*}(\eta))+B(D_{0}^{*}(\eta))$ is also a zeroth order operator (also translation invariant in the ends). We therefore conclude a constant $C=C(D_{0},B)$ so that for all $\xi,\eta$ as above we have
  \begin{equation}\label{eq:bochner-weitzenboch}
    \begin{aligned}
      \norm{B\xi}^{2}_{L^{2}}&\le \sigma^{-2}\norm{D^{\sigma}\xi}^{2}_{L^{2}}+C\sigma^{-1}\norm{\xi}^{2}_{L^{2}},\\
      \norm{B^{*}(\eta)}^{2}_{L^{2}}&\le \sigma^{-2}\norm{D^{\sigma,\ast}\eta}^{2}_{L^{2}}+C\sigma^{-1}\norm{\eta}^{2}_{L^{2}}.
    \end{aligned}
  \end{equation}
  In particular, if $D^{\sigma_{n}}\xi_{n}$ and $\xi_{n}$ remain bounded in $L^{2}$ and $\sigma_{n}\to\infty$, then $B\xi_{n}$ must converge to zero in $L^{2}$. This forces the mass of $\xi_{n}$ to concentrate near the zeros of $B$.
\end{lemma}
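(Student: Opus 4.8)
The plan is to expand $\norm{D^{\sigma}\xi}_{L^{2}}^{2}$ and control the cross term. Writing $\ip{a,b}=\mathrm{Re}\int_{\dot\Sigma}\mu(a,b)\,\d\mathrm{vol}$ for the real $L^{2}$ pairing, one has
\begin{equation*}
  \norm{D^{\sigma}\xi}_{L^{2}}^{2}=\norm{D_{0}\xi}_{L^{2}}^{2}+2\sigma\ip{D_{0}\xi,B\xi}+\sigma^{2}\norm{B\xi}_{L^{2}}^{2},
\end{equation*}
so the asserted inequality is equivalent to the bound $-2\sigma\ip{D_{0}\xi,B\xi}\le\sigma\norm{\xi}_{L^{2}}\norm{D_{0}^{*}(B\xi)+B^{*}(D_{0}\xi)}_{L^{2}}$. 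I would obtain this from the identity
\begin{equation*}
  2\ip{D_{0}\xi,B\xi}=\ip{\xi,D_{0}^{*}(B\xi)}+\ip{\xi,B^{*}(D_{0}\xi)}=\ip{\xi,\,D_{0}^{*}(B\xi)+B^{*}(D_{0}\xi)}
\end{equation*}
together with Cauchy--Schwarz. The first equality uses the formal-adjoint relation of Section~\ref{sec:formal-adjoints} on one copy of $\ip{D_{0}\xi,B\xi}$ and the fiberwise relation $\ip{B\alpha,\beta}=\ip{\alpha,B^{*}\beta}$ (valid for the real pairing, even though $B$ is anti-linear) on the other. To integrate against $D_{0}^{*}$ one needs $B\xi$ to take boundary values in $F^{*}$ when $\xi$ takes them in $F$: this holds because $V$ is tangent to $\bd\dot\Sigma$, so $\mu(w,V)\in\R$ for $w\in T\bd\dot\Sigma$ (as $Jw$ is normal to the boundary), and $M_{*}$ maps $F$ into $F$ along $\bd\dot\Sigma$ (there $M=Y\otimes Y$ with $Y$ a unit section of $F$, and $F$ is Lagrangian, so $M_{*}(\xi)=g(\xi,Y)Y\in F$); hence $B\xi=\mu(-,V)\otimes M_{*}(\xi)$ carries $T\bd\dot\Sigma$ into $F$. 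By density of $\Gamma_{0}(E,F)$ in $W^{1,2}(E,F)$ it suffices to treat smooth compactly supported $\xi$.

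The crux is that $T:=D_{0}^{*}\circ B+B^{*}\circ D_{0}$, which is a priori first order, is in fact zeroth order. I would check this in an arbitrary chart $z=s+it$ with $\abs{\bd_{s}}=1$, in which $D_{0}(u)=\bd_{s}u+i\bd_{t}u+S(s,t)u$ and $B(u)=\beta(s,t)\bar u$ for a smooth scalar $\beta$ (recall $E$ is a line bundle here). Using the adjoint formula \eqref{eq:formal-formula} (with $\abs{\bd_{s}}=1$), the first-order part of $D_{0}^{*}(B\xi)$ equals that of $-\beta\,\overline{D_{0}u}$, while $B^{*}(D_{0}\xi)=\beta\,\overline{D_{0}u}$; the first-order parts therefore cancel and $T$ acts as multiplication of $\bar u$ by a smooth function built from $\beta,\bd_{s}\beta,\bd_{t}\beta$ and $S$. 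In the ends $B$ has constant coefficients with respect to $\tau$ (it is conjugation, and $S=0$ there), so $T$ is translation invariant in the ends — in fact $T$ vanishes identically there. Consequently there is a constant $C_{1}=C_{1}(D_{0},B)$ with $\norm{T\xi}_{L^{2}}\le C_{1}\norm{\xi}_{L^{2}}$. I expect this local cancellation — keeping straight the $\abs{\bd_{s}}^{-2}$ weight relating the metrics on $E$ and $\Lambda^{0,1}\otimes E$, and the anti-linearity of $B$ and $B^{*}$ — to be the only step requiring genuine care; everything else is bookkeeping.

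Putting the two paragraphs together,
\begin{equation*}
  \norm{D_{0}\xi}_{L^{2}}^{2}+\sigma^{2}\norm{B\xi}_{L^{2}}^{2}\le\norm{D^{\sigma}\xi}_{L^{2}}^{2}+\sigma\norm{\xi}_{L^{2}}\norm{T\xi}_{L^{2}}\le\norm{D^{\sigma}\xi}_{L^{2}}^{2}+C_{1}\sigma\norm{\xi}_{L^{2}}^{2},
\end{equation*}
and dropping $\norm{D_{0}\xi}_{L^{2}}^{2}$ and dividing by $\sigma^{2}$ gives the first line of \eqref{eq:bochner-weitzenboch} with $C=C_{1}$. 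The estimate for $\eta$ is proved verbatim with $(D_{0},B)$ replaced by $(D_{0}^{*},B^{*})$: the relevant zeroth-order operator is $\eta\mapsto D_{0}(B^{*}\eta)+B(D_{0}^{*}\eta)$, and the boundary check is that $B^{*}\eta$ takes values in $F$ along $\bd\dot\Sigma$, the adjoint of what was used above (the factor $\norm{\xi}_{L^{2}}$ in the displayed $\eta$-estimate should read $\norm{\eta}_{L^{2}}$). Finally, for the concentration statement: if $\norm{D^{\sigma_{n}}\xi_{n}}_{L^{2}}$ and $\norm{\xi_{n}}_{L^{2}}$ are bounded while $\sigma_{n}\to\infty$, then \eqref{eq:bochner-weitzenboch} forces $\norm{B\xi_{n}}_{L^{2}}\to 0$; since $B=\mu(-,V)\otimes M_{*}$ is a bundle isomorphism off the finite set $Z$ consisting of the zeros of $V$ together with the zeros of $M$, for any neighborhood $N$ of $Z$ there is $c_{N}>0$ with $\norm{\xi_{n}}_{L^{2}(\dot\Sigma\setminus N)}\le c_{N}\norm{B\xi_{n}}_{L^{2}}\to 0$, so the $L^{2}$ mass of $\xi_{n}$ concentrates in $N$.
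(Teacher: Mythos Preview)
Your proof is correct and reaches the same identity as the paper, namely $\norm{D^{\sigma}\xi}^{2}=\norm{D_{0}\xi}^{2}+\sigma\ip{\xi,T\xi}+\sigma^{2}\norm{B\xi}^{2}$ with $T=D_{0}^{*}B+B^{*}D_{0}$, followed by Cauchy--Schwarz. The route, however, is cleaner than the paper's. The paper first writes the naive chain $\norm{D^{\sigma}\xi}^{2}=\ip{\xi,D^{\sigma,\ast}D^{\sigma}\xi}=\cdots=\norm{D_{0}\xi}^{2}+\sigma\ip{\xi,T\xi}+\sigma^{2}\norm{B\xi}^{2}$, which integrates by parts twice and runs into the obstacle that neither $D^{\sigma}\xi$ nor $D_{0}\xi$ need take boundary values in $F^{*}$; it then spends a page arguing that the two boundary defects cancel. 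Your direct expansion of $\norm{D_{0}\xi+\sigma B\xi}^{2}$ as a square sidesteps this entirely: no integration by parts is needed for the diagonal terms, and for the cross term you use formal adjointness only once, in $\ip{D_{0}\xi,B\xi}=\ip{\xi,D_{0}^{*}(B\xi)}$, where the partner $B\xi$ genuinely lies in $F^{*}$ (as you verify). This is a real simplification. For the zeroth-order claim the paper uses the tensorial criterion $L(f\xi)=fL(\xi)$ while you compute the principal symbol directly in coordinates; these are equivalent arguments, and your observation that $T$ actually vanishes in the ends (not merely being translation invariant) is a mild sharpening.
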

\begin{proof}
  Thanks to Proposition \ref{prop:smooth-approximation} and the subsequent remarks, it suffices to consider the case when $\xi$ is smooth and takes values in $F$ along the boundary. 

  Let $\ip{-,-}$ denote the $L^{2}$ inner product. Naively, the estimate is proved by the following computation
  \begin{equation}\label{eq:comput-naiv}
    \begin{aligned}
      \norm{D^{\sigma}\xi}^{2}&=\ip{\xi,D^{\sigma,\ast}D^{\sigma}\xi}\\
      &=\ip{\xi,D^{\ast}_{0}D_{0}\xi}+\sigma \ip{\xi,D_{0}^{\ast}(B(\xi))+B^{*}(D_{0}(\xi))}+\sigma^{2}\norm{B(\xi)}^{2}\\
      &=\norm{D_{0}\xi}^{2}+\sigma \ip{\xi,D^{\ast}_{0}(B(\xi))+B^{*}(D_{0}(\xi))}+\sigma^{2}\norm{B(\xi)}^{2}.
    \end{aligned}
  \end{equation}
  Rearranging easily yields the desired result. Unfortunately, we cannot expect to be able to apply the formal adjoint property in the first and third equality unless $D^{\sigma}\xi$ and $D_{0}\xi$ take boundary values in $F^{*}$. One way to circumvent this issue would to be assume that $D_{0}\xi$ takes boundary values in $F^{*}$. The lower order term $B$ has been constructed so that $D^{\sigma}\xi$ would automatically also take boundary values in $F^{*}$. It seems plausible that smooth sections $\xi$ which take boundary values in $F$ \emph{and} for which $D^{0}\xi$ takes boundary values in $F^{*}$ are dense in $W^{1,2}(E,F)$.\footnote{This is suggested by the following observation: Locally write $\xi=uY$. The approximation result Proposition \ref{prop:smooth-approximation} shows that $u$ can be approximated in $W^{1,2}$ by $u_{n}=\Phi_{n}\ast E(u)$. By picking $\Phi_{n}$ appropriately, these approximations satisfy $(\bd_{s}+i\bd_{t})u_{n}\in \R$. In general we would require that we can approximate $u$ by smooth functions $u_{n}$ taking real values on the boundary and also satisfying $\db u_{n}+\alpha u_{n}+\beta \cl{u_{n}}\in \R$, where $\alpha,\beta$ are arbitrary complex valued functions.} However, we will not pursue this density approach here. Rather, we prefer to make the observation that we have applied the formal adjoint property \emph{twice}, once for $D^{\sigma}$ and once for $D_{0}$, and the failures of formal adjointness will cancel each other out.

  Indeed, $D^{\sigma}-D_{0}$ is a zeroth order operator whose formal adjoint is $D^{\sigma,\ast}-D_{0}^{*}$. Formal adjoints for zeroth order operators do not require any integration by parts, hence
  \begin{equation*}
    \ip{D^{\sigma}\xi -D_{0}\xi,D^{\sigma}\xi+D_{0}\xi}=\ip{\xi,(D^{\sigma,\ast}-D_{0}^{*})(D^{\sigma}+D_{0})\xi}.
  \end{equation*}
  This simplifies to
  \begin{equation*}   \norm{D^{\sigma}\xi}^{2}-\norm{D_{0}\xi}^{2}=\ip{\xi,D^{\sigma,\ast}D^{\sigma}\xi}-\ip{\xi,D^{0,\ast}D^{0}\xi}+\ip{\xi,(D_{0}^{*}D^{\sigma}-D^{\sigma,\ast}D_{0})\xi}.
  \end{equation*}
  Clearly $D_{0}^{*}D^{\sigma}-D^{\sigma,\ast}D_{0}=\sigma(D_{0}^{\ast}B-B^{*}D_{0})$, and hence
  \begin{equation*}
    \begin{aligned}
      \ip{\xi,(D_{0}^{*}D^{\sigma}-D^{\sigma,\ast}D_{0})\xi}&=\ip{\xi,D_{0}^{*}\sigma B\xi}-\ip{\xi,\sigma B^{*}D_{0}\xi}\\
      &=\ip{D_{0}\xi,\sigma B\xi}-\ip{\sigma B\xi,D_{0}\xi}=0,        
    \end{aligned}
  \end{equation*}
  where we have used the fact that $B\xi$ takes values in $F^{*}$ (which follows easily from our construction of $B$ and the fact $\xi$ takes values in $F$). Thus  
  \begin{equation*}
    \norm{D^{\sigma}\xi}^{2}-\ip{\xi,D^{\sigma,\ast}D^{\sigma}\xi}=\norm{D_{0}\xi}^{2}-\ip{\xi,D^{0,\ast}D^{0}\xi}.
  \end{equation*}
  This implies that the conclusion of \eqref{eq:comput-naiv} holds, (even if the individual steps do not hold). The first estimate from the statement of the Lemma then follows easily. The second estimate is proved in the same way.

  To show that $L(\xi)=B^{*}D_{0}(\xi)+D_{0}^{*}B(\xi)$ is a zeroth order operator, we will show that $L(f\xi)=fL(\xi)$ for all real-valued functions $f$ and all sections $\xi$ (this implies that $L$ is described as a tensor). It suffices to prove this in the case when $f$ is supported in a coordinate chart $z=s+it$ with frame $Y$.
  
  We digress for a moment to derive formulas for $D_{0}^{*}$ and $B^{*}$ on this coordinate chart. Write $\xi=u Y$ and $B=\varphi(\d s-i\d t)\mu(-,Y)Y$. We can assume that $Y$ is a unitary frame, i.e.\ $\abs{Y}=1$, but we do not assume that $\abs{\bd_{s}}=1$. 

  Let $\eta$ be an arbitrary smooth section of $\Lambda^{1,0}\otimes E$ taking values in $F^{*}$ along the boundary. Write $\eta=w(\d s - i\d t)\otimes Y$. Then we easily compute (similarly to how we argued in Section \ref{sec:formal-adjoints}):
  \begin{equation}\label{eq:fleo}
    B(\xi)=\cl{u}\varphi(\d s-i\d t)\otimes Y\implies \mathrm{Re}\,\mu(B(\xi),\eta)=\mathrm{Re}\,u\cl{\varphi}w\abs{\d s-i\d t}^{2}.
  \end{equation}
  Therefore we must have $B^{*}(\eta)=\varphi\abs{\d s-i\d t}^{2}\cl{w}Y$, since this choice yields the desired pointwise relationship:
  \begin{equation*}
    \mathrm{Re}\,\mu(\xi,B^{*}(\eta))=\mathrm{Re}\,\cl{u}\cl{w}\varphi\abs{\d s-i\d t}^{2}=\mathrm{Re}\,\mu(B(\xi),\eta).
  \end{equation*}
  Now in \eqref{eq:formal-formula} we have computed a formula for $D_{0}^{*}$:
  \begin{equation*}
    \abs{\d s-i\d t}^{-2}D_{0}^{*}(w(\d s-i\d t)\otimes Y)=(-\bd_{s}w+i\bd_{t}w+Sw)Y,
  \end{equation*}
  for some matrix valued function $S$. The important part is that the leading order part is $-\bd=-\bd_{s}+i\bd_{t}$. We then combine \eqref{eq:fleo} with the above formula for $D_{0}^{*}$ to obtain
  \begin{equation}\label{eq:gleo}
    D_{0}^{*}(B(f\xi))=-\bd f \cdot \varphi \abs{\d s-i\d t}^{2}\cl{u}Y+fD_{0}^{*}(B(\xi)).
  \end{equation}
  This computes half of $L(f\xi)$. For the other half, we use the defining property of Cauchy-Riemann operators to conclude
  \begin{equation}\label{eq:hleo}
    B^{*}(D_{0}(f\xi))=B^{*}(\db f \cdot (\d s-i\d t)\otimes \xi)+fB^{*}(D_{0}(\xi)),
  \end{equation}
  where $\db=\bd_{s}+i\bd_{t}$. Recall that we assume $f$ is real-valued. Then our formula for $B^{*}(\eta)$ with $\eta=\db f \cdot (\d s-i\d t)\otimes \xi=\db f\cdot u\cdot (\d s-i\d t)\otimes Y$ implies
  \begin{equation*}
    B^{*}(\db f\cdot (\d s-\d t)\otimes \xi)=\cl{\db f}\varphi\abs{\d s-i\d t}^{2} \cl{u}Y=\bd f \cdot \varphi \abs{\d s-i\d t}^{2}\cl{u} Y.
  \end{equation*}
  Adding together \eqref{eq:gleo} and \eqref{eq:hleo}, the $\pm \bd f\cdot \varphi \abs{\d s-i\d t}^{2}\cl{u}Y$ terms cancel and we obtain
  \begin{equation*}
    L(f\xi)=D_{0}^{*}(B(f\xi))+B^{*}(D_{0}(f\xi))=f[D_{0}^{*}(B(\xi))+B^{*}(D_{0}(\xi))]=fL(\xi),
  \end{equation*}
  as desired. A similar argument shows that $BD_{0}^{*}+D_{0}B^{*}$ is also a zeroth order operator. This completes the proof.
\end{proof}

\subsubsection{Local Bochner-Weitzenb\"ock estimates for sections supported near the zeros}
\label{sec:local-models}
In this section we will do a case-by-case analysis of the operator $D^{\sigma}$ near the zeros. See \cite[Section 5.6]{wendl-sft} for similar results. To simplify the calculations ahead, let's write $\db=\bd_{s}+i\bd_{t}$. In the next section we will explain how to rescale $D^{\sigma}=\db\pm \sigma\alpha(z)C$ to $D^{1}=\db\pm \alpha(z)C$. In this section we will focus only on the rescaled operator $D^{1}$.

There are four possibilities for $D^{1}$, namely $\db\pm zC$ and $\db \pm \cl{z}C$. We have the following estimates for these operators:
\begin{lemma}[Local Bochner-Weitzenb\"ock]\label{lemma:baby-bochner}
  Let $v\in W^{1,2}(\C,\C)$ or $v\in W^{1,2}(\cl{\mathbb{H}},\C,\R)$. Then we have the following estimates:
  \begin{equation*}
    \begin{aligned}
      \norm{\db v}^{2}_{L^{2}}+\norm{zv}^{2}_{L^{2}}&\le \norm{\db v\pm z\cl{v}}_{L^{2}}^{2}+2\norm{v}^{2}_{L^{2}}\\
      \norm{\db v}^{2}_{L^{2}}+\norm{zv}^{2}_{L^{2}}&\le \norm{\db v\pm \cl{z}\cl{v}}_{L^{2}}^{2}.
    \end{aligned}
  \end{equation*}  
\end{lemma}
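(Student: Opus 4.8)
The plan is to prove these estimates by the same ``twice-integration-by-parts'' trick used in the global Bochner-Weitzenb\"ock lemma above, now applied to the translation-invariant model operators $D^{1}=\db\pm\alpha(z)C$ with $\alpha(z)=z$ or $\cl z$. First I would reduce to smooth $v$ of compact support taking real boundary values, which is legitimate by Proposition \ref{prop:smooth-approximation} and the approximation remarks following Lemma \ref{lemma:coord-ell-reg} (the $\C$ case is even easier, covered by \cite[Section 2.4]{wendl-sft}). Write $B^{1}(v)=\alpha(z)\cl v$, a zeroth order antilinear operator with adjoint $B^{1,*}(w)=\cl{\alpha(z)}\cl w$ (on $\cl{\mathbb H}$ one checks $B^{1}(v)$ takes values in $F^{*}=\R$ along the boundary, so the formal adjoint property for $\db$ and $-\db^{*}$ can be combined exactly as in the global lemma, with the boundary failures cancelling). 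Expanding $\norm{\db v\pm\alpha\cl v}^{2}$ and using that the cross terms involving only the difference $\pm\alpha\cl v$ produce no boundary contribution gives
\begin{equation*}
  \norm{\db v\pm\alpha(z)\cl v}^{2}_{L^{2}}=\norm{\db v}^{2}_{L^{2}}+\norm{\alpha(z)v}^{2}_{L^{2}}\pm 2\,\mathrm{Re}\,\ip{\db v,\alpha(z)\cl v}_{L^{2}}.
\end{equation*}
Since $\abs{\alpha(z)v}=\abs{zv}$ in all four cases, everything reduces to estimating the cross term $\pm 2\,\mathrm{Re}\,\ip{\db v,\alpha(z)\cl v}$ from below.

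The heart of the matter is the identity for that cross term. Integrating by parts (no boundary terms survive, using $v\in\R$ on $\R\subset\bd\cl{\mathbb H}$ and compact support), one finds $\mathrm{Re}\,\ip{\db v,\alpha(z)\cl v}=-\mathrm{Re}\,\ip{v,\db(\alpha(z)\cl v)+\text{conjugate-type corrections}}$; the point is that $\db(\alpha(z)\cl v)=(\db\alpha)(z)\cl v+\alpha(z)\db\cl v$ and, crucially, $\db\cl v=\cl{\bd\!}\,v$ behaves well under the pairing, so the $\alpha(z)\db\cl v$ contribution reorganizes into a term proportional to $\norm{v}^{2}$ integrated against $\bd_{s}$ or $\bd_{t}$ of something, which vanishes, leaving only the algebraic term $(\db\alpha)(z)$. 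For $\alpha(z)=z$ we have $\db z=\bd_{s}z+i\bd_{t}z=1+i\cdot i=0$ (using $z=s+it$, so $\bd_{s}z=1$, $\bd_{t}z=i$), wait — more carefully $\db=\bd_{s}+i\bd_{t}$ so $\db(s+it)=1+i\cdot i=0$ only if we compute $\bd_{t}(it)=i$; thus $\db z=1+i^{2}=0$. Hmm, that gives $\db z=0$, but $z$ is holomorphic so $\db z=0$ is correct. Then for $\alpha=\cl z=s-it$ we get $\db\cl z=1+i\cdot(-i)=1+1=2$. So the cross term identity will yield: for $\alpha=z$, $\mathrm{Re}\,\ip{\db v,z\cl v}$ equals a quantity bounded by $\norm{v}^{2}$ (the correction terms, not a perfect cancellation — hence the $+2\norm{v}^{2}$ in the first estimate), whereas for $\alpha=\cl z$ the analogous computation produces a cross term that is $\ge 0$ after accounting for the $\db\cl z=2$ contribution together with a boundary-free rearrangement (hence no error term in the second estimate, and the sign works out for both $\pm$).

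I expect the main obstacle to be getting the cross-term identity \emph{exactly} right, including the sign bookkeeping for the two choices of $\pm$ and the two choices of $\alpha$, and verifying that in the $\cl z$ case the correction genuinely has a favorable sign (so that the estimate holds with \emph{no} $\norm{v}^{2}$ term) while in the $z$ case it does not, forcing the $2\norm{v}^{2}$ slack. Concretely I would compute $\mathrm{Re}\,\ip{\db v,\alpha(z)\cl v}=\frac12\int \alpha(z)\,\bd_{s}\!\big(\text{something}\big)+\ldots$ by writing $v=x+iy$ with $y\equiv 0$ on the boundary and expanding in real and imaginary parts, integrate the total-derivative pieces to zero, and identify the surviving term as $\mathrm{Re}\big(-\tfrac12\ip{\db\alpha(z)\cdot v,\cl v}\big)$ up to a controllable remainder. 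Since $\db\alpha$ is $0$ or $2$, this immediately gives the two stated inequalities; a parallel computation (or the substitution $v\mapsto\cl v$, $z\mapsto\cl z$) handles the $\R/\Z$-strip versus $\C$ distinction, which is purely a matter of which integration-by-parts boundary terms appear. The $\C$ case has no boundary so is strictly simpler and can be cited from \cite[Section 5.6]{wendl-sft}.
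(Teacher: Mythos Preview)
Your overall plan --- reduce to smooth compactly supported $v$, expand $\norm{\db v\pm\alpha(z)\cl v}^{2}=\norm{\db v}^{2}+\norm{zv}^{2}\pm 2\,\mathrm{Re}\,\ip{\db v,\alpha(z)\cl v}$, and evaluate the cross term by integration by parts --- is correct and is essentially the paper's argument (the paper packages the same computation as $D^{1,\ast}D^{1}v$ and then pairs against $v$). But there is a concrete error in your cross-term calculation that reverses the two cases.

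The formal adjoint of $\db=\bd_{s}+i\bd_{t}$ for the real $L^{2}$ pairing is $-\bd=-\bd_{s}+i\bd_{t}$, not $-\db$. So integrating by parts gives
\[
\mathrm{Re}\,\ip{\db v,\alpha\cl v}=-\mathrm{Re}\int \cl v\cdot\bd(\alpha\cl v)\,\d s\,\d t
\]
(the boundary term is $\mathrm{Re}\,i\!\int_{t=0}\cl v\cdot\alpha\cl v\,\d s$, which vanishes since $v$ and $\alpha$ are real on $\R$). Expanding $\bd(\alpha\cl v)=(\bd\alpha)\cl v+\alpha\,\bd\cl v$ and using $\alpha\cl v\,\bd\cl v=\tfrac12\alpha\,\bd(\cl v^{2})$ together with one more integration by parts (same boundary check) yields
\[
\mathrm{Re}\,\ip{\db v,\alpha\cl v}=-\tfrac12\,\mathrm{Re}\int(\bd\alpha)\,\cl v^{2}\,\d s\,\d t.
\]
Thus the governing quantity is $\bd\alpha$, \emph{not} $\db\alpha$. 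One has $\bd z=2$ and $\bd\cl z=0$, so for $\alpha=z$ the cross term equals $\mp\int(x^{2}-y^{2})$ (writing $v=x+iy$), which is bounded in absolute value by $\norm{v}^{2}$ and produces the $+2\norm{v}^{2}$; for $\alpha=\cl z$ the cross term is identically zero, giving the second estimate with equality. Your values $\db z=0$ and $\db\cl z=2$ are correct but irrelevant here, and with them you cannot explain why the $z$-case carries the error while the $\cl z$-case does not --- which is exactly where your write-up becomes vague (``correction terms, not a perfect cancellation''; ``a cross term that is $\ge 0$ after \ldots\ a boundary-free rearrangement''). Swap $\db\alpha$ for $\bd\alpha$ and the argument goes through cleanly.
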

\begin{proof}
  Using the smooth approximation result Proposition \ref{prop:smooth-approximation}, we may suppose that $v$ is smooth, compactly supported, and takes real values along the boundary.
  
  To prove the inequalities, we will need to integrate by parts two times. Let us focus on the first estimate. We start by computing:
  \begin{equation*}
    (-\bd\pm zC)(\db\pm zC)v=-\bd\db v\pm z\cl{\db v}-\pm z\bd \cl{v}-\pm 2\cl{v}+\abs{z}^{2}v.
  \end{equation*}
  Using the fact that $\cl{\db v}=\bd \cl{v}$ we conclude that two terms cancel and we are left with
  \begin{equation}\label{eq:naive-babu}
    (-\bd\pm zC)(\db\pm zC)v=-\bd\db v \mp 2\cl{v}+\abs{z}^{2}v.
  \end{equation}
  The naive idea is to multiply both sides by $\mathrm{Re}\,\mu_{0}(v,-)$, integrate, and use the formal adjoint property for $-\bd\pm zC=(\db\pm zC)^{*}$ and $-\bd=\db^{*}$. This naive argument would require that $\db v$ and $\db v\pm z\cl{v}$ take real values along the boundary, which we do not assume. However, as in the previous section, the fact that we integrate by parts twice will imply that the failures of formal adjointness will cancel out.

  Indeed, we compute
  \begin{equation*}
    \mathrm{Re}\int \mu_{0}(v,-\bd\db v)\d s \d t=\mathrm{Re}\int\mu_{0}(v,-\bd_{s}\db v)\d s\d t+\mathrm{Re}\int \mu_{0}(v,i\bd_{t}\db v) \d s\d t.
  \end{equation*}
  It is clear that the can integrate by parts with respect to $\bd_{s}$, and conclude
  \begin{equation*}
    \mathrm{Re}\int \mu_{0}(v,-\bd\db v)\d s \d t=\mathrm{Re}\int\mu_{0}(\bd_{s}v,\db v)\d s\d t+\mathrm{Re}\int \mu_{0}(v,i\bd_{t}\db v) \d s\d t.
  \end{equation*}
  We can also integrate by parts with respect to $\bd_{t}$, at the expense of a boundary integral term, and (after some simplification) end up with:
  \begin{equation*}
    \mathrm{Re}\int \mu_{0}(v,-\bd\db v)\d s \d t=\mathrm{Re}\int\mu_{0}(\db v,\db v)\d s\d t-\mathrm{Re}\int_{\R}\mu_{0}(v,i\db v) \d s\d t.
  \end{equation*}
  We do the same computation with $\db$ replaced by $D=\db\pm zC$, and conclude that
  \begin{equation*}
    \mathrm{Re}\int \mu_{0}(v,D^{*}Dv)\d s \d t=\mathrm{Re}\int\mu_{0}(D v,D v)\d s\d t-\mathrm{Re}\int_{\R}\mu_{0}(v,iDv)\d s\d t.
  \end{equation*}
  Finally, we observe that
  \begin{equation*}
    \begin{aligned}
      \mathrm{Re}\int_{\R}\mu_{0}(v,iDv)\d s\d t&=\mathrm{Re}\int_{\R}\mu_{0}(v,i\db v)\d s\d t\pm \mathrm{Re}\int_{\R}\mu_{0}(v,iCz\cl{v})\d s\d t\\ &=\mathrm{Re}\int_{\R}\mu_{0}(v,i\db v)\d s\d t,
    \end{aligned}
  \end{equation*}
  where we have used the fact that $Cz\cl{v}$ takes real values along the boundary. Therefore
  \begin{equation*}
    \mathrm{Re}\int \mu_{0}(v,D^{*}Dv)\d s \d t-\norm{Dv}_{L^{2}}^{2}=\mathrm{Re}\int \mu_{0}(v,\db^{*} \db v)\d s \d t-\norm{\db v}_{L^{2}}^{2}.
  \end{equation*}
  Applying $\mathrm{Re}\,\mu_{0}(v,-)$ to \eqref{eq:naive-babu} and integrating implies
  \begin{equation*}
    \norm{\db v\pm z\cl{v}}_{L^{2}}^{2}=\norm{\db v}^{2}_{L^{2}}\mp 2\mathrm{Re}\int \mu_{0}(v,\cl{v})+\norm{zv}_{L^{2}}^{2}.
  \end{equation*}
  We rearrange and estimate to conclude that
  \begin{equation*}
    \norm{\db v}^{2}_{L^{2}}+\norm{zv}_{L^{2}}^{2} \le \norm{\db v\pm z\cl{v}}_{L^{2}}^{2}+ 2\norm{v}_{L^{2}}^{2},
  \end{equation*}
  as desired. The second estimate in the statement of the lemma (with $D=\db\pm \cl{z}C$) is proved in the same manner.
\end{proof}

\subsection{Classifying the kernels of $D^{1}$ (six cases).}
\label{sec:D1-kernel-class}
The second estimate in Lemma \ref{lemma:baby-bochner} implies that $\db v\pm \cl{z}\cl{v}=0$ has no non-zero solutions -- this takes care of three of the six kinds of operators.

Our next lemma shows that $\db v\pm z\cl{v}=0$ has either a one-dimensional space of solutions or a zero-dimensional space of solutions.
\begin{lemma}\label{lemma:kernel-class-1}
  Suppose that $v:\C\to \C$ is in $L^{2}$, then
  \begin{equation*}
    \begin{aligned}
      \db v-z\cl{v}=0&\iff v=ci\exp(-\frac{1}{2}\abs{z}^{2})\text{ for $c\in \R$},\\
      \db v+z\cl{v}=0&\iff v=c\exp(-\frac{1}{2}\abs{z}^{2})\text{ for $c\in \R$}.
    \end{aligned}
  \end{equation*}
  On the other hand if $v:\cl{\mathbb{H}}\to \C$ is in $L^{2}$ and takes real values along the boundary, then
  \begin{equation*}
    \begin{aligned}
      \db v-z\cl{v}=0&\iff v=0\\
      \db v+z\cl{v}=0&\iff v=c\exp(-\frac{1}{2}\abs{z}^{2})\text{ for $c\in \R$}.
    \end{aligned}    
  \end{equation*}
\end{lemma}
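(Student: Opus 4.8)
The plan is to analyze the first-order system $\db v \pm z\cl{v} = 0$ directly, using both the Bochner--Weitzenb\"ock estimates of Lemma \ref{lemma:baby-bochner} and an explicit ansatz. First I would record the ansatz $v = c\exp(-\tfrac12\abs{z}^2)$ (and $v = ci\exp(-\tfrac12\abs{z}^2)$ for the minus sign): a short computation using $\db(\abs{z}^2) = \db(z\cl{z}) = \cl{z}$ (since $\bd_s + i\bd_t$ applied to $z\bar z$ gives $\bar z$) shows $\db v = -\cl{z}\,v$, and since $\exp(-\tfrac12\abs z^2)$ is real this equals $-\cl z\,\cl v$ when $c\in\R$; for the $+z\cl v$ equation one checks $-\cl z\cl v = -z\cl v$ fails unless... so actually I need to be careful: for $\db v + z\cl v=0$ I should check $v=c\exp(-\tfrac12|z|^2)$ gives $\db v = -\bar z v = -\bar z\bar v$, which is $-z\bar v$ only on the real axis, so this ansatz does not literally solve the PDE on all of $\C$ — meaning I should instead use $v = c\exp(+\tfrac12(\text{something}))$. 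The correct move: for $\db v = \mp z\cl v$, write $v = e^{\phi}w$ and choose $\phi$ so the equation becomes $\db w = 0$ or anti-holomorphic; more cleanly, observe $\db v \mp z\cl v = 0$ with $v$ real on $\R$ couples to its conjugate, so set $V = \mathrm{Re}(v), W=\mathrm{Im}(v)$ and get a real elliptic system. The honest approach is: the $L^2$ solutions are exactly as claimed, and I verify the ansatz satisfies the equation by direct substitution (being careful with which sign pairs with $i$), then prove these are \emph{all} the $L^2$ solutions.

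For the uniqueness/classification step I would argue as follows. From the second estimate in Lemma \ref{lemma:baby-bochner}, $\db v \pm \cl z\cl v = 0$ with $v\in W^{1,2}$ forces $\norm{\db v}^2 + \norm{zv}^2 \le 0$, hence $v\equiv 0$; by elliptic regularity (Lemma \ref{lemma:coord-ell-reg}) any $L^2$ solution is smooth and in $W^{1,2}_{\mathrm{loc}}$, and the exponential decay from the first estimate (or a direct Agmon-type estimate) puts it in $W^{1,2}$, so $v\equiv 0$ on the nose. For $\db v \pm z\cl v = 0$: the operator $L = \db \pm zC$ satisfies $L^*L v = \db^*\db v \mp 2\cl v + \abs z^2 v$ by \eqref{eq:naive-babu}, so a kernel element of $L$ satisfies a \emph{scalar} second-order equation. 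Writing $v = p + iq$ with $p,q$ real, the equation $\db v = \mp z\cl v$ becomes, after separating real and imaginary parts, a pair of real first-order equations; the key observation is that the combination $\bar\partial_{\C}v$-type structure means $\log v$ (where $v\ne 0$) satisfies a linear equation whose solutions in the Gaussian-weighted $L^2$ space are one-dimensional. Concretely: on $\C$, multiply the PDE by $e^{\abs z^2/2}$ and show $u := e^{\abs z^2/2}v$ satisfies $\db u = 0$ or $\bd u = 0$ (a holomorphic or anti-holomorphic equation) — wait, that gives infinitely many solutions, so the point is the $L^2(\C)$ (unweighted) condition on $v$ translates to a Gaussian-decay condition on $u$, and the only holomorphic/anti-holomorphic functions with $\int |u|^2 e^{-\abs z^2} < \infty$... no, $\int|u|^2 e^{-|z|^2}$ can be infinite-dimensional (Bargmann space). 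The resolution must be that the \emph{reality-type} constraint coupling $v$ and $\cl v$ cuts this down: $\db v = \mp z\cl v$ is not $\C$-linear, so I cannot just exponentiate. Instead I would set $v = \alpha(z) e^{-\abs z^2/2}$ and derive that $\alpha$ is forced to be a constant by combining $\db v = -\bar z \bar v$ with the explicit factor — this is the genuinely delicate computation.

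\textbf{The main obstacle.} The hard part is the uniqueness in the plus-sign cases $\db v + z\cl v = 0$ on $\C$ and on $\cl{\mathbb H}$: showing the solution space is \emph{exactly} one-dimensional rather than merely finite-dimensional, and pinning down the generator. I expect to handle this by (i) reducing via $v = we^{-\abs z^2/2}$ to an equation for $w$ of the form $\db w = (\text{linear in } \bar w)$ with no zeroth-order Gaussian weight, (ii) observing that any $L^2(e^{-\abs z^2})$ solution $w$ must, by the anti-linear coupling, have both $w$ and a related antiholomorphic partner, and a Liouville-type argument (polynomial growth forced to be bounded forced to be constant) kills all but the constants, and (iii) on the half-plane, the boundary condition $v\in\R$ on $\R$ selects the real multiples in the $+$ case and is \emph{incompatible} with the would-be generator $i e^{-\abs z^2/2}$ in the $-$ case, giving $v\equiv 0$ there. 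The bookkeeping of which $i$ pairs with which sign, and verifying the half-plane solution $c\,e^{-\abs z^2/2}$ does take real boundary values (it does, being real everywhere), is routine once the structure is set up; the Liouville step is where the real work lies, and I would model it on the corresponding closed-surface computation in \cite[Section 5.6]{wendl-sft} and \cite[Section 7]{taubes}, adapted to the boundary case.
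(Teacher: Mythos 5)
There are two genuine gaps. The first is computational: with the paper's convention $\db=\bd_{s}+i\bd_{t}$ one has $\db z=0$ and $\db\cl{z}=2$, hence $\db(\abs{z}^{2})=2z$, \emph{not} $\cl{z}$. With this corrected, $v=c e^{-\abs{z}^{2}/2}$ satisfies $\db v=-zv=-z\cl{v}$ (since $v$ is real), so it does solve $\db v+z\cl{v}=0$ on all of $\C$, and $v=cie^{-\abs{z}^{2}/2}$ solves $\db v-z\cl{v}=0$ because now $\cl{v}=-v$. Your doubt about the ansatz stems entirely from this slip, and the detour it triggers (``I should instead use $v=c\exp(+\tfrac12(\dots))$'') is a dead end.

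The second, more serious gap is that the uniqueness argument is never actually supplied. You correctly diagnose why the naive exponentiation $u=e^{\abs{z}^{2}/2}v$ fails (Bargmann space is infinite-dimensional) and that the anti-linearity must be exploited, but steps (i)--(iii) of your plan do not say how, and the cited closed-surface references are where the content lives. The mechanism is concrete and uses only pieces you already have. First, multiplication by $i$ interchanges the two equations, so it suffices to treat $\db v-z\cl{v}=0$. Second, the factorization \eqref{eq:naive-babu} gives $-\Delta v+2\cl{v}+\abs{z}^{2}v=0$, and taking \emph{real parts} yields the scalar equation $-\Delta y+(2+\abs{z}^{2})y=0$ for $y=\mathrm{Re}(v)$; pairing with a cutoff of $y$ and integrating by parts (justified by the a priori bounds $\db v,zv\in L^{2}$ from Lemma \ref{lemma:baby-bochner}, and by $y=0$ hence $\bd_{s}y=0$ being unnecessary since in the half-plane case one uses $\bd_{t}y=0$ on $\R$) forces $\norm{\db y}_{L^{2}}^{2}+\norm{(2+\abs{z}^{2})^{1/2}y}_{L^{2}}^{2}=0$, i.e.\ $y\equiv 0$. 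This is the step your ``pair of real first-order equations'' was circling; note it only works for the minus-sign equation, where the zeroth-order coefficient is $2+\abs{z}^{2}>0$ (for the plus sign it would be $\abs{z}^{2}-2$), which is exactly why one must first reduce to the minus sign. Third, once all solutions are purely imaginary, the ratio $g=v'/(ie^{-\abs{z}^{2}/2})$ of any solution to the explicit one satisfies $\db g=0$ and is real-valued, hence constant; this replaces your growth-based Liouville step with an elementary one. The half-plane cases then follow by inspecting boundary values: a purely imaginary solution that is real on $\R$ vanishes there, killing the $-$ case, while $ce^{-\abs{z}^{2}/2}$ is real everywhere and survives in the $+$ case.
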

Morally, this says that positive interior zeros and $(+,+)$ zeros contribute one dimension to the kernel, but $(+,-)$ zeros do not contribute to the kernel.
\begin{proof}
  Observe that if we set $v^{\prime}=iv$, then
  \begin{equation*}
    \bd_{s}v^{\prime}+i\bd_{t}v^{\prime}-z\cl{v^{\prime}}=i(\bd_{s}v+i\bd_{t}v+z\cl{v}),
  \end{equation*}
  and hence it suffices to study the equation $\db v-z\cl{v}=0$. Following \cite[Proposition 5.22]{wendl-sft}, we prove that the real part of $v$ must vanish identically.


  The second estimate from Lemma \ref{lemma:baby-bochner} implies that $\db{v}\in L^{2}$ and $zv\in L^{2}$ (proof: both $\norm{\db(\rho(z\delta)v)}_{L^{2}}$ and $\norm{z\rho(z\delta)v}_{L^{2}}$ remain bounded as $\delta\to 0$). The $L^{2}$ elliptic estimates then imply that $v\in W^{1,2}$.

  Let $y=\mathrm{Re}(v)$. Since $-\Delta v+2\cl{v}+\abs{z}^{2}v=0$, we have
  \begin{equation*}
    0=-\Delta y+(2+\abs{z}^{2})y.
  \end{equation*}
  Apply $\mathrm{Re}\,\mu_{0}(\rho(z\delta)y,-)$ to both sides, and integrate by parts to conclude
  \begin{equation*}
    0=\int \rho(z\delta)\abs{\db y}^{2}\d s\d t+\mathrm{Re}\int \mu_{0}(\db (\rho(z\delta))\cdot y,\db y)\d s\d t+\int \rho(z\delta)(2+\abs{z}^{2})\abs{y}^{2}\d s\d t.
  \end{equation*}
  When we integrate by parts, we use $\bd_{t}y=0$ (which holds in our case). We can now take the limit $\delta\to 0$, since we have verified that $\db y, zy\in L^{2}$, and conclude that
  \begin{equation*}
    0=\norm{\db
      y}_{L^{2}}^{2}+\norm{(2+\abs{z}^{2})^{1/2}y}_{L^{2}}^{2}\implies y=0,
  \end{equation*}
  using $\db(\rho(z\delta))=O(\delta)$. It follows that any $L^{2}$ solution of $\db v-z\cl{v}=0$ as in the statement of the lemma is purely imaginary (and hence vanishes along the boundary, if the boundary exists). 

  We now observe that $v=i\exp(-\frac{1}{2}\abs{z}^{2})$ is certainly in $L^{2}$ and solves $\db v-z\cl{v}=0$. Clearly any other solution $v^{\prime}$ will be $v^{\prime}=gv$ for some \emph{holomorphic} $g$; moreover, by what we have shown above, $g$ must be real. There are no non-constant holomorphic functions defined on $\mathbb{C}$ or $\mathbb{H}$ which take only real values (the rank of the derivative matrix would be always $0$). Thus $g=c$ must be a real number. 

  In the case when $v$ is defined on $\cl{\mathbb{H}}$, the only possibility is $c=0$, since otherwise $v$ would take non-zero imaginary values along the boundary.

  Finally, we return to the second equation from the statement, $\bd_{s}v+i\bd_{t}v+z\cl{v}=0$. We have shown that this solution is conjugate to the first equation under multiplication by $i$. Therefore all solutions on the disk or half-plane are given by $v=c\exp(-\frac{1}{2}\abs{z}^{2})$ for some real $c$. In this case we \emph{can} have non-zero $c$ when $v$ is defined on $\cl{\mathbb{H}}$.

  This completes the proof.
\end{proof}

\subsubsection{The formal adjoint near the zeros}
\label{sec:local-models-2}
Since we chose our metric so that $\abs{\bd_{s}}=1$ in all of the special coordinate charts centered at the zeros of $B$, we can easily compute the coordinate representations of $D^{\sigma,\ast}$:
\begin{equation*}
  \begin{aligned}
    D^{\sigma}=\db\pm \sigma zC&\implies D^{\sigma,*}(u)=-\bd \pm \sigma zC\\
    D^{\sigma}=\db\pm \sigma\cl{z} C&\implies D^{\sigma,*}(u)=-\bd \pm \sigma\cl{z}C.
  \end{aligned}
\end{equation*}
Now let $D^{\sigma,\dagger}=-C\circ D^{\sigma,\ast}\circ C$. The above yields:
\begin{equation*}
  \begin{aligned}
    D^{\sigma}=\db\pm \sigma zC&\implies D^{\sigma,\dagger}(u)=\db \mp \sigma\cl{z}C\\
    D^{\sigma}=\db\pm \sigma\cl{z}C&\implies D^{\sigma,\dagger}(u)=\db \mp \sigma zC.
  \end{aligned}
\end{equation*}
Thus we can think of $D^{\sigma}\mapsto D^{\sigma,\dagger}$ as defining a ``duality involution'' on the set of six local model equations. This is illustrated in Figure \ref{fig:dual-yt}.

To explain the labeling scheme used in the figure, we partition the zero set of $B$, denoted $\mathrm{Z}$, into six kinds of zeros:
\begin{equation*}
  \mathrm{Z}=\mathrm{Z}^{+}\cup \mathrm{Z}^{-}\cup \mathrm{Z}^{++}\cup \mathrm{Z}^{+-}\cup \mathrm{Z}^{-+}\cup \mathrm{Z}^{--},
\end{equation*}
where $\mathrm{Z}^{\pm}$ are interior positive/negative zeros, and $\mathrm{Z}^{\pm\pm}$ are boundary zeros (let's agree for this notation that the two $\pm$ signs are independent). The convention for assigning labels is via the linearization: the first sign is the linearization of $B$ allowing arbitrary deformations, and the second sign is for the linearization only allowing deformations along the boundary. The local form of $D^{\sigma}$ near a zero $\zeta$ and the corresponding count is summarized in Figure \ref{fig:dual-yt}.

It follows from the construction in Section \ref{sec:defining-d-sigma} that the sum of the counts of all the zeros in $\mathrm{Z}$ is equal to $\mathrm{X}+\mu^{\tau}_{\mathrm{Mas}}$.

\begin{figure}[H]
  \centering
  \begin{tikzpicture}[scale=.6]
    \draw (-2.5,-8.5)rectangle+(5,12);
    \draw[shift={(5.2,0)}] (-2.5,-8.5)rectangle+(5,12);
    \draw[shift={(10.4,0)}] (-2.5,-8.5)rectangle+(10,12);
    
    \begin{scope}[shift={(0,0)}]
      \draw[black!50!white,fill=black!5!white] (2,0) arc (0:180:2);
      \node (A) at (0,2) [above] {$\db+\sigma zC$};
      \draw[postaction={decorate,decoration={markings,mark=at position 0.5 with {\arrow[scale=1.5]{<};}}}] (-2,0)--(0,0);
      \draw[postaction={decorate,decoration={markings,mark=at position 0.5 with {\arrow[scale=1.5]{<};}}}] (2,0)--(0,0);
      \draw[postaction={decorate,decoration={markings,mark=at position 0.5 with {\arrow[scale=1.5]{<};}}}] (0,2)--(0,0);
      \draw[postaction={decorate,decoration={markings,mark=at position 0.5 with {\arrow[scale=1.5]{<};}}}] (45:2)--(0,0);
      \draw[postaction={decorate,decoration={markings,mark=at position 0.5 with {\arrow[scale=1.5]{<};}}}] (135:2)--(0,0);
      \node[draw,circle,inner sep=1pt,fill] at (0,0){};
      \node (B) at (0,-0.2)[below]{$\mathrm{count}=+1$};
    \node at (B.south)[below] {$\mathrm{Z}^{++}$};
\end{scope}
    \begin{scope}[shift={(5.2,0)}]
      \node (A) at (0,2) [above] {$\db-\sigma zC$};
      \draw[black!50!white,fill=black!5!white] (2,0) arc (0:180:2);
      \draw[postaction={decorate,decoration={markings,mark=at position 0.5 with {\arrow[scale=1.5]{>};}}}] (-2,0)--(0,0);
      \draw[postaction={decorate,decoration={markings,mark=at position 0.5 with {\arrow[scale=1.5]{>};}}}] (2,0)--(0,0);
      \draw[postaction={decorate,decoration={markings,mark=at position 0.5 with {\arrow[scale=1.5]{>};}}}] (0,2)--(0,0);
      \draw[postaction={decorate,decoration={markings,mark=at position 0.5 with {\arrow[scale=1.5]{>};}}}] (45:2)--(0,0);
      \draw[postaction={decorate,decoration={markings,mark=at position 0.5 with {\arrow[scale=1.5]{>};}}}] (135:2)--(0,0);
      \node[draw,circle,inner sep=1pt,fill] at (0,0){};
      \node (B) at (0,-0.2)[below]{$\mathrm{count}=0$};
    \node at (B.south)[below] {$\mathrm{Z}^{+-}$};
  \end{scope}
    \begin{scope}[shift={(10.4,0)}]
      \node (A) at (3,0.5) [right] {$\db-\sigma zC$};
      \node (B) at (A.south) [below] {$\mathrm{count}=+1$};
      \node at (B.south)[below]{$\mathrm{Z}^{+}$};
     
     \draw[black!50!white,fill=black!5!white] (0,0) circle (2);
     \draw[postaction={decorate,decoration={markings,mark=at position 0.5 with {\arrow[scale=1.5]{>};}}}] (-2,0)--(0,0);
     \draw[postaction={decorate,decoration={markings,mark=at position 0.5 with {\arrow[scale=1.5]{>};}}}] (2,0)--(0,0);
     \draw[postaction={decorate,decoration={markings,mark=at position 0.5 with {\arrow[scale=1.5]{>};}}}] (0,2)--(0,0);
     \draw[postaction={decorate,decoration={markings,mark=at position 0.5 with {\arrow[scale=1.5]{>};}}}] (0,-2)--(0,0);
     \draw[postaction={decorate,decoration={markings,mark=at position 0.5 with {\arrow[scale=1.5]{>};}}}] (45:2)--(0,0);
     \draw[postaction={decorate,decoration={markings,mark=at position 0.5 with {\arrow[scale=1.5]{>};}}}] (135:2)--(0,0);
     \draw[postaction={decorate,decoration={markings,mark=at position 0.5 with {\arrow[scale=1.5]{>};}}}] (225:2)--(0,0);
     \draw[postaction={decorate,decoration={markings,mark=at position 0.5 with {\arrow[scale=1.5]{>};}}}] (315:2)--(0,0);
     \node[draw,circle,inner sep=1pt,fill] at (0,0){};
   \end{scope}
    \begin{scope}[shift={(0,-6)}]
      \node (A) at (0,2) [above] {$\db-\sigma\cl{z}C$};
      \draw[black!50!white,fill=black!5!white] (2,0) arc (0:180:2);
      \begin{scope}
        \clip (2,0) arc (0:180:2)--cycle;
        \draw[postaction={decorate,decoration={markings,mark=at position 0.5 with {\arrow[scale=1.5]{<};}}}] plot[domain=0.5:2] ({\x},{1/\x});
        \draw[postaction={decorate,decoration={markings,mark=at position 0.5 with {\arrow[scale=1.5]{<};}}}] plot[domain=0.5:2] ({-\x},{1/\x});
      \end{scope}
      \draw[postaction={decorate,decoration={markings,mark=at position 0.5 with {\arrow[scale=1.5]{>};}}}] (-2,0)--(0,0);
      \draw[postaction={decorate,decoration={markings,mark=at position 0.5 with {\arrow[scale=1.5]{>};}}}] (2,0)--(0,0);
      \draw[postaction={decorate,decoration={markings,mark=at position 0.5 with {\arrow[scale=1.5]{<};}}}] (0,2)--(0,0);
      \node[draw,circle,inner sep=1pt,fill] at (0,0){};
      \node (B) at (0,-0.2)[below]{$\mathrm{count}=-1$};
    \node at (B.south)[below] {$\mathrm{Z}^{--}$};
\end{scope}
    \begin{scope}[shift={(5.2,-6)}]
      \node (A) at (0,2) [above] {$\db+\sigma\cl{z}C$};
      \draw[black!50!white,fill=black!5!white] (2,0) arc (0:180:2);
      \begin{scope}
        \clip (2,0) arc (0:180:2)--cycle;
        \draw[postaction={decorate,decoration={markings,mark=at position 0.5 with {\arrow[scale=1.5]{>};}}}] plot[domain=0.5:2] ({\x},{1/\x});
        \draw[postaction={decorate,decoration={markings,mark=at position 0.5 with {\arrow[scale=1.5]{>};}}}] plot[domain=0.5:2] ({-\x},{1/\x});
      \end{scope}
      \draw[postaction={decorate,decoration={markings,mark=at position 0.5 with {\arrow[scale=1.5]{<};}}}] (-2,0)--(0,0);
      \draw[postaction={decorate,decoration={markings,mark=at position 0.5 with {\arrow[scale=1.5]{<};}}}] (2,0)--(0,0);
      \draw[postaction={decorate,decoration={markings,mark=at position 0.5 with {\arrow[scale=1.5]{>};}}}] (0,2)--(0,0);
      \node[draw,circle,inner sep=1pt,fill] at (0,0){};
      \node (B) at (0,-0.2)[below]{$\mathrm{count}=0$};
    \node at (B.south)[below] {$\mathrm{Z}^{-+}$};
\end{scope}
    \begin{scope}[shift={(10.4,-6)}]
      \node (A) at (3,0.5) [right] {$\db+\sigma\cl{z}C$};
      \node (B) at (A.south) [below] {$\mathrm{count}=-1$};
      \node at (B.south)[below]{$\mathrm{Z}^{-}$};
      \draw[black!50!white,fill=black!5!white] (0,0) circle (2);
      \begin{scope}
        \clip (0,0) circle (2);
        \draw[postaction={decorate,decoration={markings,mark=at position 0.5 with {\arrow[scale=1.5]{>};}}}] plot[domain=0.5:2] ({\x},{1/\x});
        \draw[postaction={decorate,decoration={markings,mark=at position 0.5 with {\arrow[scale=1.5]{>};}}}] plot[domain=0.5:2] ({\x},{-1/\x});
        \draw[postaction={decorate,decoration={markings,mark=at position 0.5 with {\arrow[scale=1.5]{>};}}}] plot[domain=0.5:2] ({-\x},{1/\x});
        \draw[postaction={decorate,decoration={markings,mark=at position 0.5 with {\arrow[scale=1.5]{>};}}}] plot[domain=0.5:2] ({-\x},{-1/\x});            
      \end{scope}
      \draw[postaction={decorate,decoration={markings,mark=at position 0.5 with {\arrow[scale=1.5]{<};}}}] (-2,0)--(0,0);
      \draw[postaction={decorate,decoration={markings,mark=at position 0.5 with {\arrow[scale=1.5]{<};}}}] (2,0)--(0,0);
      \draw[postaction={decorate,decoration={markings,mark=at position 0.5 with {\arrow[scale=1.5]{>};}}}] (0,2)--(0,0);
      \draw[postaction={decorate,decoration={markings,mark=at position 0.5 with {\arrow[scale=1.5]{>};}}}] (0,-2)--(0,0);
      \node[draw,circle,inner sep=1pt,fill] at (0,0){};
    \end{scope}
  \end{tikzpicture}
  \caption{The six kinds of zeros and the coordinate representation of $D^{\sigma}$ in each chart. Two zeros are in the same box if the operators are dual in the sense defined above.}
  \label{fig:dual-yt}
\end{figure}
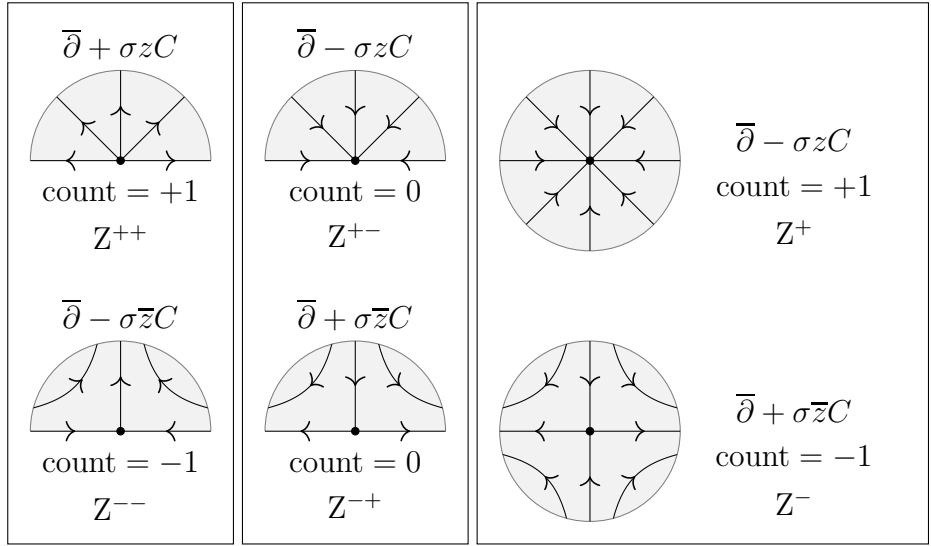

Applying Lemmas \ref{lemma:baby-bochner} and \ref{lemma:kernel-class-1} to $D^{1,\dagger}$ yields the following result for $D^{1,\ast}$.
\begin{cor}\label{cor:kernel-class-dual}
  Suppose $v:\C\to \C$ is in $L^{2}$. Then
  \begin{equation*}
    \begin{aligned}
      -\bd v-z\cl{v}=0&\iff v=0\\
      -\bd v+\cl{z}\cl{v}=0&\iff v=ic\exp(-\frac{1}{2}\abs{z}^{2})\text{ for some $c\in \R$}.
    \end{aligned}
  \end{equation*}
  Now suppose that $v:\cl{\mathbb{H}}\to \C$ is in $L^{2}$ and takes real values along the boundary. Then
  \begin{equation*}
    \begin{aligned}
      -\bd v\pm z\cl{v}=0&\iff v=0\\
      -\bd v+\cl{z}\cl{v}=0&\iff v=0\\
      -\bd v-\cl{z}\cl{v}=0&\iff v=c\exp(-\frac{1}{2}\abs{z}^{2})\text{ for some $c\in \R$}.\\
    \end{aligned}
  \end{equation*}
\end{cor}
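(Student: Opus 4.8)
The plan is to deduce the statement from Lemmas~\ref{lemma:baby-bochner} and~\ref{lemma:kernel-class-1} by transporting them through the duality involution $D^{1}\mapsto D^{1,\dagger}=-C\circ D^{1,\ast}\circ C$ described just above. Recall that $\dagger$ permutes the six local models, sending $\db\pm zC$ to $\db\mp\cl z C$ and $\db\pm\cl z C$ to $\db\mp zC$ (see Figure~\ref{fig:dual-yt}). In particular, each operator $-\bd\pm zC$ or $-\bd\pm\cl z C$ appearing in the statement is the formal adjoint $D^{1,\ast}$ of exactly one of the six local models, and its $\dagger$-dual $D^{1,\dagger}$ is again a local model whose kernel has already been determined.

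The first step is the elementary observation that $v\mapsto Cv$ is a real-linear involution of $L^{2}$ (it fixes $\abs{v}$ pointwise) which restricts to an involution of the subspace of $L^{2}$ functions taking real values along $\bd\cl{\mathbb{H}}$, since complex conjugation fixes $\R$. Because $-C$ is invertible and $C^{2}=\mathrm{id}$, we have $D^{1,\ast}v=0$ if and only if $D^{1,\dagger}(Cv)=0$; hence $v\mapsto Cv$ is an $\R$-linear bijection from $\ker D^{1,\ast}$ onto $\ker D^{1,\dagger}$, compatible with all the integrability and boundary hypotheses in the statement.

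It then suffices to read off $\ker D^{1,\dagger}$ and conjugate back. When the dual is of the form $\db\pm\cl z C$, the second estimate of Lemma~\ref{lemma:baby-bochner} (after the routine $L^{2}$-to-$W^{1,2}$ upgrade via the cutoff argument from the proof of Lemma~\ref{lemma:kernel-class-1}) forces the kernel to be $\set{0}$; this yields the ``$v=0$'' lines, namely $-\bd v-z\cl v=0$ on $\C$ and $-\bd v\pm z\cl v=0$, $-\bd v+\cl z\cl v=0$ on $\cl{\mathbb{H}}$. When the dual is of the form $\db\pm zC$, Lemma~\ref{lemma:kernel-class-1} gives the kernel explicitly, and conjugating by $C$ (which fixes $\exp(-\tfrac12\abs{z}^{2})$ and sends $i$ to $-i$) produces the remaining lines: $-\bd v+\cl z\cl v=0$ is dual to $\db w-z\cl w=0$, whose solutions $w=ci\exp(-\tfrac12\abs{z}^{2})$ on $\C$ become $v=\cl w=(-c)i\exp(-\tfrac12\abs{z}^{2})$, an arbitrary $\R$-multiple of $i\exp(-\tfrac12\abs{z}^{2})$ (and $v=0$ on $\cl{\mathbb{H}}$); and $-\bd v-\cl z\cl v=0$ is dual to $\db w+z\cl w=0$, whose solutions $w=c\exp(-\tfrac12\abs{z}^{2})$ on $\cl{\mathbb{H}}$ are real, so $v=\cl w=c\exp(-\tfrac12\abs{z}^{2})$.

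Since the analytic content is entirely contained in Lemmas~\ref{lemma:baby-bochner} and~\ref{lemma:kernel-class-1}, there is no genuine obstacle here. The only point that needs care is the combinatorics of the $\dagger$-pairing of the six local models together with the effect of the conjugation $C$ on the explicit solution formulas — in particular the swap $i\leftrightarrow-i$, which is invisible at the level of one-dimensional real spans but must be tracked to match the normalizations ($ic$ versus $c$) asserted in the statement.
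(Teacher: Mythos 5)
Your proposal is correct and is exactly the argument the paper intends: the paper's entire ``proof'' is the one-line remark that applying Lemmas \ref{lemma:baby-bochner} and \ref{lemma:kernel-class-1} to $D^{1,\dagger}=-C\circ D^{1,\ast}\circ C$ yields the corollary, and you have filled in precisely those details (the bijection $v\mapsto\cl{v}$ between $\ker D^{1,\ast}$ and $\ker D^{1,\dagger}$, its compatibility with the $L^{2}$ and real-boundary-value hypotheses, the $L^{2}$-to-$W^{1,2}$ upgrade needed to invoke Lemma \ref{lemma:baby-bochner}, and the bookkeeping of the conjugated solution formulas).
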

Heuristically, this says that the zeros with count $-1$ in Figure \ref{fig:dual-yt} contribute a one-dimensional subspace to the kernel of the formal adjoint $D^{\sigma,\ast}$ (and all other zeros contribute nothing). 

\subsection{Linear compactness and a stabilization of $D^\rho$}
\label{sec:linear-compactness-and-stab}
In this section we will relate the kernel and cokernel of $D^{\rho}$ to the kernels and cokernels of the local models $D^{1}$. We begin with an explanation of the rescaling scheme we use.

\subsubsection{Modified rescaling maps}
\label{sec:modified-rescaling-maps}

Suppose that $\zeta$ is a zero and let $z$ be the special coordinate chart centered at $\zeta$. By convention, $z$ is either $\cl{\mathbb{H}}\cap D(1)$ or $D(1)$ valued. Let $\rho$ be a bump function supported in $D(1)$ which is $1$ on $D(1/2)$. 

Let $\Phi_{\sigma}:L^{2}(\C,\C)\to L^{2}(\dot\Sigma,\C)$ be the modified rescaling map:
\begin{equation*}
  \Phi_{\sigma}(v)=\rho\cdot\sigma^{1/2}v(\sigma^{1/2}z).
\end{equation*}
Observe that $\norm{\Phi_{\sigma}(v)}_{L^{2}}\le \norm{v}_{L^{2}}=\lim_{\sigma\to\infty}\norm{\Phi_{\sigma}(v)}_{L^{2}}$. Dually, we let $\Pi_{\sigma}=\Phi_{\sigma}^{\ast}$ be the adjoint. It is easy to obtain the following explicit formula for $\Pi_{\sigma}$:
\begin{equation*}
  \Pi_{\sigma}(u)(z)=\sigma^{-1/2}\rho(\sigma^{-1/2}z)u(\sigma^{-1/2}z).
\end{equation*}

\begin{figure}[H]
  \centering
  \begin{tikzpicture}
    \fill[black!10!white] (0.5,1) circle (0.2);
    \draw[fill=black!10!white] (0,-0.4) arc (-90:90:0.2);
    \draw (0,-2)--+(0,4) (1,-2)--+(0,4) (0.5,1) circle (0.2) (4,0) circle (2);
    \path (4,0)--+(100:2)coordinate(B) (4,0)--+(220:2)coordinate(C);
    \draw[dashed] (0.5,1) +(100:0.2)--(B) (0.5,1) +(240:0.2)--(C);

    \draw (-4,-2) arc (-90:90:2)--cycle;
    \draw[dashed] (-4,0)+(60:2)--(0,0) (-4,0)+(-60:2)--(0,-.4);

    \node[draw,fill=black,inner sep=1pt,circle] (B) at (4,0) {};
    \node at (B.east)[right]{$\zeta\in \mathrm{Z}^{\pm}$};

    \node[draw,fill=black,inner sep=1pt,circle] (B) at (-4,0) {};
    \node at (B.west)[left]{$\zeta\in \mathrm{Z}^{\pm\pm}$};

  \end{tikzpicture}
  \caption{Rescaling sections near the zeros of $B$. The map $\Phi_{\sigma}$ takes a section on the large domain and compresses it to fit inside the small domain (and then cuts it off by $\rho$). The map $\Pi_{\sigma}$ does the opposite, it first cuts off by $\rho$ and then expands the domain of the section. The factors have been chosen so that $\norm{\Phi_{\sigma}(v)}_{L^{2}}=\norm{\rho(\sigma^{-1/2}z)v}_{L^{2}}$.}
  \label{fig:rescaling-near-zeros}
\end{figure}
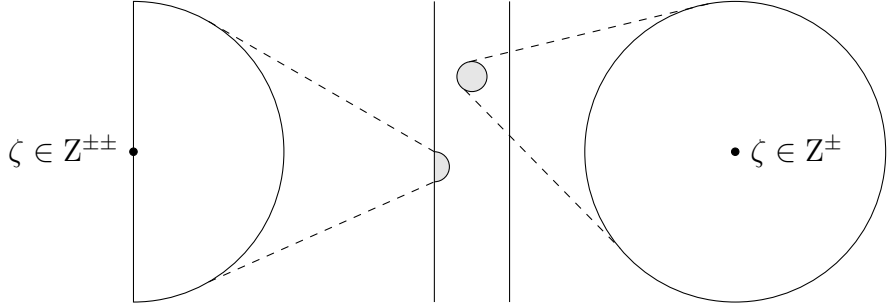

The relevance of $\Pi_{\sigma},\Phi_{\sigma}$ is how they interact with $D^{\sigma}$.
Suppose that $D^{\sigma}=\db+\sigma \alpha(z)C$ and let $D^{1}=\db+\alpha(z)C$ (where $\alpha=\pm z,\pm \cl{z}$). Then we easily compute
$$D^{\sigma}\circ \Phi_{\sigma}(v)=\sigma^{1/2}\Phi_{\sigma}(D^{1}(v))+(\db \rho)\sigma^{1/2}v(\sigma^{1/2}z).$$ Recall that the $L^{2}$ norm of $\lambda v(\lambda z)$ is constant as function of $\lambda$. A similar computation can be done using $\Pi_{\sigma}$, and we conclude:
\begin{equation}\label{eq:impo-lat-1}
  \begin{aligned}
    \norm{D^{\sigma}(\Phi_{\sigma}(v))-\sigma^{1/2}\Phi_{\sigma}(D^{1}(v))}_{L^{2}}\le c(\rho)\norm{v}_{L^{2}(D(\sigma)\setminus D(\sigma/2))},\\
    \norm{\sigma^{1/2}D^{1}(\Pi_{\sigma}(u))-\Pi_{\sigma}(D^{\sigma}(u))}_{L^{2}}\le c(\rho)\norm{u}_{L^{2}(D(1)\setminus D(1/2))}.
  \end{aligned}
\end{equation}
We similarly note the behavior of $D^{\sigma,\ast}$ under $\Phi_{\sigma}$ and $\Pi_{\sigma}$:
\begin{equation}\label{eq:impo-lat-2}
  \begin{aligned}
    \norm{D^{\sigma,\ast}(\Phi_{\sigma}(v))-\sigma^{1/2}\Phi_{\sigma}(D^{1,\ast}(v))}_{L^{2}}\le c(\rho)\norm{v}_{L^{2}(D(\sigma)\setminus D(\sigma/2))},\\
    \norm{\sigma^{1/2}D^{1,\ast}(\Pi_{\sigma}(u))-\Pi_{\sigma}(D^{\sigma,\ast}(u))}_{L^{2}}\le c(\rho)\norm{u}_{L^{2}(D(1)\setminus D(1/2))}.
  \end{aligned}
\end{equation}
These estimates will be important later on. They essentially say that a uniform bound on $\norm{D^{\sigma}(u)}_{L^{2}}$ and $\norm{u}_{L^{2}}$ implies that $\norm{D^{1}(v)}_{L^{2}}=O(\sigma^{-1/2})$ where $v=\Pi_{\sigma}(u)$.

\subsubsection{A linear compactness result}
\label{sec:compactness-lin}
In this section we will prove a compactness theorem which concerns sequences $\xi_{n}$ with $\norm{D^{\sigma_{n}}(\xi_{n})}<C$ and $\sigma_{n}\to\infty$. To set the stage, let $z_{\zeta}$ be the chosen holomorphic coordinate centered on the zero $\zeta$ (as above), and recall the modified rescaling maps:
\begin{equation*}
\text{$\Phi_{\sigma,\zeta}(v)=\rho\cdot \sigma^{1/2}v(\sigma^{1/2}z_{\zeta})$, and  $\Pi_{\sigma,\zeta}(u)=\sigma^{-1/2}\rho(\sigma^{-1/2}z_{\zeta})u(\sigma^{-1/2}z_{\zeta})$.}
\end{equation*}
Let $\Pi_{\sigma}=\oplus_{\zeta\in \mathrm{Z}}\Pi_{\sigma,\zeta}$ be considered as a map
\begin{equation*}
  \Pi_{\sigma}:L^{2}(\dot\Sigma,E)\to \bigoplus_{\zeta\in \mathrm{Z}^{\pm}}L^{2}(\C,\C) \oplus \bigoplus_{\zeta\in \mathrm{Z}^{\pm\pm}}L^{2}(\cl{\mathbb{H}},\mathbb{C})=H.
\end{equation*}
The same formula also defines $\Pi_{\sigma}$ on $L^{2}(\dot\Sigma,\Lambda^{1,0}\otimes E)$. We can think of $H$ as the Hilbert space of $L^{2}$ sections on a disjoint union of finitely many copies of $\C$ and $\cl{\mathbb{H}}$.

We define an operator $D^{1}:H\to H$ (with dense domain) whose restriction to each factor equals the choice of $\db\pm \alpha(z)C$ for $\alpha(z)=z,\cl{z}$ given by Figure \ref{fig:dual-yt}. We similarly define $D^{1,\ast}:H\to H$ where the local form is $-\bd\pm \alpha(z)C$, as appropriate.

The results of Lemmas \ref{lemma:baby-bochner}, \ref{lemma:kernel-class-1} and Corollary \ref{cor:kernel-class-dual} give a complete classification of the elements in $\ker D^{1}$ and $\ker D^{1,\ast}$. See \eqref{eq:kernel-cokernel-elts} in the next section for a summary of the kernel of $D^{1}$ and $D^{1,\ast}$.

We let $\mathrm{R}_{\sigma}(\xi)=\xi-\sum_{\zeta\in \mathrm{Z}} \rho(z_{\zeta})\xi$ which we think of as the ``remainder'' after cutting off. It follows easily from the definitions that
\begin{equation}\label{eq:remainder}
  \begin{aligned}
    D^{\sigma}(\mathrm{R}_{\sigma}(\xi))&=\db \rho\otimes \xi+\mathrm{R}_{\sigma}(D^{\sigma}(\xi)).\\
    \norm{\xi}_{L^{2}}&\le \norm{\mathrm{R}_{\sigma}(\xi)}_{L^{2}}+\norm{\Pi_{\sigma}(\xi)}_{L^{2}}\le 2\norm{\xi}_{L^{2}}.
  \end{aligned}
\end{equation}

\begin{prop}[Linear compactness]
  Let $\xi_{n}\in W^{1,2}(E,F)$ be a sequence so that $\norm{\xi_{n}}_{L^{2}}+\norm{D^{\sigma_{n}}(\xi_{n})}_{L^{2}}$ remains bounded for some sequence $\sigma_{n}\to\infty$. Then
  \begin{enumerate}[label=(\alph*)]
  \item $\norm{R_{\sigma_{n}}(\xi_{n})}_{L^{2}}\to 0$.
  \item After passing to a subsequence, $\Pi_{\sigma_{n}}(\xi_{n})\to \mathbf{k}$ in $L^{2}$ for some element $\mathbf{k}\in \ker D^{1}$.
  \end{enumerate}
  The same holds with $(\xi_{n},E,F,D^{\sigma_{n}},D^{1},\mathbf{k})$ replaced by $(\eta_{n},\Lambda^{0,1}\otimes E,F^{*},D^{\sigma_{n},\ast},D^{1,\ast},\mathbf{c})$. 
\end{prop}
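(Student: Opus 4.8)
The plan is to first show, via the global Bochner--Weitzenb\"ock estimate \eqref{eq:bochner-weitzenboch}, that the $L^2$-mass of $\xi_n$ concentrates on an arbitrarily small neighborhood of the zero set $\mathrm{Z}$ of $B$, and then to analyze the rescaled sections $\Pi_{\sigma_n}(\xi_n)$ using the local estimates of Lemma \ref{lemma:baby-bochner}. For the concentration step: since $\norm{\xi_n}_{L^2}$ and $\norm{D^{\sigma_n}\xi_n}_{L^2}$ stay bounded while $\sigma_n\to\infty$, the first line of \eqref{eq:bochner-weitzenboch} gives $\norm{B\xi_n}_{L^2}\to 0$. By the construction of $B$ in Section \ref{sec:defining-d-sigma}, the bundle homomorphism $\xi\mapsto B(\xi)=\mu(-,V)\otimes M_{*}(\xi)$ is fibrewise invertible on $\dot\Sigma\setminus \mathrm{Z}$ and is norm-preserving in the ends, so on the complement of the fixed neighborhood $U_{0}=\bigcup_{\zeta}\set{\abs{z_{\zeta}}<1/2}$ of $\mathrm{Z}$ there is $c_{0}>0$ with $\abs{B(\xi)}\ge c_{0}\abs{\xi}$ pointwise; hence $\norm{\xi_n}_{L^2(\dot\Sigma\setminus U_{0})}\le c_{0}^{-1}\norm{B\xi_n}_{L^2}\to 0$. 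Part (a) follows at once, since $R_{\sigma_n}(\xi_n)=(1-\sum_{\zeta}\rho(z_{\zeta}))\xi_n$ is supported in $\dot\Sigma\setminus U_{0}$ with $\abs{1-\sum_{\zeta}\rho(z_{\zeta})}\le 1$, so $\norm{R_{\sigma_n}(\xi_n)}_{L^2}\le\norm{\xi_n}_{L^2(\dot\Sigma\setminus U_{0})}\to 0$.

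For (b) set $v_n:=\Pi_{\sigma_n}(\xi_n)\in H$. The second line of \eqref{eq:remainder} gives $\norm{v_n}_{H}\le\norm{\xi_n}_{L^2}$, so $(v_n)$ is $L^2$-bounded, and each $v_n$ is compactly supported (in $D(\sigma_n^{1/2})$ on each factor) and real along the boundary on the $\cl{\mathbb{H}}$-factors. Combining the second estimate in \eqref{eq:impo-lat-1} with $\norm{\Pi_{\sigma_n}(D^{\sigma_n}\xi_n)}_{H}\le\norm{D^{\sigma_n}\xi_n}_{L^2}$ and the concentration bound yields
\begin{equation*}
  \sigma_n^{1/2}\norm{D^{1}(v_n)}_{L^2}\le\norm{D^{\sigma_n}\xi_n}_{L^2}+c(\rho)\norm{\xi_n}_{L^2(\dot\Sigma\setminus U_{0})}\le C,
\end{equation*}
so $\norm{D^{1}v_n}_{L^2}=O(\sigma_n^{-1/2})\to 0$. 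Feeding this into Lemma \ref{lemma:baby-bochner} (applied on each factor, which is legitimate because $v_n\in W^{1,2}$ with real boundary values) gives, on every factor,
\begin{equation*}
  \norm{z v_n}_{L^2}^{2}+\norm{\db v_n}_{L^2}^{2}\le\norm{D^{1}v_n}_{L^2}^{2}+2\norm{v_n}_{L^2}^{2}\le C.
\end{equation*}
The bound on $\norm{z v_n}_{L^2}$ is the crucial point: although $v_n$ lives on the expanding disks $D(\sigma_n^{1/2})$, it forces $\norm{v_n}_{L^2(\abs{z}>R)}\le C/R$ uniformly in $n$, so no mass escapes to infinity.

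Now the bound on $\norm{\db v_n}_{L^2}$ together with $\norm{v_n}_{L^2}\le C$ yields, via Theorem \ref{theorem:local-reg}, a uniform $W^{1,2}$-bound for $v_n$ on each fixed disk; by Rellich and a diagonal argument over $R=1,2,\dots$ some subsequence converges in $L^2_{\mathrm{loc}}$ to an element $\mathbf{k}$, and the uniform tail decay upgrades the convergence to $H$. Passing to the distributional limit in $D^{1}v_n\to 0$ shows $D^{1}\mathbf{k}=0$ weakly; moreover $\mathbf{k}$ inherits the real boundary values of the $v_n$ (e.g.\ the doubling $E(\mathbf{k})$ is the weak $W^{1,2}_{\mathrm{loc}}$-limit of the $E(v_n)$), and then Lemma \ref{lemma:kernel-class-1} together with elliptic regularity (Lemma \ref{lemma:coord-ell-reg}) identifies $\mathbf{k}$ as a genuine element of $\ker D^{1}$. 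The statement for $(\eta_n,D^{\sigma_n,\ast},D^{1,\ast},\mathbf{c})$ is proved verbatim, using instead the second inequalities in \eqref{eq:bochner-weitzenboch} and \eqref{eq:impo-lat-2}, the $D^{1,\ast}$-form of Lemma \ref{lemma:baby-bochner} (obtained by the same double integration by parts, or via the identity $D^{\sigma,\dagger}=-C D^{\sigma,\ast}C$), and Corollary \ref{cor:kernel-class-dual}. The main obstacle throughout is the failure of compactness on the expanding rescaled domains, and it is precisely the weighted estimate $\norm{z v_n}_{L^2}\le C$ coming from Lemma \ref{lemma:baby-bochner} that resolves it.
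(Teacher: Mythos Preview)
Your proof is correct and follows essentially the same approach as the paper's: global Bochner--Weitzenb\"ock for concentration, then the local estimate (Lemma \ref{lemma:baby-bochner}) to obtain the weighted bound $\norm{zv_n}_{L^2}\le C$ that prevents mass escape on the rescaled domains, followed by Rellich plus a diagonal/tail argument. Your treatment of part (a) is in fact slightly cleaner than the paper's---you apply \eqref{eq:bochner-weitzenboch} directly to $\xi_n$ and then restrict, whereas the paper first forms $R_{\sigma}(\xi)$ and computes $D^{\sigma}(R_{\sigma}(\xi))$ via \eqref{eq:remainder} before applying the estimate; both reach the same conclusion.
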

\begin{proof}
  We will only prove the $\xi_{n}$ case, leaving the $\eta_{n}$ case to the reader. To avoid too much clutter, we suppress some notation and write $\sigma:=\sigma_{n}$, $\xi_{n}:=\xi$. Keep in mind that $\rho$ is a fixed bump function.

  Let's begin the proof. Using \eqref{eq:remainder} together with the Bochner-Weitzenb\"ock estimate \eqref{eq:bochner-weitzenboch} implies that
  \begin{equation*}
    \norm{B\mathrm{R}_{\sigma}(\xi)}^{2}_{L^{2}}\le \sigma^{-2}\norm{\db \rho}^{2}_{C^{0}}\norm{\xi}_{L^{2}}^{2}+\sigma^{-2}\norm{D^{\sigma}(\xi)}_{L^{2}}^{2}+C\sigma^{-1}\norm{\xi}_{L^{2}}^{2}.
  \end{equation*}
  However, $\mathrm{R}_{\sigma}(\xi)$ is supported on $\dot\Sigma\setminus D(\zeta_{1},1/2)\setminus D(\zeta_{2},1/2)\setminus \cdots $ and it follows that $\abs{B}>b>0$ for some fixed constant $b$ on the support of $\mathrm{R}_{\sigma}(\xi)$. Therefore we conclude that
  \begin{equation*}
    \norm{\mathrm{R}_{\sigma}(\xi)}^{2}_{L^{2}}\le b^{-1}((C\sigma^{-1}+c_{\rho}\sigma^{-2})\norm{\xi}_{L^{2}}^{2}+\sigma^{-2}\norm{D^{\sigma}(\xi)}^{2}_{L^{2}})=O(\sigma^{-1}).
  \end{equation*}
  This proves part (a).

  For part (b), we use \eqref{eq:impo-lat-1} to conclude
  \begin{equation*}
    \norm{D^{1}(\Pi_{\sigma}(\xi))}\le\sigma^{-1/2}(\norm{D^{\sigma}(\xi))}_{L^{2}}+c_{\rho}\norm{\xi}_{L^{2}})=O(\sigma^{-1/2}).
  \end{equation*}
  Let $v_{n}=\Pi_{\sigma}(\xi)$. Then $\norm{v_{n}}_{L^{2}}$ is bounded and $\norm{D^{1}(v_{n})}=O(\sigma_{n}^{-1/2})$. We will now use the local Bochner Weitzenb\"ock estimates (Lemma \ref{lemma:baby-bochner}) to conclude that we have
  \begin{equation}\label{eq:fancy-man}
    \begin{aligned}
      \norm{v_{n}}_{L^{2}}+\norm{\db v_{n}}_{L^{2}}+\norm{zv_{n}}_{L^{2}}&=O(1)\\
      \norm{D^{1}(v_{n})}_{L^{2}}&=O(\sigma_{n}^{-1/2}).
    \end{aligned}
  \end{equation}
  The first estimate above is actually enough to imply that a subsequence of $v_{n}$ converges to some limit $v_{\infty}$ in $L^{2}$; we will explain this step momentarily. The second estimate will imply that $D^{1}(v_{\infty})=0$. This will complete the proof.

  Before we move on, note that the $L^{2}$ elliptic estimates for $\db$ and the first estimate above implies that $v_{n}$ is uniformly bounded in $W^{1,2}$.

  We can phrase the next part of our argument rather generally. If we let
  \begin{equation*}
    W=\set{v\in H\text{ and }\norm{v}_{W^{1,2}}+\norm{zv}_{L^{2}}\le C},
  \end{equation*}
  (with the obvious induced norm) then the inclusion $W\to H$ is compact; we will prove this below. To see how it applies to our problem, observe that the $L^{2}$ estimates for $\db$ and the first part of \eqref{eq:fancy-man} imply that $\norm{v_{n}}_{W^{1,2}}+\norm{zv_{n}}_{L^{2}}$ is bounded, and hence $v_{n}$ is bounded in $W$. Therefore, after passing to a subsequence, $v_{n}$ converges to some limit $v_{\infty}$ in $L^{2}$. If $\varphi$ is any test function (taking real values along the boundary) then we have
  \begin{equation*}
    \ip{D^{1,\ast}\varphi,v_{\infty}}=\lim\ip{D^{1,\ast}\varphi,v_{n}}\to 0,
  \end{equation*}
  and hence $D^{1}v=0$ weakly. By our elliptic regularity results $v$ is smooth, takes real values along the boundary, and $D^{1}v=0$ holds pointwise, as desired. We can then set $\mathbf{k}=v_{\infty}$ to complete the proof.

  It remains to show why $W\to H$ is a compact inclusion. It is well-known that $W^{1,2}(\Omega(r))\subset L^{2}(\Omega(r))$ is a compact inclusion for $\Omega(r)=D(r)$ or $\Omega(r)=D(r)\cap \cl{\mathbb{H}}$. Thus, by a diagonal argument, we can pass to a subsequence $v_{n}$ and that $v_{n}\to v_{\infty}$ for some limit $v\in L^{2}_{\mathrm{loc}}$ (in the $L^{2}_{\mathrm{loc}}$ topology).

  We easily estimate
  \begin{equation*}
    \norm{v_{n}}^{2}_{L^{2}(\Omega(2^{k})\setminus \Omega(2^{k-1}))}\le \frac{1}{4^{k-1}}\norm{zv_{n}}^{2}_{L^{2}(\Omega(2^{k})\setminus \Omega(2^{k-1}))}.
  \end{equation*}
  Since $\Omega(2r)\setminus \Omega(r)$ is precompact, we must have
  \begin{equation*}
    \norm{v_{\infty}}^{2}_{L^{2}(\Omega(2^{k})\setminus\Omega(2^{k-1}))}=\lim\norm{v_{n}}^{2}_{L^{2}(\Omega(2^{k})\setminus \Omega(2^{k-1}))}\le \frac{C^{2}}{4^{k-1}}.
  \end{equation*}
  Since the right hand side is summable, we conclude that $v_{\infty}$ is actually in $L^{2}$. Now for all $k$ we have
  \begin{equation*}
    \begin{aligned}
      \norm{v_{\infty}-v_{n}}_{L^{2}}^{2}&\le \norm{v-v_{n}}_{L^{2}(\Omega(2^{k}))}^{2}+\sum_{\ell>k} \norm{v_{\infty}}_{L^{2}(\Omega(2^{\ell})\setminus\Omega(2^{\ell-1}))}^{2}+\norm{v_{n}}_{L^{2}(\Omega(2^{\ell})\setminus\Omega(2^{\ell-1}))}^{2}.\\
      &\le \norm{v_{\infty}-v_{n}}_{L^{2}(\Omega(2^{k}))}^{2}+2C^{2}4^{-k}
    \end{aligned}
  \end{equation*}
  Pick $k$ large enough that the last term is less than $\epsilon$, and then take the limit $n\to\infty$, yielding
  \begin{equation*}
    \limsup\norm{v_{\infty}-v_{n}}_{L^{2}}^{2}\le \epsilon.
  \end{equation*}
  This implies that $v_{n}\to v_{\infty}$ in $L^{2}$, completing the proof. 
\end{proof}

\subsubsection{Stabilizing $D^{\sigma}$ and computing its index}
\label{sec:stabilizing-sigma}
In this section we will stabilize $D^{\sigma}$ by adding a cokernel element $\mathbf{c}_{\zeta}$ for each zero $\zeta$ with count $-1$ (Figure \ref{fig:dual-yt}). We will also ``co''-stabilize it by adding a kernel element $\mathbf{k}_{\zeta}$ for each $\zeta$ with count $+1$.

We define the following elements of $L^{2}(\C,\C)$ and $L^{2}(\cl{\mathbb{H}},\C)$:
\begin{equation}
  \label{eq:kernel-cokernel-elts}
  \begin{aligned}
    \text{at $\zeta\in \mathrm{Z}^{+}$}&\hspace{1cm}\mathbf{k}_{\zeta}=i\exp(-\frac{1}{2}\abs{z}^{2})\text{ and }\mathbf{c}_{\zeta}=0,\\
    \text{at $\zeta\in \mathrm{Z}^{-}$}&\hspace{1cm}\mathbf{k}_{\zeta}=0\text{ and }\mathbf{c}_{\zeta}=i\exp(-\frac{1}{2}\abs{z}^{2}),\\
    \text{at $\zeta\in \mathrm{Z}^{++}$}&\hspace{1cm}\mathbf{k}_{\zeta}=\exp(-\frac{1}{2}\abs{z}^{2})\text{ and }\mathbf{c}_{\zeta}=0,\\
    \text{at $\zeta\in \mathrm{Z}^{--}$}&\hspace{1cm}\mathbf{k}_{\zeta}=0\text{ and }\mathbf{c}_{\zeta}=\exp(-\frac{1}{2}\abs{z}^{2}),\\
    \text{at $\zeta\in \mathrm{Z}^{+-}\cup \mathrm{Z}^{-+}$}&\hspace{1cm}\mathbf{k}_{\zeta}=0\text{ and }\mathbf{c}_{\zeta}=0,   
  \end{aligned}
\end{equation}
The results of Lemmas \ref{lemma:baby-bochner}, \ref{lemma:kernel-class-1} and Corollary \ref{cor:kernel-class-dual} show that $\mathrm{span}_{\zeta\in \mathrm{Z}}(\mathbf{k}_{\zeta})=\ker D^{1}\subset H$, and $\mathrm{span}_{\zeta\in \mathrm{Z}}(\mathbf{c}_{\zeta})=\ker D^{1,\ast}\subset H$.

Keeping track of the counts of the various kinds of zeros, we see that
\begin{equation}\label{eq:final-county-boi}
  \dim \ker D^{1}-\dim \ker D^{1,\ast}=\mathrm{X}+\mu^{\tau}_{\mathrm{Mas}}.
\end{equation}
Throughout the subsequent arguments, we will use $\mathbf{k}$ and $\mathbf{c}$ to denote linear combinations of the above basic kernel and cokernel elements.

We consider $\Phi_{\sigma}(\mathbf{k})$ and $\Phi_{\sigma}(\mathbf{c})$ as elements of $W^{1,2}(E,F)$ and $W^{1,2}(\Lambda^{0,1}\otimes E,F^{*})$, using the special coordinate charts $z_{\zeta}$ and frames $Y$ defined above.

We define the stabilized operator by the formula:
\begin{equation*}
  \begin{aligned}
    &D^{\sigma}_{\mathrm{st}}:W^{1,2}(E,F)\oplus \ker D^{1,\ast}\to L^{2}(\Lambda^{0,1}\otimes E)\oplus \ker D^{1}\\
    &D^{\sigma}_{\mathrm{st}}(\xi,\mathbf{c})=(D^{\sigma}(\xi)+\Phi_{\sigma}(\mathbf{c}),\textstyle\sum_{\zeta}\norm{\mathbf{k}_{\zeta}}^{-2}\ip{\Pi_{\sigma}(\xi),\mathbf{k}_{\zeta}}\mathbf{k}_{\zeta}).
  \end{aligned}
\end{equation*}
Note that the second factor is simply an orthogonal projection. The following result will complete the proof of the index formula.
\begin{prop}\label{prop:gluing-iso}
  The operator $D^{\sigma}_{\mathrm{st}}$ is an isomorphism for $\sigma$ sufficiently large. 
\end{prop}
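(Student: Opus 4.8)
The plan is to establish injectivity and surjectivity of $D^\sigma_{st}$ separately, each for all sufficiently large $\sigma$, using the Linear Compactness proposition together with the rescaling estimates \eqref{eq:impo-lat-1} and \eqref{eq:impo-lat-2}. Before starting I would record that $D^\sigma_{st}$ is a bounded Fredholm operator for every $\sigma>0$: it differs from the map $(\xi,\mathbf c)\mapsto(D^\sigma\xi,0)$ by the finite rank operator $(\xi,\mathbf c)\mapsto(\Phi_\sigma(\mathbf c),\,P\Pi_\sigma\xi)$, where $P$ is orthogonal projection of $H$ onto $\ker D^1$. In particular its cokernel is finite dimensional and equals the orthogonal complement of its closed image.

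For injectivity I would argue by contradiction: if the claim fails, there are $\sigma_n\to\infty$ and $(\xi_n,\mathbf c_n)\in\ker D^{\sigma_n}_{st}$ normalized by $\norm{\xi_n}_{L^2}^2+\norm{\mathbf c_n}^2=1$. The first component of $D^{\sigma_n}_{st}(\xi_n,\mathbf c_n)=0$ reads $D^{\sigma_n}(\xi_n)=-\Phi_{\sigma_n}(\mathbf c_n)$, so $\norm{D^{\sigma_n}(\xi_n)}_{L^2}\le\norm{\mathbf c_n}\le 1$ and the Linear Compactness proposition applies: after passing to a subsequence, $\norm{R_{\sigma_n}(\xi_n)}_{L^2}\to 0$ and $\Pi_{\sigma_n}(\xi_n)\to\mathbf k$ in $L^2$ for some $\mathbf k\in\ker D^1$. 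The vanishing of the second component of $D^{\sigma_n}_{st}(\xi_n,\mathbf c_n)$ says exactly that $\Pi_{\sigma_n}(\xi_n)\perp\ker D^1$ for all $n$, and letting $n\to\infty$ forces $\mathbf k\perp\ker D^1$, hence $\mathbf k=0$; then \eqref{eq:remainder} gives $\norm{\xi_n}_{L^2}\le\norm{R_{\sigma_n}(\xi_n)}_{L^2}+\norm{\Pi_{\sigma_n}(\xi_n)}_{L^2}\to 0$. Consequently $\norm{\mathbf c_n}\to 1$, and after a further subsequence $\mathbf c_n\to\mathbf c_\infty\in\ker D^{1,\ast}$ with $\norm{\mathbf c_\infty}=1$. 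Finally I would pair: $-\norm{\Phi_{\sigma_n}(\mathbf c_n)}_{L^2}^2=\ip{D^{\sigma_n}(\xi_n),\Phi_{\sigma_n}(\mathbf c_n)}=\ip{\xi_n,D^{\sigma_n,\ast}\Phi_{\sigma_n}(\mathbf c_n)}$, the second equality being the formal adjoint identity, legitimate because $\Phi_{\sigma_n}(\mathbf c_n)$ is smooth, compactly supported, and takes boundary values in $F^*$ (this last point follows from the explicit form of the $\mathbf c_\zeta$ in \eqref{eq:kernel-cokernel-elts} and of the frames $Y$). Since $\mathbf c_n\in\ker D^{1,\ast}$ and each $\mathbf c_\zeta$ is a Gaussian, \eqref{eq:impo-lat-2} bounds $\norm{D^{\sigma_n,\ast}\Phi_{\sigma_n}(\mathbf c_n)}_{L^2}$ by $c(\rho)\norm{\mathbf c_n}_{L^2(D(\sigma_n)\setminus D(\sigma_n/2))}$, which tends to $0$; hence $\norm{\Phi_{\sigma_n}(\mathbf c_n)}_{L^2}\to 0$. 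This contradicts $\norm{\Phi_{\sigma_n}(\mathbf c_n)}_{L^2}^2=\norm{\rho(\sigma_n^{-1/2}z)\mathbf c_n}_{L^2}^2\to\norm{\mathbf c_\infty}_{L^2}^2=1$, so $D^\sigma_{st}$ is injective for $\sigma$ large.

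For surjectivity I would compute $\mathrm{coker}(D^\sigma_{st})$: a pair $(\eta,\mathbf k)$ in the codomain is orthogonal to the image iff, testing against $\xi=0$, $P'\Pi_\sigma(\eta)=0$ where $P'$ projects $H$ onto $\ker D^{1,\ast}$, and, testing against $\mathbf c=0$, $\eta$ solves $D^{\sigma,\ast}\eta=-\Phi_\sigma(\mathbf k)$ weakly — by the local elliptic regularity results (Proposition \ref{prop:coord-inv-ell-reg}) this makes $\eta$ smooth, an element of $W^{1,2}(\Lambda^{0,1}\otimes E,F^*)$, with the equation holding pointwise. Thus $\mathrm{coker}(D^\sigma_{st})$ is the kernel of the operator $\widetilde D^\sigma_{st}(\eta,\mathbf k)=(D^{\sigma,\ast}\eta+\Phi_\sigma(\mathbf k),\,P'\Pi_\sigma(\eta))$, which has precisely the structural form of $D^\sigma_{st}$ with $(D^\sigma,D^1,\mathbf k_\zeta,\mathbf c_\zeta)$ replaced by $(D^{\sigma,\ast},D^{1,\ast},\mathbf c_\zeta,\mathbf k_\zeta)$. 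Since the Linear Compactness proposition and the estimates \eqref{eq:impo-lat-1}, \eqref{eq:impo-lat-2} all have the verbatim adjoint counterparts quoted in the text, the argument of the previous paragraph applies word for word to show $\widetilde D^\sigma_{st}$ is injective for $\sigma$ large, i.e.\ $\mathrm{coker}(D^\sigma_{st})=0$. Together with injectivity this gives that $D^\sigma_{st}$ is an isomorphism for all sufficiently large $\sigma$. I expect the injectivity step to be the crux: one must use \emph{both} finite rank modifications at once — the orthogonality constraint $\Pi_\sigma(\xi)\perp\ker D^1$ to annihilate the base component $\xi$ via linear compactness, and then the adjoint pairing against $\Phi_\sigma(\mathbf c)$, exploiting that $\mathbf c\in\ker D^{1,\ast}$ decays like a Gaussian, to annihilate the stabilizing component $\mathbf c$ — and one must check that $\Phi_\sigma(\mathbf c)$ genuinely satisfies the $F^*$ boundary condition so that the formal adjoint identity is available; once these points and the weak-to-strong passage in the cokernel computation are settled, surjectivity is essentially formal by the symmetry between $D^\sigma_{st}$ and $\widetilde D^\sigma_{st}$.
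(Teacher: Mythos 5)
Your proof is correct and follows essentially the same route as the paper: a contradiction argument via the linear compactness proposition, first killing $\xi$ through the projection constraint and then killing the stabilizing term by pairing against $\Phi_{\sigma}(\mathbf{c})$ with $\mathbf{c}\in\ker D^{1,\ast}$, followed by the dual argument for surjectivity. The only differences are cosmetic — you prove plain injectivity rather than the paper's uniform estimate \eqref{eq:eventually-inj}, recast surjectivity as injectivity of an explicit adjoint stabilized operator, and route the final pairing through $D^{\sigma,\ast}$ and \eqref{eq:impo-lat-2} instead of $\Pi_{\sigma}$ and \eqref{eq:impo-lat-1} — and your explicit remark that $D^{\sigma}_{\mathrm{st}}$ is Fredholm (hence has closed image) is a welcome justification of a step the paper leaves implicit.
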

See \cite[Section 5.7]{wendl-sft} for a similar result.
\begin{proof}
  We summarize the strategy. First prove that $D^{\sigma}_{\mathrm{st}}$ is eventually uniformly injective, in the sense that there are constants $C,\sigma_{0}$ so that
  \begin{equation}\label{eq:eventually-inj}
    \sigma>\sigma_{0}\implies \norm{(\xi,\mathbf{c})}_{L^{2}}\le C\norm{D^{\sigma}_{\mathrm{st}}(\xi,\mathbf{c})}_{L^{2}}.
  \end{equation}  
  Second, we show that $D^{\sigma}_{\mathrm{st}}$ is eventually surjective. Then $D^{\sigma}_{\mathrm{st}}$ is eventually an isomorphism, as desired.

  We prove \eqref{eq:eventually-inj} by contradiction; suppose not and then conclude a sequence $\sigma_{n}\to\infty$ and elements $(\xi_{n},\mathbf{c}_{n})$ so that $\norm{(\xi_{n},\mathbf{c}_{n})}_{L^{2}}=1$ but $\norm{D^{\sigma_{n}}_{\mathrm{st}}(\xi_{n},\mathbf{c}_{n})}_{L^{2}}\to 0$. Let's agree to abbreviate $\sigma=\sigma_{n}$ to avoid excessive subscripts during the course of this argument.

  It is clear that
  \begin{equation*}
    \norm{D^{\sigma}(\xi_{n})}_{L^{2}}\le \norm{D^{\sigma}_{\mathrm{st}}(\xi_{n},\mathbf{c}_{n})}_{L^{2}}+C\norm{\mathbf{c}_{n}}_{L^{2}},
  \end{equation*}
  for a fixed constant $C$. In particular, we can apply our compactness result to $\xi_{n}$ and conclude that, after passing to a subsequence $\Pi_{\sigma}(\xi_{n})$ converges to $\mathbf{k}$ and $\mathrm{R}_{\sigma}(\xi_{n})$ converges to $0$. However, since $\mathbf{k}_{\zeta}$ form an orthogonal basis for $\ker D^{1}$ we have
  \begin{equation*}    \mathbf{k}=\lim_{n\to\infty}\sum_{\zeta}\norm{\mathbf{k}_{\zeta}}^{-2}\ip{\Pi_{\sigma}(\xi_{n}),\mathbf{k}_{\zeta}}\mathbf{k}_{\zeta}.
  \end{equation*}
  Therefore $D^{\sigma}_{\mathrm{st}}(\xi,\mathbf{c})\to 0$ implies that $\mathbf{k}=0$. Therefore $\Pi_{\sigma}(\xi_{n})$ converges to zero in $L^{2}$, and since we know $\mathrm{R}_{\sigma}(\xi_{n})\to 0$, we conclude $\xi_{n}$ converges to zero in $L^{2}$.

  In order to contradict our initial assumption, it suffices to show that the inner product $\ip{\Phi_{\sigma}(\mathbf{c}),D^{\sigma}(\xi)}$ converges to zero (because then $\norm{\mathbf{c}_{n}}^{2}\le \norm{D^{\sigma}_{\mathrm{st}}(\xi_{n},\mathbf{c}_{n})}^{2}+\epsilon$ must hold eventually, by Pythagoras' theorem, for arbitrary $\epsilon$). Using the adjointness property and \eqref{eq:impo-lat-1}, we have
  \begin{equation*}    \ip{\Phi_{\sigma}(\mathbf{c}_{n}),D^{\sigma}(\xi_{n})}=\ip{\mathbf{c}_{n},\Pi_{\sigma}(D^{\sigma(\xi_{n})})}=\sigma^{1/2}\ip{\mathbf{c}_{n},D^{1}(\Pi_{\sigma}(\xi_{n}))}+o(1)=o(1),
  \end{equation*}
  where we use the fact that $\mathbf{c}_{n}\in \ker D^{1,\ast}$. This completes the proof by contradiction, and hence we have \eqref{eq:eventually-inj}.

  To prove that $D^{\sigma}_{\mathrm{st}}$ is eventually surjective, we also argue by contradiction. Suppose that it were not. Then by standard properties of Hilbert spaces, we could find a unit norm sequence $\eta_{n},\mathbf{k}_{n}$ (with $\sigma_{n}\to\infty$) so that
  \begin{equation*}
    \ip{D^{\sigma}(\xi)+\Phi_{\sigma}(\mathbf{c}),\eta_{n}}+\ip{\Pi_{\sigma}(\xi),\mathbf{k}_{n}}=0\text{ for all $n,\xi,\mathbf{c}$},
  \end{equation*}
  Using $\Pi_{\sigma}^{*}=\Phi_{\sigma}$ and $\mathbf{c}=0$, we conclude that $D^{\sigma,\ast}(\eta_{n})=-\Phi_{\sigma}(\mathbf{k}_{n})$. Since this is bounded in $L^{2}$, we can apply the compactness result to conclude that $\Pi_{\sigma}(\eta_{n})$ converges to a solution of $\ker D^{1,\ast}$. However the assumption that
  \begin{equation*}
    \ip{\Phi_{\sigma}(\mathbf{c}),\eta_{n}}=0,
  \end{equation*}
  for \emph{all} $\mathbf{c}\in \ker D^{1,\ast}$, allows us to conclude that $\Pi_{\sigma}(\eta_{n})$ converges to $0$. It follows that $\eta_{n}$ converges to zero (since we already know $\mathrm{R}_{\sigma}(\eta_{n})$ converges to zero). Now set $\xi_{n}=\Phi_{\sigma}(\mathbf{k}_{n})$ and $\mathbf{c}=0$ to conclude that
  \begin{equation*}
    \begin{aligned}
      0=&\ip{D^{\sigma}(\Phi_{\sigma}(\mathbf{k}_{n})),\eta_{n}}+\ip{\Phi_{\sigma}(\mathbf{k}_{n}),\Phi_{\sigma}(\mathbf{k}_{n})}.\\
      =&\ip{\Phi_{\sigma}(D^{1}(\mathbf{k}_{n}))+o(1),\eta_{n}}+\ip{\Phi_{\sigma}(\mathbf{k}_{n}),\Phi_{\sigma}(\mathbf{k}_{n})}.\\
      =&\ip{o(1),\eta_{n}}+\ip{\Phi_{\sigma}(\mathbf{k}_{n}),\Phi_{\sigma}(\mathbf{k}_{n})}.\\
      \implies &\norm{\Phi_{\sigma}(\mathbf{k}_{n})}=o(1)\implies \norm{\mathbf{k}_{n}}=o(1).
    \end{aligned}
  \end{equation*}
  We have shown that both $\eta_{n},\mathbf{k}_{n}$ converge to zero, which contradicts our assumption that they were unit norm. This completes the proof.
  \end{proof}
\begin{remark}
  It follows easily from Proposition \ref{prop:gluing-iso} that $$\mathrm{ind}(D^{\sigma})=\dim \ker D^{1}-\dim \ker D^{1,\ast}.$$ To see why, write $D^{\sigma}_{\mathrm{st}}$ in matrix form. Deform the operator by keeping the $1,1$ entry fixed and setting all the other entries to zero. This deformation does not change the Fredholm index. It is easy to compute the Fredholm index after the deformation.

  Equation \eqref{eq:final-county-boi} then implies that $\mathrm{ind}(D^{\sigma})=\mathrm{X}+\mu^{\tau}_{\mathrm{Mas}}$, which completes the proof of Lemma \ref{lemma:final-baby-boy}. This in turn completes the proof of Proposition \ref{prop:20} (the index formula for $\mathrm{ind}(D^{\mathrm{al}})$). Applying our earlier result Proposition \ref{prop:main-gluing} (relating $\mathrm{ind}(D)$ and $\mathrm{ind}(D^{\mathrm{al}})$) completes the proof of our main result, Theorem \ref{theorem:index-formula}.
\end{remark}
\appendix

\section{On the parity of the Conley-Zehnder indices}
The purpose of this appendix is to explain how the parity of the Conley-Zehnder index of an asymptotic operator $A$ changes with the asymptotic trivialization.

Here is the setup. Suppose that $A_{1}=-i\bd_{t}-S(t)$ is an asymptotic operator. First we suppose that $A_{1}$ is defined on the interval $[0,1]$.

Let $\Omega(t)\in U(n)$ be a path of unitary matrices with the property that $\Omega(0),\Omega(1)$ preserve $\R^{n}$. Referring to Section \ref{sec:ahs}, we see that the transition function between any two asymptotic trivializations $\tau_{1},\tau_{2}$ always takes this form.

In this fashion $\Omega(0),\Omega(1)$ can be thought of as elements of $\mathrm{O}(n)$.

\begin{prop}
  Let $A_{2}=\Omega(t)^{-1}A_{1}\Omega(t)$. Then
  \begin{equation*}
    \begin{aligned} \Omega(0)\Omega(1)\in \mathrm{SO}(n)\implies \mu_{\mathrm{CZ}}(A_{2})-\mu_{\mathrm{CZ}}(A_{1})=0\text{ mod }2,\\
      \Omega(0)\Omega(1)\in\mathrm{O}(n)\setminus \mathrm{SO}(n)\implies \mu_{\mathrm{CZ}}(A_{2})-\mu_{\mathrm{CZ}}(A_{1})=1\text{ mod }2.
    \end{aligned}
  \end{equation*}
\end{prop}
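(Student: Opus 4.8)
The plan is to reduce the statement to a computation of the Fredholm index of an operator $D^{\mathrm{CZ}}_{A_2}$ on the infinite strip, and to compare it with $D^{\mathrm{CZ}}_{A_1}$ by a gluing/change-of-trivialization argument. Recall that $\mu_{\mathrm{CZ}}(A_i) = \mathrm{ind}(D^{\mathrm{CZ}}_{A_i})$, where $D^{\mathrm{CZ}}_{A_i} = \bd_s - (1-\beta(s))A^{\mathrm{al}} - \beta(s)A_i$ on $\R\times[0,1]$. The key observation is that conjugating $A_1$ by the unitary path $\Omega(t)$ is the effect of changing the asymptotic trivialization at the positive end; so $D^{\mathrm{CZ}}_{A_2}$ should differ from $D^{\mathrm{CZ}}_{A_1}$ only by how the trivialization twists near $s=+\infty$. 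More precisely, I would build an operator $\widetilde{D}$ on the strip which equals $\bd_s - A^{\mathrm{al}}$ near $s=-\infty$ and $\bd_s - A_1$ near $s = +\infty$ but where we then apply a further homotopy that "rotates" the $A_1$ end by the loop of unitaries; since $A^{\mathrm{al}}$ commutes with $\R^n$-preserving conjugations only up to the relevant subtlety, the index difference $\mu_{\mathrm{CZ}}(A_2) - \mu_{\mathrm{CZ}}(A_1)$ equals the Fredholm index of a Cauchy-Riemann operator on the strip whose two ends are both $\bd_s - A^{\mathrm{al}}$ (via $\tau_1$ at one end, $\tau_2$ at the other), and this latter index depends only on the homotopy class of $\Omega$ rel endpoints, hence (after using that $A^{\mathrm{al}}$ has trivial index, $\mu_{\mathrm{CZ}}(A^{\mathrm{al}})=0$) only on $\Omega(0)\Omega(1) \in \mathrm{O}(n)$ up to path components.

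The central step is thus the following reduction: the parity $\mu_{\mathrm{CZ}}(A_2) - \mu_{\mathrm{CZ}}(A_1) \bmod 2$ equals the Fredholm index, mod $2$, of an operator on $[0,1]\times\R$ interpolating from $\tau_1$-trivialized $\bd_s - A^{\mathrm{al}}$ to $\tau_2$-trivialized $\bd_s - A^{\mathrm{al}}$. Because the space of asymptotic trivializations with a fixed $\Omega(0)\Omega(1)$-coset is connected, and the Fredholm index is locally constant, this reduces the whole proposition to two model computations: the case $\Omega \equiv \mathrm{Id}$ (giving index $0$, hence parity $0$, consistent with $\mathrm{SO}(n)$), and the case where $\Omega$ is a path realizing a reflection, for which I would compute the index by a direct $n=1$ calculation (the reflection only involves one coordinate) and reduce to: the Cauchy-Riemann operator $\bd_s u + i\bd_t u + Cu$ on $[0,1]\times\R$ with boundary conditions $u(s,0)\in\R$, $u(s,1)\in\R$ at one end and $u(s,0)\in\R$, $u(s,1)\in i\R$ at the other (the reflection $x\mapsto -x$ being multiplication by $-1$, which changes the Lagrangian boundary condition). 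This $1$-dimensional operator has index differing by $1$ from the untwisted one, by an explicit spectral-flow or winding-number count of the boundary Maslov contribution: rotating one boundary Lagrangian from $\R$ to $i\R$ (a quarter turn, weight $\tfrac12$ on each side, net integer $1$ via the Maslov term in Theorem~\ref{theorem:index-formula} applied to the strip-with-a-puncture picture).

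I expect the main obstacle to be making the reduction in the first step rigorous: one must carefully track how the strip-like-end asymptotic trivialization enters the definition of $D^{\mathrm{CZ}}_{A}$ and verify that conjugation by $\Omega(t)$ on $A_1$ really does correspond to a trivialization change that can be "undone" by a homotopy supported in the end, without altering the Fredholm index except through the topology of $\Omega$ rel its $\mathrm{O}(n)$-endpoints; the subtlety is that $A^{\mathrm{al}} = -i\bd_t - C$ is \emph{not} invariant under general $\R^n$-preserving unitary conjugation, so the "undoing" homotopy passes through operators whose asymptotic operators vary, and one must confirm non-degeneracy is preserved along the way (or argue by an index-gluing argument as in Section~\ref{sec:kernel-gluing} that avoids this). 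Once that reduction is in hand, the mod-$2$ conclusion follows because $\pi_0$ of the relevant trivialization space is detected exactly by $\det(\Omega(0)\Omega(1)) = \pm 1$, so the index difference, being a homotopy invariant valued in $\Z$, descends to a function of this sign, and the two model computations pin down its value mod $2$ to be $0$ and $1$ respectively.
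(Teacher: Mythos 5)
Your overall strategy --- trading the difference $\mu_{\mathrm{CZ}}(A_2)-\mu_{\mathrm{CZ}}(A_1)$ for the index of a single strip operator that depends only on the trivialization change $\Omega$, and then evaluating model cases --- is essentially the reduction the paper makes (there the strip operator $\bd_s-(1-\beta)A_1-\beta A_2$ is rewritten in the twisted frame $X(s,t)=\Omega(t)^{-1}X_0$, so that both ends become $\bd_s-A_1$ and Theorem \ref{theorem:index-formula} identifies the index with $\mu^{\tau}_{\mathrm{Mas}}$). But the step you lean on to finish, namely that ``the space of asymptotic trivializations with a fixed $\Omega(0)\Omega(1)$-coset is connected'' so that two model computations suffice, is false. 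Since $\pi_1(U(n))\cong\Z$, the paths $t\mapsto\Omega(t)$ with prescribed endpoints in $\mathrm{O}(n)$ fall into infinitely many homotopy classes rel endpoints (already for $n=1$: $\Omega\equiv 1$ versus $\Omega(t)=e^{2\pi i kt}$), and the index is \emph{not} constant across these classes --- composing with a generator of $\pi_1(U(n))$ shifts the Conley--Zehnder index by $\pm 2$. So reducing to the two models $\Omega\equiv\mathrm{Id}$ and a single reflection path does not cover a general $\Omega$; the nontrivial content of the $\mathrm{SO}(n)$ case is precisely that the index is \emph{even} for every such $\Omega$, winding included. The paper proves this not by connectivity but by a parity argument: when the endpoints lie in $\mathrm{SO}(n)$ the frame extends over the boundary as a real frame, so the relevant section of $\det(E)^{\otimes 2}$ can be taken of the form $\mathfrak{s}_1\otimes\mathfrak{s}_2$ with $\#\mathfrak{s}_1^{-1}(0)=\#\mathfrak{s}_2^{-1}(0)$, forcing $\mu^{\tau}_{\mathrm{Mas}}\equiv 0\bmod 2$. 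Your argument needs an analogous statement (e.g.\ that a full unitary loop changes the index by an even integer), and as written it is missing.

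A second, smaller issue: your model computation for the reflection is not the right operator. The reflection $\mathrm{diag}(-1,1,\dots,1)$ preserves the Lagrangian $\R^{n}$, so in the twisted frame the boundary condition at $t=1$ is still $\R^{n}$, not $i\R$; what changes is the framing/orientation, which is exactly why the invariant is carried by $\det(E)^{\otimes 2}$ rather than by the Lagrangian boundary condition itself. The ``quarter turn, weight $\tfrac12$ on each side'' count is a heuristic, not a computation. The paper instead takes $\Omega(t)=\mathrm{diag}(e^{i\pi t},1,\dots,1)$, notes that $\det(X)^{\otimes 2}=e^{-2\pi i t}\det(X_0)^{\otimes 2}$ at the positive end, writes down the explicit interpolating section $[(1-\beta(s))+\beta(s)e^{-2\pi i t}]\det(X_0)^{\otimes 2}$, and counts its single transverse zero.
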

\begin{proof}
  To set the stage, consider the Cauchy-Riemann operator on $(\R\times [0,1],\C^{n},\R^{n})$ which equals
  \begin{equation*}
    D=\bd_{s}-(1-\beta(s))A_{1}-\beta(s)A_{2}.
  \end{equation*}
  Here $\beta(s)$ is a cut-off function which vanishes for $s\le 0$ and equals $1$ for $s\ge 1$. We also suppose that $\beta'(s)>0$ on $(0,1)$.

  Clearly, the index formula gives
  \begin{equation*}    \mathrm{ind}(D)=\mu_{\mathrm{CZ}}(A_{2})-\mu_{\mathrm{CZ}}(A_{1}).
  \end{equation*}

  Now let $X_{0}$ be the standard frame of $\C^{n}$ and  consider a non-standard frame $X(s,t)$ with the property that
  \begin{equation*}
    \begin{aligned}
      X(s,t)&=\Omega(t)^{-1}X_{0}\text{ for $s\ge 1$}\\
      X(s,t)&=X_{0}\text{ for $s\le 0$}.
    \end{aligned}
  \end{equation*}
  A priori, we do not define $X$ on the region $[0,1]\times [0,1]$. Observe that with respect to this non-standard frame both asymptotics of $D$ are equal to $\bd_{s}-A_{1}$. As a consequence, the index formula gives
  \begin{equation*}
    \mathrm{ind}(D)=\mu_{\mathrm{Mas}}^{\tau},
  \end{equation*}
  where $\tau=X^{-1}$. Recalling the definition, we conclude $\mu^{\tau}_{\mathrm{Mas}}$ is the signed count of zeros of a generic section of $\det(\C^{n})^{\otimes 2}$ which (a) equals
  \begin{equation*}
    (X_{1}\wedge \cdots \wedge X_{n})^{\otimes 2}
  \end{equation*}
  in the ends, and (b) restricts to the canonical generator of $\det(\R^{n})^{\otimes 2}$ along the boundary.

  The next step is to prove the result in the case when $\Omega(0),\Omega(1)\in \mathrm{SO}(n)$. Since $\mathrm{SO}(n)$ is connected, we can extend $X$ over the boundary $[0,1]\times \set{0,1}$ so that it is always a frame of $\R^{n}$. Now let
    \begin{equation*}
    \mathfrak{s}=X_{1}\wedge \cdots \wedge X_{n}=:\mathrm{det}(X)
  \end{equation*}
  as a section of $\det(\C^{n})$. It is clear that $\mathfrak{s}$ restricts to some generator of $\det(\R^{n})$ along the boundary.

  Extend $\mathfrak{s}$ to $[0,1]\times [0,1]$ \emph{in two ways}, defining $\mathfrak{s}_{1},\mathfrak{s}_{2}$. We require that the zeros of $\mathfrak{s}_{1},\mathfrak{s}_{2}$ are disjoint. Then $\mathfrak{s}_{1}\otimes \mathfrak{s}_{2}$ defines a section of $\det(\C^{n})^{\otimes 2}$ whose zeros count $\mu^{\tau}_{\mathrm{Mas}}$. However, it is easy to show that the signed count of zeros of $\mathfrak{s}_{1}$ and $\mathfrak{s}_{2}$ agree:
  \begin{equation*}
    \#\mathfrak{s}_{1}^{-1}(0)=\#\mathfrak{s}_{2}^{-1}(0).
  \end{equation*}
  The argument proving this equality is similar to results in \cite{milnor}.
  
  Hence
  \begin{equation*}
    \mu^{\tau}_{\mathrm{Mas}}=\#\mathfrak{s}_{1}^{-1}(0)+\#\mathfrak{s}_{2}^{-1}(0)=0\text{ mod }2.
  \end{equation*}
  This proves the theorem in the case $\Omega(0),\Omega(1)\in \mathrm{SO}(n)$.

  Next, suppose that $\Omega(t)=\mathrm{diag}(e^{i\pi t},1,\dots,1)$. Then
  \begin{equation*}
    \det(X)=\left\{
    \begin{aligned}
      e^{-i\pi t}\det(X_{0})\text{ for $s\ge 1$,}\\
      \det(X_{0})\text{ for $s\le 0$}.
    \end{aligned}\right.\implies    \det(X)^{\otimes 2}=\left\{
    \begin{aligned}
      e^{-i2\pi t}\det(X_{0})^{\otimes 2}\text{ for $s\ge 1$,}\\
      \det(X_{0})^{\otimes 2}\text{ for $s\le 0$}.
    \end{aligned}\right.
  \end{equation*}
  Therefore we can extend $\mathfrak{s}=\det(X)^{\otimes 2}$ as
  \begin{equation*}
    \mathfrak{s}=[(1-\beta(s))+\beta(s)e^{-2\pi i t}]\det(X_{0})^{\otimes 2}.
  \end{equation*}
  It is clear that this restricts to the canonical generator of $\det(\R^{n})^{\otimes 2}$ along the boundary. Our assumption that $\beta'(s)>0$ for $s\in (0,1)$ implies that $\mathfrak{s}$ has a single transverse zero when $\beta(s)=1/2$ and $t=1/2$. As a consequence we conclude that
  \begin{equation*}
    \mu_{\mathrm{CZ}}(A_{2})-\mu_{\mathrm{CZ}}(A_{1})=\mu^{\tau}_{\mathrm{Mas}}=\#\mathfrak{s}^{-1}(0)=\pm 1.
  \end{equation*}
  A similar argument works with $\Omega(t)=(e^{i\pi(1-t)},1,\dots,1)$.

  Now suppose that $\Omega(0)\in \mathrm{SO}(n)$ but $\Omega(1)\not\in \mathrm{SO}(n)$. Then we can replace
  \begin{equation*}
    \tilde{\Omega}(t)=\mathrm{diag}(e^{i\pi t},1,\cdots,1)\Omega(t).
  \end{equation*}
  This modified operator has $\tilde{\Omega}(0),\tilde{\Omega}(1)\in \mathrm{SO}(n)$, and so the first part of our proof applies, and so we conclude
  \begin{equation*}
    \mu_{\mathrm{CZ}}(\tilde{\Omega}^{-1}A_{1}\tilde{\Omega})-    \mu_{\mathrm{CZ}}(A_{1})=0\text{ mod }2.
  \end{equation*}
  The second part of our proof implies that
  \begin{equation*}
    \mu_{\mathrm{CZ}}(\tilde{\Omega}^{-1}A_{1}\tilde{\Omega})-        \mu_{\mathrm{CZ}}(\Omega^{-1}A_{1}\Omega)=1\text{ mod }2.
  \end{equation*}
  Combining these allows us to conclude the desired result. The other cases are handled similarly.
\end{proof}

The $\R/\Z$ case is simpler. If $A_{1}$ is defined on $\R/\Z$ and $\Omega(t)$ is a loop in $U(n)$ then we have:
\begin{prop}[closed case]
  Let $A_{2}=\Omega(t)^{-1}A_{1}\Omega(t)$. Then
  \begin{equation*}    \mu_{\mathrm{CZ}}(A_{2})-\mu_{\mathrm{CZ}}(A_{1})=0\text{ mod }2.
  \end{equation*}
\end{prop}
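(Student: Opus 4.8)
The plan is to run the same argument used for the interval case, and then observe that the $\mathrm{SO}(n)$ versus $\mathrm{O}(n)\setminus\mathrm{SO}(n)$ dichotomy simply does not arise over $\mathbb{R}/\mathbb{Z}$. First I would set up the Cauchy-Riemann operator
\begin{equation*}
  D=\bd_{s}-(1-\beta(s))A_{1}-\beta(s)A_{2}
\end{equation*}
on the cylinder $\mathbb{R}\times \mathbb{R}/\mathbb{Z}$, with $\beta$ the usual cut-off ($0$ for $s\le 0$, $1$ for $s\ge 1$). In the standard trivialization the surface $\mathbb{R}\times \mathbb{R}/\mathbb{Z}$ has Euler characteristic $\mathrm{X}=0$ (the vector field $\bd_{s}$ extends nowhere-vanishingly) and $\mu^{\tau_{0}}_{\mathrm{Mas}}=0$ (the constant section $1$ of the trivial bundle has no zeros), so Theorem \ref{theorem:index-formula} gives $\mathrm{ind}(D)=\mu_{\mathrm{CZ}}(A_{2})-\mu_{\mathrm{CZ}}(A_{1})$.

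Next I would change to the non-standard frame $X(s,t)$ which equals $X_{0}$ for $s\le 0$ and $\Omega(t)^{-1}X_{0}$ for $s\ge 1$, defined a priori only in the two ends. With respect to $X$ both asymptotics of $D$ become $\bd_{s}-A_{1}$, so applying Theorem \ref{theorem:index-formula} in the trivialization $\tau=X^{-1}$ (again $\mathrm{X}=0$, and the Conley-Zehnder corrections cancel since both asymptotic operators are $A_1$) yields $\mathrm{ind}(D)=\mu^{\tau}_{\mathrm{Mas}}$. Hence $\mu_{\mathrm{CZ}}(A_{2})-\mu_{\mathrm{CZ}}(A_{1})=\mu^{\tau}_{\mathrm{Mas}}$, and it remains to show that $\mu^{\tau}_{\mathrm{Mas}}$ is even.

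By definition $\mu^{\tau}_{\mathrm{Mas}}$ is the signed count of zeros of a generic section $\mathfrak{s}$ of $\det(\mathbb{C}^{n})^{\otimes 2}$ over $\mathbb{R}\times \mathbb{R}/\mathbb{Z}$ which equals $\det(X)^{\otimes 2}$ in the ends. This is the point where the closed case is easier than the interval case: since $\Omega$ is a genuine loop in $U(n)$, the function $\det(X)=\det(\Omega(t))^{-1}$ already defines a nowhere-vanishing section of the trivial line bundle $\det(\mathbb{C}^{n})$ over each end circle, of winding number $0$ at the negative end and $-\deg(\det\Omega)$ at the positive end; there is no boundary obstruction to worry about. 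I would then write $\mathfrak{s}=\mathfrak{s}_{1}\otimes \mathfrak{s}_{2}$ where $\mathfrak{s}_{1},\mathfrak{s}_{2}$ are two generic extensions of $\det(X)$ over the whole cylinder with disjoint zero sets. A standard relative-degree computation (as in \cite{milnor}, exactly as in the interval case) gives $\#\mathfrak{s}_{1}^{-1}(0)=\#\mathfrak{s}_{2}^{-1}(0)=-\deg(\det\Omega)$, and since the zero sets are disjoint $\mu^{\tau}_{\mathrm{Mas}}=\#\mathfrak{s}_{1}^{-1}(0)+\#\mathfrak{s}_{2}^{-1}(0)=-2\deg(\det\Omega)\equiv 0 \bmod 2$, which is the claim.

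There is essentially no obstacle here: the only things to check carefully are that $\det(X)$ genuinely extends over the ends (this uses that $\Omega$ is a loop, whereas in the interval case one only knew $\Omega(0),\Omega(1)\in \mathrm{O}(n)$) and the relative-Chern-number bookkeeping for the count of zeros of $\mathfrak{s}_{1}$; neither affects the final parity statement, which indeed holds with the stronger conclusion that the difference equals $-2\deg(\det\Omega)$.
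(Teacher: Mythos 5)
Your argument is correct and is precisely the adaptation of the interval-case proof that the paper leaves to the reader: the same two computations of $\mathrm{ind}(D)$ in the two trivializations, followed by the $\mathfrak{s}=\mathfrak{s}_{1}\otimes\mathfrak{s}_{2}$ doubling trick, which in the closed case needs no $\mathrm{SO}(n)$ hypothesis because $\det(X)$ is automatically nowhere-vanishing over the end circles. The only caveat is that the sign in your sharper identity $\mu_{\mathrm{CZ}}(A_{2})-\mu_{\mathrm{CZ}}(A_{1})=-2\deg(\det\Omega)$ depends on orientation conventions for the relative degree, but as you note this is irrelevant to the parity claim.
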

\begin{proof}
  The argument is similar to the one given above. The proof is left to the reader. See also \cite[Section 3.4]{wendl-sft}.
\end{proof}

\section{On the invariance of the Euler characteristic term}
The analysis in Section \ref{sec:final-frontier} shows that the Euler characteristic term $\mathrm{X}(\Sigma,\Gamma_{\pm})$ is an invariant, as explained in Proposition \ref{prop:X-well-defined}. However, it is possible to give an elementary proof of the invariance of $\mathrm{X}(\Sigma,\Gamma_{\pm})$, using techniques similar to those found in \cite{milnor}.

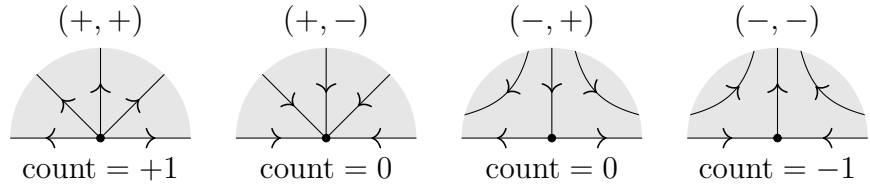
\begin{figure}[H]
  \centering
  \begin{tikzpicture}[scale=.6]
    \begin{scope}[shift={(0,0)}]
      \fill[black!10!white] (2,0) arc (0:180:2)--cycle;
      \node (A) at (0,2) [above] {$(+,+)$};

      \draw[postaction={decorate,decoration={markings,mark=at position 0.5 with {\arrow[scale=1.5]{<};}}}] (-2,0)--(0,0);
      \draw[postaction={decorate,decoration={markings,mark=at position 0.5 with {\arrow[scale=1.5]{<};}}}] (2,0)--(0,0);
      \draw[postaction={decorate,decoration={markings,mark=at position 0.5 with {\arrow[scale=1.5]{<};}}}] (0,2)--(0,0);
      \draw[postaction={decorate,decoration={markings,mark=at position 0.5 with {\arrow[scale=1.5]{<};}}}] (45:2)--(0,0);
      \draw[postaction={decorate,decoration={markings,mark=at position 0.5 with {\arrow[scale=1.5]{<};}}}] (135:2)--(0,0);
      \node[draw,circle,inner sep=1pt,fill] at (0,0){};
      \node at (0,-0.2)[below]{$\mathrm{count}=+1$};
    \end{scope}
    \begin{scope}[shift={(5,0)}]
      \node (A) at (0,2) [above] {$(+,-)$};

      \fill[black!10!white] (2,0) arc (0:180:2)--cycle;
      \draw[postaction={decorate,decoration={markings,mark=at position 0.5 with {\arrow[scale=1.5]{>};}}}] (-2,0)--(0,0);
      \draw[postaction={decorate,decoration={markings,mark=at position 0.5 with {\arrow[scale=1.5]{>};}}}] (2,0)--(0,0);
      \draw[postaction={decorate,decoration={markings,mark=at position 0.5 with {\arrow[scale=1.5]{>};}}}] (0,2)--(0,0);
      \draw[postaction={decorate,decoration={markings,mark=at position 0.5 with {\arrow[scale=1.5]{>};}}}] (45:2)--(0,0);
      \draw[postaction={decorate,decoration={markings,mark=at position 0.5 with {\arrow[scale=1.5]{>};}}}] (135:2)--(0,0);
      \node[draw,circle,inner sep=1pt,fill] at (0,0){};
      \node at (0,-0.2)[below]{$\mathrm{count}=0$};
    \end{scope}
    \begin{scope}[shift={(10,0)}]
      \node (A) at (0,2) [above] {$(-,+)$};

      \fill[black!10!white] (2,0) arc (0:180:2)--cycle;
      \begin{scope}
        \clip (2,0) arc (0:180:2)--cycle;
        \draw[postaction={decorate,decoration={markings,mark=at position 0.5 with {\arrow[scale=1.5]{>};}}}] plot[domain=0.5:2] ({\x},{1/\x});
        \draw[postaction={decorate,decoration={markings,mark=at position 0.5 with {\arrow[scale=1.5]{>};}}}] plot[domain=0.5:2] ({-\x},{1/\x});
      \end{scope}
      \draw[postaction={decorate,decoration={markings,mark=at position 0.5 with {\arrow[scale=1.5]{<};}}}] (-2,0)--(0,0);
      \draw[postaction={decorate,decoration={markings,mark=at position 0.5 with {\arrow[scale=1.5]{<};}}}] (2,0)--(0,0);
      \draw[postaction={decorate,decoration={markings,mark=at position 0.5 with {\arrow[scale=1.5]{>};}}}] (0,2)--(0,0);
      \node[draw,circle,inner sep=1pt,fill] at (0,0){};
      \node at (0,-0.2)[below]{$\mathrm{count}=0$};
    \end{scope}
    \begin{scope}[shift={(15,0)}]
      \node (A) at (0,2) [above] {$(-,-)$};

      \fill[black!10!white] (2,0) arc (0:180:2)--cycle;
      \begin{scope}
        \clip (2,0) arc (0:180:2)--cycle;
        \draw[postaction={decorate,decoration={markings,mark=at position 0.5 with {\arrow[scale=1.5]{<};}}}] plot[domain=0.5:2] ({\x},{1/\x});
        \draw[postaction={decorate,decoration={markings,mark=at position 0.5 with {\arrow[scale=1.5]{<};}}}] plot[domain=0.5:2] ({-\x},{1/\x});
      \end{scope}
      \draw[postaction={decorate,decoration={markings,mark=at position 0.5 with {\arrow[scale=1.5]{>};}}}] (-2,0)--(0,0);
      \draw[postaction={decorate,decoration={markings,mark=at position 0.5 with {\arrow[scale=1.5]{>};}}}] (2,0)--(0,0);
      \draw[postaction={decorate,decoration={markings,mark=at position 0.5 with {\arrow[scale=1.5]{<};}}}] (0,2)--(0,0);
      \node[draw,circle,inner sep=1pt,fill] at (0,0){};
      \node at (0,-0.2)[below]{$\mathrm{count}=-1$};
    \end{scope}
  \end{tikzpicture}
  \caption{The four models for a boundary zero of $V$. The first sign is from the linearization of $V:\Sigma\to T\Sigma$ and the second sign is from the linearization of the restriction $V:\bd\Sigma\to T\bd\Sigma$.}
  \label{fig:two-signs-2-baby}
\end{figure}

The argument is as follows: pick two admissible vector fields $V_{0}$ and $V_{1}$ and consider them as defining a partial section $V$ of $\pr^{*}T\dot\Sigma\to \dot\Sigma\times [0,1]$ (lying over $\Sigma\times \set{0,1}$).

Extend $V$ to a section of $\pr^{*}T\bd\dot\Sigma\to \bd\dot\Sigma\times [0,1]$, while keeping it transverse. Note that we only do a partial extension, $V$ is not defined on the interior of $\dot\Sigma\times [0,1]$. The zero set $Z(V)$ is a one-manifold in $\bd\dot\Sigma\times [0,1]$ whose endpoints are the zeros of $V_{0},V_{1}$. 

Note that the four kinds of zeros of $V_{0},V_{1}$ can be differentiated by the signs of their linearizations, as in Figure \ref{fig:two-signs-2-baby}. Let us agree to say two zeros \emph{cancel} if they are endpoints of an interval in $Z(V)$ and lie on the same side $\bd\Sigma\times \set{0}$ or $\bd\Sigma\times \set{1}$. We say two zeros \emph{match} if they are endpoints of an interval whose boundaries lie on opposite sides. Let us also agree to call an interval component of $Z(V)$ a \emph{pairing}.

The usual argument (see \cite{milnor}), which only involves the linearization in the boundary direction, then shows that $(\pm,+)$ zeros can only cancel with $(\pm,-)$ zeros, and $(\pm,+)$ zeros can only match with $(\pm,+)$ zeros, etc. Here the various $\pm$ signs are independent (i.e.,\ a $(-,+)$ zero can cancel with a $(+,-)$ zero).

If a $(+,+)$ cancels with $(-,-)$ (along some arc $\gamma$) then we can infinitesimally the extend $V$ into the interior near $\gamma$ so that is non-vanishing away from $\gamma$. This is because the vector fields $V_{0}$ and $V_{1}$ are both pointing upwards along the imaginary axis at the end points of $\gamma$. An explicit formula would be something like $V(s,t)=V(s,0)+t\nu$ where $\nu$ is an inwards pointing vector, and $t$ is distance to the boundary. This can be done compatibly with $V_{0}$ and $V_{1}$. Indeed this formula holds exactly in a standard coordinate chart $z=s+it$ if $V=z\bd_{s}$ or $V=-\cl{z}\bd_{s}$ and $\nu=i\bd_{t}$.

Similarly, we can do this non-vanishing local extension if:
\begin{enumerate}[label={$\bullet$}]
\item a $(+,-)$ cancels with a $(-,+)$,
\item a $(+,+)$ matches with a $(+,+)$,
\item a $(+,-)$ matches with a $(+,-)$,
\item a $(-,+)$ matches with a $(-,+)$,
\item a $(-,-)$ matches with a $(-,-)$.    
\end{enumerate}
Notice that in all cases the count is preserved. The fact that we extend $V$ near these intervals $\gamma$ in a non-vanishing fashion means that these zeros are ``protected'' from being cancelled by interior zeros once we further extend $V$ to the rest of $\dot\Sigma\times [0,1]$.

Unfortunately, there are other ``bad'' pairings which can occur, namely:
\begin{enumerate}[label={$\bullet$}]
\item a $(+,+)$ cancels with a $(+,-)$,
\item a $(-,+)$ cancels with a $(-,-)$,
\item a $(+,+)$ matches with a $(-,+)$,
\item a $(+,-)$ matches with a $(-,-)$.
\end{enumerate}
In each case, the count of boundary zeros is not preserved. Indeed there is always an imbalance of $\pm 1$. Let $\gamma^{\prime}$ be such a bad pairing.

The next step is to extend $V$ to a neighborhood of $\gamma^{\prime}$ in such a way that a single interior zero enters the neighborhood and hits $\gamma^{\prime}$ (which we consider leaving the surface) -- this loss of an interior zero will re-balance the count.

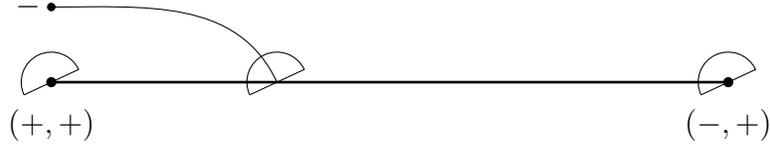
\begin{figure}[H]
  \centering
  \begin{tikzpicture}
    \draw (-3,1)node[draw,circle,inner sep=1pt,fill=black]{}node[left]{$-$}to[out=0,in=115](0,0);
    \draw[line width=1pt] (-3,0)node[draw,circle,inner sep=1pt,fill=black]{}node[below,shift={(0,-0.2)}]{$(+,+)$}--(6,0)node[draw,circle,inner sep=1pt,fill=black]{}node[below,shift={(0,-0.2)}]{$(-,+)$};
    \draw[rotate=25] (-.4,0)--(.4,0)arc (0:180:0.4);
    \draw[shift={(-3,0)}][rotate=25] (-.4,0)--(.4,0)arc (0:180:0.4);
    \draw[shift={(6,0)}][rotate=25] (-.4,0)--(.4,0)arc (0:180:0.4);
  \end{tikzpicture}
  \caption{An interior zero leaves through $\gamma^{\prime}$, re-balancing the count. One should imagine $(\cl{\mathbb{H}}\cap D(1))\times \gamma^{\prime}$ in the figure.}
  \label{fig:rebalance}
\end{figure}
To simplify the construction, let us suppose that $V$ is constant on $\bd\dot\Sigma\times [x_{0},x_{1}]$, and suppose that $p\times [x_{0},x_{1}]$ is contained in $\gamma^{\prime}$. This can be achieved after a small perturbation of $V$. Since $V$ is cut transversally, by assumption, we know that $p$ is a non-degenerate zero of $V_{x}=V|_{\bd\dot\Sigma\times \set{x}}$ for all $x\in [x_{0},x_{1}]$.

Then we can extend $V$ to a non-vanishing section on a neighborhood of the complement $$\gamma^{\prime}\setminus (p\times (x_{0},x_{1})),$$ by keeping the imaginary part of $V$ always positive or negative. After a perturbation on a fixed boundary coordinate chart of $p$ we may suppose that
\begin{equation*}
  V_{x_{0}}=z\text{ and }V_{x_{1}}=\cl{z}\text{ or }V_{x_{0}}=-z\text{ and }V_{x_{1}}=-\cl{z},
\end{equation*}
or vice-versa. This uses the fact that $V_{x_{0}}$ and $V_{x_{1}}$ have non-degenerate zeros. Suppose the chart is valued in $\cl{\mathbb{H}}\cap D(1)$.

On a neighborhood of the interval $\set{p}\times [x_{0},x_{1}]$ we will interpolate between positive imaginary part to negative part by the formula
\begin{equation*}
  V(x,z)=(1-\beta(x))z+\beta(x)\cl{z},
\end{equation*}
where $\beta$ monotonically increases from $0$ to $1$. Similar formulas work in the other cases. Note that we can explicitly describe the zero set of $V$ near $\gamma^{\prime}$ as
\begin{equation*}
  Z(V)=(\set{0}\times [x_{0},x_{1}])\cup i\R\cap \Omega(1)\times \set{\beta^{-1}(1/2)}.
\end{equation*}
i.e.,\ $x_{\ast}=\beta^{-1}(1/2)$ is a singular time where the zero set of $V$ looks like Figure \ref{fig:rebalance}. Let us call the set $(i\R\times \Omega(1))\times \set{x_{\ast}}$ a \emph{sink} for $\gamma^{\prime}$. Every ``bad pairing'' has exactly one sink attached to it.

To simplify the set up, let us suppose that $V$ is $x$-independent for $x\in [0,\epsilon]\cup [1-\epsilon,1]$, and place all of the sinks in these regions. Note that once we do the local extensions of $V$, it will no longer be $x$ independent in this region.

Now extend $V$ to the rest of $\dot\Sigma\times [0,1]$, so that it is a transverse section away from what we have already defined. A straightforward argument shows that every sink is one end of a (compact) interval $I\to Z(V)$; we think of these as pairings between sinks and interior zeros, or pairings between two sinks. The other components of $Z(V)$ are pairings which join interior zeros to interior zeros; these will preserve the counts (by \cite{milnor}). 

Let $I$ be a pairing between an interior zero and a sink. Orient $\dot\Sigma\times [0,1]$ so that $(\bd_{s},\bd_{t},\bd_{x})$ forms an oriented basis. Let $\nu(I)$ denote the normal bundle to $I$. The linearization of the vector field defines a map $\nu(I)\to T\dot\Sigma|_{I}$ which is an isomorphism except at the sink endpoint (i.e.,\ a single endpoint). Near the interior zero $p$, $\nu(I)$ is identified with $T\Sigma_{p}$.

Near the sink, $\nu(I)$ is identified with $\mathrm{span}\set{\bd_{s},\bd_{x}}$, and the linearization depends on what kind of pairing of $Z(V)\cap \bd\Sigma$ it intersects. Let us focus on only the matching parts of $Z(V)\cap \bd \Sigma$ (and not the cancelling parts). Keeping in the local model for the zero set near $\gamma^{\prime}$, we write the linearization near the sinks as
\begin{enumerate}[label={(\alph*)}]
\item $\bd_{s}\mapsto +\bd_{s}\text{ and }\bd_{x}\mapsto +\bd_{t}$, if $(-,+)$ matches with $(+,+)$ (net count: $+1$),
\item $\bd_{s}\mapsto -\bd_{s}\text{ and }\bd_{x}\mapsto -\bd_{t}$, if $(-,-)$ matches with $(+,-)$ (net count: $+1$),
\item $\bd_{s}\mapsto +\bd_{s}\text{ and }\bd_{x}\mapsto -\bd_{t}$, if $(+,+)$ matches with $(-,+)$ (net count: $-1$),
\item $\bd_{s}\mapsto -\bd_{s}\text{ and }\bd_{x}\mapsto +\bd_{t}$, if $(+,-)$ matches with $(-,-)$ (net count: $-1$).
\end{enumerate}
Keep in mind that matching is a directed relation. Similar tables hold for cancelling pairs of $V_{0}$ and $V_{1}$. 

We orient the interval $I$ so that it ends at a sink. Then $-\bd_{t}$ is identified with the tangent vector near the sink, and since $\set{-\bd_{t},\bd_{s},\bd_{x}}$ has the ambient orientation, we conclude that $\set{\bd_{s},\bd_{x}}$ is an oriented basis of $\nu(I)$ near the sink.

If the initial location of $I$ lies at $x=0$, then $\nu(I)\simeq T\Sigma$ is orientation preserving. If the initial location of $I$ lies at $x=1$ then the identification $\nu(I)\simeq T\Sigma$ is orientation reversing.

In particular, in case (a) or (b), since these linearizations preserve orientation, the initial zero of $I$ must be positive if it lies at $x=0$ or negative if it lies at $x=1$. In either case, the count is re-balanced. In case (c) or (d), the linearization near the sink reverses orientation, and we similarly show that the count is re-balanced.

Let us finally comment on what happens when sinks connect to sinks via an interval $J$. We suppose that both sinks are in one of the cases (a)-(d).

As we have explained, at the terminal end, $\set{\bd_{s},\bd_{x}}$ is an oriented basis of $\nu(J)$. At the initial end, $\set{\bd_{s},\bd_{x}}$ has the wrong orientation of $\nu(J)$. As a consequence, sinks of type (a) or (b) can only cancel with sinks of type (c) or (d) (and vice-versa). This implies that the counts are re-balanced.

For completeness, here are the tables for the linearizations at sinks for cancelling pairs (as opposed to matching pairs).

If a sink lies on a cancelling between two zeros in $V_{0}$, then its linearization is given by:
\begin{enumerate}[resume,label={(\alph*)}]
\item $\bd_{s}\mapsto +\bd_{s}$ and $\bd_{x}\mapsto +\bd_{t}$ if $(-,+)$ cancels with $(-,-)$ (net count: $+1$).
\item $\bd_{s}\mapsto +\bd_{s}$ and $\bd_{x}\mapsto -\bd_{t}$ if $(+,+)$ cancels with $(+,-)$ (net count: $-1$).  
\end{enumerate}

If a sink lies on a cancelling between two zeros in $V_{1}$, then the linearization is given by:
\begin{enumerate}[resume,label={(\alph*)}]
\item $\bd_{s}\mapsto +\bd_{s}$ and $\bd_{x}\mapsto +\bd_{t}$ if $(+,-)$ cancels with $(-,-)$ (net count: $-1$).
\item $\bd_{s}\mapsto +\bd_{s}$ and $\bd_{x}\mapsto -\bd_{t}$ if $(+,+)$ cancels with $(-,+)$ (net count: $+1$).  
\end{enumerate}

The rest of the details are left to the reader.

\bibliography{citations}
\bibliographystyle{alpha}
\end{document}